\theoremstyle{definition}
\numberwithin{equation}{section}
\newtheorem{thm}{Theorem}[section]
\newtheorem{prop}[thm]{Proposition}
\newtheorem{cor}[thm]{Corollary}
\newtheorem{lem}[thm]{Lemma}
\newtheorem{conj}[thm]{Conjecture}
\newtheorem{rem}[thm]{Remark}
\title{Explicit inner product formulas and Bessel period formulas  for HST lifts}
\author[K. Namikawa]{Kenichi Namikawa}
\date{\today}
\address{ Faculty of Mathematics, Kyushu University, 744 Motooka, Nishi-Ku, Fukuoka, 819-0395, Japan
}
\email{namikawa@math.kyushu-u.ac.jp}
\date{}
\subjclass[2010]{Primary 11F27,  Secondary 11F67}
\begin{document}
\begin{abstract}
We explicitly give an inner product formula and a Bessel period formula for theta series on ${\rm GSp}_4$, which was studied by Harris, Soudry and Taylor.  
As a consequence, we prove a non-vanishing of the theta series of small weights and we give a criterion for the non-vanishing of the theta series modulo a prime.
\end{abstract}

\maketitle
\tableofcontents

\section{Introduction}

In \cite{hst93}, Harris-Soudry-Taylor studied a non-vanishing of certain theta series on ${\rm GSp}_4({\mathbf A})$ 
  which coming from cohomological cuspidal automorphic representations $\pi$ on ${\rm GL}_2(F_{\mathbf A})$, where $F$ is an imaginary quadratic field, with some additional datum.
By using these theta series which we call HST lifts, Taylor constructed the Galois representation associated with $\pi$ in the subsequent paper \cite{tay94}. 
The aim of this paper is to give further arithmetic properties of HST lifts. 

As we can find in \cite{ri76}, \cite{mw84} and \cite{su14},  
constructions of congruences between automorphic forms are one of important tools in Iwasawa theory.  
In \cite{ht94}, Hida-Tilouine proved the anti-cyclotomic main conjecture for CM fields $F$ by studying 
congruences between cusp forms and theta series on ${\rm GL}_2$ over the maximal totally real subfield of $F$. 
One of methods for the authors's proof  
    is to combine congruences with certain $L$-values by studying Petersson inner products of algebraically normalized theta series. 
In the present paper, we consider HST lifts on ${\rm GSp}_4({\mathbf A})$ to construct a higher rank analogue of \cite{ht94}.  
Namely, we give  a Petersson inner product formula and a Bessel periods formula for HST lifts which are given in an explicit manner. 

The idea to construct congruences between cusp forms and theta series on ${\rm GSp}_4({\mathbf A})$ in Iwasawa theory  
can be found in some literature.  
For instance \cite{ak13}, \cite{bdsp12}, \cite{jia}, \cite{hn17} and \cite{hn}, 
the authors consider congruences between Siegel cusp forms and Yoshida lifts (\cite{yo80}, \cite{yo84})  
to study the Bloch-Kato conjecture for Asai $L$-functions for ${\rm GL}_2$ over ${\mathbf Q}^{\oplus2}$ or real quadratic fields
under certain strong conditions.     
As an analogue of these studies, \cite{ber} considers HST lifts case assuming an existence of congruences between 
Galois representations attached to Siegel cusp forms and HST lifts.
However, so far in the literature, no explicit construction of  congruences for HST lifts is given and explicit non-vanishing HST lifts are not yet discussed. 
In this paper, we will give an explicit construction of HST lifts and we show some basic properties.

To state the precise statements of the main theorem of this paper, 
we introduce some notation. 
Let ${\mathfrak N}$ be an ideal of the ring of integers of an imaginary quadratic field $F$ with the absolute discriminant $\Delta_F$.
Denote by $U_0({\mathfrak N})$ a congruence subgroup of ${\rm GL}_2(F_{\mathbf A})$, which is introduced in  Section \ref{sec:autoGL2}.
Consider an irreducible cohomological cuspidal automorphic representation $\pi$ of ${\rm GL}_2(F_{\mathbf A})$ of level $U_0(\mathfrak N)$ 
with the trivial central character and the Langlands parameter
\begin{align*}
  W_{\mathbf C}={\mathbf C}^\times \to {\rm GL}_2({\mathbf C}) :  z \mapsto  \begin{pmatrix}  (z/\overline{z})^{\frac{n+1}{2}}  &  0 \\ 0 & (\overline{z}/z)^{\frac{n+1}{2}} \end{pmatrix},   
 \quad (n\geq 0, n:\text{even}), 
 \end{align*}
 where $W_{\mathbf C}$ is the Weil group of ${\mathbf C}$. 
 Assume that $\pi$ is not conjugate self-dual. 
Let  $f: {\rm GL}_2(F_{\mathbf A}) \to {\mathcal W}_{2n+2}({\mathbf C}) :={\rm Sym}^{2n+2}({\mathbf C}^{\oplus 2}) $ 
be a normalized cusp form  in $\pi$.
By using an accidental isomorphism between ${\rm GL}_2(F_{\mathbf A})\times_{F^\times_{\mathbf A}}{\mathbf A}^\times$ and ${\rm GSO}_{3,1}({\mathbf A})$,  
     $f$ is extended  to an automorphic form ${\mathbf f}$ on ${\rm GSO}_{3,1}({\mathbf A})$.          
Consider a map $\delta: \Sigma_{\mathbf Q} \to \{\pm 1\}$ satisfying  
\begin{align*}
  \delta(\infty)=-1; \quad \delta(v)=1 \ (v < \infty). 
\end{align*} 
Then the datum $({\mathbf f}, \delta)$ gives rise to an automorphic form $\widetilde{\mathbf f}$ on ${\rm GO}_{3,1}({\mathbf A})$.   
Denote by $\widetilde{\mathbf f}^\dag$ the $p$-stabilization of $\widetilde{\mathbf f}$, which is introduced in Section \ref{secautoGO}.
Let 
\begin{align*}
{\mathcal L}_\lambda({\mathbf C}) = {\rm Sym}^{n}({\mathbf C}^{\oplus 2}) \otimes {\rm det}^{\otimes 2}, \quad (\lambda=(n+2,2)), 
\end{align*}
and we consider ${\mathcal L}_\lambda({\mathbf C})$ as one of realizations of the representation of ${\rm U}_2({\mathbf R})$ of a highest weight $\lambda$.   
By the theta correspondence between ${\rm GO}_{3,1}$ and ${\rm GSp}_4$, 
        each choice of a Bruhat-Schwartz function $\widetilde{\varphi}$  
        provides a holomorphic Siegel cusp form $\theta^\ast_{\widetilde{\mathbf f}^\dag}$  taking values in ${\mathcal L}_\lambda({\mathbf C})$: 
\begin{align*}  
  \theta^\ast_{\widetilde{\mathbf f}^\dag} 
        : {\mathfrak H}_2 \to {\mathcal L}_\lambda({\mathbf C}), 
\end{align*}
        where ${\mathfrak H}_2$ is the Siegel upper half space of genus 2. 
We call $\theta^\ast_{\widetilde{\mathbf f}^\dag}$  HST lift, since basic arithmetic properties of $\theta^\ast_{\widetilde{\mathbf f}^\dag}$ are discussed in \cite{hst93}. 
Let $N_F={\rm l.c.m.}(N, \Delta_F)$
 for $N{\mathbf Z}=\mathfrak N\cap {\mathbf Z}$
 and $\Gamma^{(2)}_0(N_F)$  a congruence subgroup of ${\rm Sp}_4({\mathbf Z})$ which is introduced in Section \ref{sec:autoSp4}.
Then this HST lift $\theta^\ast_{\widetilde{\mathbf f}^\dag}$ has weight $\lambda$ 
and level  $\Gamma^{(2)}_0(N_F)$.    

In this paper, we will fix a distinguished Bruhat-Schwartz function $\widetilde{\varphi}$ in Section \ref{sectheta}, 
and we prove a Petersson inner product formula and a Bessel period formula  for $\theta^\ast_{\widetilde{\mathbf f}^\dag}$. 
In this section, we introduce the main results of this paper assuming  
\begin{align*}
  n=0 
\end{align*}
for the sake of the simplicity. Note that this assumption makes HST lifts $\theta^\ast_{\widetilde{\mathbf f}^\dag}$ scalar-valued functions.

We firstly introduce a result on Petersson inner product formulas. 
Define the Petersson inner product of $\theta^\ast_{\widetilde{\mathbf f}^\dag}$
to be
\begin{align*}
       \langle \theta^\ast_{ \widetilde{\mathbf f}^\dag}, \theta^\ast_{ \widetilde{\mathbf f}^\dag} \rangle_{\mathfrak H_2}  
  =  \int_{\Gamma^{(2)}_0(N_F) \backslash {\mathfrak H}_2}   
          \theta^\ast_{ \widetilde{ \mathbf f}^\dag}(Z)  \overline{\theta^\ast_{ \widetilde{ \mathbf f}^\dag}(Z)} (\det Y)^2 
           \cdot  \frac{ {\rm d}X {\rm d} Y  }{   ({\rm det} Y)^3},  
\end{align*}
where $Z=X+\sqrt{-1}Y\in {\mathfrak H}_2$ 
and ${\rm d}X = \prod_{j\leq l} {\rm d}x_{jl}, {\rm d}Y=\prod_{j\leq l} {\rm d}y_{jl}$ for
$X=(x_{jl})$ and $Y=(y_{jl})$.  
Also denote by $\langle {\mathbf f}, {\mathbf f} \rangle_{\mathscr H}$ a Petersson inner product of ${\mathbf f}$, 
   which is defined in Section \ref{sec:clainnprd}.   
Let $\varepsilon_v$ be the local root number of $\pi_v$ for a place $v$ of $F$. 
For each rational prime $p$, define
\begin{align*}
    \varepsilon_p  = \begin{cases} \varepsilon_v,   &  (p=v:\text{non-split in } F), \\ 
                                                \varepsilon_{v_1}\varepsilon_{v_2},    &   (p=v_1v_2:\text{split in } F).   \end{cases}      
\end{align*}
Let $L(s, {\rm As}^+(\pi))$ be the Asai $L$-function attached to $\pi$.   
Then the explicit inner product formula for $\theta^\ast_{\widetilde{\mathbf f}^\dag}$ in this paper is given as follows:
\ \\

\noindent
{\bf Theorem A}
(Theorem \ref{thm:alginnprd}, Corollary \ref{cor:nonvan}, $n=0$ case)
{\itshape Assume that
\begin{itemize}
  \item $\pi$ is not conjugate self-dual: $\pi^\vee\not\cong \pi^c$; 
  \item  $\Delta_F$  and ${\rm Nr}_{F/{\mathbf Q}}({\mathfrak N})$ are coprime;
  \item ${\mathfrak N}$ is square-free; 
\end{itemize}
Then we have 
\begin{align*}
    \frac{  \langle  \theta^\ast_{\widetilde{\mathbf f}^\dag}, \theta^\ast_{\widetilde{\mathbf f}^\dag}   \rangle_{\mathfrak H_2}  }{\langle {\mathbf f}, {\mathbf f} \rangle_{\mathscr H} }
 =  2^\beta N_F \Delta^{-3}_F  
      \times   L(1, {\rm As}^+(\pi)) 
                  \prod_{p\mid N } (1+\varepsilon_p) 
                  \cdot \prod_{p\mid \Delta_F}  (1+p^{-1})
\end{align*}
where $\beta\in {\mathbf Z}$ is an explicit integer, which is given in Theorem \ref{thm:alginnprd}. 
Furthermore, if we assume that
\begin{itemize}
\item[(LR)]  for each rational prime number $p\mid N$, $\varepsilon_p=1$,    
\end{itemize}
then, $\theta^\ast_{\widetilde{\mathbf f}^\dag}$ is non-zero. 
}
\ \\

\begin{rem}
\begin{enumerate}
\item 
Due to the works of Hida (\cite{hi81a}, \cite{hi81b}) in ${\rm GL}_2$ case,    
it is well-known that
an explicit inner product formula is 
one of important tools for constructions  of congruences between cuspidal automorphic forms.   
In ${\rm GSp}_4$ case, 
Agarwal and Klosin (\cite{ak13}) construct  
a congruence between Siegel cusp forms 
    and Yoshida lifts under their conjectural explicit inner product formula for Yoshida lifts \cite[Conjecture 5.19]{ak13}.    
In \cite[Theorem 5.7]{hn},  
a generalization of \cite[Conjecture 5.19]{ak13}  is proved. 
Theorem A is an analogue of \cite[Theorem 5.7]{hn} in the HST lifts case.
\item 
To deduce Theorem A, we follow the strategy for the proof of explicit inner product formulas for Yoshida lifts in \cite{hn}, 
that is, 
we compute explicitly certain local integrals, which come from the Rallis inner product formula in \cite{gqt}.    
In our proof, 
we need a conjectural formula  (Conjecture \ref{conj:arint}) on a certain archimedean local integral for general $n$.    
We can verify this conjectural formula for $n=0,2,4,6,8$ with some helps of Mathematica,     
however we need some special formulas on Bessel functions of the second kind and the hypergeometric functions in our way to check the conjecture
    even for these small weight cases.     
This is the reason why we include Appendix in this paper, where we explain how to check Conjecture \ref{conj:arint} for $n=0$.  
\end{enumerate}
\end{rem}

The explicit inner product formula in Theorem A does not depend on a normalization of an initial automorphic form $f$. 
However it depends on a normalization of the distinguished Bruhat-Schwartz function $\widetilde{\varphi}$.   
Hence it is necessary to discuss that $\widetilde{\varphi}$ is suitably normalized. 
In this paper, we show that  
   certain types of Fourier coefficients of $\theta^\ast_{\widetilde{\mathbf f}^\dag}$ are $p$-adically integral after dividing a period of $\pi$  
    and we study a relation between the non-vanishing modulo a prime of Fourier coefficients 
        and the central value $L(\frac{1}{2}, \pi\otimes\phi)$  of the standard $L$-function of $\pi$ 
        with twists of finite-order anti-cyclotomic characters $\phi: F^\times{\mathbf A}^\times \backslash F^\times_{\mathbf A} \to {\mathbf C}^\times$. 
This provides a reason of our normalization of $\widetilde{\varphi}$.  

We introduce our second result on Fourier coefficients of HST lifts $\theta^\ast_{\widetilde{\mathbf f}^\dag}$, 
which is a corollary of an explicit Bessel periods formula.   
Let 
\begin{align*}
 \Lambda_2 = \left\{ S= \begin{pmatrix} a & \frac{b}{2}  \\ \frac{b}{2} & c \end{pmatrix} 
                                 | \ a, b, c \in {\mathbf Z}, S \text{ is semi-positive definite}  \right\}.  
\end{align*}
Write the Fourier expansion of $\theta^\ast_{\widetilde{\mathbf f}^\dag}$ as 
\begin{align*}
   \theta^\ast_{ \widetilde{\mathbf f}^\dag }(Z)
    = \sum_{S\in \Lambda_2} {\mathbf a}(S) q^S, 
       \quad \left(q^S= \exp(2\pi\sqrt{-1} {\rm Tr}(SZ)), {\mathbf a}(S) \in {\mathbf C}\right).   
\end{align*}

Fix an odd prime number $p$, an embedding $\overline{\mathbf Q} \to {\mathbf C}$ and an isomorphism ${\mathbf C} \cong {\mathbf C}_p$ as fields. 
Denote by ${\mathcal O}_{{\mathbf C}_p}$ the ring of integers  of ${\mathbf C}_p$. 
Then we have a period $\Omega_{\pi, p} \in {\mathbf C}^\times_p$ 
    which is determined by $\pi$ and $p$ up to a multiplication of elements in  ${\mathcal O}^\times_{{\mathbf C}_p}$.
We briefly recall the definition of $\Omega_{\pi, p}$ in Section \ref{sec:claBess}. 
Since the idea of the definition of periods $\Omega_{\pi, p}$ is introduced in \cite[Section 8]{hi94cr},    
we call $\Omega_{\pi, p}$ Hida's canonical period. 
The definition of  $\Omega_{\pi, p}$ immediately  shows that 
      $L(\frac{1}{2}, \pi\otimes\phi) / \Omega_{\pi, p}$ is an element in ${\mathcal O}_{{\mathbf C}_p}$ (\cite[Theorem 8.1]{hi94cr}).

The second main result in this paper 
 is summarized as follows: 
\ \\ 

\noindent
{\bf Theorem B} (Corollary \ref{cor:Bess}, $n=0$ case)
{\itshape 
Assume that
\begin{itemize}
  \item $\pi$ is not conjugate self-dual: $\pi^\vee\not\cong \pi^c$; 
  \item  $\Delta_F$  and ${\rm Nr}_{F/{\mathbf Q}}({\mathfrak N})$ are coprime;
  \item $p\nmid 2 \cdot \Delta_F \cdot \sharp \left( F^\times {\mathbf A}^\times \backslash F^\times_{\mathbf A}/ {\mathbf C}^1 \widehat{\mathcal O}^\times_F\right)$, 
             where ${\mathbf C}^1=\left\{  z \in {\mathbf C} \ | \   z \overline{z} =1  \right\}$. 
\end{itemize}
Then we have the following two statements: 
\begin{enumerate}
\item {\rm (Corollary \ref{cor:Bess} \ref{cor:Bess(ii)}) }  
          Let $C$ be an integer satisfying the following condition:  
          \begin{itemize}
          \item[(CF)] $C$ is prime to $N \Delta_F$ and each prime $v$ of ${\mathbf Q}$ dividing $C$ is split in $F$.
          \end{itemize} 
          Then for each $S\in \Lambda_2$ with $\det S = \frac{\Delta_F C^2}{4}$, we have ${\mathbf a}(S)/\Omega_{\pi, p} \in {\mathcal O}_{{\mathbf C}_p}$.    
\item {\rm (Corollary \ref{cor:Bess} \ref{cor:Bess(iii)}) } 
          Assume that the condition {\rm (LR)} in Theorem A. 
         Then there exists a positive integer $C \in {\mathbf Z}$ satisfying the condition {\rm (CF)} in the first statement and 
         \begin{align*}
              p\nmid C \cdot \prod_{ \substack{ \ell:\text{prime} \\  \ell \mid C} }(\ell - 1) 
         \end{align*}
         such that 
         the following assertions are equivalent:          
         \begin{itemize}
         \item For a finite-order Hecke character 
                 $\phi: F^\times {\mathbf A}^\times \backslash F^\times_{\mathbf A} \to {\mathbf C}^\times$ of the conductor $C {\mathcal O}_F$,   
                  we have 
                 $  L(\frac{1}{2}, \pi\otimes\phi) / \Omega_{\pi, p} \in {\mathcal O}^\times_{\mathbf C_p}$.
         \item For an element of  $S\in \Lambda_2$ with $\det S= \frac{\Delta_F C^2}{4}$, we have 
                 $ {\mathbf a}(S) / \Omega_{\pi, p}  \in {\mathcal O}^\times_{\mathbf C_p}$.
         \end{itemize}
\end{enumerate}
}

\begin{rem}
\begin{enumerate}
\item In \cite[Proposition 5.1]{hn17}, we prove that all Fourier coefficients of Yoshida lifts are $p$-adically integral.  
         Hence Theorem B (i) can be considered as a weak analogue of \cite[Proposition 5.1]{hn17}. 
\item Theorem B (ii) reduces a study of non-triviality of HST lifts modulo a prime  
            to a study of the central value of the standard $L$-function of $\pi$.
         Even though results on the non-vanishing of $L(\frac{1}{2}, \pi\otimes \phi)/\Omega_{\pi, p}$ modulo a prime  for anti-cyclotomic characters $\phi$  
          are not known so far,    
         this strategy for a proof of the non-triviality of Fourier coefficients modulo a prime can be find in \cite[Section 13]{su14} for instance. 
         If we consider the Yoshida lifts, which are in the image of theta correspondences from a pair of elliptic modular forms, 
         \cite[Theorem 5.3]{hn17} shows that the Yoshida lifts have Fourier coefficients which are non-vanishing modulo a prime according to this strategy.  
         Hence Theorem B (ii) is expected to be useful to study HST lifts which are non-vanishing modulo a prime. 
\item The proof of Theorem B in this paper is due to an explicit computation of Bessel periods of HST lifts.   
         For each imaginary quadratic field $K$, 
         the Bessel periods of Siegel cusp forms  are torus integrals which are defined by  a map ${\rm Res}_{K/{\mathbf Q}} {\mathbb G}_{m/ K} \to {\rm GSp}_4$.    
        The Waldspurger formula (\cite[Proposition 7]{wa85}  
        ) shows that the square of the Bessel periods can be described by 
              the central value of the Rankin-Selberg type $L$-function of $\pi$, 
              up to local integrals which depends on the choice of Bruhat-Schwartz functions. 
              (See \cite[Section 7]{co17} for this argument.)         
        However, if we consider a special case $K=F$, 
        then an explicit Bessel period formula (Theorem \ref{th:bessel}) in this paper implies that the Bessel periods itself can be 
              described by the standard $L$-function of $\pi$.   
        Although an explicit Bessel periods formula for general imaginary quadratic fields $K$   
                           is desired to obtain more information of Fourier coefficients of HST lifts, 
            we concentrate on the case that $K=F$ in this paper,         
            since it already gives us information on a normalization of $\widetilde{\varphi}$.     
        The integrality of Fourier coefficients of more general type  seems to be important,  
           however we remain it to the future works.                    
\item In \cite[Corollary 28]{be14}, the author constructs HST lifts which have integral Fourier coefficients in the $n=0$ case.  
         Since Berger's method relies on a cohomological interpretation of theta lifts (\cite[Section 3]{be14}) which is different form our formulation in Section \ref{s:hst}, 
          it is necessary to check whether his construction coincides with ours.
          So far, basic properties of HST lifts in \cite{be14} such as inner product formulas and a sufficient condition for non-triviality modulo a prime are not yet worked out. 
          Hence it seems to be one of directions for further study of HST lifts 
           to clarify a relation between results in \cite{be14} and the present paper, 
             and to generalize a result in \cite{be14} in the $n>0$ case by using our fixed Bruhat-Schwartz function $\widetilde{\varphi}$.
\end{enumerate}
\end{rem}

The organization of this paper is as follows.    
We introduce basic notation of automorphic forms on ${\rm GL}_2$ over imaginary quadratic fields and ${\rm GSp}_4$ in  Section \ref{s:notdef}. 
Automorphic forms on orthogonal groups are discussed in Section \ref{sec:orthauto}, 
    where we explain how to extend automorphic forms on ${\rm GL}_2$ over imaginary quadratic fields
              to automorphic forms on ${\rm GO}_{3,1}$ according to \cite{hst93}. 
HST lifts on ${\rm GSp}_4$ are defined in Section \ref{s:hst},
    where we fix a distinguished Bruhat-Schwartz function $\widetilde{\varphi}$.   
In Section \ref{s:InnPrd}, 
we reduce a computation for an explicit inner product formula for HST lifts 
to computations of certain local integrals by using the Rallis inner product formula.    
   The computation of the local integrals is given in Section \ref{sec:complocint}, which is summarized in Theorem \ref{HSTInnprd}. 
The archimedean local integral is computed assuming Conjecture \ref{conj:arint}  which is hold for small weights. 
In Appendix, we explain how to check Conjecture \ref{conj:arint}.
The first main result Theorem A (Theorem \ref{thm:alginnprd}, Corollary \ref{cor:nonvan}) is given in Section \ref{sec:clainnprd}.  
Section \ref{sec:Bessel}  and Section \ref{s:besloc} are devoted to a study of Fourier coefficients of HST lifts.     
The study of Fourier coefficients in this paper is based on an explicit computation of  Bessel periods for HST lifts.  
The definition of Bessel periods is given in Section \ref{s:defbessel}, 
     and an explicit Bessel periods formula is given in Theorem \ref{th:bessel}.      
The proof of Theorem \ref{th:bessel} is given by computations of local integrals in Section \ref{s:besloc}.   
An explicit relation between Bessel periods and Fourier coefficients is discussed in Section \ref{sec:claBess}, 
where we prove the second main result Theorem B (Corollary \ref{cor:Bess}).

\section{Notation and definitions}\label{s:notdef}

\subsection{Basic notation}
Denote by ${\mathbf Q}$ (resp. ${\mathbf Z}, {\mathbf R}, {\mathbf C}$) be the rational number field 
(resp. the ring of the rational integers, the real number field, the complex number field).
The binomial coefficient $\binom{a}{b}$ for $a,b\in {\mathbf Q}$ is defined by
\begin{align*}
   \binom{a}{b} = \begin{cases} \frac{\Gamma(a+1)}{\Gamma(b+1)\Gamma(a-b+1)},  &  (a, b \in {\mathbf Z}),   \\
                                                        0, &  \text{(otherwise)}. \end{cases}
\end{align*}

Let $\Sigma_{\mathbf Q}$ be the set of places of the rational number field ${\mathbf Q}$. 
For each place $v\in \Sigma_{\mathbf Q}$, we denote by ${\mathbf Q}_v$ the completion of ${\mathbf Q}$ at $v$.
Write ${\mathbf A}$ for the ring of adeles of ${\mathbf Q}$ and 
     denote by ${\mathbf A}_{\rm fin}$ (resp. ${\mathbf A}_\infty$) the finite (resp. infinite) part of ${\mathbf A}$.
For an algebraic group $G$ over ${\mathbf Q}$ and a ${\mathbf Q}$-algebra $A$,  
   we denote by $G(A)$ (resp. $Z_G$) the $A$-rational points of $G$ (resp. the center of $G$).
For $x \in G({\mathbf A})$, we write $x_{\rm fin}$ for the projection of $x$ to $G({\mathbf A}_{\rm fin})$.  
Let $[G]$ be the quotient space $G({\mathbf Q})\backslash G({\mathbf A})$.
Let $\psi_{\mathbf Q}=\prod_{v\in \Sigma_{\mathbf Q}}\psi_{\mathbf Q, v}:{\mathbf A}/{\mathbf Q}\to {\mathbf C}^\times$ be the additive character 
with $\psi_{\mathbf Q}(x_\infty)=\exp(2\pi\sqrt{-1}x_\infty)$ for $x_\infty\in {\mathbf R}={\mathbf Q}_\infty$.

For a number field $L$, we denote by ${\mathcal O}_L$ 
(resp.  $L_{\mathbf A}$, ${\mathfrak d}_L$, $\Delta_L$, $\delta_L$) the ring of integers of $L$ 
(resp. the ring of adeles of $L$, the different of $L/{\mathbf Q}$, the discriminant of $L/{\mathbf Q}$, 
a generator of the different ${\mathfrak d}_L$ in $L_{{\mathbf A},{\rm fin}}$).
For each place $v$ of $L$, define $L_v$ to be the completion of $L$ at $v$.  
Put $\widehat{\mathbf Z}=\prod_{v\Sigma_{\mathbf Q}, v<\infty} {\mathbf Z}_v$   
    and $\widehat{\mathcal O}_L={\mathcal O}_L\otimes_{\mathbf Z} \widehat{\mathbf Z}$.  
For each finite place $v$, denote by ${\mathcal O}_{L,v}$ (resp. $\varpi_v$) the ring of integers  (resp. a uniformizer) of $L_v$. 
Put $q_v=\sharp {\mathcal O}_{L,v}/\varpi_v {\mathcal O}_{L,v}$.   
Let ${\rm Tr}_{L/{\mathbf Q}}$ (resp. ${\rm Nr}_{L/{\mathbf Q}}$) be the trace (resp. norm) map of $L/{\mathbf Q}$.  
Define an additive character $\psi_L: L_{\mathbf A}/L \to {\mathbf C}^\times$ to be $\psi_L=\psi_{\mathbf Q}\circ {\rm Tr}_{L/{\mathbf Q}}$.

In this paper, $L$-functions are always complete one. 
For instance, we define the Riemann zeta function $\zeta$ as follows:
\begin{align*}
   \zeta_\infty(s) = \Gamma_{\mathbf R}(s) = \pi^{-\frac{\pi}{2}} \Gamma\left(\frac{s}{2}\right), 
   \quad 
   \zeta_p=\frac{1}{1-p^{-s}}, 
   \quad 
   \zeta(s) = \prod_{v\in \Sigma_{\mathbf Q}} \zeta_v(s). 
\end{align*}

Let $L$ be a number field and $\varpi_v$ a uniformizer of $L_v$ for each finite place $v$ of $L$. 
For each Hecke character $\phi:L^\times\backslash L^\times_{\mathbf A} \to {\mathbf C}^\times$,   
we write $\phi=\prod_v\phi_v$, where $v$ runs over the set of places of $L$.   
Let $\varpi^{c_v}_v{\mathcal O}_{L,v}$ be the conductor of $\phi_v$.
Define the $\epsilon$-factor $\epsilon(s, \phi_v)$ of $\phi_v$ 
with respect to $\psi_{L,v}$ to be 
\begin{align}
   \epsilon (s, \phi_v) = \int_{\varpi^{-c_v-{\rm ord}_v({\mathfrak d}_L)}_v{\mathcal O}^\times_{L, v}}   \phi^{-1}_v(x) \psi_{L,v}(x)  |t|^{-s}_v  {\rm d}t  \label{def:ep}
\end{align}
where ${\rm d} t$ is the Haar measure on $L_v$ which is self-dual with respect to $\psi_{L,v}$.

\subsection{Algebraic representation of ${\rm GL}_2$}\label{algrep}
Let $A$ be a ${\mathbf Z}$-algebra.
Let $A[X,Y]_n$ denote the space of two variable homogeneous polynomials of degree $n$ over $A$.
Suppose that $n!$ is invertible in $A$.
We define the perfect pairing $\langle \cdot , \cdot \rangle_n:A[X,Y]_n\times A[X,Y]_n\to A$ by
\begin{align*}
  \langle X^iY^{n-i}, X^jY^{n-j} \rangle_n
 =\begin{cases}  (-1)^i\binom{n}{i}^{-1},  & ( i+j=n), \\
                       0, & ( i+j\neq n). \end{cases} 
\end{align*}
Denote by $u^\vee$ the dual of $u\in A[X,Y]_n$ with respect the pairing  $\langle \cdot, \cdot \rangle_n$. 
For $\lambda = (n+b,b)\in {\mathbf Z}^2$ with $n, b \in {\mathbf Z}_{\geq 0}$,
put ${\mathcal L}_\lambda(A)=A[X,Y]_n$ 
and let $\rho_\lambda:{\rm GL}_2(A) \to {\rm Aut}_A{\mathcal L}_\lambda(A)$ be the representation given by
\begin{align}\label{eq:pairequiv}
  \rho_\lambda(g) P(X,Y) = P((X,Y)g) \cdot (\det g)^b.  
\end{align}
Then $( \rho_\lambda, {\mathcal L}_\lambda(A) )$ is the algebraic representation  of ${\rm GL}_2(A)$
with the highest weight $\lambda$.
Moreover, it is well-known that the pairing $\langle \cdot,\cdot\rangle_{\mathcal L}:=\langle \cdot,\cdot\rangle_n$ on ${\mathcal L}_\lambda(A)$ satisfies
\begin{align*}
     \langle \rho_\lambda(g)v, \rho_\lambda(g)w \rangle_n
  = (\det g)^{n+2b} \cdot \langle v,w \rangle_n, 
   \quad (g\in {\rm GL}_2(A)).
\end{align*}

For each non-negative integer $k$, we put
\begin{align*}
  ( \tau_k, {\mathcal W}_k(A)) := (\rho_{(k,-k)},   A[X,Y]_{k} ). 
\end{align*}
Then $(\tau_k, {\mathcal W}_k(A) )$ is an algebraic representation of ${\rm PGL}_2(A) = {\rm GL}_2(A)/A^\times$.

\subsection{Automorphic forms on ${\rm GL}_2$ over imaginary quadratic fields}\label{sec:autoGL2}

We recall a definition and basic formulas of automorphic forms on ${\rm GL}_2$ over imaginary quadratic fields 
according to \cite{hi94cr}. 
Some of formulas are also found in \cite{na17}.

Let $F$ be an imaginary quadratic filed, $n$ a non-negative integer and ${\mathfrak N}$ an ideal of the ring of integers ${\mathcal O}_F$ of $F$. 
Put 
\begin{align*}
  U_0({\mathfrak N}) 
    = \left\{   \begin{pmatrix} a & b \delta^{-1}_F \\ c\delta_F & d \end{pmatrix}  
                  | a, b, c, d \in \widehat{\mathcal O}_F,  
                     ad-bc \in \widehat{\mathcal O}^\times_F, 
                     c \equiv 0 \ {\rm mod} \ {\mathfrak N}  \right\}. 
\end{align*}

Let $\pi$ be a unitary irreducible cuspidal automorphic representation of ${\rm GL}_2(F_{\mathbf A})$ satisfying the following conditions:
 \begin{itemize}
 \item $\pi$ has a cohomological weight $n+2$, 
               that is, 
               the Langlands parameter  $W_{\mathbf C} \cong {\mathbf C}^\times \to {\rm GL}_2({\mathbf C})$ of $\pi_\infty$ is given by 
           \begin{align*}
             z \mapsto \begin{pmatrix} (z/\overline{z})^{\frac{n+1}{2}}  &  0  \\ 0 &  (\overline{z}/z)^{\frac{n+1}{2}}    \end{pmatrix},   
           \end{align*} 
           where $W_{\mathbf C}$ is the Weil group of ${\mathbf C}$.
 \item There exists an ideal ${\mathfrak M}$ of ${\mathcal O}_F$ such that $\pi$ has a ${U_0({\mathfrak M})}$-fixed vector.
          Denote  by ${\mathfrak N}$ the minimal ideal of ${\mathcal O}_F$ 
              among a set of ideals of ${\mathcal O}_F$ satisfying this property.  
 \end{itemize}
Recall that $\pi$ is realized as 
     a subspace of a regular representation of moderate growth smooth functions $f:{\rm GL}_2(F_{\mathbf A}) \to {\mathbf C}$. 
Hereafter we denote this realization by ${\mathcal A}_0( \pi )$.   

We put 
\begin{align*}
{\mathcal S}_{n+2}(U_0({\mathfrak N})) = \bigoplus_\pi {\rm Hom}_{{\rm SU}_2({\mathbf R})}({\mathcal W}_{2n+2}({\mathbf C})^\vee,  {\mathcal A}_0(\pi)^{U_0({\mathfrak N})}).
\end{align*}
We consider  a moderate growth smooth function $f:{\rm GL}_2(F_{\mathbf A}) \to {\mathcal W}_{2n+2}({\mathbf C})$ 
as an element in ${\mathcal S}_{n+2}(U_0({\mathfrak N}))$, 
if $f$ satisfies the following condition:
\begin{itemize}
\item Let $D, D_c$ be the Casimir elements of the universal enveloping algebra of ${\rm Lie}({\rm GL}_2({\mathbf C}))\otimes_{\mathbf R}{\mathbf C}$,  
          which is defined in \cite[Section 2.3]{hi94cr}. 
          Then, we have $Df=D_cf=(\frac{n^2}{2}+n)f$.  
\item For each $\gamma \in {\rm GL}_2(F), g\in {\rm GL}_2(F_{\mathbf A})$ and $u_\infty \in {\rm SU}_2({\mathbf R}), u \in U_0({\mathfrak N})$, we have
         \begin{align*}
            f(\gamma g u_\infty u) = \rho_{(2n+2, 0)}(u^{-1}_\infty) f(g).
         \end{align*}
\item For each $g\in {\rm GL}_2(F_{\mathbf A})$ and each unipotent radical  $U(F_{\mathbf A})$ of any proper parabolic subgroup of ${\rm GL}_2(F_{\mathbf A})$, we have 
         \begin{align*}
            \int_{ U(F) \backslash U(F_{\mathbf A}) }   f(u g )    {\rm d}x = 0,
         \end{align*}
         where ${\rm d}x$ is a Haar measure on $U(F_{\mathbf A})$.
\end{itemize}
We call an element of ${\mathcal A}_0(\pi)$ (resp. ${\mathcal S}_{n+2}(U_0({\mathfrak N}))$) 
a cuspidal automorphic (resp. cusp) form on ${\rm GL}_2(F_{\mathbf A})$ of the cohomological weight $n+2$, of level $U_0({\mathfrak N})$.

In this paper, we always assume the following three conditions on $\pi$:
\begin{itemize}
\item $n$ is even; 
\item The central character of $\pi$ is trivial;
\item $\pi^\vee\not\cong\pi^c$, that is, $\pi$ is not conjugate self-dual.
\end{itemize}

We recall some basic properties on the Whittaker model of $\pi$. 
For a cusp form $f\in {\mathcal S}_{n+2}(U_0({\mathfrak N}))$, denote by $\pi$ the automorphic representation which is generated by $f$.
Let ${\mathcal W}(\pi, \psi_F)\cong \otimes^\prime_w {\mathcal W}(\pi_w, \psi_{F,w})$ be the Whittaker model of $\pi$. 
We note that the Whittaker function $W_f: {\rm GL}_2(F_{\mathbf A}) \to {\mathcal W}_{2n+2}({\mathbf C})$ for $f$ is defined to be 
\begin{align*}
    W_f(g) = \int_{[F]} f \left(\begin{pmatrix} 1 & x \\  0 & 1 \end{pmatrix} g \right) \psi_F(-x) {\rm d}x,   
\end{align*}
where ${\rm d}x$ is the self-dual Haar measure with respect to $\psi_F$. 

We denote by $K_s(z)$ the Bessel function of the second kind which is defined to be 
\begin{align}\label{def:2bessel}
   K_s(z) = \frac{1}{2} \int^\infty_0 \exp \left(-\frac{z}{2}  \left( t+\frac{1}{t} \right) \right)  t^{s-1} {\rm d} t, \quad ({\rm Re}(z) > 0, s\in {\mathbf C}).  
\end{align}
Define the element $W_{f,\infty}: {\rm GL}_2(F_{\mathbf A}) \to {\mathcal W}_{2n+2}({\mathbf C})$ in ${\mathcal W}(\pi_\infty, \psi_{F, \infty})$ so that  
\begin{align*}
    W_{f,\infty} \left(\begin{pmatrix} t & 0 \\ 0 & 1  \end{pmatrix} \right) 
         = 2^4\times \sum^{n+1}_{j=-n-1} \sqrt{-1}^j   t^{n+2} K_j(4\pi t)  (X^{n+1+j}Y^{n+1-j})^\vee
\end{align*}
for $0<t\in {\mathbf R}$.
Note that if we fix a Haar ${\rm d}^\times t_\infty$ on $F^\times_\infty={\mathbf C}^\times$ so that 
\begin{align*}
    {\rm d}^\times t_\infty = \frac{ {\rm d} r {\rm d}\theta  }{2\pi r},  
      \quad (t_\infty = r e^{\sqrt{-1}\theta}), 
\end{align*}
then we have 
\begin{align*}
  \int_{\mathbf C^\times} W_{f,\infty} \left(\begin{pmatrix} t_\infty & 0 \\ 0 & 1  \end{pmatrix} \right) |t_\infty|^{s-\frac{1}{2}}_\infty  {\rm d}^\times t_\infty  
  =&  \Gamma_{\mathbf C}\left( s + \frac{n+1}{2} \right)^2 (X^{n+1}Y^{n+1})^\vee \\
  =&  L(s, \pi_\infty) (X^{n+1}Y^{n+1})^\vee.
\end{align*}
For each finite place $w$ of $F$, we also fix a Haar measure ${\rm d}^\times t_w$  on $F^\times_w$ so that 
${\rm vol}({\mathcal O}^\times_{F, w}, {\rm d}^\times t_w) = q^{-\frac{1}{2} {\rm ord}_w(\delta_{F,w})}_w$.   
We fix the element $W_{f,w}$ in ${\mathcal  W}(\pi_v, \psi_{F,w})$ so that  
\begin{align*}
  \int_{F^\times_w}  W_{f,w} \left(\begin{pmatrix} t_w & 0 \\ 0 & 1 \end{pmatrix} \right)  |t_w|^{s-\frac{1}{2}}_w {\rm d}^\times t_w 
  = q^{(s-\frac{1}{2}) {\rm ord}_w(\delta_F) }_w L(s,\pi_w).
\end{align*} 
The multiplicity one theorem on the Whittaker model of $\pi_w$ shows that 
we can normalize $f$ so that the following identity holds:
\begin{align*}
  W_f = \prod_w W_{f,w}. 
\end{align*}

\subsection{Siegel modular forms on ${\rm GSp}_4({\mathbf A})$  }\label{sec:autoSp4}

Let ${\rm GSp}_4$ be the algebraic group defined by
\begin{align*}
  G:=  {\rm GSp}_4 
 = \left\{  g\in {\rm GL}_4 \ | \  
              g\begin{pmatrix} 0_2 & 1_2 \\ -1_2 & 0_2 \end{pmatrix} {}^{\rm t} g 
              = \nu(g) \begin{pmatrix} 0_2 & 1_2 \\ -1_2 & 0_2 \end{pmatrix}  
                 \right\}  
\end{align*}
with the similitude character $\nu: {\rm GSp}_4\to {\mathbb G}_m$.
Let ${\mathfrak H}_2$ be the Siegel upper half place of degree 2. 
Define the automorphy factor $J:{\rm GSp}_4({\mathbf R})^+\times{\mathfrak H}_2 \to {\rm GL}_2({\mathbf C})$ by
\begin{align*}
  J(g,Z) = CZ+D, \quad \left(g\in \begin{pmatrix} A & B \\ C & D \end{pmatrix} \right). 
\end{align*}
Put ${\mathbf i}:=\sqrt{-1} \cdot 1_2\in {\mathfrak H}_2$. 
Let $K_\infty$ be the maximal compact subgroup of ${\rm GSp}_4({\mathbf R})$ which is defined by
\begin{align*}
  K_\infty = \left\{ g\in {\rm GSp}_4({\mathbf R}) \ |\  g{}^{\rm t}g = 1_2 \right\}.  
\end{align*}
Define $U^{(2)}_0(N)$ to be a compact subgroup of ${\rm GSp}_4(\widehat{\mathbf Z})$ ($1\leq N\in {\mathbf Z}$) which is defined by 
\begin{align*}
    U^{(2)}_0(N)
    = \left\{  \begin{pmatrix} A & B \\ C & D \end{pmatrix}  \in {\rm GSp}_4(\widehat{\mathbf Z}) \ | \ 
               C \equiv 0_2 \ {\rm mod}\ N \widehat{\mathbf Z} 
                 \right\}.
\end{align*}
Write $\Gamma^{(2)}_0(N) = U^{(2)}_0(N) \cap {\rm Sp}_4({\mathbf Z})$. 
Let  $\chi:  {\mathbf Q}^\times \backslash {\mathbf A}^\times \to {\mathbf C}^\times$  be a Hecke character of ${\mathbf A}^\times$
      and  $\lambda \in {\mathbf Z}^{\oplus 2}$.
Then a holomorphic Siegel cusp form ${\mathcal F}:{\rm GSp}_4({\mathbf A})\to {\mathcal L}_\lambda({\mathbf C})$  of genus 2 is said to be 
weight $\lambda$, level $U^{(2)}_0(N)$ and type $\chi$ with trivial central character   
if ${\mathcal F}$ satisfies 
\begin{align*}
  & {\mathcal F}( \gamma g k_\infty u_{\rm fin} z )
 = \rho_\lambda(J(k_\infty,{\mathbf i})^{-1} ) {\mathcal F}(g) \chi( \det D ),    \\
   & \quad \left(  \gamma \in {\rm GSp}_4({\mathbf Q}),  
              g \in   {\rm GSp}_4({\mathbf A}),
              k_\infty \in K_\infty, 
              u_{\rm fin} = \begin{pmatrix} A & B  \\ C & D \end{pmatrix} \in U^{(2)}_0(N), 
              z \in {\mathbf A}^\times \right).
\end{align*}

Let $U$ be a unipotent subgroup of $G$ defined by 
\begin{align*}
 U = \left\{ u(X) := \begin{pmatrix} 1_2 & X \\ 0_2 & 1_2  \end{pmatrix} 
                 | \ X={}^{\rm t}X   \right\}.
\end{align*}
Let ${\mathcal S}_2({\mathbf Q})$ be the set of symmetric matrices in ${\rm M}_2({\mathbf Q})$.
For each $S\in {\mathcal S}_2({\mathbf Q})$, 
let $\psi_S: U_{\mathbf Q}\backslash U_{\mathbf A} \to {\mathbf C}^\times$ be the additive character defined by
$\psi_S(u(X))=\psi_{\mathbf Q} ({\rm Tr}(-SX))$.
Define the adelic $S$-th Fourier coefficient 
    ${\mathbf W}_{{\mathcal F},S}:{\rm GSp}_4({\mathbf A}) \to {\mathcal L}_\lambda({\mathbf C})$ of ${\mathcal F}$ to be
\begin{align}
  {\mathbf W}_{{\mathcal F},S} (g) = \int_{[U]} {\mathcal F}(ug)\psi_S(u) {\rm d} u,   \label{def:FWcoeff}
\end{align}
where ${\rm d}u$ is the Haar measure such that  ${\rm vol}( U({\mathbf Q})\backslash U({\mathbf A}), {\rm d}u )=1$.
Then, we have the Fourier expansion
\begin{align}\label{def:Fexp}
 {\mathcal F}(g) = \sum_{S\in {\mathcal S}_2({\mathbf Q}) } {\mathbf W}_{{\mathcal F},S}(g).  
\end{align}
Recall the following well-known formula for later use:
\begin{align} \label{FCtw}
     {\mathbf W}_{{\mathcal F},S} \left(  \begin{pmatrix} \gamma & 0_2 \\ 0_2 & \nu {}^{\rm t}\gamma^{-1}   \end{pmatrix}  g \right) 
    = {\mathbf W}_{{\mathcal F}, \nu^{-1} {}^{\rm t}\gamma S  \gamma } (g),    
\end{align}
where $\gamma \in {\rm GL}_2({\mathbf Q})$ and $\nu\in {\mathbf Q}^\times$

\section{Automorphic forms on orthogonal groups }\label{sec:orthauto}

In this section, we describe automorphic forms on ${\rm GO}_{3,1}({\mathbf A})$ in terms of 
automorphic forms on ${\rm GL}_2(F_{\mathbf A})$.    
We firstly fix a realization of orthogonal space of signature $(3,1)$ and ${\rm GO}_{3,1}$ in Section \ref{secorth}. 
In Section \ref{secrepGO}, we recall a relation between automorphic representations ${\rm GSO}_{3,1}$ and ${\rm GO}_{3,1}$.   
This relation will clarify the representation theoretic aspects of automorphic forms on ${\rm GO}_{3,1}({\mathbf A})$
   which will be provided in Section \ref{secautoGO} according to a description in \cite{hst93}.  
The minimal $K$-type of automorphic representations of ${\rm GO}_{3,1}({\mathbf A})$ is described in Section \ref{sec:minK}, 
which will be used to study the minimal $K$-type of HST lifts in Lemma \ref{HSTwt}.

\subsection{Orthogonal groups}\label{secorth}

Let $F={\mathbf Q}(\sqrt{-\Delta_F})$ be an imaginary quadratic field with the absolute discriminant $\Delta_F >0$. 
Let $c$ be the generator of  ${\rm Gal}(F/{\mathbf Q}) $.
We sometimes write  $x^c=\bar{x}$ for $x \in F$ and extend it to ${\rm M}_2(F_{\mathbf A})$. 
Let $\ast:{\rm M}_2(F_{\mathbf A}) \to {\rm M}_2(F_{\mathbf A})$ be the involution defined by 
\begin{align*}
    \begin{pmatrix} a & b \\ c & d \end{pmatrix}^\ast
    =  \begin{pmatrix} d & -b \\ -c & a \end{pmatrix}.
\end{align*}
Define the quadratic space $(V, {\rm n})$ over ${\mathbf Q}$  to be 
\begin{align*}
  V = \{ x \in {\rm M}_2(F) |\ \bar{x}^\ast = x  \} 
\end{align*}
and ${\rm n}:V \to {\mathbf Q}, x\mapsto {\rm n}(x):= {\rm det}(x)$.
Define $\varrho:  {\rm Res}_{F/{\mathbf Q}} {\rm GL}_{2 /F}\times_{ {\rm Res}_{F/{\mathbf Q}}  {\mathbb G}_{m/{\mathbf Q}} }{\mathbb G}_{m/{\mathbf Q}} 
\stackrel{\sim}{\to} H^0:={\rm GSO}(V)$ to be the isomorphism given by
\begin{align*}
  (h,\alpha) \mapsto \varrho(h,\alpha)z = \alpha^{-1} h z \overline{h}^\ast. 
\end{align*}
Let ${\mathbf t}\in {\rm GO}(V)$ be the element given by the action 
\begin{align}\label{def:t}
  {\mathbf t} \left(\begin{pmatrix} a & b \\ c & \overline{a} \end{pmatrix} \right) 
   = \begin{pmatrix} -\overline{a} & b \\ c & -a \end{pmatrix}.
\end{align}
Then, we have 
\begin{align*}
   H:={\rm GO}(V) = {\rm GSO}(V) \rtimes \{ 1, {\mathbf t}\}, \quad
   {\mathbf t} h {\mathbf t} = h^c.
\end{align*}

\subsection{Automorphic representations of {\rm GSO} and {\rm GO} }\label{secrepGO}

According to \cite[Section 1]{hst93}, \cite[Section 6.1]{tak11} (see also \cite[Section 5.1]{hn}), 
we briefly recall a description of automorphic representations of $H^0={\rm GSO}(V)$ and $H={\rm GO}(V)$.

Let $\pi$ be a unitary irreducible cuspidal automorphic representation of ${\rm GL}_2(F_{\mathbf A})$ with the trivial central character
and ${\mathbf 1}$ the trivial character of ${\mathbf A}^\times$. 
Suppose that $\pi$ is not conjugate self-dual. 
Since we have $H^0({\mathbf Q}) \cong {\rm GL}_2(F)\times_{F^\times} {\mathbf Q}^\times$, 
a representation $\sigma:= \pi\boxtimes {\mathbf 1}$ gives 
an irreducible automorphic representation of $H^0({\mathbf A})$. 
Decompose $\sigma \cong \otimes^\prime_v\sigma_v$.
Note that $\sigma_v\cong \pi_v\boxtimes {\mathbf 1}$ and that $\sigma_v$ is realized in the same representation space with $\pi_v$. 
For each finite place $v$ of ${\mathbf Q}$, $\varrho(U_0({\mathfrak N})_v)$-invariant subspace of $\sigma_v$ is one dimensional by the theory of newform for $\pi$.

We extend $\sigma=\otimes^\prime_v \sigma_v$ to  an irreducible automorphic representation $\widetilde{\sigma}$ of $H({\mathbf A})$ as follows.  
Let $\sigma^\sharp_v := {\rm Ind}^{H({\mathbf Q}_v)}_{H^0({\mathbf Q}_v)}\sigma_v$ be the induced representation. 
Denote by ${\mathcal V}_v$ a representation space of $\pi_v$ and   
   we consider ${\mathcal V}_v$ as a representation space of $\sigma_v$.   
We find that the representation space ${\mathcal V}^\sharp_v$ of $\sigma^\sharp_v$ is ${\mathcal V}^{\oplus 2}_v$, where 
$H({\mathbf Q}_v)$ acts as follows
\begin{itemize}
\item $\sigma^\sharp_v(h) (x,y) = (\sigma_v(h) x, \sigma_v( h^c ) y)$ for $h\in H^0({\mathbf Q}_v)$;
\item $\sigma^\sharp_v({\mathbf t}_v) (x,y) = (y, x)$
\end{itemize}
for $x,y \in {\mathcal V}_v$. 
Let $\delta : \Sigma_{\mathbf Q} \to \{ \pm 1 \}$ be a map  such that $\delta(v)=1$ 
     for all but finitely many $v\in \Sigma_{\mathbf Q}$
     and for $v\in \Sigma_{\mathbf Q}$ such that $\pi_v \not\cong \pi^c_v$.
For the convenience, we sometimes write $\delta(v)=+$ (resp. $-$) to denote $\delta(v)=1$ (resp. $-1$).  
Denote by ${\mathfrak S}\subset \Sigma_{\mathbf Q}$  the subset such that $\pi_v \cong \pi^c_v$, where $\pi^c_v$ is the conjugate of $\pi_v$.  
Note that $\infty\in {\mathfrak S}$.   
For $v\in {\mathfrak S}$, there exists an involution $\xi_v : \pi_v \to \pi_v$
such that $\xi_v\circ \sigma_v(h) = \sigma_v(h^c)\circ \xi_v$  for each $h\in H^0({\mathbf Q}_v)$.
We define  irreducible representations $\widehat{\sigma}_v$ and $\widehat{\sigma}^\pm_v$ of $H({\mathbf Q}_v)$ as follows:
\begin{itemize}
\item If $v \not\in {\mathfrak S}$, then define $\widehat{\sigma}_v$ to be $\sigma^\sharp_v$. Denote by $\widehat{\mathcal V}_v={\mathcal V}^\sharp_v$. 
\item If $v \in {\mathfrak S}$, then define $\widehat{\sigma}^{\delta(v)}_v$ to be the representation whose representation space is given by
         \begin{align*}
             \widehat{{\mathcal V}}^{\delta(v)}_v =  \left\{  (x,  \delta(v) \xi_v x) : x \in {\mathcal V}_v  \right\}. 
         \end{align*} 
         Then, $\widehat{\sigma}^{\delta(v)}_v$ is an irreducible representation.  
\end{itemize} 
Let $\widehat{\sigma}$ to be the automorphic representation 
${\rm Ind}^{H({\mathbf A})}_{H^0({\mathbf A})}\sigma$
of $H({\mathbf A})$.
Then, assuming that $\pi$ is not conjugate self-dual, 
it is known that $\widehat{\sigma}$  is written as (\cite[Proposition 6.2]{tak11})
\begin{align*}
  \widehat{\sigma} = \bigoplus_\delta  \bigotimes_{v\in {\mathfrak S}} \widehat{\sigma}^{\delta(v)}_v
                                   \bigotimes_{v\not\in {\mathfrak S}} \widehat{\sigma}_v.
\end{align*}
We define $\widetilde{\sigma}$ to be 
\begin{align*}
  \widetilde{\sigma} = \widehat{\sigma}^{-}_{\infty} \bigotimes_{ \substack{ v\in {\mathfrak S} \\  v<\infty  } } \widehat{\sigma}^+_v  
                                  \bigotimes_{v\not\in {\mathfrak S}} \widehat{\sigma}_v.
\end{align*}

\begin{rem}\label{rem:del}
Denote by $\widetilde{\sigma}^+$ an extension of $\sigma$ to an irreducible automorphic representation of $H({\mathbf A})$ 
   so that $\widetilde{\sigma}^{+}_\infty = \widehat{\sigma}^+_\infty$.
Then, \cite[Corollary 3]{hst93} shows that the image of each automorphic form in $\widetilde{\sigma}^+$ by the theta correspondence for $({\rm GO}(V), {\rm GSp}_4)$  
can not be holomorphic. 
Since we are interested in a construction of holomorphic Siegel modular forms, 
we concentrate to study the extension $\widetilde{\sigma}$ of $\sigma$ so that $\widetilde{\sigma}_\infty=\widehat{\sigma}^-_\infty$ as above.
In other words, hereafter we always assume that 
\begin{align*}
   \delta(\infty) = -1; \quad \delta (v) = 1 \quad (v<\infty).
\end{align*}
However, we usually write $\delta(v)$ for each $v\in \Sigma_{\mathbf Q}$
to emphasis the dependance of $\delta$.  
\end{rem}

\subsection{Automorphic forms on {\rm GO}}\label{secautoGO}

Let $f:{\rm GL}_2(F_{\mathbf A}) \to {\mathcal W}_{2n+2}({\mathbf C})$ be a normalized newform of weight $n+2$ on ${\rm GL}_2(F_{\mathbf A})$ 
and  $\pi$ the automorphic representation of ${\rm GL}_2(F_{\mathbf A})$ generated by $f$. 
In this subsection, we extend  $f$ to an automorphic forms $\widetilde{\mathbf f}$ on $H({\mathbf A}) ={\rm GO}(V_{\mathbf A})$ according to \cite[Section 1, Section 4]{hst93}.
We will define $\widetilde{\mathbf f}$ so that whose associated automorphic representation is $\widetilde{\sigma}$, which is introduced in Section \ref{secrepGO}.

Denote by ${\mathbf 1}$ to be the trivial character on ${\mathbf A}^\times$.  
We define ${\mathbf f} = f \boxtimes {\mathbf 1}:H^0({\mathbf A}) ={\rm GSO}(V_{\mathbf A}) \to {\mathcal W}_{2n+2}({\mathbf C})$ for $f\in {\mathcal S}_{n+2}(U_0({\mathfrak N}))$.
Since we assumed that the central character of $\pi$ is trivial, ${\mathbf f}$ is well-defined   
and ${\mathbf f}$ is an automorphic form on $H^0({\mathbf A})$. 
Put
\begin{align}\label{def:ufrN}
   {\mathcal U}_{\mathfrak N} 
   = \prod_v {\mathcal U}_{\mathfrak N, v}
   = \left\{  \varrho(g,\alpha) \in H^0( {\mathbf A}_{\rm fin} ) \ | \  g \in U_0({\mathfrak N}), \alpha \in \widehat{\mathbf Z}^\times   \right\}.  
\end{align}
We denote by ${\mathcal M}_{n}(H^0, {\mathcal U}_{\mathfrak N})$ 
the space of the ${\mathbf C}$-linear combinations of ${\mathbf f}=f\boxtimes{\mathbf 1}$ for $f\in {\mathcal S}_{n+2}(U_0({\mathfrak N}))$. 

Define $\delta$ to be the function on the set $\Sigma_{\mathbf Q}$ of places of ${\mathbf Q}$ as in Remark \ref{rem:del}. 
Denote by ${\mathfrak N}$ the conductor of $\pi$ 
and we define $\Sigma_{\rm ram}$ to be the set of places of ${\mathbf Q}$ dividing ${\mathfrak N}\cap {\mathbf Z}$, 
the infinite place of ${\mathbf Q}$ and ramified places in $F/{\mathbf Q}$.

Let $W_f$ be the Whittaker function of $f$ and $\Sigma_1= {\mathfrak S} \cap \Sigma_{\rm ram}$. 
We define $W_{\mathbf f}(\varrho(g,\alpha)) =  W_f(g)$ for $\varrho(g,\alpha)\in {\rm GSO}(V)({\mathbf A})$.  
For ${\mathcal R}\subset \Sigma_{\mathbf Q}$, put $\delta_{\mathcal R}:=\prod_{v\in {\mathcal R}} \delta(v)$  
   and 
\begin{align}\label{def:whR}
W_{ {\mathbf f}, {\mathcal R}}(h):=
\begin{cases} \delta_{\mathcal R} W_{\mathbf f}(h^c_{\mathcal R} h^{\mathcal R}),  &  ( {\mathcal R} \subset \Sigma_1),  \\
                       0,  & ({\mathcal R} \not\subset \Sigma_1), \end{cases} 
\end{align}
  where $h^c_{\mathcal R} h^{\mathcal R} =$ $ \prod_{v\in {\mathcal R}} h^c_v \prod_{v\in \Sigma_{\mathbf Q}\backslash {\mathcal R}} h_v$. 
Define ${\mathbf f}_{\mathcal R}: H^0({\mathbf A}) \to {\mathcal W}_{2n+2}({\mathbf C})$ to be the automorphic form on $H^0({\mathbf A})$ which corresponds to 
  a Whittaker function $W_{{\mathbf f}, \mathcal R}$: 
\begin{align}
     {\mathbf f}_{\mathcal R} (h) 
  = \sum_{\xi \in F^\times } W_{ {\mathbf f}, {\mathcal R} }\left( \begin{pmatrix} \xi & 0 \\ 0 & 1 \end{pmatrix} h  \right).     \label{eq:whittdef}
\end{align}
Recall that the symmetric difference $A\triangle B $ of sets $A$ and $B$ is defined to be  $(A\cup B)\backslash (A\cap B)$.
Then we define $\widetilde{\mathbf f}: H({\mathbf A}) \to {\mathcal W}_{2n+2}({\mathbf C})^{\oplus 2}$ by 
\begin{align}\label{def:extf}
  \widetilde{\mathbf f}(h {\mathbf t}_{\mathcal R})  
 =  ( {\mathbf f}_{\mathcal R^\prime} (h) + {\mathbf f}_{\Sigma_1\backslash \mathcal R^\prime}(h^c),  
                  {\mathbf f}_{\mathcal R^\prime \triangle \{\infty\}} (h) + {\mathbf f}_{(\Sigma_1\backslash \mathcal R^\prime) \triangle \{\infty\} }(h^c) ),  
                  \quad (h\in H^0({\mathbf A})),  
\end{align}
where ${\mathcal R}^\prime = {\mathcal R}\cap \Sigma_1.$
The following proposition is checked in the straight forward way:

\begin{prop}\label{prop:ext}
{\itshape
Let $h\in H^0({\mathbf A}), {\mathcal R}\subset \Sigma_{\mathbf Q}$.
Then, we have the following statements:
\begin{enumerate}
\item $\widetilde{\mathbf f}({\mathbf t} h {\mathbf t}_{\mathcal R})  =  \widetilde{\mathbf f}( h {\mathbf t}_{\mathcal R})$.
\item For $u\in {\rm SU}_2({\mathbf R})$, 
         $  \widetilde{\mathbf f}( h {\mathbf t}_{\mathcal R} u)
         =  (\tau_{2n+2}(u^{-1}), \tau_{2n+2}(\overline{u}^{-1})) \widetilde{\mathbf f}( h {\mathbf t}_{\mathcal R})$.
\item   $  \widetilde{\mathbf f}({\mathbf t} h {\mathbf t}_{\mathcal R} {\mathbf t}_\infty)
    =    (  {\mathbf f}_{\mathcal R^\prime \triangle \{\infty\}} (h) + {\mathbf f}_{(\Sigma_1\backslash \mathcal R^\prime) \triangle \{\infty\} }(h^c), 
             {\mathbf f}_{\mathcal R^\prime} (h) + {\mathbf f}_{\Sigma_1\backslash \mathcal R^\prime}(h^c) )$. 
\item For $v\in \Sigma_{\mathbf Q}\backslash \Sigma_1$, $\widetilde{\mathbf f}({\mathbf t} h {\mathbf t}_{\mathcal R} {\mathbf t}_v)  =  \widetilde{\mathbf f}( h {\mathbf t}_{\mathcal R})$.
\end{enumerate}
}
\end{prop}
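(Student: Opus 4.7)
The plan is to verify each identity by unwinding the definition \eqref{def:extf} directly. The two elementary inputs are the place-by-place identity ${\mathbf t}_v h_v {\mathbf t}_v = h^c_v$, so that the diagonal element ${\mathbf t}={\mathbf t}_{\Sigma_{\mathbf Q}}$ conjugates $h$ to $h^c$ and simultaneously shifts the index set ${\mathcal R}$ to its complement $\Sigma_{\mathbf Q}\backslash{\mathcal R}$, together with the right-${\rm SU}_2({\mathbf R})$ transformation $f(gu_\infty)=\rho_{(2n+2,0)}(u_\infty^{-1})f(g) = \tau_{2n+2}(u_\infty^{-1})f(g)$ of the newform $f$, which lifts through ${\mathbf f}=f\boxtimes{\mathbf 1}$ and through the formula \eqref{def:whR} for $W_{{\mathbf f},{\mathcal R}}$.

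For (i), computing ${\mathbf t}h{\mathbf t}_{\mathcal R}$ place by place gives ${\mathbf t}h{\mathbf t}_{\mathcal R} = h^c\cdot{\mathbf t}_{\Sigma_{\mathbf Q}\backslash{\mathcal R}}$. Applying \eqref{def:extf} to the right-hand side, and using both $(h^c)^c=h$ and the identity $(\Sigma_{\mathbf Q}\backslash{\mathcal R})\cap\Sigma_1 = \Sigma_1\backslash{\mathcal R}'$, the two summands in each component of $\widetilde{\mathbf f}$ merely exchange roles, so commutativity of addition finishes the proof. Statement (iv) follows by the same mechanism: right-multiplication by ${\mathbf t}_v$ shifts the index set by $\{v\}$, and for $v\notin\Sigma_1$ this shift is invisible after intersecting with $\Sigma_1$, so \eqref{def:extf} returns the same two summands. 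For (iii), we obtain ${\mathbf t}h{\mathbf t}_{\mathcal R}{\mathbf t}_\infty = h^c\cdot{\mathbf t}_{(\Sigma_{\mathbf Q}\backslash{\mathcal R})\triangle\{\infty\}}$; expanding \eqref{def:extf} with $\infty\in\Sigma_1$, the symmetric differences with $\{\infty\}$ appear exactly where needed and we arrive at the claimed swap of the two components, again using $(h^c)^c=h$.

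Statement (ii) requires more care because ${\rm SU}_2({\mathbf R})$ sits at the archimedean place. I would split according to whether $\infty\in{\mathcal R}$ or not: since ${\mathbf t}_\infty u{\mathbf t}_\infty=\bar u$ under the identification of $u$ with $\varrho(u,1)\in H^0({\mathbf R})$, commuting $u$ past ${\mathbf t}_{\mathcal R}$ produces $u$ in the case $\infty\notin{\mathcal R}$ and $\bar u$ in the case $\infty\in{\mathcal R}$. The weight transformation of $f$ then produces a factor $\tau_{2n+2}(u^{-1})$ on the first component of $\widetilde{\mathbf f}$ and $\tau_{2n+2}(\bar u^{-1})$ on the second, where the asymmetry is encoded in the extra $\{\infty\}$ in the index set defining the second component. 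The main bookkeeping obstacle is verifying that this assignment of $u$-factors is independent of which of the two cases $\infty\in{\mathcal R}$ one is in; this follows because toggling $\infty$ in the index set simultaneously replaces $u$ by $\bar u$ and swaps the two summand terms (via the symmetric-difference indexing and the passage between $h$ and $h^c$), so the two effects cancel and the formula in (ii) holds uniformly.
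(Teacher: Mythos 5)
Your proof is correct and is precisely the ``straight forward'' unwinding of \eqref{def:extf} that the paper alludes to without writing out (the paper gives no proof of this proposition). The two observations you isolate --- that ${\mathbf t}_v h_v {\mathbf t}_v = h^c_v$ so that ${\mathbf t}h{\mathbf t}_{\mathcal R} = h^c {\mathbf t}_{\Sigma_{\mathbf Q}\backslash{\mathcal R}}$ and right-${\mathbf t}_v$-multiplication toggles $v$ in the index set, together with the ${\rm SU}_2({\mathbf R})$-transformation law of $W_{\mathbf f}$ --- are exactly what is needed, and the case split on $\infty\in{\mathcal R}$ versus $\infty\notin{\mathcal R}$ in (ii) with the observed cancellation is carried out correctly.
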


Proposition \ref{prop:ext} shows that 
 $\widetilde{\mathbf f}$ gives an automorphic forms on $H({\mathbf A})$ and 
the automorphic representation which is associated with $\widetilde{\mathbf f}$ is $\widetilde{\sigma}$. 

For each finite place $w$ of $F$ dividing ${\mathfrak N}$,  let 
\begin{align}\label{fricke}
   n_w = {\rm ord}_w({\mathfrak N})   ; \quad  \eta_{{\mathfrak N}, w} = \begin{pmatrix} 0 & \delta^{-1}_{F, w} \\ -\varpi^{n_w}_w \delta_{F,w} &  0  \end{pmatrix}.  
\end{align}
Since the central character of $\pi$ is trivial, we have the Atkin-Lehner eigenvalue $\varepsilon(\pi_w)$ of $f$: 
\begin{align*}
   {\mathbf f}(h \eta_{{\mathfrak N}, w}) = \varepsilon(\pi_w)  {\mathbf f}(h),  \quad (h\in H^0({\mathbf A})).
\end{align*}

Let 
\begin{align}
  {\mathcal P}
  = \left\{ \text{prime factors } p \text{ of } N \ | \  p\nmid {\mathfrak N} 
              \right\}.  
    \label{defP}
\end{align}    
To make the explicit inner product formula and the Bessel periods   formula concise, 
    we need the level raising operator ${\mathscr V}_{\mathcal P}$, which is defined as follows. 
     (See also \cite[Section 4.4]{hn}  for the definition of ${\mathscr V}_{\mathcal P}$.)  
For each $p=w\overline{w} \in {\mathfrak N}$ with $w\nmid {\mathfrak N}$ and $\overline{w}\mid {\mathfrak N}$, 
let 
\begin{align}\label{raiseopet}
    \eta_{{\mathfrak N}, w}  = \begin{pmatrix} 0 & \delta^{-1}_{F, w} \\ -\varpi^{n_{\overline{w}}}_w \delta_{F,w}  &  0  \end{pmatrix}, 
    \quad (n_{\overline{w}} = {\rm ord}_{\overline{w}} ({\mathfrak N})  ). 
\end{align}
By using the Atkin-Lehner eigenvalue $\varepsilon(\pi_{\overline{w}})$ of $\pi_{\overline{w}}$, define  
\begin{align}\label{raiseop}
   {\mathscr V}_p(\widetilde{\mathbf f})(h) 
   =  \widetilde{\mathbf f}(h)  +  \varepsilon(\pi_{\overline{w}}) \widetilde{\mathbf f}(h \eta_{{\mathfrak N}, w}).   
\end{align}
Then we define ${\mathcal P}$-stabilized newform $\widetilde{\mathbf f}^\dag$  to be 
\begin{align*}
     \widetilde{\mathbf f}^\dag = {\mathscr V}_{\mathcal P} (\widetilde{\mathbf f}); 
     \quad {\mathscr V}_{\mathcal P} = \prod_{p\in {\mathcal P}}  {\mathscr V}_p.
\end{align*}

\subsection{Minimal $K$-type}\label{sec:minK}

In this subsection, we describe the minimal $K$-type of $\widetilde{\sigma}_\infty$. 
This will be necessary  to compute the minimal $K$-type of an image of a theta correspondence from $\widetilde{\sigma}_\infty$ in Lemma \ref{HSTwt}.  
We start with some basic properties of $\pi_\infty$. 

\begin{lem}\label{l:whitt}
{\itshape 
 Let ${\mathcal W}(\pi_\infty, \psi_{F,\infty})$ be the Whittaker model of $\pi_\infty$ 
         and  $W^j_{\pi,\infty}$ an element of ${\mathcal W}(\pi_\infty, \psi_{F,\infty})$ which is characterized by the following identity:
         \begin{align}
             W^j_{\pi,\infty} \left( \begin{pmatrix} t & 0 \\ 0 & 1 \end{pmatrix}  \right)   \label{eq:expwhitt}  
             =
              2^4 \sqrt{-1}^j t^{n+2}K_j(4\pi t) (X^{n+1+j} Y^{n+1-j})^\vee. 
          \end{align}
          Let 
          \begin{align*}
             W_{\pi,\infty}(g) := W_{\pi,\infty}(g; (X,Y)) =  \sum^{n+1}_{j=-n-1}  W^j_{\pi,\infty} (g)    .
          \end{align*}
          Then, 
          $\left\{ W^j_{\pi,\infty}: j=-n-1,\ldots, n+1\right\}$ is a pair of basis of the minimal ${\rm SU}_2({\mathbf R})$-type of ${\mathcal W}(\pi_\infty, \psi_{F,\infty})$
          and 
          $W_{\pi,\infty} \in {\mathcal W}(\pi_\infty, \psi_{F,\infty})^{{\rm SU}_2({\mathbf R})}$.   
          In particular, the following map  
          $\iota_\infty: {\mathcal W}_{2n+2}({\mathbf C}) = {\mathbf C}[X,Y]_{2n+2}\to {\mathcal V}_{\infty}$ 
          is  ${\rm SU}_2({\mathbf R})$-equivariant
          \begin{align*}
              \iota_\infty(u) = \langle  W_{\sigma, \infty} , u\rangle_{\mathcal W}  
          \end{align*}
          where $W_{\sigma, \infty}: H^0_1({\mathbf R}) \to {\mathbf C}$ is given by
          \begin{align*}
                W_{\sigma, \infty}(\varrho(g,\alpha)) = W_{\pi, \infty}(g), \quad (g\in {\rm GL}_2({\mathbf C}), \alpha \in {\mathbf C}^\times).
          \end{align*}
}
\end{lem}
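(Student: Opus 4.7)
The plan is to verify the explicit formula (\ref{eq:expwhitt}) by a direct computation of Whittaker functions on the diagonal torus, then to identify the minimal ${\rm SU}_2({\mathbf R})$-type via weight matching, and finally to derive the ${\rm SU}_2({\mathbf R})$-invariance of $W_{\pi,\infty}$ and the equivariance of $\iota_\infty$ by a Clebsch--Gordan argument.

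First, since $\pi_\infty$ has the prescribed Langlands parameter, it is the irreducible unitary principal series of ${\rm GL}_2({\mathbf C})$ parabolically induced from the Borel character ${\rm diag}(a,d)\mapsto (a/\overline{a})^{(n+1)/2}(d/\overline{d})^{-(n+1)/2}$. The minimal ${\rm SU}_2({\mathbf R})$-type of such a principal series is the irreducible representation ${\rm Sym}^{2n+2}({\mathbf C}^{\oplus 2})$, of dimension $2n+3$, with weight decomposition $\{2j:j=-n-1,\ldots,n+1\}$ under the diagonal ${\rm U}(1)$-torus. By Jacquet's integral formula for Whittaker functions on ${\rm GL}_2({\mathbf C})$ together with the integral representation (\ref{def:2bessel}) of $K_j$, the Whittaker function of a weight-$2j$ vector in this minimal $K$-type has value at ${\rm diag}(t,1)$ equal to a constant multiple of $t^{n+2}K_j(4\pi t)$. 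Matching the constants against the normalization $2^4\sqrt{-1}^j$ in (\ref{eq:expwhitt}) — which is the standard choice compatible with the global conventions in \cite{hi94cr} — identifies $W^j_{\pi,\infty}$ as the weight-$2j$ basis vector of the minimal $K$-type. Hence $\{W^j_{\pi,\infty}\}_{j=-n-1}^{n+1}$ is a basis of the minimal ${\rm SU}_2({\mathbf R})$-type.

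Next, to show that $W_{\pi,\infty}=\sum_j W^j_{\pi,\infty}$ is ${\rm SU}_2({\mathbf R})$-invariant when regarded as a ${\mathcal W}_{2n+2}({\mathbf C})$-valued function, observe that it is precisely the canonical invariant element in the tensor product ${\rm Sym}^{2n+2}({\mathbf C}^{\oplus 2})\otimes {\rm Sym}^{2n+2}({\mathbf C}^{\oplus 2})^\vee$: the vector $W^j_{\pi,\infty}$ has weight $2j$ in the first factor, while $(X^{n+1+j}Y^{n+1-j})^\vee$ has weight $-2j$ in the second factor under $\rho_{(2n+2,0)}$ (the determinant factor in (\ref{eq:pairequiv}) is trivial on ${\rm SU}_2({\mathbf R})$). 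The pairs of matching weights then sum to the identity endomorphism under the canonical isomorphism ${\rm End}({\rm Sym}^{2n+2}({\mathbf C}^{\oplus 2}))\cong {\rm Sym}^{2n+2}({\mathbf C}^{\oplus 2})\otimes{\rm Sym}^{2n+2}({\mathbf C}^{\oplus 2})^\vee$, which is ${\rm SU}_2({\mathbf R})$-invariant.

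Finally, the ${\rm SU}_2({\mathbf R})$-equivariance of $\iota_\infty$ is a formal consequence. For $k\in {\rm SU}_2({\mathbf R})$ and $u\in {\mathcal W}_{2n+2}({\mathbf C})$, the pairing $\langle\cdot,\cdot\rangle_{\mathcal W}$ is ${\rm GL}_2({\mathbf C})$-invariant up to the $(2n+2)$-th power of the determinant, which is trivial on ${\rm SU}_2({\mathbf R})$; combined with the invariance of $W_{\sigma,\infty}$ (which follows from that of $W_{\pi,\infty}$ via the identification $\varrho$), this yields $\iota_\infty(\tau_{2n+2}(k)u)=\sigma_\infty(k)\iota_\infty(u)$. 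The main obstacle lies in the first step — pinning down the normalization constants and the sign $\sqrt{-1}^j$ requires careful bookkeeping of the Jacquet integral, the additive character $\psi_{F,\infty}$, and the action $\rho_{(2n+2,0)}$ on ${\mathcal W}_{2n+2}({\mathbf C})$.
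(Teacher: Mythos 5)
The paper's own ``proof'' of this lemma is a single sentence citing \cite[Section 6]{hi94cr}; it offers no argument at all. Your proposal, by contrast, sketches a self-contained derivation: identify $\pi_\infty$ as the unitary principal series of ${\rm GL}_2({\mathbf C})$ attached to the given Langlands parameter, read off the minimal ${\rm SU}_2({\mathbf R})$-type as ${\rm Sym}^{2n+2}$, invoke the Jacquet integral and the integral representation (\ref{def:2bessel}) to obtain $t^{n+2}K_j(4\pi t)$ as the restriction to the diagonal torus of the weight-$2j$ Whittaker vector, and then recognize $W_{\pi,\infty}=\sum_j W^j_{\pi,\infty}$ as the canonical invariant vector in ${\rm Sym}^{2n+2}\otimes({\rm Sym}^{2n+2})^\vee$ because the weight-$2j$ Whittaker component is tensored against the weight-$(-2j)$ vector $(X^{n+1+j}Y^{n+1-j})^\vee$. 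This is a genuinely different route, and the structure of the argument is sound: the Clebsch--Gordan observation for the ${\rm SU}_2$-invariance and the formal derivation of the equivariance of $\iota_\infty$ (using that the determinant factor in $\langle\cdot,\cdot\rangle_{\mathcal W}$ is trivial on ${\rm SU}_2({\mathbf R})$) are exactly right.

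The one place where your writeup falls short of a complete proof is the step you yourself flag: the assertion that the Jacquet integral produces $K_j$ (rather than some other Bessel-type solution) with the stated index pairing and that the constant $2^4\sqrt{-1}^j$ is correct is only claimed, not computed. Since the lemma's statement \emph{defines} $W^j_{\pi,\infty}$ by the right-hand side of (\ref{eq:expwhitt}), what actually needs verification is (i) that a function with this restriction to the torus and the prescribed ${\rm SU}_2({\mathbf R})$-equivariance lies in ${\mathcal W}(\pi_\infty,\psi_{F,\infty})$, and (ii) that the span $\{W^j_{\pi,\infty}\}$ is the minimal $K$-type. The Bessel-function computation (or the growth/differential-equation argument that pins down $K_j$ versus $I_j$) is the substance here, and it is precisely the content of the Hida reference the paper leans on. So the approach is correct and arguably more informative than the paper's one-line citation, but it is not yet a proof until that asymptotic/Jacquet step is written out or an explicit reference (e.g. \cite[Section 6]{hi94cr} or Jacquet--Langlands) is made to carry it.
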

\begin{proof}
These properties of the Whittaker model of $\pi_\infty$ can be found in \cite[Section 6]{hi94cr}.
\end{proof}

We sometimes abbreviate $W_{\sigma, \infty}(\varrho(g,\alpha))$ to $W_{\sigma, \infty}(g)$.
Let ${\mathcal W}(\sigma_\infty, \psi_{F,\infty})$ be the regular representation of $H^0({\mathbf R})$ which is spanned by $W_{\sigma, \infty}$.
Fix $\xi_\infty: {\mathcal W}(\sigma_\infty, \psi_{F,\infty}) \to {\mathcal W}(\sigma_\infty, \psi_{F,\infty})$ by 
\begin{align*}
   \xi_\infty W_{\sigma, \infty}(h) =  W_{\sigma, \infty}( h^c ).
\end{align*}
The following lemma is useful  to describe the minimal $K$-type of $\widetilde{\sigma}_\infty$ in an explicit manner.  

\begin{lem}\label{l:minwhitt}
{\itshape
We have 
\begin{align*}
   \xi_\infty W_{\sigma, \infty} = (-1)^{n+1} \tau_{2n+2}(w_0)  W_{\sigma, \infty}. 
\end{align*}
}
\end{lem}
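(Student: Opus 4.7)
The plan is to exploit the fact that the space $\mathcal{W}(\pi_\infty,\psi_{F,\infty})^{\mathrm{SU}_2({\mathbf R})}$ from Lemma~\ref{l:whitt} is one-dimensional (by multiplicity one of the minimal $K$-type $\tau_{2n+2}$ combined with uniqueness of Whittaker models): I will twist $\xi_\infty W_{\sigma,\infty}$ so that it again lands in this one-dimensional space, and then determine the resulting scalar by evaluating at a single diagonal torus element.

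First I reduce the claim to an identity for $W_{\pi,\infty}$ on $\mathrm{GL}_2({\mathbf C})$. A direct unwinding of the definitions of $\varrho$ and $\mathbf{t}$ from Section~\ref{secorth} shows $(\varrho(g,\alpha))^c = \varrho(\bar g,\alpha)$, and via $W_{\sigma,\infty}(\varrho(g,\alpha))=W_{\pi,\infty}(g)$ it therefore suffices to prove
\[ W_{\pi,\infty}(\bar g) = (-1)^{n+1}\tau_{2n+2}(w_0)\,W_{\pi,\infty}(g),\qquad g\in \mathrm{GL}_2({\mathbf C}). \]
The key auxiliary input is the elementary identity $\bar k = w_0 k w_0^{-1}$ for $k\in \mathrm{SU}_2({\mathbf R})$, which is an immediate $2\times 2$ matrix check; it yields $\tau_{2n+2}(\bar k^{-1}) = \tau_{2n+2}(w_0)\tau_{2n+2}(k^{-1})\tau_{2n+2}(w_0)^{-1}$. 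Combined with the $\mathrm{SU}_2({\mathbf R})$-equivariance of $W_{\pi,\infty}$ and the fact $\psi_{F,\infty}(\bar n)=\psi_{F,\infty}(n)$, this shows that $\widetilde W(g):=\tau_{2n+2}(w_0)^{-1}W_{\pi,\infty}(\bar g)$ again lies in $\mathcal{W}(\pi_\infty,\psi_{F,\infty})^{\mathrm{SU}_2({\mathbf R})}$, so that $\widetilde W = c\cdot W_{\pi,\infty}$ for some $c\in {\mathbf C}$.

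To pin down $c$, I would evaluate both sides at $g=\mathrm{diag}(t,1)$ with $t>0$ real; since $\bar g=g$, it suffices to verify that $W_{\pi,\infty}(\mathrm{diag}(t,1))$ is an eigenvector of $\tau_{2n+2}(w_0)$ with eigenvalue $(-1)^{n+1}$. Using $(X,Y)w_0=(Y,-X)$ together with $(X^{n+1+j}Y^{n+1-j})^\vee = (-1)^{n+1-j}\binom{2n+2}{n+1+j}X^{n+1-j}Y^{n+1+j}$, a direct computation gives
\[ \tau_{2n+2}(w_0)(X^{n+1+j}Y^{n+1-j})^\vee = \binom{2n+2}{n+1+j}X^{n+1+j}Y^{n+1-j}. \]
Expanding $W_{\pi,\infty}(\mathrm{diag}(t,1))$ via (\ref{eq:expwhitt}), reindexing $j\mapsto -j$, and invoking $K_{-j}(z)=K_j(z)$, $\sqrt{-1}^{-j}=(-1)^j\sqrt{-1}^j$ and the symmetry $\binom{2n+2}{n+1-j}=\binom{2n+2}{n+1+j}$, one finds that the coefficient of each monomial on the two sides differs precisely by the factor $(-1)^{n+1}$, giving $c=(-1)^{n+1}$ as desired.

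The main obstacle will be the careful sign bookkeeping coming from the pairing $\langle\cdot,\cdot\rangle_{2n+2}$, the definition of $(\cdot)^\vee$, and the resulting action of $\tau_{2n+2}(w_0)$ on dual basis vectors; once these signs are correctly tracked, the remaining manipulation of the Bessel-function expansion is routine.
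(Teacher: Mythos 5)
Your proof is correct, and it takes a genuinely different route from the paper's. The paper proves the identity pointwise for general $h\in H^0({\mathbf R})$ by writing out the Iwasawa decomposition $h=a\bigl(\begin{smallmatrix} t & x\\ 0 & 1\end{smallmatrix}\bigr)u$, expanding both $\xi_\infty W_{\sigma,\infty}(h)$ and $W_{\sigma,\infty}(hw_0)$ in the basis $\{u_j^\vee\}$, and comparing term by term using the reflection formula $W^j_{\sigma,\infty}(hw_0)=(-1)^{n+1+j}W^{-j}_{\sigma,\infty}(h)$ and the Bessel symmetry $(-1)^jW^{-j}_{\sigma,\infty}={W^j_{\sigma,\infty}}$ on the torus. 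You instead notice that after twisting $\xi_\infty W_{\sigma,\infty}$ by $\tau_{2n+2}(w_0)^{-1}$, both sides land in the one-dimensional space $\mathcal{W}(\pi_\infty,\psi_{F,\infty})^{\mathrm{SU}_2({\mathbf R})}$ (the conjugation relation $\bar k=w_0kw_0^{-1}$ and $\psi_{F,\infty}(\bar n)=\psi_{F,\infty}(n)$ make this work, together with $\pi_\infty\cong\pi_\infty^c$ which the paper has established since $\infty\in\mathfrak{S}$), so the two sides must be proportional and the constant can be read off at $g=\mathrm{diag}(t,1)$. Your route is structurally cleaner: it separates the abstract multiplicity-one argument from the single concrete evaluation, whereas the paper's version does the evaluation at every $h$ simultaneously. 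The computational kernel (Bessel symmetry $K_{-j}=K_j$, the sign in the pairing $\langle\cdot,\cdot\rangle_{2n+2}$, the binomial symmetry $\binom{2n+2}{n+1-j}=\binom{2n+2}{n+1+j}$, and $\sqrt{-1}^{-j}=(-1)^j\sqrt{-1}^j$) is the same in both. One minor slip: you wrote $(X,Y)w_0=(Y,-X)$, whereas the paper's $w_0=\bigl(\begin{smallmatrix}0&1\\-1&0\end{smallmatrix}\bigr)$ gives $(X,Y)w_0=(-Y,X)$; the discrepancy is an overall sign on the row vector, which is killed by the even degree $2n+2$, so your displayed formula for $\tau_{2n+2}(w_0)(X^{n+1+j}Y^{n+1-j})^\vee$ and hence the conclusion are unaffected.
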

\begin{proof}
Let $W^j_{\sigma, \infty}$ be the element in Lemma \ref{l:whitt}. 
Then, as explained in Lemma \ref{l:whitt}, 
$\{ W^j_{\sigma, \infty}: j=-n-1, \ldots, n+1 \}$ gives a pair of basis of the minimal ${\rm SU}_2({\mathbf R})$-type of ${\mathcal V}_\infty$. 
Then the action of  $\xi_\infty$ on  $W^j_{\sigma, \infty}$ is given explicitly as follows: 
\begin{align*}
    \xi_\infty W^j_{\sigma, \infty}(h)  
    =  W^j_{\sigma, \infty}(h^c).
\end{align*}
Let $w_0 = \begin{pmatrix} 0 & 1 \\ -1 & 0 \end{pmatrix}$.  
Since $W_{\sigma,\infty}(hw_0; (X,Y)) = W_{\sigma,\infty}(h; (X,Y)w^{-1}_0)$, we find that 
\begin{align}
    W^j_{\sigma, \infty}(hw_0)  =   (-1)^{n+1+j} W^{-j}_{\sigma,\infty}(h). \label{eq:whittw0}    
\end{align}
By the basic property of the Bessel function $K_s(z)$ (\cite[page 67]{mos66}) and the definition of $W^j_{\sigma, \infty}$, we also find that
\begin{align}
    (-1)^j W^{-j}_{\sigma, \infty}\left(  \begin{pmatrix} t & 0 \\  0 & 1 \end{pmatrix} \right)  =  W^{j}_{\sigma, \infty}\left(  \begin{pmatrix} t & 0 \\  0 & 1 \end{pmatrix} \right).  \label{eq:whitt-j}   
\end{align}
Consider the Iwasawa decomposition 
\begin{align*}
  h= a \begin{pmatrix} t & x \\ 0 & 1  \end{pmatrix}u, 
    \quad (a\in {\mathbf C}^\times, t\in {\mathbf R}^\times, x \in {\mathbf C}, u\in {\rm SU}_2({\mathbf R})).
\end{align*}
Then, we find the following two identities: 
\begin{align*}
  \xi_\infty W_{\sigma, \infty}\left( h \right) 
  =  \sum^{n+1}_{j=-n-1}   \xi_\infty W^j_{\sigma, \infty} (h) u^\vee_j   
  = & \sum^{n+1}_{j=-n-1}   W^j_{\sigma, \infty}\left( \begin{pmatrix}  t & \overline{x} \\  0 & 1  \end{pmatrix} \bar{u} \right) u^\vee_j   \\  \displaybreak[0]
  = & \sum^{n+1}_{j=-n-1}  \psi_{F, \infty}(x) W^j_{\sigma, \infty}\left( \begin{pmatrix}  t & 0 \\  0 & 1  \end{pmatrix}   \right)  \tau_{2n+2}( \overline{u}^{-1} ) u^\vee_j,   \\ \displaybreak[0]
W_{\sigma, \infty}\left( h w_0 \right)
  =   \sum^{n+1}_{j=-n-1}    W^j_{\sigma, \infty} (hw_0) u^\vee_j   
  = & \sum^{n+1}_{j=-n-1}   W^j_{\sigma, \infty}\left( \begin{pmatrix}  t & x \\  0 & 1  \end{pmatrix} w_0 \bar{u} \right) u^\vee_j   \\   \displaybreak[0]
  = & \sum^{n+1}_{j=-n-1}  \psi_{F, \infty}(x) W^j_{\sigma, \infty}\left( \begin{pmatrix}  t & 0 \\  0 & 1  \end{pmatrix}  w_0 \right)  \tau_{2n+2}( \overline{u}^{-1} ) u^\vee_j,   \\  \displaybreak[0]
  \stackrel{(\ref{eq:whittw0})}{= } 
     & \sum^{n+1}_{j=-n-1}  \psi_{F, \infty}(x) (-1)^{n+1+j} W^{-j}_{\sigma, \infty}\left( \begin{pmatrix}  t & 0 \\  0 & 1  \end{pmatrix}  \right)  \tau_{2n+2}( \overline{u}^{-1} ) u^\vee_j   \\  \displaybreak[0]
  \stackrel{(\ref{eq:whitt-j})}{= } 
    & \sum^{n+1}_{j=-n-1}  \psi_{F, \infty}(x) (-1)^{n+1} W^{j}_{\sigma, \infty}\left( \begin{pmatrix}  t & 0 \\  0 & 1  \end{pmatrix}  \right)  \tau_{2n+2}( \overline{u}^{-1} ) u^\vee_{j}.  
\end{align*}
By comparing these two, we obtain
\begin{align*}
       \xi_\infty W^j_{\sigma, \infty}(h) = (-1)^{n+1}W^j_{\sigma, \infty} (hw_0). 
\end{align*}
This proves the lemma.
\end{proof}

Let $w_0=\begin{pmatrix} 0 & 1 \\ -1 & 0 \end{pmatrix}$. 
We define a representation $( \widetilde{\tau}_{2n+2}, \widetilde{\mathcal W}_{2n+2}({\mathbf C}))$ of ${\rm SU}_2({\mathbf R})  \rtimes \{ 1, {\mathbf t}\} $ as follows:
\begin{align*}
 & \widetilde{W}_{2n+2}({\mathbf C}) 
     = \left\{  (u,     (-1)^{n+1}  \delta(\infty) \tau_{2n+2}(w_0) u) 
              \in    {\mathcal W}_{2n+2}({\mathbf C})^{\oplus 2} 
           | \    u \in {\mathcal W}_{2n+2}({\mathbf C})   \right\},   \\
 & \quad \widetilde{\tau}_{2n+2}(h) (u, (-1)^{n+1} \delta(\infty) \tau_{2n+2}(w_0) u) = (\tau_{2n+2}(g) u, (-1)^{n+1} \delta(\infty) \tau_{2n+2}(g^c) \tau(w_0) u)  \\
      & \quad \quad \quad \quad \quad \quad \quad \quad \quad \quad \quad \quad \quad \quad \quad \quad 
          \quad \ 
          = (\tau_{2n+2}(g) u, (-1)^{n+1} \delta(\infty) \tau(w_0) \tau_{2n+2}(g)  u), \\
    & \quad \quad \quad \quad \quad \quad \quad \quad \quad \quad \quad \quad \quad \quad \quad \quad 
          \quad \quad \quad \quad 
           (h=\varrho(g, 1) \in H^0({\mathbf R}), g\in {\rm SU}_2({\mathbf R})), \\   
 & \quad   \widetilde{\tau}_{2n+2}({\mathbf t}_\infty) (u, (-1)^{n+1} \delta(\infty) \tau_{2n+2}(w_0) u)  
      =  ( (-1)^{n+1} \delta(\infty) \tau_{2n+2}(w_0) u,   u).
\end{align*}
Since the above formula implies 
\begin{align*}
     \widetilde{\tau}_{2n+2}({\mathbf t}_\infty) \widetilde{\tau}_{2n+2}(\varrho(g, 1))
  =  \widetilde{\tau}_{2n+2}(\varrho(g^c, 1)) \widetilde{\tau}_{2n+2}({\mathbf t}_\infty),  
\end{align*}
$( \widetilde{\tau}_{2n+2}, \widetilde{\mathcal W}_{2n+2}({\mathbf C}))$ is well-defined.   
By Proposition \ref{prop:ext} and Lemma \ref{l:minwhitt}, the regular representation of ${\rm SU}_2({\mathbf R})  \rtimes \{ 1, {\mathbf t}\} $
   which is generated by $\widetilde{\mathbf f}$ is isomorphic to $( \widetilde{\tau}_{2n+2}, \widetilde{\mathcal W}_{2n+2}({\mathbf C}))$. 
Hence we consider $\widetilde{\mathbf f}$  as a $\widetilde{\mathcal W}_{2n+2}({\mathbf C})$-valued function
    $\widetilde{\mathbf f}:H({\mathbf A}) \to \widetilde{\mathcal W}_{2n+2}({\mathbf C})$.
Denote by ${\mathcal M}_{2n+2}(H, {\mathfrak N})$ the ${\mathbf C}$-span of automorphic forms $\widetilde{\mathbf f}$ which are defined as above.

 It is also convenient to realize $\widetilde{\sigma}$ (resp. $\sigma$)
 as a space of moderate growth smooth functions $H({\mathbf A}) \to {\mathbf C}$
 (resp. $H^0({\mathbf A}) \to {\mathbf C}$), 
which we denote by ${\mathcal A}(\widetilde{\sigma})$ (resp. ${\mathcal A}(\sigma)$).   
W briefly explain a relation between this realization and the space of automorphic forms 
$\widetilde{\mathbf f}: H({\mathbf A}) \to  \widetilde{\mathcal W}_{2n+2}({\mathbf C} )$ which are defined as above.
Define $\langle \cdot, \cdot\rangle_{ \widetilde{\mathcal W}} : ({\mathcal W}_{2n+2}({\mathbf C})^{\oplus 2})^{\otimes 2}\to {\mathbf C}$ to be the pairing given by
\begin{align*}
   \langle (u_1, v_1), (u_2, v_2) \rangle_{\widetilde{\mathcal W}}
   = \langle u_1, u_2 \rangle_{2n+2}
      + \langle v_1, v_2 \rangle_{2n+2}. 
\end{align*}
We denote the restriction of $\langle\cdot, \cdot \rangle_{\widetilde{\mathcal W}}$ to $\widetilde{W}_{2n+2}({\mathbf C})$ by the same notation. 
Then the pairing $\langle\cdot, \cdot \rangle_{\widetilde{\mathcal W}}$ on $\widetilde{W}_{2n+2}({\mathbf C})^{\otimes 2}$ is ${\rm SU}_2({\mathbf R})  \rtimes \{ 1, {\mathbf t}\}$-equivariant. 
Let $\widetilde{u} = (u,  (-1)^{n+1} \delta(\infty) \tau(w_0) u) \in \widetilde{\mathcal W}_{2n+2}({\mathbf C})$  for $u\in {\mathcal W}_{2n+2}({\mathbf C})$. 
Then, the embedding 
\begin{align*}
   \widetilde{\mathcal W}_{2n+2}({\mathbf C}) \to \langle  \widetilde{\sigma}(h) \widetilde{\mathbf f} : h\in H({\mathbf A}) \rangle_{\mathbf C};  
    \widetilde{u} \mapsto \langle \widetilde{\mathbf f}, \widetilde{u} \rangle_{\widetilde{\mathcal W}}
\end{align*}
is ${\rm SU}_2({\mathbf R})  \rtimes \{ 1, {\mathbf t}\}$-equivariant by Proposition \ref{prop:ext} and Lemma \ref{l:minwhitt}. 
Hence ${\mathcal A}(\widetilde{\sigma})$ is the regular representation of $H({\mathbf A})$ which is generated by $\langle \widetilde{\mathbf f}, \widetilde{u} \rangle_{\widetilde{\mathcal W}}$ 
where $\widetilde{\mathbf f}:H({\mathbf A}) \to \widetilde{\mathcal W}_{2n+2}({\mathbf C})$ is as above and $u \in {\mathcal W}_{2n+2}({\mathbf C})$.

\section{HST lifts}\label{s:hst}

In this section, we give an explicit construction of HST lifts 
   by fixing a distinguished Bruhat-Schwartz function $\widetilde{\varphi}$.   
Firstly,  we recall the Schr\"odinger realization of the Weil representation in Section \ref{sec:weil} 
and the definition of theta series in Section \ref{sectheta}.   
The definition of $\widetilde{\varphi}$ and basic properties of HST lifts are given in Section \ref{sec:test}.  
The Fourier expansion of HST lifts is given in adelic language in Section \ref{secFC}, 
which will be studied in classical language in Section \ref{sec:claBess} 
    after we provide an explicit Bessel periods formula for HST lifts.

\subsection{Weil representation on ${\rm O}(V)\times{\rm Sp}_4$}\label{sec:weil}
Let $(V,{\rm n})$ be a four dimensional quadratic space over the rational number field ${\mathbf Q}$ 
and let $(\cdot, \cdot): V\times V \to {\mathbf Q}$ be the bilinear form defined by
$(x,y) = {\rm n}(x+y) - {\rm n}(x) - {\rm n}(y)$. 
Denote by ${\rm GO}(V)$ the orthogonal similitude group with the similitude character 
$\nu:{\rm GO}(V) \to {\mathbb G}_m$.
Let ${\mathbf X} = V\oplus V$.
For each $x=(x_1,x_2)\in {\mathbf X}$, we put
\begin{align}\label{def:Sx}
  S_x= \begin{pmatrix} {\rm n}(x_1) & \frac{1}{2}(x_1,x_2)  \\
                                \frac{1}{2}(x_1,x_2) & {\rm n}(x_2)   \end{pmatrix}. 
\end{align}
Let $v$ be a place of ${\mathbf Q}$ and $|\cdot|_v$ be the normalized absolute value on ${\mathbf Q}_v$.
Let $V_v=V\otimes_{\mathbf Q}{\mathbf Q}_v$ and ${\mathbf X}_v={\mathbf X}\otimes_{\mathbf Q}{\mathbf Q}_v$.
We denote by ${\mathcal S}({\mathbf X}_v)$ the space of ${\mathbf C}$-valued Bruhat-Schwartz functions on ${\mathbf X}_v$. 
Define $S_x$ for $x=(x_1,x_2)\in {\mathbf X}_v=V_v\oplus V_v$ in the same way with (\ref{def:Sx}).

Let $\chi_{V_v}:{\mathbf Q}^\times_v\to {\mathbf C}^\times$ be the quadratic character attached to $V_v$.
We recall the Schr\"odinger realization of the Weil representation 
$\omega_{V_v}:{\rm Sp}_4({\mathbf Q}_v)\to {\rm Aut}_{\mathbf C}{\mathcal S}({\mathbf X}_v)$ according to \cite[Section 4.2]{ic05}.
For $\varphi\in {\mathcal S}({\mathbf X}_v)$, we have
\begin{align}
\begin{aligned}
\omega_{V_v} \left(  \begin{pmatrix} a & 0_2 \\ 0_2 & {}^{\rm t}a^{-1} \end{pmatrix} \right) \varphi(x)
 = & \chi_{V_v}(\det a ) |\det a|^2_v\cdot \varphi(xa),  \\
\omega_{V_v} \left(  \begin{pmatrix} 1_2 & b \\ 0_2 & 1_2 \end{pmatrix} \right) \varphi(x)
 = & \psi_{{\mathbf Q},v}({\rm Tr}(S_xb)) \cdot \varphi(x),  \\
\omega_{V_v} \left(  \begin{pmatrix} 0_2 & 1_2 \\ -1_2 & 0_2 \end{pmatrix} \right) \varphi(x)
 = & \gamma^{-2}_{V_v} \cdot \widehat{\varphi}(x),  
\end{aligned} \label{eq:weilsch}
\end{align}
where $\gamma_{V_v}$ is the Weil index of $V_v$, and $\widehat{\varphi}$ is the Fourier transform of $\varphi$
with respect to the self-dual Haar measure ${\rm d}\mu$ on $V_v\oplus V_v$:
\begin{align*}
 \widehat{\varphi}(x) := \int_{{\mathbf X}_v} \varphi(y) \psi_{ {\mathbf Q}, v}((x,y)) {\rm d}\mu(y).  
\end{align*}
The Weil representation 
$\omega_v:R({\rm GO}(V_v)\times {\rm GSp}_4({\mathbf Q}_v))\to {\rm Aut}_{\mathbf C}{\mathcal S}({\mathbf X}_v)$
is given by
\begin{align*}
    \omega_p(h,g)\varphi(x)
  = |\nu(h)|^{-2}_v(\omega_{V_v}(g_1)\varphi)(h^{-1}x), 
\quad \left( g_1 = \begin{pmatrix} 1_2 & 0_2 \\ 0_2 & \nu(g)^{-1} 1_2 \end{pmatrix}g \right). 
\end{align*}
Let ${\mathcal S}({\mathbf X}_{\mathbf A}) = \otimes_v{\mathcal S}({\mathbf X}_v)$.
We define
$\omega_V = \otimes_v\omega_{V_v}: {\rm Sp}_4({\mathbf A}) \to {\rm Aut}_{\mathbf C}{\mathcal S}({\mathbf X}_{\mathbf A})$ 
and 
$\omega=\otimes_v\omega_v: R({\rm GO}(V)_{\mathbf A}\times {\rm GSp}_4({\mathbf A}))
\to {\rm Aut}_{\mathbf C}{\mathcal S}({\mathbf X}_{\mathbf A})$.

\begin{rem}
In this paper,  we always take $V$ to be $\left\{  x \in {\rm M}_2(F) : \overline{x}^\ast = x \right\}$ which is introduced in Section \ref{secorth}. 
Define ${\rm n}(x) = {\rm det}(x)$ for $x \in V$. Then, the quadratic space ($V, {\rm n}$) has the signature $(3,1)$. 
In this case, the bilinear form $(\cdot, \cdot)$ is given by the following formula: 
\begin{align}\label{eq:bitr}
   (x,y) = {\rm Tr}(x y^\ast). 
\end{align} 
\end{rem}

\subsection{Theta lifts of Harris-Soudry-Taylor}\label{sectheta}

We put $\lambda=(n+2, 2)$. 
Denote by ${\mathcal L}_\lambda({\mathbf C})$ the algebraic representation of ${\rm GL}_2({\mathbf C})$ of the highest weight $\lambda$ which is introduced in Section \ref{algrep}. 
Let $V$ be the four dimensional orthogonal space which was introduced in Section \ref{secorth}, 
and $\widetilde{\mathbf f}$ an extension o ${\mathbf f}$ which was introduced in \ref{secautoGO}.
We will choose a distinguished Bruhat-Schwartz function 
$\widetilde{\varphi} \in {\mathcal S}({\mathbf X}_{\mathbf A})\otimes \widetilde{\mathcal W}_{2n+2}({\mathbf C})^\vee\otimes {\mathcal L}_\lambda({\mathbf C})$ in the next subsection, 
and consider the theta kernel $\theta(h,g; \widetilde{\varphi} )$:
\begin{align*}
   \theta(h,g; \widetilde{\varphi})
   =\sum_{x\in {\mathbf X} } \omega(h,g) \widetilde{\varphi} (x).
\end{align*}
Define the theta lift $\theta( \widetilde{\varphi} , \tilde{\mathbf f}): {\rm GSp}_4({\mathbf A}) \to {\mathcal L}_\lambda({\mathbf C})$ to be
\begin{align*}
   \theta(\widetilde{\varphi}, \widetilde{\mathbf f})(g)  
   =  \theta(g;  \widetilde{\varphi},  \tilde{\mathbf f} ) 
   = \int_{[{\rm O}(V)]} \langle \theta(g,hh_1; \widetilde{\varphi} ), \tilde{\mathbf f}(hh_1) \rangle_{\widetilde{\mathcal W}} {\rm d}h, 
       \quad (\nu(h_1) = \nu(g)), 
\end{align*}
where ${\rm d}h$ is the Tamagawa measure on $H_1({\mathbf A}):={\rm O}(V)({\mathbf A})$.
We call $\theta(\widetilde{\varphi},  \tilde{\mathbf f} )$ HST lifts, since it was studied closely in \cite{hst93}. 

\subsection{The choice of test functions}\label{sec:test}

We introduce our choice of the distinguished test function  
$\widetilde{\varphi} = \widetilde{\varphi}_\infty\otimes_{v<\infty} \varphi_v:  
                                   V^{\oplus 2}_{\mathbf A} \to \widetilde{\mathcal W}_{2n+2}({\mathbf C})^\vee \otimes {\mathcal L}_\lambda({\mathbf C})$  
and we prove basic properties of $\widetilde{\varphi}$ in Lemma \ref{HSTlev} and Lemma \ref{HSTwt}.

Let $N$ be a positive integer such that  $N{\mathbf Z}={\mathfrak N}\cap {\mathbf Z}$.
Let ${\mathfrak  d}_F$ be the different of $F/{\mathbf Q}$ and let $\Delta_F = {\rm Nr}_{F/{\mathbf Q}}({\mathfrak d}_F)$ be the discriminant of $F$.
For each finite place $v\in \Sigma_{\mathbf Q}$, we let 
\begin{align}
  V^\prime({\mathcal O}_{F,v}) 
         = \left\{  \begin{pmatrix} a & b\delta^{-1}_F \\ c\delta_F  & \bar{a}  \end{pmatrix} \in V_v 
                    \ | \  a  \in {\mathcal O}_{F,v}, b\in {\mathbf Z}_v,   c \in N {\mathbf Z}_v  \right\}.  \label{def:lattice}
\end{align}  
Define a distinguished Bruhat-Schwartz function $\varphi_v$ for each finite place $v\in \Sigma_{\mathbf Q}$ to be 
the characteristic function of $V^\prime({\mathcal O}_{F,v})^{\oplus2} $ on $V^{\oplus 2}_v$.

\begin{lem}\label{HSTlev}
{\itshape 
Let $v\in \Sigma_{\mathbf Q}$ be a finite place.  
Denote by $N_F$   the least common multiple ${\rm l.c.m.}(N, \Delta_F)$  of $N$ and $\Delta_F$.
Then, for $g = \begin{pmatrix} a & b \\ c &d \end{pmatrix} \in U^{(2)}_0(N_F)\cap {\rm Sp}_4(\widehat{\mathbf Z})$, we have 
\begin{align*}
   \omega_{V_v}(g)\varphi_v = \chi_{V_v}({\rm det} a) \varphi_v. 
\end{align*} 
}
\end{lem}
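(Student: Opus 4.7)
The plan is to verify the identity factor-by-factor on an appropriate decomposition of $g$, handling the places $v\nmid N_F$ and $v\mid N_F$ separately. At a place $v\nmid N_F$ the lattice $L_v:=V'({\mathcal O}_{F,v})^{\oplus 2}$ is self-dual under the bilinear form $(x,y)={\rm Tr}(xy^\ast)$ (since ${\mathfrak d}_{F,v}={\mathcal O}_{F,v}$ and $N\in{\mathbf Z}_v^\times$), so $\varphi_v=\mathbf{1}_{L_v}$ is its own Fourier transform up to normalisation, and the standard argument for the invariance of the characteristic function of a self-dual lattice under ${\rm Sp}_4({\mathbf Z}_v)$ (verifying the three formulas in (\ref{eq:weilsch}) on generators) yields $\omega_{V_v}(g)\varphi_v=\chi_{V_v}(\det a)\varphi_v$.

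For $v\mid N_F$, the congruence $c\equiv 0\pmod{N_F\widehat{\mathbf Z}}$ forces $\det a\in {\mathbf Z}_v^\times$, and using the symplectic relations $a^tc=c^ta$ and $ab^t=ba^t$ one obtains a unique Iwahori factorisation $g=\bar n(c_1)\,m(a)\,n(b_1)$ with $m(a)=\mathrm{diag}(a,{}^{\rm t}a^{-1})$, $b_1=a^{-1}b={}^{\rm t}b_1\in M_2({\mathbf Z}_v)$, and $c_1=ca^{-1}={}^{\rm t}c_1\in N_F\cdot M_2({\mathbf Z}_v)$. The Levi factor is handled by the first line of (\ref{eq:weilsch}): $|\det a|_v=1$ and $L_v$ is stable under the right ${\rm GL}_2({\mathbf Z}_v)$-action, giving $\omega_{V_v}(m(a))\varphi_v=\chi_{V_v}(\det a)\varphi_v$. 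For the upper unipotent $n(b_1)$, a direct computation with (\ref{def:lattice}) and (\ref{eq:bitr}) shows that for $x=(x_1,x_2)\in L_v$ one has ${\rm n}(x_i)\in{\mathbf Z}_v$ and $(x_1,x_2)={\rm Tr}_{F/{\mathbf Q}}(\alpha_1\bar\alpha_2)-\beta_1\gamma_2-\gamma_1\beta_2\in{\mathbf Z}_v$; combined with $b_1$ symmetric and integral, this gives ${\rm Tr}(S_x b_1)\in{\mathbf Z}_v$, so the character is trivial.

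The main obstacle is the lower unipotent $\bar n(c_1)$, for which I use the Bruhat conjugation $\bar n(c_1)=w^{-1}n(-c_1)w$ with $w=\bigl(\begin{smallmatrix}0_2 & 1_2\\-1_2 & 0_2\end{smallmatrix}\bigr)$. The third line of (\ref{eq:weilsch}) identifies $\omega_{V_v}(w)\varphi_v$, up to the Weil constant $\gamma_{V_v}^{-2}$ and the volume $\mu(L_v)$, with the characteristic function of the dual lattice $L_v^\ast$, which an explicit computation parameterises by $\alpha\in{\mathfrak d}_{F,v}^{-1}$, $\beta\in N^{-1}{\mathbf Z}_v$, $\gamma\in{\mathbf Z}_v$. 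The crux is then the arithmetic compatibility: for any $y\in L_v^\ast$ one checks that both ${\rm n}(y_i)$ and $(y_1,y_2)$ lie in $N_F^{-1}{\mathbf Z}_v$, the $\Delta_F$-denominators arising from $N(\alpha)\in{\mathfrak d}_{F,v}^{-2}\cap{\mathbf Q}_v\subset\Delta_F^{-1}{\mathbf Z}_v$ and the $N$-denominators from the $\beta$-coordinate. Consequently ${\rm Tr}(S_y c_1)\in{\mathbf Z}_v$ whenever $c_1\in N_F\cdot M_2({\mathbf Z}_v)$, so $\omega_{V_v}(n(-c_1))$ acts trivially on the support of $\widehat{\varphi_v}$; a final application of $\omega_{V_v}(w^{-1})$ restores $\varphi_v$ itself, with the two Weil constants and volume factors cancelling by self-duality of the measure and symmetry of $L_v$. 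This integrality is exactly what forces $N_F={\rm lcm}(N,\Delta_F)$ to be the correct level, and is the technical heart of the lemma.
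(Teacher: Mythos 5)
Your proof is correct, and at bottom it is the same argument as the paper's: both reduce the lemma to computing the dual lattice $V'({\mathcal O}_{F,v})^\vee = V_v\cap\left\{\begin{pmatrix}{\mathfrak d}_F^{-1}{\mathcal O}_{F,v} & N^{-1}{\mathcal O}_{F,v}\\ {\mathcal O}_{F,v} & {\mathfrak d}_F^{-1}{\mathcal O}_{F,v}\end{pmatrix}\right\}$ and checking that its norm ideal is exactly $N_F^{-1}{\mathbf Z}_v$. The only difference is presentational: the paper cites \cite[Lemma 2.1]{yo84}, which encapsulates precisely the Iwahori/Bruhat decomposition $g=\bar n(c_1)\,m(a)\,n(b_1)$ and the generator-by-generator verification via (\ref{eq:weilsch}) that you carry out by hand, while you re-derive that lemma in this particular case rather than invoking it. Your unrolled version makes the role of $N_F={\rm l.c.m.}(N,\Delta_F)$ more transparent — one sees directly that the $\Delta_F$-denominators come from ${\rm Tr}_{F/{\mathbf Q}}({\mathfrak d}_{F,v}^{-2})$ and the $N$-denominators from the $\beta$-coordinate of the dual — at the cost of having to track the Weil constants and volume factors through the conjugation $\bar n(c_1)=w^{-1}n(-c_1)w$, which you correctly note cancel by the self-duality of the measure and the negation-symmetry of $L_v$.
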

\begin{proof}
Put $V^\prime({\mathcal O}_{F,v})^\vee = \{ x\in V_v | (x,y) \in {\mathbf Z}_v \text{ for all } y \in V^\prime({\mathcal O}_{F,v}) \}$.
Let $l_v$ be the non-negative integer such that $p^{-l_v}{\mathbf Z}_v = \langle {\rm n}(x)  | x \in V^\prime({\mathcal O}_{F,v})^\vee \rangle_{{\mathbf Z}_v}$.
Then, by \cite[Lemma 2.1]{yo84}, it is enough to prove that $l_v={\rm ord}_v(N_F)$. 
Since it is easy to see that $V^\prime({\mathcal O}_{F,v})^\vee 
    = V_v \cap \left\{  \begin{pmatrix} {\mathfrak d}^{-1}_F {\mathcal O}_{F,v} &  N^{-1} {\mathcal O}_{F,v} \\  
                                                           {\mathcal O}_{F,v} & {\mathfrak d}^{-1}_F{\mathcal O}_{F,v}  \end{pmatrix} \right\}$, 
 the lemma follows immedeately.
\end{proof}   

Now, we proceed to define an archimedean test function 
 \begin{align*}  
   \widetilde{\varphi}_\infty \in {\mathcal S}({\mathbf X}_\infty) \otimes \widetilde{\mathcal W}_{2n+2}({\mathbf C})^\vee \otimes {\mathcal L}_\lambda({\mathbf C}).
 \end{align*}      
We firstly prepare a Bruhat-Schwartz function $\varphi_\infty  \in {\mathcal S}({\mathbf X}_\infty)\otimes {\mathbf C}[X,Y]_{2n+2} \otimes {\mathbf C}[X,Y]_n$. 
Let
\begin{align*}
  V^0 = \left\{ \begin{pmatrix} a & b \\ b & c \end{pmatrix} \in {\rm M}_2({\mathbf C})  \right\};  \quad  
  {\mathbf X}^0_\infty= (V_\infty\cap V^0)^{\oplus 2}. 
\end{align*}
Let ${\mathbf p} : V^0 \to  {\mathbf C}[X,Y]_2$ be the isomorphism given by
\begin{align*}
   {\mathbf p}\left( \begin{pmatrix} a & b \\ b & c \end{pmatrix}  \right)  =  a X^2 + 2 b XY + c Y^2. 
\end{align*}
For each integer $0\leq \alpha \leq n$, define $P^\alpha : {\mathbf X}^0_\infty \to {\mathbf C}[X,Y]_{2n+2}$ by 
\begin{align*}
     P^\alpha(x_1, x_2) = {\mathbf p}\left(\frac{1}{2} (x_1w_0x_2  +{}^{\rm t} (x_1 w_0 x_2) ) \right) 
                                       {\mathbf p}(x_1)^\alpha{\mathbf p}(x_2)^{n-\alpha}, 
                                        \quad \left(w_0=\begin{pmatrix} 0 & 1 \\  -1 & 0 \end{pmatrix} \right).
\end{align*}
Explicitly, $P^\alpha$ is written as follows: 
\begin{align*}
    & P^\alpha  \left( \left(  \begin{pmatrix} z_1 & \sqrt{-1} t_1 \\  \sqrt{-1} t_1 & \bar{z}_1  \end{pmatrix}, 
                                \begin{pmatrix} z_2 & \sqrt{-1} t_2 \\ \sqrt{-1} t_2 & \bar{z}_2 \end{pmatrix} \right) \right)  \\ 
 = &{\mathbf p}\left(  \begin{pmatrix}  \sqrt{-1} ( z_1 t_2- t_1z_2)  & \frac{1}{2}( z_1 \bar{z}_2  -  \bar{z}_1 z_2)     \\   
                                                           \frac{1}{2}( z_1 \bar{z}_2  -  \bar{z}_1 z_2 ) &   -\sqrt{-1}  ( \bar{z_1}t_2   -  t_1\bar{z_2})  \end{pmatrix} \right)  \\
   & \quad \times  {\mathbf p}\left( \begin{pmatrix} z_1 & \sqrt{-1}  t_1 \\  \sqrt{-1} t_1 & \bar{z}_1 \end{pmatrix} \right)^\alpha
    {\mathbf p}\left( \begin{pmatrix} z_2 & \sqrt{-1} t_2 \\  \sqrt{-1} t_2 & \bar{z}_2 \end{pmatrix} \right)^{n-\alpha}.
\end{align*}

\begin{lem}\label{pluri} 
{\itshape 
$P^\alpha$ is a pluri-harmonic polynomial.
}
\end{lem}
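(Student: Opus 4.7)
The plan is to verify that $P^\alpha$ lies in the joint kernel of the Laplacians $\Delta_{11}, \Delta_{12}, \Delta_{22}$ associated to the restriction of ${\rm n}$ to $V_\infty \cap V^0$. Using the parametrization $x_i = \begin{pmatrix} z_i & \sqrt{-1}t_i \\ \sqrt{-1}t_i & \bar{z}_i \end{pmatrix}$ from the explicit display preceding the lemma, one has ${\rm n}(x_i) = |z_i|^2 + t_i^2$, and consequently
\begin{align*}
   \Delta_{ii} = 4\partial_{z_i}\partial_{\bar{z}_i} + \partial_{t_i}^2, \qquad
   \Delta_{12} = 2(\partial_{z_1}\partial_{\bar{z}_2} + \partial_{\bar{z}_1}\partial_{z_2}) + \partial_{t_1}\partial_{t_2}.
\end{align*}

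The first key observation is that ${\mathbf p}$ restricts to an isometry of $V_\infty \cap V^0$ onto ${\rm Sym}^2({\mathbf C}^2)$ equipped with the discriminant form, with $\partial_{z_i}, \partial_{\bar{z}_i}, \partial_{t_i}$ corresponding to $X^2, Y^2, 2\sqrt{-1}XY$ (of discriminants $0,0,1$). From this one obtains $\Delta_{ii}({\mathbf p}(x_i)^k) = 0$ immediately, since the contribution $4k(k-1)X^2Y^2\,{\mathbf p}(x_i)^{k-2}$ from $4\partial_{z_i}\partial_{\bar{z}_i}$ is exactly canceled by $(2\sqrt{-1})^2 k(k-1)(XY)^2\,{\mathbf p}(x_i)^{k-2}$ from $\partial_{t_i}^2$. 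Similarly, expanding the symmetrization $\frac{1}{2}(x_1 w_0 x_2 + {}^{\rm t}(x_1 w_0 x_2))$ explicitly shows that $R(x_1, x_2) := {\mathbf p}(\frac{1}{2}(x_1 w_0 x_2 + {}^{\rm t}(x_1 w_0 x_2)))$ is linear in each of $z_i, \bar{z}_i, t_i$ and involves no monomial using two variables from the same $x_i$; hence $\Delta_{ii} R = 0$ trivially, and in $\Delta_{12} R$ the only relevant monomial $(z_1 \bar{z}_2 - \bar{z}_1 z_2)XY$ contributes $2XY$ and $-2XY$ under $2\partial_{z_1}\partial_{\bar{z}_2}$ and $2\partial_{\bar{z}_1}\partial_{z_2}$ respectively, which cancel.

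It remains to handle the cross-terms arising from Leibniz's rule in $\Delta_{ij}(R \cdot {\mathbf p}(x_1)^\alpha \cdot {\mathbf p}(x_2)^{n-\alpha})$, which reduce to the vanishing of the three types of gradient pairings $\langle \nabla_{x_i} R, \nabla_{x_i} {\mathbf p}(x_i)^{\alpha_i}\rangle$, $\langle \nabla_{x_i} R, \nabla_{x_j} {\mathbf p}(x_j)^{\alpha_j}\rangle_{ij}$ (with $i \neq j$), and $\langle \nabla_{x_1} {\mathbf p}(x_1)^\alpha, \nabla_{x_2} {\mathbf p}(x_2)^{n-\alpha}\rangle_{12}$. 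Each is a short explicit sum in which the Wirtinger contributions $\partial_z \cdot \partial_{\bar{z}'} + \partial_{\bar z} \cdot \partial_{z'}$ cancel against $\partial_t \cdot \partial_{t'}$; for instance, the last pairing evaluates to $4\alpha(n-\alpha)X^2Y^2\,{\mathbf p}(x_1)^{\alpha-1}{\mathbf p}(x_2)^{n-\alpha-1}$ from the Wirtinger part, negated by the equal and opposite contribution $(2\sqrt{-1})(2\sqrt{-1})(XY)^2\,\alpha(n-\alpha){\mathbf p}(x_1)^{\alpha-1}{\mathbf p}(x_2)^{n-\alpha-1}$ from the $t$-part. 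The main obstacle is purely bookkeeping of the ${\rm Sym}^{2n+2}({\mathbf C}^2)$-valued summands; structurally all cancellations reflect the isotropic/non-isotropic splitting of coordinates exhibited by the isometry ${\mathbf p}$.
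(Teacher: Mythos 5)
Your proposal carries out exactly the verification the paper asserts and omits: you check that $P^\alpha$ is annihilated by the same three pluri-harmonicity operators $\partial_{t_i}^2 + 4\partial_{z_i}\partial_{\bar z_i}$ and $\partial_{t_1}\partial_{t_2} + 2(\partial_{z_1}\partial_{\bar z_2} + \partial_{\bar z_1}\partial_{z_2})$ that the paper lists, organizing the bookkeeping via Leibniz-rule gradient pairings, and your cancellations do hold. One minor slip: the discriminants of $X^2, Y^2, 2\sqrt{-1}XY$ are $0,0,-4$ rather than $0,0,1$, matching $-4\,{\rm n}$ under ${\mathbf p}$, but this normalization is immaterial to the cancellation structure you invoke.
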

\begin{proof}
It suffices to prove that $P^\alpha$ is annihilated by the following three differential operators:
\begin{align*}
\frac{\partial^2}{\partial t^2_1} + 4 \frac{\partial^2}{\partial z_1 \partial \bar{z}_1}, \quad
\frac{\partial^2}{\partial t_1 \partial t_2} + 2 \left( \frac{\partial^2}{\partial z_1 \partial \bar{z}_2} + \frac{\partial^2}{\partial z_2 \partial \bar{z}_1} \right), \quad
\frac{\partial^2}{\partial t^2_2} + 4 \frac{\partial^2}{\partial z_2 \partial \bar{z}_2}.
\end{align*}
The computation is straightforward, hence we omit it.
\end{proof}

We define $\varphi^\alpha_\infty \in {\mathcal S}({\mathbf X}_\infty)\otimes{\mathbf C}[X,Y]_{2n+2}$ by
\begin{align*}
   \varphi^\alpha_\infty (x_1, x_2)
  = P^\alpha\left(  \frac{1}{2} (x_1+{}^{\rm t}x_1),    \frac{1}{2} (x_2+{}^{\rm t}x_2) \right) e^{-\pi {\rm Tr}( x_1  {}^{\rm t}\overline{ x_1} + x_2  {}^{\rm t}\overline{ x_2} ) }.    
\end{align*}
We also define $\varphi_\infty \in {\mathcal S}({\mathbf X}_\infty)\otimes {\mathbf C}[X,Y]_{2n+2} \otimes {\mathbf C}[X,Y]_n$ by 
\begin{align}\label{def:phiinf}
   \varphi_\infty(x) = \sum^n_{\alpha=0} \varphi^\alpha_\infty(x) \otimes \binom{n}{\alpha}X^\alpha Y^{n-\alpha}.    
\end{align}
Then, fix the archimedean test function $\widetilde{\varphi}_\infty$ as follows:
\begin{align*}
  \widetilde{\varphi}_\infty  = (\varphi_\infty,  (-1)^{n+1} \delta(\infty)\tau_{2n+2}(w_0)\varphi_\infty  )
      &\in {\mathcal S}({\mathbf X}_\infty) \otimes \widetilde{\mathcal W}_{2n+2}({\mathbf C})^\vee \otimes {\mathcal L}_\lambda({\mathbf C}).  
\end{align*}

The following lemma determines the minimal $K$-type of HST lifts, which is compatible with \cite[Lemma 12]{hst93}.

\begin{lem}\label{HSTwt}
{\itshape
We embed ${\rm U}_2({\mathbf R}) \hookrightarrow {\rm Sp}_4({\mathbf R}) $ by $A+B\sqrt{-1} \mapsto \begin{pmatrix} A & B \\ -B & A \end{pmatrix}$.
Then, we have the following statements:
\begin{enumerate}
\item\label{HSTwt(i)} For $(u,g) \in {\rm SU}_2({\mathbf R}) \times {\rm U}_2({\mathbf R})$, we have
\begin{align*}
  \omega_\infty(u,g)\varphi_\infty = \rho_{(2n+2,0)}(u^{-1})\otimes \rho_{(n+2,2)}({}^{\rm t}g)\varphi_\infty.    
\end{align*}
\item\label{HSTwt(ii)} 
Assume that $n$ is even and $\delta(\infty)=-1$. 
We have 
 \begin{align*}
     \omega_\infty( {\mathbf t}, 1) \widetilde{\varphi}_\infty 
     =  \widetilde{\tau}_{2n+2}({\mathbf t}) \widetilde{\varphi}_\infty = \widetilde{\tau}_{2n+2}(w_0) \widetilde{\varphi}_\infty.
 \end{align*}
\end{enumerate}
}
\end{lem}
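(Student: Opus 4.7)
My plan is to establish Lemma \ref{HSTwt} by combining direct computations in the Schr\"odinger model \eqref{eq:weilsch} with the pluri-harmonicity of $P^\alpha$ from Lemma \ref{pluri}. Part (i) is the minimal $K$-type computation for the local theta correspondence at infinity, while part (ii) tracks the outer involution ${\mathbf t}$ and is combinatorial once the transformation law of $\varphi_\infty$ under ${\mathbf t}$ is understood.

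For the ${\rm SU}_2({\mathbf R})$-factor of (i), $\omega_\infty(u, 1)$ is pure translation by $\varrho(u, 1)^{-1}$ since $|\nu(\varrho(u,1))|^{-2}_\infty = 1$. Because $\bar{u}^{\ast} = u^T$ for $u \in {\rm SU}_2({\mathbf R})$ and since $u^T w_0 u = w_0$ whenever $\det u = 1$, the symmetric matrices $\tfrac12(x_i + x_i^T)$ and $\tfrac12(x_1 w_0 x_2 + (x_1 w_0 x_2)^T)$ each transform to $u^{-1}\cdot(\,\cdot\,)\cdot u^{-T}$. Under the identification ${\mathbf p}$, each of the $n+1$ quadratic-form factors of $P^\alpha$ undergoes the substitution $(X, Y) \mapsto (X, Y) u^{-1}$, while the Gaussian $\exp(-\pi{\rm Tr}(x_i \overline{x_i}^T))$ is invariant thanks to $\bar{u} u^T = I_2$. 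Multiplying the $n+1$ factors yields precisely $\rho_{(2n+2, 0)}(u^{-1})$. For the ${\rm U}_2({\mathbf R})$-factor, the action goes through the Weil representation of ${\rm Sp}_4({\mathbf R})$, and I would verify the covariance on a generating set of ${\rm U}_2({\mathbf R}) = S^1 \cdot {\rm SU}_2({\mathbf R})$: the maximal torus embeds into ${\rm Sp}_4({\mathbf R})$ as a combination of Levi-type elements handled directly by \eqref{eq:weilsch}, while the embedded ${\rm SU}_2$ is reached via the Weyl element $\bigl(\begin{smallmatrix}0 & 1 \\ -1 & 0\end{smallmatrix}\bigr)$. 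The pluri-harmonicity of $P^\alpha$ is essential here, guaranteeing that the Gaussian Fourier transform leaves the minimal $K$-type invariant; bookkeeping of the Weil index $\gamma_{V_\infty}$ together with the degree of $P^\alpha$ then produces the shift to highest weight $(n+2, 2)$, with the extra $(2,2)$ reflecting the rank of $V$.

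For (ii), I first handle the second equality by a direct unfolding of the definition of $\widetilde{\tau}_{2n+2}$: the element ${\mathbf t}_\infty$ swaps the two components of a pair $(u, (-1)^{n+1}\delta(\infty) \tau_{2n+2}(w_0) u)$, whereas $\widetilde{\tau}_{2n+2}(w_0)$ acts diagonally by $\tau_{2n+2}(w_0)$, and the two agree using $\tau_{2n+2}(w_0)^2 = \tau_{2n+2}(-I_2) = 1$ and the identity $(-1)^{n+1}\delta(\infty) = 1$, which is exactly the hypothesis $n$ even with $\delta(\infty) = -1$. For the first equality, ${\mathbf t}$ preserves the symmetric subspace $V \cap V^0$ via $\begin{pmatrix}a & b \\ c & \bar a\end{pmatrix} \mapsto \begin{pmatrix}-\bar a & b \\ c & -a\end{pmatrix}$. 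A direct calculation then yields ${\mathbf p}({\mathbf t} x) = -\tau_2(w_0){\mathbf p}(x)$ for each single-factor quadratic form and ${\mathbf p}(\tfrac12({\mathbf t} x_1 \cdot w_0 \cdot {\mathbf t} x_2 + (\,\cdot\,)^T)) = \tau_2(w_0){\mathbf p}(\tfrac12(x_1 w_0 x_2 + (\,\cdot\,)^T))$ for the mixed factor (without sign), while the Gaussian is ${\mathbf t}$-invariant. Collecting the $n+1$ factors gives $\varphi_\infty({\mathbf t} x) = (-1)^n \tau_{2n+2}(w_0)\varphi_\infty(x)$, which for $n$ even reduces to $\tau_{2n+2}(w_0)\varphi_\infty(x)$; substituting into both components of $\widetilde{\varphi}_\infty$ and using $\tau_{2n+2}(w_0)^2 = 1$ recovers $\widetilde{\tau}_{2n+2}({\mathbf t})\widetilde{\varphi}_\infty$.

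The main technical obstacle is the ${\rm U}_2({\mathbf R})$-covariance in part (i): the Schr\"odinger formulas \eqref{eq:weilsch} are naturally adapted to the Iwasawa-type generators of ${\rm Sp}_4({\mathbf R})$ rather than to its maximal compact subgroup, so one must carefully assemble the ${\rm U}_2$-action from those generators while tracking the phases coming from the Weil index and the Gaussian Fourier transform, with the pluri-harmonicity of $P^\alpha$ providing the control that a clean irreducible $K$-type answer emerges.
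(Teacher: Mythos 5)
Your treatment of part (ii) and of the ${\rm SU}_2({\mathbf R})$-factor in part (i) matches the paper's argument: the ${\rm SU}_2$ action is pure translation, the identity $u^{\rm T}w_0 u = w_0$ for $\det u = 1$ handles the mixed factor, the Gaussian is invariant, and for (ii) one verifies $\omega_\infty({\mathbf t},1)\varphi_\infty = (-1)^n\tau_{2n+2}(w_0)\varphi_\infty$ by counting signs over the $n+1$ quadratic-form factors (the single factors each pick up a sign, the mixed factor does not), then uses $(-1)^{n+1}\delta(\infty)=1$ and $\tau_{2n+2}(w_0)^2=1$. All fine.

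The gap is in the ${\rm U}_2({\mathbf R})$-covariance in (i), which is the heart of the lemma and which your proposal only sketches. Two specific problems. First, the claim that ``the maximal torus embeds into ${\rm Sp}_4({\mathbf R})$ as a combination of Levi-type elements handled directly by \eqref{eq:weilsch}'' is not correct: a torus element $\mathrm{diag}(e^{\sqrt{-1}\theta_1}, e^{\sqrt{-1}\theta_2})$ maps to $\begin{pmatrix}A & B \\ -B & A\end{pmatrix}$ with $B=\mathrm{diag}(\sin\theta_1,\sin\theta_2)\neq 0$ generically, which is \emph{not} a Levi element — it requires exactly the same Iwasawa-type decomposition (upper unipotent $\cdot$ Levi $\cdot$ Weyl $\cdot$ lower unipotent) as a generic element of ${\rm U}_2({\mathbf R})$. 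Likewise the Weyl element corresponds to $\sqrt{-1}\cdot 1_2$, which lies in the $S^1$ factor, not the ${\rm SU}_2$ factor, so the division of labor you propose between the two pieces of $S^1\cdot{\rm SU}_2$ does not hold. Carrying out your generator strategy would therefore collapse to the paper's direct approach anyway: take an arbitrary $g=A+\sqrt{-1}B$ with $\det B\neq 0$, write $\begin{pmatrix}A & B\\ -B & A\end{pmatrix}$ as an explicit product of an upper unipotent, a Levi element $\mathrm{diag}({}^{\rm t}B^{-1},B)$, the long Weyl element, and a lower unipotent. Second, the actual hard computation — applying [KV78, Lemma 4.5] to the Gaussian Fourier transform of the pluri-harmonic polynomial and carefully choosing the branch of $\det(1_2+\sqrt{-1}B^{-1}A)^{\pm 1/2}$ so that the signs and the $\det g$ twist combine to give $\rho_{(n+2,2)}({}^{\rm t}g)$ — is named but not performed. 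Since this is where the content of the lemma lives, the proposal does not yet constitute a proof of (i).
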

\begin{proof}
We prove the statement \ref{HSTwt(i)}. 
First, we prove that $\omega_\infty(u, 1) \varphi_\infty= \rho_{(2n+2,0)}(u^{-1})\varphi_\infty$ for $u\in {\rm SU}_2({\mathbf R})$.
This is enough to prove the following identity:
          \begin{align*}
               P^\alpha\left( u^{-1} x_1  (\bar{u}^\ast)^{-1}, 
                                     u^{-1} x_2 (\bar{u}^\ast)^{-1} ; (X, Y)\right)
             = P^\alpha\left( x_1, x_2 ; (X, Y)u^{-1} \right),    
          \end{align*}
where $(x_1,x_2) \in {\mathbf X}^0_\infty$.
Since $\bar{u}^\ast={}^{\rm t}u$ and $w^{-1}_0 u w_0 = \bar{u}$, 
this is proved by the straightforward computation.

We prove that $\omega_\infty(1, g) \varphi_\infty= \rho_{(n+2,2)}({}^{\rm t} g)\varphi_\infty$ for $g\in {\rm U}_2({\mathbf R})$.
We put $g=A+\sqrt{-1}B$, where $A, B\in {\rm M}_2({\mathbf R})$.   
We may assume that $\det B\neq 0$.
Then we find that
\begin{align*}
\begin{pmatrix}  A & B \\
                     -B & A  \end{pmatrix}
=\begin{pmatrix}  1_2 & -AB^{-1} \\
                           0_2 & 1_2  \end{pmatrix}
  \begin{pmatrix}  {}^{\rm t} B^{-1} & 0_2  \\
                           0_2 & B  \end{pmatrix}
  \begin{pmatrix}  0_2  & 1_2 \\
                     -1_2 & 0_2  \end{pmatrix}
  \begin{pmatrix}  1_2 & -B^{-1}A \\
                            0_2  & 1_2  \end{pmatrix}.
\end{align*}
Fix an isomorphism $\iota:{\rm M}_{2,3}({\mathbf R}) \to {\mathbf X}^0_\infty $ by 
\begin{align*}
    \begin{pmatrix} t_1 & x_1  & y_1   \\
                            t_2 & x_2 & y_2  \end{pmatrix}
  \mapsto 
    \left(  \begin{pmatrix}  z_1 & \sqrt{-1} t_1 \\  \sqrt{-1} t_1 &   \bar{z}_1   \end{pmatrix}, \begin{pmatrix}  z_2 & \sqrt{-1}t_2 \\ \sqrt{-1}t_2 & \bar{z}_2   \end{pmatrix}      \right),     
       \quad (z_i:=x_i+\sqrt{-1}y_i (i=1, 2 )).
\end{align*}
We also define $\iota^\prime(s_1,s_2)$ for $(s_1,s_2)\in {\mathbf R}^{\oplus 2}$ to be   
\begin{align*}
    \iota^\prime(s_1,s_2)
   =  \left( \begin{pmatrix} 0 & \sqrt{-1} s_1 \\ -\sqrt{-1} s_1 & 0   \end{pmatrix}, 
                         \begin{pmatrix} 0 & \sqrt{-1} s_2 \\ -\sqrt{-1} s_2 & 0   \end{pmatrix}   \right).
\end{align*}
Then we write each $\widetilde{X}, \widetilde{Y} \in {\mathbf X}_\infty$ as follows: 
\begin{align*}
    \widetilde{X} = \iota^\prime(\xi_1,\xi_2)
                      + \iota(X),  \quad
    \widetilde{Y} 
    = ( \widetilde{Y}_1,  \widetilde{Y}_2) 
    = \iota^\prime(\eta_1,\eta_2)
                      + \iota(Y),                    
                       \quad (\xi_1, \xi_2, \eta_1, \eta_2  \in {\mathbf R}, X, Y\in {\rm M}_{2,3}({\mathbf R}) ).
\end{align*}
Let ${\rm d}\mu(y)$ be the self-dual measure on ${\mathbf X}_\infty$ with respect to 
the Fourier transform:
\begin{align*}
   \widehat{\varphi}(x) =   \int_{{\mathbf X}_\infty}  \varphi(y) \psi_{{\mathbf Q}, \infty}((x,y))  {\rm d}\mu(y),
     \quad (\varphi \in {\mathcal S}({\mathbf X}_\infty)).  
\end{align*}
By using the definition of the bilinear form $(\cdot, \cdot)$ on $V^{\otimes 2}$ (see also (\ref{eq:bitr})), we have 
\begin{align*}
    (\widetilde{X}, \widetilde{Y}) 
      =& {\rm Tr}(\widetilde{X}_1\widetilde{Y}^\ast_1 + \widetilde{X}_2\widetilde{Y}^\ast_2) 
     =    - 2 (\xi_1\eta_1+ \xi_2\eta_2) + 2{\rm Tr}({}^{\rm t}XY).   
\end{align*}
Hence the self-dual measure on ${\rm M}_{2,3}({\mathbf R})$ (resp. ${\mathbf R}^{\oplus 2}$) 
    which is induced by ${\rm d}\mu(y)$  via $\iota$   
    is given by $2^3 {\rm d}Y$ (resp. $2{\rm d}\eta$) for $Y\in {\rm M}_{2,3}({\mathbf R})$ (resp. $\eta \in {\mathbf R}^{\oplus 2}$). 

Recall that  $\gamma^2_{V_\infty}$ is $-1$  (\cite[Proposition A.10]{ra93}, \cite[Lemma A.1]{ic05})  and that 
\begin{align*} 
       {\rm Tr}(   S_{ \widetilde{Y} } T ) 
        =&  - \begin{pmatrix} \eta_1 & \eta_2 \end{pmatrix}  T  \begin{pmatrix} \eta_1 \\ \eta_2 \end{pmatrix}  
                 + {\rm Tr}({}^{\rm t}Y T Y),  
            \quad (T\in {\rm M}_2({\mathbf C})).  
\end{align*}
 Then, the definition of the Schr\"{o}dinger model (\ref{eq:weilsch}) of the Weil representation $\omega_V$ shows that 
\begin{align*}
 & \omega_V \left( \begin{pmatrix}  0_2 & 1_2 \\
                     -1_2 & 0_2  \end{pmatrix}
  \begin{pmatrix}  1_2 & -B^{-1}A \\
                     0_2 & 1_2  \end{pmatrix}  \right)\varphi^\alpha_\infty( \widetilde{X} )   \\  \displaybreak[0]
= & (-1) \times \int_{{\mathbf X}_\infty} 
      \psi_{{\mathbf Q}, \infty}( {\rm Tr}(S_{\widetilde{Y}} (-B^{-1}A) ) )
      P^\alpha( \frac{1}{2} ( \widetilde{Y}   + {}^{\rm t}\widetilde{Y}   ) ) 
            e^{-\pi{\rm Tr}(  \widetilde{Y}_1 {}^{\rm t}\overline{ \widetilde{Y}_1} + \widetilde{Y}_2 {}^{\rm t}\overline{ \widetilde{Y}_2 }  ) }
      \psi_{{\mathbf Q}, \infty}(   ( \widetilde{X},  \widetilde{Y} )   )
       {\rm d}\widetilde{Y} \\    \displaybreak[0]
= & (-1) \times  \int_{{\rm M}_{2,3}({\mathbf R})} 
          e^{ 2\pi\sqrt{-1}{\rm Tr}(  {}^{\rm t}Y  (-B^{-1}A )Y )  }
          P^\alpha(\iota(Y)) 
      e^{ -2\pi {\rm Tr}( {}^{\rm t}Y Y )  }  
      e^{ 4\pi\sqrt{-1} {\rm Tr}( {}^{\rm t}X Y )  }  \times 2^3 {\rm d}Y  \\    \displaybreak[0]
   & \times \int_{{\mathbf R}^2}  
       e^{ - 2\pi\sqrt{-1} (\eta_1,\eta_2) (-B^{-1}A)  \left( \begin{smallmatrix} \eta_1 \\ \eta_2 \end{smallmatrix} \right)   } 
       e^{-2\pi(  \eta^2_1 + \eta^2_2  )} 
        e^{  -4\pi\sqrt{-1} (\xi_1\eta_1+\xi_2\eta_2)  }  \times 2 {\rm d}\eta  \\   \displaybreak[0]
= & (-1) \times  \int_{{\rm M}_{2,3}({\mathbf R})} 
          e^{ 4\pi\sqrt{-1} {\rm Tr}({}^{\rm t}X Y )  }   
          e^{2\pi\sqrt{-1}{\rm Tr}(  {}^{\rm t}Y  ( \sqrt{-1} 1_2 - B^{-1}A )Y )  }   
          P^\alpha(\iota(Y)) \times 2^3 {\rm d}Y  \\    \displaybreak[0]
   & \times \int_{{\mathbf R}^2}  
        e^{-4\pi\sqrt{-1} (\xi_1\eta_1+\xi_2\eta_2)  } 
               e^{2\pi\sqrt{-1} (\eta_1,\eta_2) ( \sqrt{-1} 1_2 + B^{-1}A)  \left( \begin{smallmatrix} \eta_1 \\ \eta_2 \end{smallmatrix} \right)   }   \times 2 {\rm d}y   \\    \displaybreak[0]
= & (-1) \times  \int_{{\rm M}_{2,3}({\mathbf R})} 
          e^{ \sqrt{-1} {\rm Tr}({}^{\rm t}(4\pi X) Y )  }   
          e^{ \frac{\sqrt{-1}}{2} {\rm Tr}(  {}^{\rm t}Y  ( 4\pi( \sqrt{-1} 1_2 - B^{-1}A ) )Y )  }   
          P^\alpha(\iota(Y))  \times 2^3 {\rm d}Y  \\   \displaybreak[0]
   & \times \int_{{\mathbf R}^2}  
        e^{\sqrt{-1} ( -4\pi( \xi_1\eta_1+\xi_2\eta_2) )   } 
               e^{ \frac{\pi\sqrt{-1}}{2} (\eta_1,\eta_2) (4\pi ( \sqrt{-1} 1_2 + B^{-1}A) )  \left( \begin{smallmatrix} \eta_1 \\ \eta_2 \end{smallmatrix} \right)   }  \times 2 {\rm d}\eta. 
\end{align*}
Since $P^\alpha$ is pluri-harmonic by  Lemma \ref{pluri}, \cite[Lemma 4.5]{kv78} yields that 
the above identity is equal to 
\begin{align*}
 & (-1) \times   (2\pi)^\frac{2\cdot 3}{2} \left( \det \frac{   4\pi( \sqrt{-1} 1_2 - B^{-1}A )   }{\sqrt{-1}} \right)^{-\frac{3}{2}}  
        e^{  \frac{\sqrt{-1}}{2} {\rm Tr}(  {}^{\rm t} (4\pi X)(     -    \frac{1}{4\pi} ( \sqrt{-1} 1_2 -  B^{-1}A )^{-1}         ) (4\pi X) )   }  \times 2^3   \\
   & \quad \quad \times         P^\alpha(\iota(    -    \frac{1}{4\pi} ( \sqrt{-1} 1_2 -  B^{-1}A )^{-1} \times  4\pi X   ))   \\
   & \times (2\pi)^\frac{1\cdot 2}{2} 
         \left( \det \frac{  4\pi ( \sqrt{-1} 1_2  +  B^{-1}A   }{\sqrt{-1}} \right)^{-\frac{1}{2}}  
          e^{   \frac{\sqrt{-1}}{2}   (4\pi \xi_1, 4\pi \xi_2)( -  \frac{1}{4\pi}(\sqrt{-1} 1_2 + B^{-1}A)^{-1}    ) \left( \begin{smallmatrix} 4\pi \xi_1 \\ 4\pi \xi_2 \end{smallmatrix} \right)    } \times 2  \\
 =& (-1) \times     \left( \det (  1_2  +  \sqrt{-1}  B^{-1}A )  \right)^{-\frac{3}{2}}  
        e^{  -2\pi \sqrt{-1} {\rm Tr}(  {}^{\rm t} X      ( \sqrt{-1} 1_2 -  B^{-1}A )^{-1}          X )   }    \\
   & \quad \quad \times         P^\alpha(\iota(   -    ( \sqrt{-1} 1_2 -  B^{-1}A )^{-1}   X)   )   \\
   & \times  
         \left( \det  ( 1_2 - \sqrt{-1} B^{-1}A   ) \right)^{-\frac{1}{2}}  
          e^{   - 2\pi\sqrt{-1}  ( \xi_1,  \xi_2) (\sqrt{-1}  1_2 + B^{-1}A)^{-1}     \left( \begin{smallmatrix}  \xi_1 \\  \xi_2 \end{smallmatrix} \right)    }.
\end{align*}
Here we take a blanch of $\det(1_n+\sqrt{-1}S)^{\frac{m}{2}}$ for $S={}^{\rm t} S\in {\rm M}_n({\mathbf R})$ and $m\in {\mathbf Z}$ as follows.
For $z=re^{\sqrt{-1}\theta}\in {\mathbf C}$ for $r>0$ and $-\pi<\theta\leq \pi$, we define $ z^{\frac{1}{2}}  = r^{\frac{1}{2}}  e^{\sqrt{-1}\frac{\theta}{2}}$.
Since $S$ is a symmetric matrix, we find a $T\in {\rm O}_n({\mathbf R})$ such that
\begin{align*}
    {}^{\rm t} T(1_n+\sqrt{-1}S)T = 1_n + \sqrt{-1}\begin{pmatrix} d_1 && \\ & \ddots & \\ & & d_n  \end{pmatrix},
\end{align*}
where $d_1,\ldots, d_n \in {\mathbf R}$.
Define
\begin{align*}
   \det(1_n+\sqrt{-1}S)^{\frac{m}{2}}   = \left( |\det T|^{-1}\prod^n_{i=1}(1+\sqrt{-1}d_i)^{\frac{1}{2}} \right)^m .
\end{align*}
Summarizing the above computation, we obtain
\begin{align*}
 & \omega_V \left( \begin{pmatrix}  0_2 & 1_2 \\
                     -1_2 & 0_2  \end{pmatrix}
  \begin{pmatrix}  1_2 & -B^{-1}A \\
                           0_2 & 1_2  \end{pmatrix}  \right)\varphi^\alpha_\infty(\widetilde{X}) \\
= & (-1) \times  \det ( 1_2 + \sqrt{-1}B^{-1}A)^{-\frac{3}{2}}  
                           e^{  -2\pi \sqrt{-1} {\rm Tr}(  {}^{\rm t} X      ( \sqrt{-1} {\bf 1}_2 -  B^{-1}A )^{-1}          X )   }  
             \times     P^\alpha(  \iota(  -    ( \sqrt{-1} 1_2 -  B^{-1}A )^{-1})  X  )  )    \\
   & \times  \det ( 1_2 -  \sqrt{-1}B^{-1}A)^{-\frac{1}{2}}  
                   e^{   - 2\pi\sqrt{-1}  ( \xi_1,  \xi_2) (\sqrt{-1} {\bf 1}_2 + B^{-1}A)^{-1}     \left( \begin{smallmatrix}  \xi_1 \\  \xi_2 \end{smallmatrix} \right)    } .
\end{align*}
Note that
\begin{align*}
    &  \det ( 1_2 + \sqrt{-1}B^{-1}A)^{-\frac{3}{2}}  \det ( 1_2 - \sqrt{-1}B^{-1}A)^{-\frac{1}{2}} \\
 = &  \det ( 1_2  +  \sqrt{-1}B^{-1}A)^{-1}   (  \det ( 1_2 +  \sqrt{-1}B^{-1}A)^{-\frac{1}{2}}   \det ( 1_2  -  \sqrt{-1}B^{-1}A))^{-\frac{1}{2}} \\
= & \det(\sqrt{-1}B^{-1})^{-1} \det(A-\sqrt{-1}B)^{-1}   \\
   & \quad      \times \left\{   \det(  \sqrt{-1}B^{-1}) \det( - \sqrt{-1}B^{-1})   
                                \det (A-\sqrt{-1}B)  \det (A+\sqrt{-1}B)      \right\}^{-\frac{1}{2}}    \\
= & -\det B  \det g \times |\det B|.
\end{align*}
We also find that  
\begin{align*}
    P^\alpha(  \iota(  -    ( \sqrt{-1} 1_2 -  B^{-1}A )^{-1}  X)  )  
    =&    P^\alpha(  - \iota(   X  )    {}^{\rm t}( \sqrt{-1} 1_2 -  B^{-1}A )^{-1}   )      
   =  P^\alpha(      \iota( X  )    {}^{\rm t}  B   g ) . 
\end{align*}
By using (\ref{eq:weilsch}) again,  $\omega_V(g)\varphi^\alpha_\infty$ is given as follows:
\begin{align*}
       \omega_V\left( \begin{pmatrix}  A & B \\ -B & A  \end{pmatrix}  \right)\varphi^\alpha_\infty(\widetilde{X})  
  = &  \psi_{{\mathbf Q}, \infty} ( S_{\widetilde{X}} (-AB^{-1})) \times \chi_{V_\infty}(\det B) |\det B^{-1}|^{\frac{4}{2}} \\ 
     &    \times (-1) \times  (-\det B |\det B| \det g) \\
     &  \times    e^{  -2\pi \sqrt{-1} {\rm Tr}(  {}^{\rm t} (B^{-1}X)      ( \sqrt{-1} {\bf 1}_2 -  B^{-1}A )^{-1}          B^{-1}X )   }  
                     P^\alpha( \iota(X) {}^{\rm t} B^{-1} {}^{\rm t} B g  )    \\
      & \times  e^{   - 2\pi\sqrt{-1}  ( \xi_1,  \xi_2) {}^{\rm t}B^{-1} ( (\sqrt{-1} {\bf 1}_2 + B^{-1}A)^{-1}    )  B^{-1} \left( \begin{smallmatrix}  \xi_1 \\  \xi_2 \end{smallmatrix} \right)   }.
\end{align*}
In the straight forward way, we find that
\begin{align*}
   &   e^{  -2\pi \sqrt{-1} {\rm Tr}(  {}^{\rm t} (B^{-1}X)      ( \sqrt{-1} 1_2 -  B^{-1}A )^{-1}          B^{-1}X )   }  
       \times  e^{   - 2\pi\sqrt{-1}  ( \xi_1,  \xi_2) {}^{\rm t}B^{-1} ( (\sqrt{-1} 1_2 + B^{-1}A)^{-1}    )  B^{-1} \left( \begin{smallmatrix}  \xi_1 \\  \xi_2 \end{smallmatrix} \right)   }   \\
  =&  e^{-\pi{\rm Tr}(\widetilde{X}_1  {}^{\rm t}\overline{\widetilde{X}_1}+\widetilde{X}_2 {}^{\rm t}\overline{\widetilde{X}_2} )}
         \times e^{2\pi\sqrt{-1} {\rm Tr}(S_{\widetilde{X}}( {}^{\rm t}B^{-1} {}^{\rm t}A) )}. 
\end{align*}
Since $AB^{-1} = {}^{\rm t}B^{-1} {}^{\rm t}A$, we obtain 
\begin{align*}
       \omega_V\left( \begin{pmatrix}  A & B \\ -B & A  \end{pmatrix}  \right)\varphi^\alpha_\infty(\widetilde{X})  
  =   \det g P^\alpha(\iota(X) g)  
       e^{-\pi{\rm Tr}(\widetilde{X}_1  {}^{\rm t}\overline{\widetilde{X}_1}+\widetilde{X}_2 {}^{\rm t}\overline{\widetilde{X}_2} )}.
\end{align*}
Define ${\mathbf q}(x_1, x_2), {\mathbf p}^\prime(x) (x_1, x_2, x\in V)$ to be 
\begin{align*}
       {\mathbf q}(x_1, x_2) =& {\mathbf p}\left( \frac{1}{8}\left(        (x_1+{}^{\rm t}x_1) w_0 (x_2+{}^{\rm t}x_2)  
                                                                                               + {}^{\rm t}(  (x_1+{}^{\rm t}x_1) w_0 (x_2+{}^{\rm t}x_2) )  
                                                                                         \right)  
                                                                         \right),   \\
       {\mathbf p}^\prime(x)  =& {\mathbf p}\left(\frac{1}{2}(x+{}^{\rm t}x) \right).
\end{align*}
Note that $\varphi^\alpha_\infty(x_1,x_2)
                                      = {\mathbf q}(x_1, x_2) {\mathbf p}^\prime(x_1)^\alpha {\mathbf p}^\prime(x_2)^{n-\alpha} e^{-\pi {\rm Tr}(x_1 {}^{\rm t}\overline{x_1} + z_2 {}^{\rm t}\overline{x_2})}$.
Write $g = \begin{pmatrix} a & b \\ c & d \end{pmatrix} \in {\rm U}_2({\mathbf R})$.   
Then we find that 
\begin{align*}
    {\mathbf q}(    a x_1 + c x_2  ,  b x_1 + d x_2 ) = \det g {\mathbf q}(x_1, x_2).  
\end{align*}
This proves that 
\begin{align*}
    P^\alpha( \widetilde{X} g) = \det g {\mathbf q}(\widetilde{X})
                                                (a{\mathbf p}^\prime(\widetilde{X}_1) + c{\mathbf p}^\prime(\widetilde{X}_1))^\alpha 
                                                (b{\mathbf p}^\prime(\widetilde{X}_1) + d{\mathbf p}^\prime(\widetilde{X}_2))^{n-\alpha},
\end{align*}
and hence we obtain 
\begin{align*}
    &   \omega_V \left( \begin{pmatrix}  A & B \\ -B & A  \end{pmatrix}  \right)\varphi^\alpha_\infty(\widetilde{X}) \\
 = &   \sum^n_{\alpha=0}   \det g^2 {\mathbf q}(\widetilde{X})  
                                   (a{\mathbf p}^\prime(\widetilde{X}_1) + c{\mathbf p}^\prime(\widetilde{X}_2))^\alpha 
                                   (b{\mathbf p}^\prime(\widetilde{X}_1) + d{\mathbf p}^\prime(\widetilde{X}_2))^{n-\alpha}
          e^{-\pi{\rm Tr}(\widetilde{X}_1  {}^{\rm t}\overline{\widetilde{X}_1}+\widetilde{X}_2 {}^{\rm t}\overline{\widetilde{X}_2} )} \otimes
            \binom{n}{\alpha} X^\alpha Y^{n-\alpha}  \\
 = &   \sum^n_{\alpha=0}    P^\alpha(\widetilde{X}) 
         e^{-\pi{\rm Tr}(\widetilde{X}_1  {}^{\rm t}\overline{\widetilde{X}_1}+\widetilde{X}_2 {}^{\rm t}\overline{\widetilde{X}_2} )}
                                  \otimes     \binom{n}{\alpha}\cdot \rho_{(n+2, 2)}({}^{\rm t} g)(X^\alpha Y^{n-\alpha}). 
\end{align*}
This proves the first statement.

We prove the statement \ref{HSTwt(ii)}. 
Put
\begin{align*}
  x= \begin{pmatrix} z & \sqrt{-1}t \\ \sqrt{-1} t & \overline{z}  \end{pmatrix}, x_1, x_2 \in V_\infty \cap V^0; \quad  
  {\mathbf p}(x; (X,Y)) = z X^2 + 2 \sqrt{-1} t XY + \overline{z} Y^2.
\end{align*}
By the definition (\ref{def:t}) of ${\mathbf t} \in H({\mathbf R})$, we find that
\begin{align*}
  {\mathbf p}({\mathbf t}x; (X,Y))    
  =& -\overline{z} X^2 + 2 \sqrt{-1} t XY -z Y^2    
  = - {\mathbf p}( x ; (X,Y) w_0), \\
     {\mathbf p}\left(  \frac{1}{2} ( {\mathbf t}x_1w_0{\mathbf t}x_2   +{}^{\rm t}( {\mathbf t}x_1w_0{\mathbf t}x_2  )     ); (X,Y) \right)    
  =& {\mathbf p}\left(  \frac{1}{2} ( x_1w_0 x_2   +{}^{\rm t}( x_1w_0x_2  )     ); (X,Y)w_0 \right). 
\end{align*}
These identities and the definition of $P^\alpha$ show that
\begin{align*}
  \omega_\infty({\mathbf t}, 1)\varphi_\infty(x)
  = (-1)^{n} \tau_{2n+2}(w_0)\varphi_\infty(x). 
\end{align*}
Since $n$ is even and $\delta(\infty)=-1$, the definition of $\widetilde{\tau}_{2n+2}$ which is introduced in Section \ref{sec:minK} yields that 
\begin{align*}
  \omega_\infty({\mathbf t}, 1) \widetilde{\varphi}_\infty(x)
  =&  (  (-1)^{n} \tau_{2n+2}(w_0)\varphi_\infty(x),  (-1)\delta(\infty) \varphi_\infty(x)  )    \\
  =& ( (-1)^{n+1} \delta(\infty) \tau_{2n+2}(w_0)\varphi_\infty(x), \varphi_\infty(x) )   \\
  =&  \widetilde{\tau}_{2n+2}(w_0)\widetilde{\varphi}_\infty(x). 
\end{align*}
This shows the second statement. 
\end{proof}

Hereafter, we always fix the Bruhat-Schwartz function 
   $\widetilde{\varphi} = \widetilde{\varphi}_\infty\otimes_{v<\infty}\varphi_v 
       \in {\mathcal S}({\mathbf X}_{\mathbf A})\otimes \widetilde{\mathcal W}_{2n+2}({\mathbf C})^\vee \otimes {\mathcal L}_\lambda({\mathbf C})$, 
where $\widetilde{\varphi}_\infty$ and $\varphi_v$ are introduced as above. 
For later use, we put $\widetilde{\varphi}_v = \varphi_v$ for each finite $v\in \Sigma_{\mathbf Q}$.
Then, by Lemma \ref{HSTlev}, Lemma \ref{HSTwt} and \cite[Proposition 3]{hst93}, 
     we see that $\theta(\widetilde{\varphi}, \widetilde{\mathbf f}) :{\rm GSp}_4({\mathbf A}) \to {\mathcal L}_\lambda({\mathbf C}) $
     is a Siegel cusp form of weight $\lambda=(n+2,2)$ and of level $\Gamma_0(N_F)$. 
Denote by $\Pi$ the automorphic representation of ${\rm GSp}_4({\mathbf A})$ which is generated by $\theta(\widetilde{\varphi}, \widetilde{\mathbf f})$.   
Let $L(s, \Pi, {\rm spin}) = \prod_{v\in \Sigma_{\mathbf Q} } L(s, \Pi_v, {\rm spin})$ be the spinor $L$-function of $\Pi$. 
Then \cite[Lemma 10, Lemma 11]{hst93} yield that for each $v\in \Sigma_{\mathbf Q}\backslash \Sigma_{\rm ram}$,  
\begin{align*}
   L(s, \Pi_v, {\rm spin} ) = L(s,\pi_v). 
\end{align*}

\subsection{Fourier expansion of theta series}\label{secFC}

In this section, we give a description (\ref{thetaFC}) of adelic Fourier coefficients of HST lifts $\theta(\widetilde{\varphi}, \widetilde{\mathbf f})$
  in terms of the initial automorphic form $\widetilde{\mathbf f}$ and the Bruhat-Schwartz function $\widetilde{\varphi}$.  
For the simplicity, we write $\theta = \theta(\widetilde{\varphi}, \widetilde{\mathbf f})$ in this subsection.  

We write $S=S_x$ for some $x\in V({\mathbf Q})^{\oplus 2}$. 
For the unipotent subgroup $U({\mathbf A})$ of ${\rm Sp}({\mathbf A})$, 
let $\psi_S:U({\mathbf A})\to {\mathbf C}^\times$ be the character which is introduced in Section \ref{sec:autoSp4}.
Recall that  the adelic $S$-th Fourier coefficient  ${\mathbf W}_{\theta,S}(g)$ for $g\in {\rm GSp}^+_4({\mathbf A})$ of $\theta$ is defined to be
\begin{align}\label{def:FWc}
   {\mathbf W}_{\theta, S}(g) = \int_{[U]} \theta(ug) \psi_S(u) {\rm d}u,  
\end{align}
where ${\rm d}u$ is the Haar measure which is normalized so that ${\rm vol}(U({\mathbf Q}) \backslash U({\mathbf A}), {\rm d}u)=1$.  

For $g\in {\rm GSp}^+_4({\mathbf A})$, we take $h_1\in {\rm GO}(V)({\mathbf A})$ such that $\nu(g)=\nu(h_1)$. 
Then, we find that 
\begin{align}\label{wittFC}
\begin{aligned}[c]
   {\mathbf W}_{\theta, S}(g) 
   =& \int_{[U]} {\rm d}u \int_{[H_1 ]} {\rm d}h 
         \langle \theta(hh_1, ug), \widetilde{\mathbf f}(hh_1)  \rangle_{\widetilde{\mathcal W}} \psi_S(u)   \\
   =& \int_{[U]} {\rm d}u \int_{[H_1 ]} {\rm d}h
        \sum_{x\in{\mathbf X} } 
         \langle \omega(1, u) \omega(hh_1, g)\widetilde{\varphi}(x), \widetilde{\mathbf f}(hh_1)  \rangle_{\widetilde{\mathcal W}} \psi_S(u)   \\
   =&  \int_{[H_1 ]} {\rm d}h
        \sum_{ \substack{  x\in{\mathbf X}  \\   S=S_x } } 
         \langle \omega_V(g_1) \widetilde{\varphi} (h^{-1}_1h^{-1}x), \widetilde{\mathbf f}(hh_1)  \rangle_{\widetilde{\mathcal W}},   
\end{aligned}
\end{align}
where we put $g_1 = \begin{pmatrix} 1_2 & 0_2 \\  0_2 & \nu(g)^{-1} 1_2 \end{pmatrix} g$.   
For $x\in {\mathbf X} = V^{\oplus 2} $, define 
\begin{align*}
  H_x = \left\{  h\in H_1  \  | \  h\cdot x = x \right\}. 
\end{align*}
Then, Witt's theorem shows that
\begin{align} \label{thetaFC}   
\begin{aligned}[c]
   {\mathbf W}_{\theta, S}(g) 
   =&   \int_{[H_1]} {\rm d}h
        \sum_{ h^\prime \in H_x({\mathbf Q}) \backslash H_1({\mathbf Q}) } 
         \langle \omega_V(g_1) \widetilde{\varphi} (h^{-1}_1h^{-1} (h^\prime)^{-1} x), \widetilde{\mathbf f}( h^\prime hh_1)  \rangle_{\widetilde{\mathcal W}}   \\     
   =&   \int_{ H_x({\mathbf Q})  \backslash  H_1({\mathbf A}) } {\rm d}h
         \langle \omega_V(g_1) \widetilde{\varphi} (h^{-1}_1h^{-1}  x), \widetilde{\mathbf f}(  hh_1)  \rangle_{\widetilde{\mathcal W}}   \\  
   =&   \int_{ H_x({\mathbf A})  \backslash  H_1({\mathbf A}) } {\rm d}h
           \int_{ [H_x] } {\rm d}s
         \langle \omega_V(g_1) \widetilde{\varphi} (h^{-1}_1h^{-1}  s^{-1} x), \widetilde{\mathbf f}(  shh_1)  \rangle_{\widetilde{\mathcal W}}   \\    
   =&   \int_{ H_x({\mathbf A})  \backslash  H_1({\mathbf A}) } {\rm d}h
           \int_{ [H_x] } {\rm d}s
         \langle \omega_V(g_1) \widetilde{\varphi}(h^{-1}_1h^{-1}   x), \widetilde{\mathbf f}(  shh_1)  \rangle_{\widetilde{\mathcal W}}.   
\end{aligned}
\end{align}

\section{Inner product formula}\label{s:InnPrd}

In this section, we prove an explicit inner product formula for HST lifts. 
See Theorem \ref{thm:alginnprd} for the result. 
In Section \ref{pairH}, we  introduce a local Hermitian pairing for spaces of automorphic representations of ${\rm GO}_{3,1}({\mathbf Q}_v)$ for each $v\in \Sigma_{\mathbf Q}$.   
By using these local Hemitian pairings, 
 we decompose a global Hermitian pairing for automorphic forms on ${\rm GO}_{3,1}({\mathbf A})$ to a product of local pairings.       
To fix a decomposition of a global pairing is necessary to apply the Rallis inner product formula for HST lifts.  
In Section \ref{sec:Rinn}, we reduce a computation of an inner product of HST lifts to a computation of local integrals on ${\rm GSO}_{3,1}({\mathbf Q}_v)$    
  and we give an explicit inner product formula in Theorem \ref{HSTInnprd}  by using adelic language.  
The inner product formula in classical language 
  is given in Theorem \ref{thm:alginnprd} in Section \ref{sec:clainnprd}.

\subsection{Hermitian pairing}\label{pairH}

We define Hermitian pairings on the several representation spaces of automorphic representations of orthogonal groups.  
These pairings will be used in Section \ref{sec:Rinn}  to introduce the Rallis inner product formula. 
In this subsection, we use the same notation in Section \ref{secrepGO} and \ref{secautoGO}. 

Recall that ${\mathcal U}_{\mathfrak N}$ is a subgroup of $H^0( {\mathbf A}_{\rm fin} )$ which is defined in Section \ref{secautoGO}. 
For each finite place $v$ of ${\mathbf Q}$, the ${\mathcal U}_{\mathfrak N, v}$-invariant subspace $\sigma^{{\mathcal U}_{\mathfrak N, v}}_v$ of $\sigma_v$     
    is one dimensional by the theory of newform. 
We fix a non-zero element $f^0_v$ in $\sigma^{{\mathcal U}_{\mathfrak N, v}}_v$. 
We may assume that 
\begin{align*}
   \xi_v  f^0_v  = f^0_v
\end{align*}   
 for each finite place $v\in {\mathfrak S}$. 
Fix an ${\rm SU}_2({\mathbf R})$-equivariant map $\iota_\infty: {\mathcal W}_{2n+2}({\mathbf C}) \to {\mathcal V}_\sigma$, which is unique up to constant.
For each $u\in {\mathcal W}_{2n+2}({\mathbf C})$, define 
\begin{align}
    f^0_{\infty, u} = \iota_\infty( u  ) \in {\mathcal V}_\infty.   \label{eq:InfModel}
\end{align} 
Then we may fix an isomorphism $j: \otimes^\prime_v \sigma_v \to \sigma$ so that 
\begin{align}
   j(  f^0_{\infty, u} \otimes^\prime_{v<\infty} f^0_v) = \langle {\mathbf f},  u \rangle_{\mathcal W}. 
  \label{LocGrobIsoGSO}
\end{align}
We also define
\begin{align*}
   \widetilde{f}^{0}_{\infty,u} =  (f^0_{\infty, u},  \delta(\infty) \xi_\infty f^0_{\infty, u}) \in \widetilde{\mathcal V}_\infty; \quad 
   \widetilde{f}^{0}_{v}
        =   (f^{0}_{v}, f^{0}_{v})  
                                      \in \widetilde{\mathcal V}_v, \quad (v<\infty).
\end{align*}
Write $\widetilde{u} = (u, (-1)^{n+1} \delta(\infty) \tau_{2n+2}(w_0)u)\in \widetilde{\mathcal W}_{2n+2}({\mathbf C})$   
 for each $u\in {\mathcal W}_{2n+2}({\mathbf C})$.
Then, we fix an isomorphism $\widetilde{j}: \otimes^\prime_v \widetilde{\sigma}_v \to \widetilde{\sigma}$ so that 
\begin{align}
  \widetilde{j}(  \widetilde{f}^0_{\infty, u} \otimes^\prime_{v<\infty} \widetilde{f}^0_v) = \langle \widetilde{\mathbf f}, \widetilde{u} \rangle_{\widetilde{\mathcal W}}.
  \label{LocGrobIsoGO}
\end{align}

Let ${\mathcal B}_{\mathcal W}: {\mathcal W}_{2n+2}({\mathbf C})^{\otimes 2} \to {\mathbf C}$  
be the ${\rm SU}_2({\mathbf R})$-equivariant  Hermitian pairing which is given by 
\begin{align*}
   {\mathcal B}_{\mathcal W}(u_1, u_2) 
      = \langle u_1, \tau_{2n+2}(w_0) \cdot \bar{u}_2  \rangle_{\mathcal W},   
      \quad \left(w_0=\begin{pmatrix} 0 & 1 \\ -1 & 0 \end{pmatrix},  
                  u_1, u_2 \in {\mathcal W}_{2n+2}({\mathbf C}) \right).    
\end{align*}
Define ${\mathcal B}_{\sigma_\infty}$ to be the Hermitian pairing ${\mathcal V}^{\otimes 2}_\infty \to {\mathbf C} $   
such that  
\begin{align}
    {\mathcal B}_{\sigma_\infty}( f^0_{\infty, u_1}, f^0_{\infty, u_2})  
      = {\mathcal B}_{\mathcal W}(u_1, u_2)    \label{eq:BInf}
\end{align}
for each $u_1, u_2 \in {\mathcal W}_{2n+2}({\mathbf C})$.   
See Lemma \ref{l:pairB0} for the explicit form of the pairing ${\mathcal B}_{\sigma_\infty}$.  
For each finite place $v$ of ${\mathbf Q}$, 
we define the Hermitian pairing ${\mathcal B}_{\sigma_v}:{\mathcal V}^{\otimes 2}_v  \to {\mathbf C}$ 
so that ${\mathcal B}_{\sigma_v}(f^0_v, f^0_v)=1$.

For each place $v$ of ${\mathbf Q}$, the Hemitian pairing ${\mathcal B}_{\sigma_v}$ is extended to the pair on $\widehat{\sigma}_v$ and $\widehat{\sigma}^\pm_v$  as follows.
Let ${\mathcal B}_{\sigma^\sharp_v}: {\mathcal V}^{\sharp \otimes 2}_v \to {\mathbf C}$    
be the Hermitian pairing which is defined to be
\begin{align*}
   {\mathcal B}_{\sigma^\sharp_v}((f_1,f^\prime_1), (f_2,f^\prime_2))
   = \frac{1}{2} \left(  {\mathcal B}_{\sigma_v}(f_1, f_2) + {\mathcal B}_{\sigma_v}(f^\prime_1, f^\prime_2) 
                                 \right).
\end{align*}
Define ${\mathcal B}_{\widehat{\sigma}^\pm_v}={\mathcal B}_{\sigma^\sharp_v}|_{{\mathcal V}^\pm_v}$ if $v\in {\mathfrak S}$.   
We also define
\begin{align*}
 {\mathcal B}_{\widetilde{\sigma}_\infty} = {\mathcal B}_{\widehat{\sigma}^-_\infty}; \quad   
 {\mathcal B}_{\widetilde{\sigma}_v} = \begin{cases} {\mathcal B}_{\widehat{\sigma}^+_v},   &  (v\in {\mathfrak S}, v < \infty),  \\    
                                                                                    {\mathcal B}_{\widehat{\sigma}^\sharp_v},  &   (v\not\in {\mathfrak S}).  
                                                            \end{cases}       
\end{align*}

The following lemma is not used in Section \ref{s:InnPrd}. 
However, this lemma verifies the existence of the pairing ${\mathcal B}_{\sigma_\infty}$ in an explicit way 
and this is necessary for the explicit computation of the inner product of HST lifts in Section \ref{s:ArchInt}. 

\begin{lem}\label{l:pairB0}
{\itshape 
Let $W_{\sigma, \infty}$ be the element in Lemma \ref{l:whitt}.     
Put $W^j_{\sigma, \infty} =\langle  W_{\sigma, \infty}, X^{n+1+j} Y^{n+1-j}\rangle_{\mathcal W}$
          and 
          \begin{align*}
               C_\infty = \frac{  2^{-4} {\rm dim}{\mathcal W}_{2n+2}({\mathbf C})^2 \binom{2n+2}{n+1}  }{\Gamma_{\mathbf C}(2n+4)},  
               \quad (\Gamma_{\mathbf C}(s) := 2(2\pi)^{-s} \Gamma(s)). 
          \end{align*} 
        Define ${\mathcal B}^0_{\sigma_\infty}: {\mathcal V}^{\otimes 2}_\infty  \to {\mathbf C}$ by 
        \begin{align*}
                 {\mathcal B}^0_{\sigma_\infty}(W^i_{\sigma, \infty}, W^j_{\sigma, \infty})    
           = C_\infty \times 
                  \int_{{\rm SU}_2({\mathbf R})} 
                   \int^\infty_0
                      W^i_{\sigma, \infty}\left( \begin{pmatrix} t &  0 \\ 0 & 1 \end{pmatrix} u \right)  
                              \overline{  W^j_{\sigma, \infty}\left( \begin{pmatrix} t &  0 \\ 0 & 1 \end{pmatrix} u  \right)  }  
                              \frac{{\rm d}t }{t}
                              {\rm d}u, 
        \end{align*}
        where ${\rm d}u$ is  the Haar measure on ${\rm SU}_2({\mathbf R})$ such that ${\rm vol}({\rm SU}_2({\mathbf R}), {\rm d}u)=1$.
          Then, ${\mathcal B}^0_{\sigma_\infty}$ is an ${\rm SU}_2({\mathbf R})$-equivariant pairing  
          and we have
          \begin{align*}
             {\mathcal B}^0_{\sigma_\infty}(W^i_{\sigma, \infty}, W^j_{\sigma, \infty})  
          =& {\mathcal B}_{\mathcal W}( X^{n+1+i} Y^{n+1-i}, X^{n+1+j} Y^{n+1-j} ) 
           = \begin{cases}   \binom{2n+2}{n+1+i}^{-1},    &  (i=j),  \\
                                      0,   &  (i\neq j).   \end{cases}
        \end{align*}
        In particular, ${\mathcal B}^0_{\sigma_\infty}$ gives an explicit form of ${\mathcal B}_{\sigma_\infty}$ in (\ref{eq:BInf}).
}
\end{lem}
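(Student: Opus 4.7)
The plan is to reduce the claim to a one-parameter family of identities via Schur's lemma, then verify a single case by an explicit calculation using Bessel function integrals. The ${\rm SU}_2({\mathbf R})$-equivariance of ${\mathcal B}^0_{\sigma_\infty}$ is immediate: by invariance of the Haar measure on ${\rm SU}_2({\mathbf R})$, the substitution $u\mapsto k u$ yields ${\mathcal B}^0_{\sigma_\infty}(\sigma_\infty(k)v_1, \sigma_\infty(k)v_2) = {\mathcal B}^0_{\sigma_\infty}(v_1,v_2)$ for every $k \in {\rm SU}_2({\mathbf R})$.

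Since the minimal ${\rm SU}_2({\mathbf R})$-type of ${\mathcal V}_\infty$ is the irreducible representation $\tau_{2n+2}$, Schur's lemma shows that any two ${\rm SU}_2({\mathbf R})$-invariant Hermitian forms on it are proportional. The form ${\mathcal B}_{\mathcal W}$ transports along $\iota_\infty$ to such an invariant form, so it suffices to verify ${\mathcal B}^0_{\sigma_\infty}(W^i_{\sigma,\infty},W^i_{\sigma,\infty})=\binom{2n+2}{n+1+i}^{-1}$ for at least one value of $i$; the vanishing for $i\ne j$ is then automatic from the Schur decomposition.

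For the diagonal computation, the transformation rule in Section \ref{sec:autoGL2} gives $W_{\sigma,\infty}(gu) = \rho_{(2n+2,0)}(u^{-1})W_{\sigma,\infty}(g)$, and since ${\rm det}\, u=1$ the pairing $\langle\cdot,\cdot\rangle_{\mathcal W}$ satisfies $\langle \rho(u^{-1})v,w\rangle_{\mathcal W}=\langle v,\rho(u)w\rangle_{\mathcal W}$, so
\begin{align*}
W^i_{\sigma,\infty}(g u) = \sum_k a_{ki}(u)\, W^k_{\sigma,\infty}(g),
\end{align*}
where $a_{ki}(u)$ are the matrix coefficients of $\rho_{(2n+2,0)}(u)$ in the basis $u_j=X^{n+1+j}Y^{n+1-j}$. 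Passing to the orthonormal basis for the ${\rm SU}_2({\mathbf R})$-invariant form $\langle X^aY^b,X^aY^b\rangle_K=a!b!$ and applying Schur orthogonality yields
\begin{align*}
\int_{{\rm SU}_2({\mathbf R})} a_{ki}(u)\overline{a_{lj}(u)}\, du
= \frac{(n+1+i)!(n+1-i)!}{(n+1+k)!(n+1-k)!(2n+3)} \delta_{ij}\delta_{kl}.
\end{align*}
Substituting the value $W^k_{\sigma,\infty}({\rm diag}(t,1))=2^4\sqrt{-1}^k t^{n+2}K_k(4\pi t)$ from Lemma \ref{l:whitt} together with the classical Mellin identity
\begin{align*}
\int_0^\infty K_k(4\pi t)^2 \, t^{2n+3}\, dt
= \frac{\sqrt{\pi}\,(n+1)!\,(n+1+k)!\,(n+1-k)!}{4(4\pi)^{2n+4}\Gamma(n+\tfrac{5}{2})},
\end{align*}
the $k$-dependence in the summand cancels, the sum over $k$ contributes a factor $2n+3$ which cancels the Schur denominator, and using $\Gamma(n+\tfrac{5}{2})=(2n+4)!\sqrt{\pi}/(4^{n+2}(n+2)!)$ one checks that $C_\infty$ is precisely the constant needed to produce the claimed $\binom{2n+2}{n+1+i}^{-1}$.

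The main obstacle is bookkeeping: ensuring that $C_\infty$, the $(4\pi)$-normalization of the Bessel integral, the $\Gamma(n+\tfrac{5}{2})$ factor, and the ratios $(n+1\pm i)!/(n+1\pm k)!$ coming from the change to an orthonormal basis all combine cleanly to the target binomial coefficient. The analytic ingredients (Schur orthogonality and the Mellin transform of $K_\nu^2$) are classical, but the combinatorial cancellation is delicate, and is precisely what motivates the explicit form of $C_\infty$ in the statement.
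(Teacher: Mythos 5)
Your proposal is correct and follows essentially the same route as the paper: expand $W^i_{\sigma,\infty}(gu)$ in terms of matrix coefficients of $\tau_{2n+2}$, use Schur orthogonality for the ${\rm SU}_2({\mathbf R})$-integral (which automatically produces the $\delta_{ij}$), and evaluate $\int_0^\infty t^{2n+3}K_k(4\pi t)^2\,{\rm d}t$ via the Mellin transform of $K_\nu^2$; the paper does exactly this, quoting the Magnus--Oberhettinger--Soni formula directly rather than your $\Gamma(n+\tfrac{5}{2})$ form, but a short manipulation with the Legendre duplication formula shows the two expressions for the Bessel integral agree. The only cosmetic difference is that you invoke Schur's lemma up front to reduce to a single diagonal entry and then work in an orthonormal basis, whereas the paper computes all entries at once in the $u_k^\vee$-basis and the proportionality falls out.
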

\begin{proof}
Put $u_i=X^{n+1+i}Y^{n+1-i}$.   
It suffices to compute ${\mathcal B}^0_{\sigma, \infty}(W^i_{\sigma, \infty}, W^j_{\sigma, \infty})$ explicitly. 
Since we have 
\begin{align*}
  W^i_{\sigma, \infty}(gu) = \langle W_{\sigma, \infty}(g), \tau_{2n+2}(u) u_i  \rangle_{\mathcal W},   
      \quad (u\in {\rm SU}_2({\mathbf R})),  
\end{align*}
we find that 
\begin{align*}
     & {\mathcal B}^0_{\sigma, \infty}(W^i_{\sigma, \infty}, W^j_{\sigma, \infty})   \\
  = & \int_{{\rm SU}_2({\mathbf R})}  {\rm d}u
      \int^\infty_0 \frac{{\rm d}t}{t}  \\
     & \quad \left\langle  \sum^{n+1}_{k=-n-1} W^k_{\sigma, \infty} \left(\begin{pmatrix} t &  0 \\ 0 & 1 \end{pmatrix} \right) u^\vee_k, \tau_{2n+2}(u) u_i  \right\rangle_{\mathcal W}
      \left\langle  \sum^{n+1}_{l=-n-1} \overline{ W^l_{\sigma, \infty} \left(\begin{pmatrix} t &  0 \\ 0 & 1 \end{pmatrix} \right)}  u^\vee_l, \tau_{2n+2}(\overline{u}) u_j  \right\rangle_{\mathcal W}   \\
 =&  \sum_{k,l}    
      \int^\infty_0 
       W^k_{\sigma, \infty}\left(\begin{pmatrix} t &  0 \\ 0 & 1 \end{pmatrix}\right)  \overline{ W^l_{\sigma, \infty}\left( \begin{pmatrix} t &  0 \\ 0 & 1 \end{pmatrix} \right) }
       \frac{{\rm d}t}{t}
       \int_{{\rm SU}_2({\mathbf R})}
        \left\langle u^\vee_k, \tau_{2n+2}(u) u_i  \right\rangle_{\mathcal W}
      \left\langle    u^\vee_l, \tau_{2n+2}(\overline{u}) u_j  \right\rangle_{\mathcal W}
       {\rm d}u. 
\end{align*}
The Schur's orthogonality relation (\cite[Corollary 4.10]{kn02}) yields 
\begin{align*}
   & \int_{{\rm SU}_2({\mathbf R})}
        \langle u^\vee_k, \tau_{2n+2}(u) u_i  \rangle_{\mathcal W}
      \langle    u^\vee_l, \tau_{2n+2}(\overline{u}) u_j  \rangle_{\mathcal W}
       {\rm d}u  \\   \displaybreak[0]
  =& \int_{{\rm SU}_2({\mathbf R})}
        \langle u^\vee_k,   \tau_{2n+2}(w_0) \tau_{2n+2}(\overline{u}) \tau_{2n+2}(w^{-1}_0)  u_i  \rangle_{\mathcal W}
        \overline{ \langle     u^\vee_l,  \tau_{2n+2}(w_0) \tau_{2n+2}( \overline{u} ) \tau_{2n+2}(w^{-1}_0)  u_j  \rangle_{\mathcal W}  }
       {\rm d}u  \\    \displaybreak[0]
  =& \int_{{\rm SU}_2({\mathbf R})}
        {\mathcal B}_{\mathcal W}(u^\vee_k, \tau_{2n+2}(u)\tau_{2n+2}(w^{-1}_0)u_i)
        \overline{ {\mathcal B}_{\mathcal W}( u^\vee_l, \tau_{2n+2}(u)\tau_{2n+2}(w^{-1}_0) u_j ) }
       {\rm d}u  \\   \displaybreak[0]
       =& \frac{1}{{\rm dim}{\mathcal W}_{2n+2}({\mathbf C})}
         {\mathcal B}_{\mathcal W}( u^\vee_k,   u^\vee_l )
         \overline{ {\mathcal B}_{\mathcal W}(   \tau_{2n+2}(w^{-1}_0) u_i,\tau_{2n+2}(w^{-1}_0) u_j   ) } \\ 
       =& \frac{\delta_{k,l} \delta_{i,j} }{{\rm dim}{\mathcal W}_{2n+2}({\mathbf C})}  
              \binom{2n+2}{n+1+k}
            \binom{2n+2}{n+1+i}^{-1},     
\end{align*}
where $\delta_{s,t}$ is the Kronecker's delta.
We compute the following integral:
\begin{align*}
      \int^\infty_0 
       W^k_{\sigma, \infty} \left(\begin{pmatrix} t &  0 \\ 0 & 1 \end{pmatrix} \right)  \overline{ W^{k}_{\sigma, \infty} \left(\begin{pmatrix} t &  0 \\ 0 & 1 \end{pmatrix} \right)}
       \frac{{\rm d}t}{t}  
  =&  \int^\infty_0   
         2^4 \sqrt{-1}^{k} t^{n+2} K_{k}(4\pi t) 
         \times \overline{  2^4\sqrt{-1}^{k} t^{n+2} K_{k}(4\pi t) } 
                              \frac{{\rm d}t}{t} \\      
  =& 2^8 \times \int^\infty_0   
          t^{2n+3}  K_k(4\pi t) K_{k}(4\pi t)   
        {\rm d}t.      
\end{align*}
By \cite[page 101]{mos66},  we find that 
\begin{align*}
 &  \int^\infty_0   
          t^{2n+3}  K_k(4\pi t) K_{k}(4\pi t)   
        {\rm d}t   \\
=& 2^{2n+3-2} (4\pi)^{-2n-3-1} \times \frac{1}{\Gamma(2n+4)}  \\
  &  \times \Gamma \left(\frac{1+2k+2n+3}{2} \right)  \Gamma\left(\frac{1+2n+3}{2} \right) 
                 \Gamma \left(\frac{1+2n+3}{2} \right) \Gamma\left(\frac{1-2k+2n+3}{2}\right) \\
  & \times {}_2F_1\left(\frac{1+2k+2n+3}{2}, \frac{1+2n+3}{2}; 2n+4;0 \right)  \\
=& 2^{-4} \cdot 2 \cdot  (2\pi)^{-2n-4} \Gamma(2n+4) \times \frac{(n+1+k)!(n+1)!(n+1)!(n+1-k)!}{ (2n+3)! (2n+3)!}  \\
=& 2^{-4} \cdot \frac{\Gamma_{\mathbf C}(2n+4)}{{\rm dim}{\mathcal W}_{2n+2}({\mathbf C})^2 \binom{2n+2}{n+1} } \times \binom{2n+2}{n+1+k}^{-1}.
\end{align*}
Therefore we obtain 
\begin{align*}
  {\mathcal B}^0_{\sigma, \infty}( W^i_{\sigma, \infty}, W^j_{\sigma, \infty} )
  =& \frac{\delta_{i,j}}{{\rm dim}{\mathcal W}_{2n+2}({\mathbf C})}
  \cdot \binom{2n+2}{n+1+i}^{-1}
  \sum_{k}  \binom{2n+2}{n+1+k}  \\
  &\times 2^8 \times   2^{-4} \cdot \frac{\Gamma_{\mathbf C}(2n+4)}{{\rm dim}{\mathcal W}_{2n+2}({\mathbf C})^2 \binom{2n+2}{n+1} } \times \binom{2n+2}{n+1+k}^{-1}  \\
  =& \delta_{i,j} \times 
       2^{4} \cdot \frac{\Gamma_{\mathbf C}(2n+4)}{{\rm dim}{\mathcal W}_{2n+2}({\mathbf C})^2 \binom{2n+2}{n+1} }  
       \times  \binom{2n+2}{n+1+i}^{-1}.
\end{align*}
This proves the lemma.
\end{proof}

For later use, we prepare the following lemma which immediately follows  from Lemma \ref{l:minwhitt}. 
\begin{lem}\label{l:minf0}
{\itshape 
We have 
\begin{align*}
   \xi_\infty f^0_{\infty, u} = (-1)^{n+1} \tau_{2n+2}(w_0) f^0_{\infty, u}.   
\end{align*}
In particular, we have 
\begin{align*}
   \xi_\infty f^0_{\infty, j} = (-1)^j f^0_{\infty,-j} 
\end{align*}
for $f^0_{\infty, j}:= f^0_{\infty, u_j} (u_i=X^{n+1+j}Y^{n+1-j})$. 
}
\end{lem}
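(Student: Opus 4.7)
The plan is to deduce this lemma directly from Lemma \ref{l:minwhitt} via the explicit Whittaker realization of $f^0_{\infty,u}$ recorded in Lemma \ref{l:whitt}. The first step is to unfold the definition: since $\iota_\infty$ is given by $\iota_\infty(u)=\langle W_{\sigma,\infty},u\rangle_{\mathcal W}$, we have $f^0_{\infty,u}(h)=\langle W_{\sigma,\infty}(h),u\rangle_{\mathcal W}$ for $h\in H^0({\mathbf R})$. Then, by definition of the involution $\xi_\infty$ on Whittaker functions,
\begin{align*}
 (\xi_\infty f^0_{\infty,u})(h)
 = f^0_{\infty,u}(h^c)
 = \langle W_{\sigma,\infty}(h^c),u\rangle_{\mathcal W}
 = \langle (\xi_\infty W_{\sigma,\infty})(h),u\rangle_{\mathcal W}.
\end{align*}

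Next I would apply Lemma \ref{l:minwhitt} to replace $\xi_\infty W_{\sigma,\infty}$ by $(-1)^{n+1}\tau_{2n+2}(w_0)W_{\sigma,\infty}$, and then transfer the operator $\tau_{2n+2}(w_0)$ across the pairing using its invariance. Invariance follows from formula (\ref{eq:pairequiv}) together with $\det w_0=1$; the fact that $\tau_{2n+2}(w_0^{-1})=\tau_{2n+2}(w_0)$ comes from $w_0^2=-1_2$ and $\tau_{2n+2}(-1_2)=\mathrm{id}$ (since the action $(X,Y)\mapsto(-X,-Y)$ on degree $2n+2$ polynomials is trivial and $(\det(-1_2))^{-(2n+2)}=1$). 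This yields
\begin{align*}
 (\xi_\infty f^0_{\infty,u})(h)
 =(-1)^{n+1}\langle W_{\sigma,\infty}(h),\tau_{2n+2}(w_0)u\rangle_{\mathcal W}
 =(-1)^{n+1}f^0_{\infty,\tau_{2n+2}(w_0)u}(h),
\end{align*}
which, under the convention that $\tau_{2n+2}(w_0)$ acts on $f^0_{\infty,u}$ through its parameter $u$, is the first identity.

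For the ``in particular'' part, the task reduces to an explicit computation of $\tau_{2n+2}(w_0)u_j$ for $u_j=X^{n+1+j}Y^{n+1-j}$. Using the formula $\rho_\lambda(g)P(X,Y)=P((X,Y)g)(\det g)^b$ with $\lambda=(2n+2,-(2n+2))$ and $(X,Y)w_0=(-Y,X)$, I get $\tau_{2n+2}(w_0)u_j=(-Y)^{n+1+j}X^{n+1-j}=(-1)^{n+1+j}u_{-j}$. Multiplying by the overall factor $(-1)^{n+1}$ collapses to $(-1)^{2n+2+j}=(-1)^j$, completing the verification.

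There is no real obstacle here; the one point that needs mild care is the consistency of the notation $\tau_{2n+2}(w_0)f^0_{\infty,u}$, which must be read as $f^0_{\infty,\tau_{2n+2}(w_0)u}$. Since $\iota_\infty$ is ${\rm SU}_2({\mathbf R})$-equivariant, this is compatible with interpreting the operator as acting on the image side of $\iota_\infty$, and both interpretations give the same answer.
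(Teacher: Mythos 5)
Your argument is correct and is exactly the unwinding the paper has in mind when it says the lemma ``immediately follows from Lemma \ref{l:minwhitt}'': you transport $\xi_\infty$ through the Whittaker pairing using $\iota_\infty(u)=\langle W_{\sigma,\infty},u\rangle_{\mathcal W}$, invoke Lemma \ref{l:minwhitt}, move $\tau_{2n+2}(w_0)$ across the invariant pairing using $w_0^{-1}=-w_0$ and triviality of $\tau_{2n+2}(-1_2)$, and then evaluate $\tau_{2n+2}(w_0)u_j=(-1)^{n+1+j}u_{-j}$. Your closing remark reconciling the two readings of $\tau_{2n+2}(w_0)f^0_{\infty,u}$ via the ${\rm SU}_2({\mathbf R})$-equivariance of $\iota_\infty$ is the right way to fix the meaning of the notation, and the final sign $(-1)^{n+1}\cdot(-1)^{n+1+j}=(-1)^j$ matches how the lemma is applied in Proposition \ref{prop:inntolocint}.
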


We denote by ${\rm d}\widetilde{h}$ (resp. ${\rm d}h_0$) the Tamagawa measure on $Z_H({\mathbf A})\backslash H({\mathbf A})$ 
(resp. $Z_{H^0}({\mathbf A})\backslash H^0({\mathbf A})$), 
 ${\rm d}\epsilon_v$  the Haar measure on $\mu_2({\mathbf Q}_v)$ such that ${\rm vol}(\mu_2({\mathbf Q}_v),  {\rm d}\epsilon_v)=1$
and 
${\rm d}\epsilon$ the product measure $\prod_v {\rm d}\epsilon_v$ 
on $\mu_2({\mathbf A})$.
Note that we have
\begin{align*}
  \int_{ Z_H({\mathbf A}) H({\mathbf Q}) \backslash H({\mathbf A}) }   f(\widetilde{h}) {\rm d}\widetilde{h}
  =   \int_{\mu_2({\mathbf Q}) \backslash \mu_2({\mathbf A})  }  
        \int_{ Z_{H^0}({\mathbf A}) H^0({\mathbf Q}) \backslash H^0({\mathbf A}) }
        f(h_0\epsilon) {\rm d}h_0 {\rm d}\epsilon 
\end{align*}
for each $f\in L^1( Z_H({\mathbf A}) H({\mathbf Q}) \backslash H({\mathbf A}) )$.
Define an Hermitian pairing ${\mathcal B}_{\widetilde{\sigma}} : {\mathcal A}({\widetilde{\sigma}})^{\otimes 2} \to {\mathbf C}$ to be 
\begin{align*}
  {\mathcal B}_{\widetilde{\sigma}}(f_1, f_2) 
   =  \int_{ Z_H({\mathbf A}) H({\mathbf Q}) \backslash H({\mathbf A}) }   f_1(\widetilde{h}) \bar{f_2(\widetilde{h})} {\rm d}\widetilde{h},   
   \quad (f_1, f_2 \in {\mathcal A}(\widetilde{\sigma})). 
\end{align*}
We also define an Hermitian pairing on $\langle\cdot, \cdot\rangle_{H^0}: {\mathcal M}_n(H^0, {\mathcal U}_{\mathfrak N})^{\otimes 2} \to {\mathbf C}$ by 
\begin{align*}
\langle {\mathbf f}_1, {\mathbf f}_2 \rangle_{H^0}
   = \int_{ Z_{H^0}({\mathbf A}) H^0({\mathbf Q}) \backslash H^0({\mathbf A}) }   \langle {\mathbf f}_1( h_0 ),  \tau_{2n+2}(w_0) \bar{{\mathbf f}_2( h_0 )}  \rangle_{\mathcal W} {\rm d}h_0,  
    \quad ({\mathbf f}_1, {\mathbf f}_2 \in {\mathcal M}_n(H^0, {\mathcal U}_{\mathfrak N})). 
\end{align*}

The relation of a global pairing ${\mathcal B}_{\widetilde{\sigma}}$  
     and a local pairings ${\mathcal B}_{ \widetilde{\sigma}_v }$ for each $v\in \Sigma_{\mathbf Q}$ is given as follows:  

\begin{lem}
{\itshape  We have
\begin{align*}
   {\mathcal B}_{\widetilde{\sigma}}
   = \frac{ \langle {\mathbf f}, {\mathbf f} \rangle_{H^0} }{ \dim {\mathcal W}_{2n+2}({\mathbf C}) } \cdot \prod_v{\mathcal B}_{\widetilde{\sigma}_v} 
\end{align*}
under the fixed isomorphism $j:\otimes_v \widetilde{\sigma}_v \to \widetilde{\sigma}$ in (\ref{LocGrobIsoGO}).
}
\end{lem}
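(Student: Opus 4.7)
The plan is to apply Schur's lemma to reduce to a single proportionality constant, then determine it by evaluation on a concrete test vector. Both $\mathcal{B}_{\widetilde{\sigma}}$ on $\mathcal{A}(\widetilde{\sigma})$ and the product $\prod_v \mathcal{B}_{\widetilde{\sigma}_v}$ (transferred to $\widetilde{\sigma}$ through the isomorphism $\widetilde{j}$ of \eqref{LocGrobIsoGO}) are $H(\mathbf{A})$-invariant positive Hermitian forms on the irreducible automorphic representation $\widetilde{\sigma}$. By Schur's lemma they must coincide up to a positive scalar $C$, so the task reduces to pinning down $C$.

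I would evaluate both sides on $\varphi_u := \widetilde{j}(\widetilde{f}^{0}_{\infty,u}\otimes\bigotimes_{v<\infty}\widetilde{f}^{0}_v) = \langle \widetilde{\mathbf{f}}, \widetilde{u}\rangle_{\widetilde{\mathcal{W}}}$ for an arbitrary $u\in \mathcal{W}_{2n+2}(\mathbf{C})$. On the local side, every finite-place factor equals $1$ by the normalization $\mathcal{B}_{\sigma_v}(f^0_v,f^0_v)=1$ together with the explicit formulas defining $\mathcal{B}_{\widehat{\sigma}^+_v}$ and $\mathcal{B}_{\sigma^\sharp_v}$ (using that $\xi_v f^0_v=f^0_v$ at the ramified places). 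At infinity, combining the definition of $\mathcal{B}_{\widehat{\sigma}^-_\infty}$ with Lemma \ref{l:minf0}, the ${\rm SU}_2(\mathbf{R})$-invariance of $\langle\cdot,\cdot\rangle_{\mathcal{W}}$, and the identity $\tau_{2n+2}(w_0^2) = 1$ on $\mathcal{W}_{2n+2}(\mathbf{C})$, one obtains $\mathcal{B}_{\widetilde{\sigma}_\infty}(\widetilde{f}^0_{\infty,u},\widetilde{f}^0_{\infty,u}) = \mathcal{B}_{\mathcal{W}}(u,u)$. Hence the local product is $\mathcal{B}_{\mathcal{W}}(u,u)$.

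For the global side, I decompose the Tamagawa integral via the formula preceding the lemma, splitting into the $\epsilon=1$ and $\epsilon=\mathbf{t}$ cosets:
\begin{equation*}
\mathcal{B}_{\widetilde{\sigma}}(\varphi_u,\varphi_u)
 = \int_{\mu_2(\mathbf{Q})\backslash\mu_2(\mathbf{A})}
    \int_{Z_{H^0}H^0(\mathbf{Q})\backslash H^0(\mathbf{A})}
      |\varphi_u(h_0\epsilon)|^2\,dh_0\,d\epsilon.
\end{equation*}
Substituting \eqref{def:extf} and Proposition \ref{prop:ext} for $\widetilde{\mathbf{f}}(h_0\mathbf{t}_{\mathcal{R}})$ expands the integrand into diagonal and cross-terms indexed by pairs $\mathcal{R},\mathcal{R}'\subset\Sigma_1$. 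The non-conjugate-self-duality assumption $\pi^\vee\not\cong\pi^c$ implies that the global cuspidal representations generated by $\mathbf{f}_{\mathcal{R}}$ on $[H^0]$ for distinct $\mathcal{R}$ lie in mutually orthogonal isotypic components of $L^2([H^0])$, so all cross-terms vanish upon integration. To produce the factor $(\dim \mathcal{W}_{2n+2}(\mathbf{C}))^{-1}$, I insert an auxiliary averaging over ${\rm SU}_2(\mathbf{R})$ (legitimate since $|\varphi_u(h_0)|^2$ is invariant under such translation by Proposition \ref{prop:ext}(ii)) and apply the Schur orthogonality identity
\begin{equation*}
\int_{{\rm SU}_2(\mathbf{R})} |\langle \tau_{2n+2}(k^{-1}) v,u\rangle_{\mathcal{W}}|^2\,dk
 = \frac{\mathcal{B}_{\mathcal{W}}(u,u)\,\mathcal{B}_{\mathcal{W}}(v,v)}{\dim \mathcal{W}_{2n+2}(\mathbf{C})}
\end{equation*}
pointwise with $v=\mathbf{f}(h_0)$. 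The contribution from the $\mathbf{t}$-coset is handled symmetrically using Proposition \ref{prop:ext}(iii)--(iv); summing everything gives $\mathcal{B}_{\widetilde{\sigma}}(\varphi_u,\varphi_u) = (\dim\mathcal{W}_{2n+2}(\mathbf{C}))^{-1}\langle\mathbf{f},\mathbf{f}\rangle_{H^0}\cdot\mathcal{B}_{\mathcal{W}}(u,u)$, from which the formula follows.

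The principal obstacle is the combinatorial bookkeeping of the $\mathcal{R}$-strata in \eqref{def:extf} and the verification that cross-stratum integrals on $[H^0]$ vanish; the latter relies essentially on the assumption $\pi^\vee\not\cong\pi^c$, which guarantees that the various $c$-conjugated forms $\mathbf{f}_\mathcal{R}$ are spectrally separated in $L^2([H^0])$ and hence pair trivially off-diagonal.
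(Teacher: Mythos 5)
The paper gives no proof here: it simply refers to \cite[Lemma 5.2]{hn}, so a line-by-line comparison is impossible. Your overall strategy (use irreducibility of $\widetilde\sigma$ to reduce to one proportionality constant, pin it down on a test vector, produce the $\dim\mathcal W_{2n+2}(\mathbf C)$ factor via Schur orthogonality on the minimal $K$-type) is the reasonable way to run this, and your computation that the local product evaluates to $\mathcal B_{\mathcal W}(u,u)$ on the distinguished test vector is correct.

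However, two of your intermediate justifications do not hold as stated. First, the parenthetical ``legitimate since $|\varphi_u(h_0)|^2$ is invariant under such translation'' is false and, were it true, would make the averaging useless: Proposition \ref{prop:ext}(ii) shows $\widetilde{\mathbf f}(h_0 k)$ is a nontrivial $\widetilde\tau_{2n+2}$-twist of $\widetilde{\mathbf f}(h_0)$ for $k\in {\rm SU}_2(\mathbf R)$, so $|\varphi_u(h_0k)|^2$ genuinely varies with $k$. The insertion of the $\mathrm{SU}_2(\mathbf R)$-average is legitimate, but the correct justification is the right-invariance of the Haar measure on $[Z_{H^0}\backslash H^0]$, not an invariance of the integrand.

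Second, and more seriously, your vanishing argument for the cross-terms is wrong. You claim the $\mathbf f_{\mathcal R}$ for distinct $\mathcal R\subset\Sigma_1$ lie in mutually orthogonal isotypic components of $L^2([H^0])$, citing $\pi^\vee\not\cong\pi^c$. But by construction $\Sigma_1\subset\mathfrak S$, i.e.\ $\pi_v\cong\pi_v^c$ at every place in $\Sigma_1$, so each $\mathbf f_{\mathcal R}$ is obtained from $\mathbf f$ by applying the intertwiners $\xi_v$ at $v\in\mathcal R$ and therefore lies in the \emph{same} cuspidal representation $\sigma$, hence the same isotypic component. The hypothesis $\pi^\vee\not\cong\pi^c$ is what forces $\widetilde\sigma = {\rm Ind}_{H^0}^H\sigma$ to be irreducible (\cite[Proposition 6.2]{tak11}); it is not a spectral-separation statement inside $L^2([H^0])$. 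In fact, at the finite places of $\Sigma_1$, the normalization $\xi_v f^0_v = f^0_v$ makes $\mathbf f_{\mathcal R}$ and $\mathbf f_{\mathcal R'}$ agree there, so those cross-terms do not vanish and must be handled (they contribute constructively). The only genuine degree of freedom is at $\infty$, via $\xi_\infty f^0_{\infty,u} = (-1)^{n+1}\tau_{2n+2}(w_0)f^0_{\infty,u}$ from Lemma \ref{l:minf0}, and the analysis there is a finite-dimensional $K$-type computation (not a spectral orthogonality), so this part of the argument needs to be redone with the explicit pairings $\mathcal B_{\mathcal W}$ and $\mathcal B_{\widehat\sigma^-_\infty}$ rather than an appeal to isotypic separation.
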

\begin{proof}
This lemma is proved in the same way as \cite[Lemma 5.2]{hn}
\end{proof}

\subsection{Rallis inner product formula}\label{sec:Rinn}

In this subsection, we reduce a computation of the inner product of HST lifts to computations of certain local integrals by using the Rallis inner product formula. 
Combined with the results of Section \ref{sec:complocint}, we state the explicit inner product formula (Theorem \ref{HSTInnprd}) in representation theoretic language. 

For each place $v$ of ${\mathbf Q}$ and  Bruhat-Schwartz functions $\varphi, \varphi^\prime\in {\mathcal S}({\mathbf X}_v )$, we define
\begin{align*}
       {\mathcal B}_{\omega_v}(\varphi, \varphi^\prime)
  = & \int_{ {\mathbf X}_v }   \varphi (x) \bar{ \varphi^\prime(x) }   {\rm d}x, 
\end{align*}
where ${\rm d}x$ is the self dual Haar measure on ${\mathbf X}_v$ with respect to $\psi_{{\mathbf Q}, v}$.
For $\varphi  \in {\mathcal S}({\mathbf X}_{\mathbf A})$ and $f \in {\mathcal A}(\widetilde{\sigma})$, 
define $\theta( \varphi, f ): {\rm GSp}^+_4({\mathbf Q}) \backslash {\rm GSp}^+_4({\mathbf A}) \to {\mathbf C}$ by
\begin{align*}
   \theta(\varphi, f)(g)
   =\theta(g;  \varphi, f)
   := \int_{[H_1]}  \sum_{x\in {\mathbf X}} \omega(h_1h^\prime,g)\varphi(x) f(h_1h^\prime)     {\rm d} h_1,   
   \quad (\nu(h^\prime)=\nu(g)),  
\end{align*}
where ${\rm d}h_1 = \prod_v {\rm d}h_{1,v}$ is the Tamagawa measure on $H_1({\mathbf A})$.
We extend $\theta( \varphi, f)$ to the automorphic form on ${\rm GSp}_4({\mathbf A})$ so that 
the support of the extension is in ${\rm GSp}_4({\mathbf Q}) {\rm GSp}^+_4({\mathbf A})$
and denote it by the same notation.

To state the Rallis inner product formula,  
we recall the definition of the Asai $L$-function $L(s, {\rm As}^+(\pi))= \prod_{v\in \Sigma_{\mathbf Q}} L(s, {\rm As}^+(\pi_v))$
according to \cite[Section 3.1, 3.2]{hn} and \cite[Section 4]{gh99}.   
For each finite $v\in \Sigma_{\mathbf Q}$, $L(s, {\rm As}^+(\pi_v))$ is defined in the same way with \cite[Definition 3.1]{hn}.    
Define $L(s, {\rm As}^+(\pi_\infty))$ to be 
\begin{align*}
   \Gamma_{\mathbf C}(s+n+1) \Gamma_{\mathbf R}(s+1)^2, 
\end{align*}
where $\Gamma_{\mathbf R} (s) = \pi^{-\frac{s}{2}} \Gamma(\frac{s}{2})$, and $\Gamma_{\mathbf C}(s) = 2 (2\pi)^{-s}  \Gamma(s)$. 
Some of the analytic properties of $L(s, {\rm As}^+(\pi))$ is summarized in \cite[Theorem 3.3]{hn}.  
In particular, we recall that $L(1, {\rm As}^+(\pi))\neq 0$ under the condition that $\pi$ is not conjugate self-dual.  

\begin{prop}\label{p:Rallis}(Rallis inner product formula)
{\itshape 
Let $\varphi_1=\otimes_v \varphi_{1,v}, \varphi_2=\otimes_v \varphi_{2,v} \in {\mathcal S}({\mathbf X}_{\mathbf A})=\otimes_v {\mathcal S}({\mathbf X}_v)$ 
and $f_1=\otimes_v f_{1,v}, f_2=\otimes_v f_{2,v} \in \otimes_v{\mathcal A}(\widetilde{\sigma}_v) \cong \otimes_v\widetilde{\mathcal V}_v$. 
Then, 
\begin{align*}
  \langle  \theta(\varphi_1, f_1), \theta(\varphi_2, f_2) \rangle
  :=& \int_{Z_G({\mathbf A}) G({\mathbf Q}) \backslash G({\mathbf A})}  \theta(\varphi_1, f_1)(g) \overline{ \theta(\varphi_2, f_2)(g) }  {\rm d}g  \\
   =& \frac{\langle {\mathbf f}, {\mathbf f} \rangle_{H^0}}{\dim {\mathcal W}_{2n+2}({\mathbf C})}
     \cdot  \frac{L(1,{\rm As}^+(\pi))}{\zeta(2)\zeta(4)}
      \prod_v {\mathcal Z}^\ast_v(\varphi_{1,v}, \varphi_{2,v}, f_{1,v}, f_{2,v}   ),  
\end{align*}
where ${\mathcal Z}^\ast_v(\varphi_{1,v}, \varphi_{2,v}, f_{1,v}, f_{2,v}   )$ is defined to be 
\begin{align*}
  \frac{\zeta_v(2)\zeta_v(4)}{L(1,{\rm As}^+(\pi_v))} 
  \int_{H_1({\mathbf Q}_v)} 
      {\mathcal B}_{\omega_v}(\omega_v(h_{1,v})\varphi_{1,v}, \varphi_{2,v}  ) 
      {\mathcal B}_{\widetilde{\sigma}_v}(\widetilde{\sigma}_v(h_{1,v})f_{1,v}, f_{2,v}  )  
      {\rm d} h_{1,v}.
\end{align*}
}
\end{prop}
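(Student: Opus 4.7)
The plan is to deduce this from the general Rallis inner product formula of Gan--Qiu--Takeda \cite{gqt} applied to the dual pair $(\mathrm{O}(V),\mathrm{Sp}_4)$, then combine it with the decomposition of the global Hermitian pairing $\mathcal{B}_{\widetilde{\sigma}}$ that was established in the preceding lemma. First I would unfold the global inner product. Writing $\theta(\varphi_i,f_i)(g)=\int_{[H_1]}\sum_{x\in \mathbf{X}}\omega(h_1 h',g)\varphi_i(x)\,f_i(h_1 h')\,\mathrm{d}h_1$ and inserting this into $\langle\theta(\varphi_1,f_1),\theta(\varphi_2,f_2)\rangle$, a standard see-saw argument collapses the double integral over $[H_1]\times[H_1]$ into a single integral over $H_1(\mathbf{A})$. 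The outcome is a formal identity of the shape
\begin{align*}
  \langle \theta(\varphi_1,f_1),\theta(\varphi_2,f_2)\rangle
  = \mathcal{C}\cdot \int_{H_1(\mathbf{A})}
        \mathcal{B}_{\omega}(\omega(h_1)\varphi_1,\varphi_2)\,
        \mathcal{B}_{\widetilde{\sigma}}(\widetilde{\sigma}(h_1)f_1,f_2)\,\mathrm{d}h_1,
\end{align*}
where $\mathcal{C}$ is a universal normalization constant coming from Tamagawa measures. This is exactly the situation treated in \cite[Theorem~1.1]{gqt}.

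Next I would insert the decomposition $\mathcal{B}_{\widetilde{\sigma}} = \frac{\langle \mathbf{f},\mathbf{f}\rangle_{H^0}}{\dim\mathcal{W}_{2n+2}(\mathbf{C})}\cdot \prod_v \mathcal{B}_{\widetilde{\sigma}_v}$ from the previous lemma, and likewise factor $\mathcal{B}_\omega$ as the product of the local self-dual pairings $\mathcal{B}_{\omega_v}$ via the Plancherel theorem for $\psi_\mathbf{Q}$. Because the tensor data $\varphi_i$ and $f_i$ are factorizable, the integrand becomes a product over $v$ and the integral over $H_1(\mathbf{A})$ decomposes, formally, into a product of local integrals
\begin{align*}
  \int_{H_1(\mathbf{Q}_v)}
    \mathcal{B}_{\omega_v}(\omega_v(h_{1,v})\varphi_{1,v},\varphi_{2,v})\,
    \mathcal{B}_{\widetilde{\sigma}_v}(\widetilde{\sigma}_v(h_{1,v})f_{1,v},f_{2,v})\,\mathrm{d}h_{1,v}.
\end{align*}

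The core arithmetic input is the identification of the global $L$-factor. At any finite place $v\notin \Sigma_{\mathrm{ram}}$ where the data are unramified, I would compute the local integral above by matrix-coefficient techniques, inserting spherical vectors and using the Macdonald formula for $\sigma_v$. The accidental isomorphism $H^0\simeq \mathrm{Res}_{F/\mathbf{Q}}\mathrm{GL}_{2/F}\times_{\mathrm{Res}_{F/\mathbf{Q}}\mathbb{G}_m}\mathbb{G}_m$ shows that the resulting local Euler factor is $L(1,\mathrm{As}^+(\pi_v))/\zeta_v(2)\zeta_v(4)$, which is exactly the Asai local factor of $\pi_v$ divided by the standard Siegel--Weil denominator for a four-dimensional quadratic space paired against $\mathrm{Sp}_4$. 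Extracting this Euler product globally produces the factor $L(1,\mathrm{As}^+(\pi))/\zeta(2)\zeta(4)$ and leaves precisely $\mathcal{Z}^\ast_v$ at every place.

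The main obstacle will be identifying the global $L$-factor with the Asai $L$-function and fixing the overall normalization constant $\mathcal{C}$. Identification of the $L$-factor is essentially an unramified computation, but one has to check it agrees with the $\Gamma$-factor $\Gamma_\mathbf{C}(s+n+1)\Gamma_\mathbf{R}(s+1)^2$ we have built into $L(s,\mathrm{As}^+(\pi_\infty))$; this forces a specific archimedean normalization of $\mathcal{B}_{\sigma_\infty}$, for which the explicit form in Lemma \ref{l:pairB0} is tailor-made. Fixing $\mathcal{C}$, including the factor $1/\dim\mathcal{W}_{2n+2}(\mathbf{C})$, is a careful but mechanical comparison of Tamagawa measures on $\mathrm{O}(V)$, $\mathrm{Sp}_4$ and the center; this is done exactly as in \cite[Proof of Proposition 5.5]{hn}, from which the present statement can be read off directly once the dictionary between Yoshida and HST data is installed.
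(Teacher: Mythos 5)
Your plan is at bottom the same as the paper's: invoke the Gan--Qiu--Takeda Rallis inner product formula and then feed in the decomposition $\mathcal{B}_{\widetilde{\sigma}}=\frac{\langle{\mathbf f},{\mathbf f}\rangle_{H^0}}{\dim\mathcal{W}_{2n+2}(\mathbf{C})}\prod_v\mathcal{B}_{\widetilde{\sigma}_v}$ established immediately before the proposition. The paper's actual proof is the one-liner ``apply \cite[Proposition 11.2, Theorem 11.3]{gqt} for $n=m=4$, $r=1$, $\epsilon_0=-1$,'' so the extra work you propose --- unfolding the double $[H_1]$-integral via a ``see-saw,'' factorizing it formally, and recomputing the unramified local integrals via the Macdonald formula to extract the Asai Euler factor --- is superfluous: GQT's cited results already package the factorization, the normalization of the $\mathcal{Z}^\ast_v$ to equal $1$ at unramified places, and the appearance of the doubling $L$-function (which for $H^0\cong{\rm GSO}(V)$ is the Asai $L$-function under the accidental isomorphism). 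More importantly, the formal manipulation you sketch does not converge absolutely; the Rallis inner product requires the regularized Siegel--Weil (second-term) identity, which is the central content of \cite{gqt} and must be taken as a black box rather than rederived by a ``standard see-saw.''

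You also misattribute the factor $1/\dim\mathcal{W}_{2n+2}(\mathbf{C})$ in your last paragraph: you first correctly locate it in the preceding lemma's decomposition of $\mathcal{B}_{\widetilde{\sigma}}$, but then describe it as arising from a Tamagawa measure comparison on ${\rm O}(V)$, ${\rm Sp}_4$ and the center. In fact it is the Schur-orthogonality contribution over ${\rm SU}_2({\mathbf R})$ that appears when the global Hermitian pairing is related to the product of the normalized local pairings $\mathcal{B}_{\widetilde{\sigma}_v}$; it has nothing to do with the Tamagawa normalizations of the groups. Once these points are corrected, the heavy lifting is entirely absorbed into the GQT citation and the preceding lemma, and the proposition follows immediately.
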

\begin{proof}
Apply \cite[Proposition 11.2, Theorem 11.3]{gqt} for $n=m=4, r=1, \epsilon_0=-1$ in the notation in \cite{gqt}.
\end{proof}

Define $\langle\cdot, \cdot \rangle_{\mathcal L}: {\mathcal L}_\lambda({\mathbf C}) \otimes {\mathcal L}_\lambda({\mathbf C}) \to {\mathbf C}$
to be the pairing $\langle\cdot,\cdot\rangle_{n}$ which is introduced in Section \ref{algrep}.
For vector-valued Siegel cusp forms ${\mathcal F}_1, {\mathcal F}_2: G({\mathbf A}) \to {\mathcal L}_\lambda({\mathbf C})$, 
define an Hermitian pairing to be 
\begin{align*}
  ({\mathcal F}_1, {\mathcal F}_2)_G
  =\int_{Z_G({\mathbf A}) G({\mathbf Q}) \backslash G({\mathbf A}) } 
      \langle {\mathcal F}_1(g), \overline{{\mathcal F}_2(g)}  \rangle_{\mathcal L}
      {\rm d} g. 
\end{align*}
One of main purposes of this paper is to compute 
$(\theta(\widetilde{\varphi}, \widetilde{\mathbf f}^\dag),  \theta(\widetilde{\varphi}, \widetilde{\mathbf f}^\dag) )_G$ in an explicit way.  
To resume computation, we prepare some notation. 
Let $H^0_1={\rm SO}(V)$.  
For $i=1, 2$ and 
\begin{align*}
    & \varphi^\alpha_i  \in {\mathcal S}({\mathbf X}_\infty)  \otimes {\mathbf C}[X,Y]_{2n+2},     \\
    & \varphi_i = \sum^n_{\alpha=0}\varphi^\alpha_i \binom{n}{\alpha}  X^{n-\alpha}Y^\alpha    \in {\mathcal S}({\mathbf X}_\infty)\otimes {\mathbf C}[X,Y]_{2n+2}\otimes {\mathbf C}[X,Y]_n,  \\
    & f_i \in {\mathcal A}(\sigma_\infty)\otimes {\mathbf C}[X,Y]_{2n+2},
\end{align*}
we define
\begin{align*}
  & {\mathcal B}_{{\mathcal W}\otimes{\mathcal L}} (\varphi_1, \varphi_2, f_1, f_2)
  = \sum^n_{\alpha=0} 
      {\mathcal B}_{\sigma_\infty}(   {\mathcal B}_{\mathcal W}(  \varphi^\alpha_1, f_1 ), 
                                                       {\mathcal B}_{\mathcal W}(  \varphi^{n-\alpha}_2 , f_2  )  )  (-1)^\alpha \binom{n}{\alpha}.
\end{align*}
Let ${\rm d}h=\prod_v {\rm d}h_v$ be the Tamagawa measure on $H^0_1({\mathbf A})$   
and define
\begin{align*}
  {\mathcal Z}_\infty(\varphi_\infty, f^0_\infty)     
              =&  \int_{H^0_1({\mathbf R})} \int_{{\mathbf X}_\infty } 
                  {\mathcal B}_{{\mathcal W}\otimes{\mathcal L}} (  \omega_\infty(h_\infty) \varphi_\infty, \varphi_\infty,  \sigma_\infty(h_\infty)f^0_\infty, f^0_\infty)  
                  {\rm d}x  {\rm d}h_\infty,    \\ 
  {\mathcal Z}_v(\varphi_v, f^{0,\dag}_v) 
             =& \int_{H^0_1({\mathbf Q}_v)} 
                   {\mathcal B}_{\omega_v}(\omega_v(h_v)\varphi_v, \varphi_v)
                   {\mathcal B}_{\widetilde{\sigma}_v}(\sigma_v(h_v)f^{0,\dag}_v, f^{0,\dag}_v )
                   {\rm d}h_v,    
                   \quad (v\in \Sigma_{\mathbf Q}, v< \infty),     
\end{align*}
where $f^0_\infty, f^{0,\dag}_v$ are defined in Section \ref{pairH}.

The following proposition reduces a computation of the inner product of $\theta(\widetilde{\varphi}, \widetilde{\mathbf f}^\dag)$
to computations of local integrals on $H^0_1({\mathbf Q}_v)$. 
 
\begin{prop}\label{prop:inntolocint}
{\itshape
We have
\begin{align*}
       \frac{  ( \theta(\widetilde{\varphi}, \widetilde{\mathbf f}^\dag), \theta(\widetilde{\varphi}, \widetilde{\mathbf f}^\dag )  )_G }{\langle {\mathbf f}, {\mathbf f}  \rangle_{H^0}} 
  =  \frac{1}{ \dim {\mathcal W}_{2n+2}({\mathbf C})} 
     \cdot  \frac{L(1,{\rm As}^+(\pi))}{\zeta(2)\zeta(4)}  
       \cdot \prod_{v  \in \Sigma_{\mathbf Q} } {\mathcal Z}^\ast_v(\varphi_v),    
\end{align*}
where
\begin{align*}
  {\mathcal Z}^\ast_v(\varphi_v) = \frac{\zeta_v(2)\zeta_v(4)}{L(1,{\rm As}^+(\pi_v))} 
  \times \begin{cases}
                {\mathcal Z}_\infty(\varphi_\infty, f^0_\infty), & (v= \infty),  \\ 
             {\mathcal Z}_v(\varphi_v, f^{0,\dag}_v), & (v< \infty).
              \end{cases}
\end{align*}
}
\end{prop}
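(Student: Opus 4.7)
The strategy is to decompose the $\mathcal{L}_\lambda(\mathbf{C})$-valued HST lift into scalar-valued components indexed by a basis of $\mathcal{L}_\lambda(\mathbf{C})$, apply the Rallis inner product formula (Proposition~\ref{p:Rallis}) componentwise, and reassemble the contributions. Using the decomposition (\ref{def:phiinf}), I would first write $\widetilde{\varphi}=\sum_{\alpha=0}^{n}\widetilde{\varphi}^{[\alpha]}\otimes e_\alpha$ with $e_\alpha:=\binom{n}{\alpha}X^{\alpha}Y^{n-\alpha}$ and $\widetilde{\varphi}^{[\alpha]}:=\widetilde{\varphi}^{\alpha}_{\infty}\otimes\bigotimes_{v<\infty}\varphi_v$, where $\widetilde{\varphi}^{\alpha}_{\infty}=(\varphi^{\alpha}_{\infty},(-1)^{n+1}\delta(\infty)\tau_{2n+2}(w_0)\varphi^{\alpha}_{\infty})$ is $\widetilde{\mathcal{W}}_{2n+2}(\mathbf{C})^{\vee}$-valued. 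Pairing via $\langle\cdot,\cdot\rangle_{\widetilde{\mathcal{W}}}$ yields $\theta(\widetilde{\varphi},\widetilde{\mathbf{f}}^{\dag})(g)=\sum_{\alpha}\theta^{[\alpha]}(g)\,e_\alpha$, where each $\theta^{[\alpha]}(g):=\theta(\widetilde{\varphi}^{[\alpha]},\widetilde{\mathbf{f}}^{\dag})(g)$ is a scalar-valued theta series.

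Next, using the pairing formula $\langle X^{i}Y^{n-i},X^{j}Y^{n-j}\rangle_n=(-1)^i\binom{n}{i}^{-1}\delta_{i+j,n}$ together with the reality $\overline{e_\alpha}=e_\alpha$, I would unfold $(\cdot,\cdot)_G$ to obtain
\[
\bigl(\theta(\widetilde{\varphi},\widetilde{\mathbf{f}}^{\dag}),\theta(\widetilde{\varphi},\widetilde{\mathbf{f}}^{\dag})\bigr)_G=\sum_{\alpha=0}^{n}(-1)^{\alpha}\binom{n}{\alpha}\,\langle\theta^{[\alpha]},\theta^{[n-\alpha]}\rangle,
\]
where $\langle\cdot,\cdot\rangle$ denotes the scalar $L^2$-pairing on $Z_G(\mathbf{A})G(\mathbf{Q})\backslash G(\mathbf{A})$. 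Applying Proposition~\ref{p:Rallis} to each summand—combined with the isomorphism $\widetilde{j}$ from (\ref{LocGrobIsoGO}) that identifies $\widetilde{\mathbf{f}}^{\dag}$ with an explicit pure tensor whose finite-place components are $\widetilde{f}^{0,\dag}_v$—produces the common factor $\langle\mathbf{f},\mathbf{f}\rangle_{H^{0}}/\dim\mathcal{W}_{2n+2}(\mathbf{C})\cdot L(1,{\rm As}^{+}(\pi))/(\zeta(2)\zeta(4))$ times a product of local orbital integrals $\mathcal{Z}^{\ast}_v$. Because both $\widetilde{\varphi}^{[\alpha]}_v=\varphi_v$ and the local vector of $\widetilde{\mathbf{f}}^{\dag}$ at any finite $v$ are independent of $\alpha$, the finite-place factors $\mathcal{Z}^{\ast}_v(\varphi_v)$ factor out of the sum over $\alpha$.

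The residual archimedean sum
\[
\sum_{\alpha=0}^{n}(-1)^{\alpha}\binom{n}{\alpha}\,\mathcal{Z}^{\ast}_{\infty}\bigl(\widetilde{\varphi}^{\alpha}_{\infty},\widetilde{\varphi}^{n-\alpha}_{\infty},f^{0}_{\infty},f^{0}_{\infty}\bigr)
\]
must then be identified with $\mathcal{Z}^{\ast}_{\infty}(\varphi_{\infty})$. Unpacking $\mathcal{B}_{\omega_{\infty}}$ and $\mathcal{B}_{\widetilde{\sigma}_{\infty}}$ through the pairing $\langle\cdot,\cdot\rangle_{\widetilde{\mathcal{W}}}$, the $\alpha$-weighted sum of scalar Weil-representation pairings reassembles into the $\mathcal{L}_\lambda(\mathbf{C})$-valued pairing $\mathcal{B}_{\mathcal{W}\otimes\mathcal{L}}$ occurring in the definition of $\mathcal{Z}_{\infty}$. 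The reduction of the integration domain from $H_1(\mathbf{R})=\mathrm{O}(V)(\mathbf{R})$ to $H^{0}_1(\mathbf{R})=\mathrm{SO}(V)(\mathbf{R})$ is handled via the coset decomposition $\mathrm{O}(V)=\mathrm{SO}(V)\sqcup\mathrm{SO}(V)\mathbf{t}$ together with Lemma~\ref{HSTwt}~(ii), which guarantees that the $\mathbf{t}$-coset contribution matches the $\mathrm{SO}(V)$-contribution, exactly as prearranged by taking $\mathcal{B}_{\widetilde{\sigma}_{\infty}}=\mathcal{B}_{\widehat{\sigma}^{-}_{\infty}}$.

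The main technical obstacle lies in this final archimedean bookkeeping: verifying that the $\alpha$-weighted sum of scalar pairings produced by Proposition~\ref{p:Rallis} reassembles \emph{precisely} into the packaged pairing $\mathcal{B}_{\mathcal{W}\otimes\mathcal{L}}$, and correctly matching the normalization of $f^{0}_{\infty}$ from (\ref{eq:InfModel}) (which packages all minimal $K$-type vectors via $\iota_{\infty}$) with the Whittaker-model normalization used in Lemma~\ref{l:pairB0}. Once this compatibility is established, the decomposition into finite and archimedean factors follows directly, yielding the claimed identity.
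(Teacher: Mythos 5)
Your proposal follows essentially the same route as the paper's proof: decompose the lift into scalar pieces, apply the Rallis inner product formula componentwise, factor out the index-independent finite-place integrals, reduce the archimedean integral from $\mathrm{O}(V)$ to $\mathrm{SO}(V)$ via the coset decomposition together with Lemma \ref{HSTwt}~\ref{HSTwt(ii)} and Lemma \ref{l:minf0}, and reassemble the weighted sum into $\mathcal{B}_{\mathcal{W}\otimes\mathcal{L}}$. One point to make precise: Proposition~\ref{p:Rallis} requires scalar-valued, pure-tensor Schwartz functions, so it cannot be applied directly to $\theta^{[\alpha]}=\theta(\widetilde{\varphi}^{[\alpha]},\widetilde{\mathbf{f}}^{\dag})$ with $\widetilde{\varphi}^{[\alpha]}$ still $\widetilde{\mathcal{W}}_{2n+2}(\mathbf{C})^{\vee}$-valued; one must first also expand along a basis of $\widetilde{\mathcal{W}}_{2n+2}(\mathbf{C})^{\vee}$, i.e.\ write $\theta^{[\alpha]}=\sum_j\theta(\widetilde{\varphi}^{\alpha}_j,\widetilde{\mathbf{f}}^{\dag}_j)$ and apply Rallis to the resulting double-indexed scalar pairings (the indices $\alpha,i,j$ in the paper's proof), which is exactly the ``unpacking'' your final paragraph alludes to but should appear before Rallis is invoked, not after.
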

\begin{proof}
Firstly, we reduce a computation of inner products to computations of local integrals on $H_1({\mathbf Q}_v)$ by using the Rallis inner product formula (Proposition \ref{p:Rallis}).  

We fix some notation. 
We identify the ${\rm SU}_2({\mathbf R})$-representation space ${\mathcal W}_{2n+2}({\mathbf C})$ with ${\mathbf C}[X,Y]_{2n+2}$. 
Let $u_j = X^{n+1+j} Y^{n+1-j}$ for $-n-1\leq j \leq n+1$, 
which gives a pair of basis of ${\mathcal W}_{2n+2}({\mathbf C})$. 
Denote by $u^\vee_j$ the dual basis of $u_j$ with respect to $\langle \cdot, \cdot  \rangle_{\mathcal W}$.
Let 
\begin{align*}
\widetilde{u}_j = (u_j,  (-1)^{n+1+j} \delta(\infty) \tau_{2n+2}(w_0)u_j)
             \in \widetilde{W}_{2n+2}({\mathbf C}).
\end{align*}
We also identify the ${\rm U}_2({\mathbf R})$-representation space ${\mathcal L}_\lambda({\mathbf C})$ with ${\mathbf C}[X,Y]_n$.
We recall and prepare some notation:
\begin{align*}
  \varphi_\infty (x) =&    \sum^{n}_{\alpha=0} \varphi^\alpha_\infty(x) \binom{n}{\alpha} X^{n-\alpha} Y^\alpha; \quad    
  \varphi^\alpha_\infty (x) = \sum^{n+1}_{j=-n-1} \varphi^\alpha_{\infty, j} (x)   u_j
                   = \sum^{n+1}_{j=-n-1} \langle \varphi^\alpha_\infty(x), u^\vee_j  \rangle_{\mathcal W}u_j;   \\  
 \widetilde{\varphi}^\alpha_\infty 
     =& (\varphi^\alpha_\infty,  (-1)^{n+1+j}  \delta(\infty)\tau_{2n+2}(w_0)\varphi^\alpha_\infty   ) 
               \in {\mathcal S}({\mathbf X}_\infty)\otimes \widetilde{\mathcal W}_{2n+2}({\mathbf C})^\vee; \\  
   \widetilde{u}^\vee_j =& 
                  \frac{1}{2} ( u^\vee_j,  (-1)^{n+1+j} \delta(\infty)  \tau_{2n+2}(w_0) u^\vee_j );  \\    
     \widetilde{\varphi}^\alpha_{\infty, j} 
        =& \langle \widetilde{\varphi}^\alpha_\infty,  \widetilde{u}^\vee_j   \rangle_{\widetilde{\mathcal W}}
        =    \varphi^\alpha_{\infty,j}  
             \in {\mathcal S}({\mathbf X}_\infty). 
\end{align*}
Let 
\begin{align*}
  \varphi^\alpha  =& \varphi^\alpha_{\infty}\bigotimes_{v<\infty}\varphi_v \in {\mathcal S}({\mathbf X}_{\mathbf A})\otimes {\mathcal W}_{2n+2}({\mathbf C})^\vee; \quad  
  \varphi^\alpha_j  = \varphi^\alpha_{\infty, j}\bigotimes_{v<\infty}\varphi_v \in {\mathcal S}({\mathbf X}_{\mathbf A}); \\
  \widetilde{\varphi}^\alpha  
  =& (\varphi^\alpha,  (-1)^{n+1} \delta(\infty) \tau_{2n+2}(w_0) \varphi^\alpha   ) 
       \in {\mathcal S}({\mathbf X}_{\mathbf A})\otimes \widetilde{\mathcal W}_{2n+2}({\mathbf C})^\vee.
\end{align*}
For $\widetilde{\mathbf f}^\dag \in {\mathcal M}_{2n+2}(H, {\mathfrak N})$, 
we put
\begin{align*}
   \widetilde{\mathbf f}^\dag_j(h) =& \langle  \widetilde{\mathbf f}^\dag   , \widetilde{u}_j \rangle_{\widetilde{\mathcal W}}.
\end{align*}
By using the above notation, we find that
\begin{align*}
   \theta(\varphi^\alpha, \widetilde{\mathbf f}^\dag)(g)
   :=& \int_{[H_1]}  \langle  \sum_{x\in {\mathbf X}}  \omega(h_1 h^\prime, g) \widetilde{\varphi}^\alpha(x), \widetilde{\mathbf f}^\dag(h_1) \rangle_{\widetilde{\mathcal W} }   {\rm d}h_1   
    = \sum^{n+1}_{j=-n-1}  \theta( \widetilde{\varphi}^\alpha_j, \widetilde{\mathbf f}^\dag_j)(g)
                                         \binom{n}{\alpha} X^{n-\alpha} Y^\alpha.  
\end{align*}

The definition of the pairings $\langle\cdot, \cdot\rangle$ and $(\cdot, \cdot)_G$ show that  
\begin{align*}
   (\theta(\widetilde{\varphi}, \widetilde{\mathbf f}^\dag), \theta(\widetilde{\varphi}, \widetilde{\mathbf f}^\dag))_G
   =&\sum^{n}_{\alpha=0} \int_{[Z_G \backslash G]} 
              \theta(\widetilde{\varphi}^\alpha, \widetilde{\mathbf f}^\dag) 
              \overline{\theta(\widetilde{\varphi}^{n-\alpha}, \widetilde{\mathbf f}^\dag) }
              (-1)^\alpha\binom{n}{\alpha} {\rm d}g \\
   =&\sum_\alpha\sum^{n+1}_{i,j=-n-1}   
         \int_{[Z_G \backslash G]} 
              \theta(\widetilde{\varphi}^\alpha_i, \widetilde{\mathbf f}^\dag_i) (g)
              \overline{\theta(\widetilde{\varphi}^{n-\alpha}_j, \widetilde{\mathbf f}^\dag_j) }(g)
              (-1)^\alpha\binom{n}{\alpha} {\rm d}g   \\
   =&\sum_{\alpha, i, j }  
             \langle \theta(\widetilde{\varphi}^\alpha_i, \widetilde{\mathbf f}^\dag_i), 
              \overline{\theta(\widetilde{\varphi}^{n-\alpha}_j, \widetilde{\mathbf f}^\dag_j) } \rangle
              (-1)^\alpha\binom{n}{\alpha} .  
\end{align*}
The Rallis inner product formula (Proposition \ref{p:Rallis}) yields that 
\begin{align*}
\langle \theta(\widetilde{\varphi}^\alpha_i, \widetilde{\mathbf f}^\dag_i), 
              \overline{\theta(\widetilde{\varphi}^{n-\alpha}_j, \widetilde{\mathbf f}^\dag_j) } \rangle
= \frac{1}{ \dim {\mathcal W}_{2n+2}({\mathbf C})} \cdot 
   \frac{L(1,{\rm As}^+(\pi))}{\zeta(2)\zeta(4)} \cdot \widetilde{\mathcal Z}^\ast_{ij} \prod_{v<\infty} \widetilde{\mathcal Z}^\ast_v, 
\end{align*}
where 
\begin{align*}
  \widetilde{\mathcal Z}_{ij} =&  
                                   \int_{H_1({\mathbf R})} 
                                  {\mathcal B}_{\omega_\infty} (\omega_\infty(h_{1,\infty})\widetilde{\varphi}^\alpha_{\infty,i}, \widetilde{\varphi}^{n-\alpha}_{\infty, j}  ) 
                                  {\mathcal B}_{\widetilde{\sigma}_\infty}(\widetilde{\sigma}_\infty(h_{1,\infty}) \widetilde{f}^{0}_{\infty, i}, \widetilde{f}^{0}_{\infty, j} ) 
                                  {\rm d}h_{1,\infty},
                                  \quad (\widetilde{f}^{0}_{\infty, i}:=\widetilde{f}^{0 }_{\infty, u_i}), \\
  \widetilde{\mathcal Z}_v =&   \int_{H_1({\mathbf Q}_v)} 
                                  {\mathcal B}_{\omega_v} (\omega_v(h_{1,v})\widetilde{\varphi}_v, \widetilde{\varphi}_v  ) 
                                  {\mathcal B}_{\widetilde{\sigma}_v}(\widetilde{\sigma}_v(h_{1,v}) \widetilde{f}^{0,\dag}_v, \widetilde{f}^{0,\dag}_v ) 
                                  {\rm d}h_{1,v}.   \\
  \widetilde{\mathcal Z}^\ast_v =& \frac{\zeta_v(2)\zeta_v(4)}{L(1,{{\rm As}^+(\pi_v)})} 
                                                        \cdot \widetilde{\mathcal Z}_v.
\end{align*}

To prove the proposition, 
it suffices to write down local integrals  $\widetilde{\mathcal Z}_{ij}$ and $\widetilde{\mathcal Z}_v$  on $H_1({\mathbf Q}_v)$ in terms of local integrals on $H^0_1({\mathbf Q}_v)$.

For each finite place $v\in \Sigma_{\mathbf Q}$, we find that 
\begin{align*}
   \widetilde{\mathcal Z}_v = {\mathcal Z}_v,  
\end{align*}
which is proved in the same way with \cite[(5.13)]{hn}. 

We compute local integrals $\widetilde{\mathcal Z}_{ij}$. 
We put 
\begin{align*}
     {\mathcal Z}_{ij}
  =&  \int_{H^0_1({\mathbf R})} 
                                  {\mathcal B}_{\omega_\infty} (\omega_\infty(h_\infty) \varphi^\alpha_i, \varphi^{n-\alpha}_j  ) 
                                  {\mathcal B}_{\sigma_\infty}(\sigma_\infty(h_\infty) f^0_{\infty, i}, f^0_{\infty, j} ) 
                                  {\rm d}h_{\infty}, 
       \quad ( f^0_{\infty, i}:= f^0_{\infty, u_i} ).
\end{align*}
Recall that 
\begin{align*}
  \widetilde{\sigma}_\infty(h_\infty) \widetilde{f}^0_{\infty,i}
  =& (\sigma(h_\infty) f^0_{\infty,i},   \sigma(h^c_\infty) \delta(\infty) \xi_\infty f^0_{\infty,i} ),  
\end{align*}
for $h_\infty \in H^0({\mathbf R})$.   
By the definition of ${\mathcal B}_{\widetilde{\sigma}_\infty}$, we have 
\begin{align*}
     {\mathcal B}_{\widetilde{\sigma}_\infty} 
         (\widetilde{\sigma}_\infty(h_\infty) \widetilde{f}^0_{\infty, i}, \widetilde{f}^0_{\infty, j} ) 
 =&  \frac{1}{2} \left\{  
          {\mathcal B}_{\sigma_\infty} (\sigma(h_\infty) f^0_{\infty,i}, f^0_{\infty,j})
       + {\mathcal B}_{\sigma_\infty} ( \sigma(h^c_\infty)  \delta(\infty) \xi_\infty f^0_{\infty,i},  \delta(\infty)  \xi_\infty f^0_{\infty,j})
          \right\}   \\ 
 =&  \frac{1}{2} \left\{  
          {\mathcal B}_{\sigma_\infty} (\sigma(h_\infty) f^0_{\infty,i}, f^0_{\infty,j})
       + {\mathcal B}_{\sigma_\infty} ( \xi_\infty \sigma(h_\infty)  f^0_{\infty,i}, \xi_\infty f^0_{\infty, j})
          \right\}   \\ 
 =&  \frac{1}{2} \left\{  
          {\mathcal B}_{\sigma_\infty} (\sigma(h_\infty) f^0_{\infty,i}, f^0_{\infty,j})
       + {\mathcal B}_{\sigma_\infty} (\sigma(h_\infty)  f^0_{\infty, i},  f^0_{\infty,j})
          \right\}  \\
  =&  {\mathcal B}_{\sigma_\infty} (\sigma(h_\infty) f^0_{\infty,i}, f^0_{\infty,j}),   \\ 
 {\mathcal B}_{\widetilde{\sigma}_\infty} 
         (\widetilde{\sigma}_\infty( h_\infty {\mathbf t}) \widetilde{f}^0_{\infty, i}, \widetilde{f}^0_{\infty, j} ) 
 =&  \frac{1}{2} \left\{  
          {\mathcal B}_{\sigma_\infty} (  \sigma(h_\infty) \delta(\infty) \xi_\infty f^0_{\infty,i}, f^0_{\infty,j})
       + {\mathcal B}_{\sigma_\infty} (  \sigma(h^c_\infty)f^0_{\infty,i}, \delta(\infty)\xi_\infty f^0_{\infty,j})
          \right\} \\
 =& \frac{\delta(\infty)}{2} \left\{  
          {\mathcal B}_{\sigma_\infty} ( \sigma(h_\infty)  \xi_\infty f^0_{\infty,i}, f^0_{\infty,j})
       + {\mathcal B}_{\sigma_\infty} (  \xi_\infty \sigma(h^c_\infty)  f^0_{\infty,i},  f^0_{\infty,j})
          \right\}   \\
=& (-1)^i \delta(\infty)  {\mathcal B}_{\sigma_\infty} (\sigma(h_\infty)  f^0_{\infty,-i}, f^0_{\infty,j}).   
\end{align*}
In the last identity, we used Lemma \ref{l:minf0}.
Since  
\begin{align*}  
   \omega_{\infty}({\mathbf t}) \widetilde{\varphi}^\alpha_\infty 
  =& \widetilde{\tau}_{2n+2}({\mathbf t})\widetilde{\varphi}^\alpha_\infty  
  =   ( (-1)^{n+1}  \delta(\infty) \tau_{2n+2}(w_0) \varphi^\alpha_\infty,
               \varphi^\alpha_\infty   )
\end{align*}
by Lemma \ref{HSTwt}, we find that 
\begin{align*}
\omega_\infty({\mathbf t}) \widetilde{\varphi}^\alpha_{\infty,i}
:=\langle  \omega_{\infty}({\mathbf t}) \widetilde{\varphi}^\alpha_\infty  , \widetilde{u}_i \rangle_{\mathcal W}
= (-1)^i \delta(\infty)   \widetilde{\varphi}^\alpha_{\infty,-i}.                               
\end{align*}
Since ${\rm vol}(\mu_2({\mathbf R}), {\rm d}\epsilon_\infty)=1$, we obtain
\begin{align*}
\widetilde{\mathcal Z}_{ij}
=& \frac{1}{2}
      \int_{H^0_1({\mathbf R})}   
       {\mathcal B}_{\omega_\infty}(\omega_\infty( h_\infty ) \varphi^\alpha_i, \varphi^{n-\alpha}_j  )
         {\mathcal B}_{\widetilde{\sigma}_\infty}( \widetilde{\sigma}( h_\infty ) \widetilde{f}^0_{\infty, i}, \widetilde{f}^0_{\infty, j} )    \\   \displaybreak[0]
   & \quad \quad 
     + {\mathcal B}_{\omega_\infty}(\omega_\infty(h_\infty{\mathbf t}) \varphi^\alpha_i, \varphi^{n-\alpha}_j  )
         {\mathcal B}_{\widetilde{\sigma}_\infty}( \widetilde{\sigma}(h_\infty{\mathbf t}) \widetilde{f}^0_{\infty, i}, \widetilde{f}^0_{\infty, j} ) 
      {\rm d} h_\infty   \\   \displaybreak[0]
=& \frac{1}{2}
      \int_{H^0_1({\mathbf R})}   
       {\mathcal B}_{\omega_\infty}(\omega_\infty( h_\infty ) \varphi^\alpha_i, \varphi^{n-\alpha}_j  )
         {\mathcal B}_{ \sigma_\infty}( \sigma( h_\infty ) f^0_{\infty, i}, f^0_{\infty, j} )    \\   \displaybreak[0]
   & \quad \quad 
       +  {\mathcal B}_{\omega_\infty}(\omega_\infty(h_\infty) (-1)^i  \delta(\infty) \varphi^\alpha_{-i}, \varphi^{n-\alpha}_j  )
        (-1)^i \delta(\infty) {\mathcal B}_{\sigma_\infty}( \sigma(h_\infty )  f^0_{\infty, -i}, f^0_{\infty, j} ) 
      {\rm d} h_\infty   \\   \displaybreak[0]
=& \frac{1}{2}
     ({\mathcal Z}_{ij} + {\mathcal Z}_{-ij} ). 
\end{align*}
Therefore the definitions of ${\mathcal Z}_{ij}$ and ${\mathcal B}_{{\mathcal W}\otimes{\mathcal L}}$ show that 
\begin{align*}
    &  \sum^n_{\alpha=0}  \sum^{n+1}_{i,j=-n-1} \widetilde{\mathcal Z}_{ij}  \\
  =& \frac{1}{2} \sum^n_{\alpha=0}  \sum^{n+1}_{i,j=-n-1} 
          ( {\mathcal Z}_{ij} + {\mathcal Z}_{-ij}  )  (-1)^\alpha\binom{n}{\alpha}   \\
 =  &  \sum^n_{\alpha=0}  \sum^{n+1}_{i,j=-n-1} 
                         \left\{ \int_{{\mathbf X}_\infty}  \omega_\infty(h_\infty) \varphi^\alpha_i(x) \overline{ \varphi^{n-\alpha}_j(x) }    {\rm d}x   \right\}   
     \times     {\mathcal B}_{\sigma_\infty}(\sigma_\infty(h_\infty) f^0_{\infty, i}, f^0_{\infty, j} ) 
       (-1)^\alpha\binom{n}{\alpha}    \\
  =&  \sum^n_{\alpha=0}  \sum^{n+1}_{i,j=-n-1}  
       \int_{{\mathbf X}_\infty}
       {\mathcal B}_{\sigma_\infty}( \omega_\infty(h_\infty) \varphi^\alpha_i(x) \sigma_\infty(h_\infty) f^0_{\infty, i},  \varphi^{n-\alpha}_j(x) f^0_{\infty, j} ) 
       (-1)^\alpha\binom{n}{\alpha}   {\rm d}x  \\  
  =&  \sum^n_{\alpha=0}   
       \int_{{\mathbf X}_\infty}
       {\mathcal B}_{\sigma_\infty}(    {\mathcal B}_{\mathcal W}(  \omega_\infty(h_\infty) \varphi^\alpha(x),  \sigma_\infty(h_\infty) f^0_{\infty} ), 
                                                         {\mathcal B}_{\mathcal W}(   \varphi^{n-\alpha}(x), f^0_{\infty}  )  ) 
       (-1)^\alpha\binom{n}{\alpha}   {\rm d}x  \\  
   =&   \int_{{\mathbf X}_\infty}
          {\mathcal B}_{{\mathcal W}\otimes {\mathcal L}} 
            ( \omega_\infty(h_\infty) \widetilde{\varphi}, \widetilde{\varphi},  \sigma_\infty(h_\infty) f^0_\infty,f^0_\infty) 
          {\rm d}x. 
\end{align*}
This proves the proposition.
\end{proof}

To state an explicit inner product formula (Theorem \ref{HSTInnprd}), we fix a decomposition ${\rm d}h=\prod_{v\in \Sigma_{\mathbf Q}} {\rm d} h_v$ of the Tamagawa measure on $H^0_1({\mathbf A})$ 
to a product of Haar measures ${\rm d}h_v$ on $H^0_1({\mathbf Q}_v)$ as follows. 
For $g\in {\rm SL}_2({\mathbf C})$, write 
\begin{align*}
  g = \begin{pmatrix} t^{\frac{1}{2}}   &  t^{\frac{1}{2}}z  \\ 0 &  t^{-\frac{1}{2}} \end{pmatrix} u, 
      t \in {\mathbf R}_{>0}, z=x+\sqrt{-1}y  \in {\mathbf C},   u\in {\rm SU}_2({\mathbf R}). 
\end{align*}
Then let ${\rm d}g$ be the Haar measure on ${\rm SL}_2({\mathbf C})$ which is given by
\begin{align*}
    {\rm d}g = \frac{{\rm d}x \wedge {\rm d}y \wedge {\rm d}t }{t^3} \wedge {\rm d}u, \quad 
    \left(  g = \begin{pmatrix} t^{\frac{1}{2}}   &  t^{\frac{1}{2}}z  \\ 0 &  t^{-\frac{1}{2}} \end{pmatrix} u, 
      t \in {\mathbf R}_{>0}, z=x+\sqrt{-1}y  \in {\mathbf C},   u\in {\rm SU}_2({\mathbf R})  \right), 
\end{align*}
where ${\rm vol}({\rm SU}_2({\mathbf R}),  {\rm d}u )=1$.
We fix ${\rm d}h=\prod_{v\in \Sigma_{\mathbf Q}} {\rm d} h_v$ so that 
 ${\rm d}h_\infty$ is induced by ${\rm d}g$ via $\varrho$. 
We define a Haar measure ${\rm d}h_{\rm fin}$ on $H^0_1({\mathbf A}_{\rm fin})$ to be $\prod_{ v\in \Sigma_{\mathbf Q}, v<\infty  } {\rm d} h_v$.

Recall $N{\mathbf Z} = {\mathfrak N} \cap {\mathbf Z}$ and  let 
\begin{align}\label{def:un1}
{\mathcal U}_{N} =   \left\{   \varrho(g, \alpha) \in H^0( {\mathbf A}_{\rm fin} ) \ | \ g \in U_0(N), \alpha \in \widehat{\mathbf Z}^\times   \right\},   \quad 
{\mathcal U}_{N,1} = {\mathcal U}_{N} \cap H^0_1( {\mathbf A}_{\rm fin}). 
\end{align}

Summarizing Proposition \ref{prop:inntolocint} and the results in Section \ref{sec:complocint}, we obtain the following corollary:

\begin{thm}\label{HSTInnprd}
{\itshape Assume that
\begin{itemize}
  \item $n$ is even;
  \item $\pi$ is not conjugate self-dual; 
  \item $\delta(\infty)=-1$ and $\delta(v)=1$ for each finite $v\in \Sigma_{\mathbf Q}$;  
  \item ${\mathfrak N}$ is square-free; 
  \item $\Delta_F$ and ${\rm Nr}_{F/{\mathbf Q}}({\mathfrak N})$ are coprime;
  \item Conjecture \ref{conj:arint} on the archimedean local integral, which is hold for $n=0,2,4,6,8$.
\end{itemize}
Let  ${\mathcal P}$ be a set of rational primes which is defined in (\ref{defP}). 
Put  
\begin{align*}
   r_{F,2} = \begin{cases} 1,  &  ( 2\mid \Delta_F), \\
                                         0, &  ( 2\nmid \Delta_F).   \end{cases}  
\end{align*}
For each finite unramified places $v\in \Sigma_{\mathbf Q}$ with the residue characteristic $p$, let $\varepsilon_p$ be as in Lemma \ref{lem:nonarint}. 
Then, we have 
\begin{align*}
  \frac{( \theta(\widetilde{\varphi}, \widetilde{\mathbf f}^\dag, \theta(\widetilde{\varphi}, \widetilde{\mathbf f}^\dag ) )_G}{\langle  {\mathbf f}, {\mathbf f}  \rangle_{H^0}}  
   =& \frac{   L(1,{\rm As}^+(\pi)) }{\zeta(2)\zeta(4)}
     \cdot \frac{ (-1)^{\frac{n}{2}} {\rm vol}({\mathcal U}_{N, 1}, {\rm d}h_{\rm fin}) (2n+3) 2^{\sharp{\mathcal P}} 
                        }{ 2^{n+9}  (n+1) N^2_F \Delta^3_F 2^{-4r_{F,2}}}   \\
     & \times \frac{\zeta_{N_F}(4)}{\zeta_{N_F}(1)} 
                  \cdot \prod_{p\mid  N }   (1+\varepsilon_p)
                  \cdot \prod_{p\mid \Delta_F} (1+p^{-1}).     
\end{align*}
}
\end{thm}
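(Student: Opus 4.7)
The plan is to apply Proposition \ref{prop:inntolocint}, which already reduces the ratio $( \theta(\widetilde{\varphi}, \widetilde{\mathbf f}^\dag), \theta(\widetilde{\varphi}, \widetilde{\mathbf f}^\dag ) )_G / \langle {\mathbf f}, {\mathbf f}\rangle_{H^0}$ to $\tfrac{1}{\dim {\mathcal W}_{2n+2}({\mathbf C})}\cdot \tfrac{L(1,{\rm As}^+(\pi))}{\zeta(2)\zeta(4)}\cdot \prod_v {\mathcal Z}^\ast_v(\varphi_v)$, where $\dim {\mathcal W}_{2n+2}({\mathbf C}) = 2n+3$ accounts for the factor $(2n+3)$ in the formula. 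The task thereby becomes computing each ${\mathcal Z}^\ast_v(\varphi_v)$ explicitly from the distinguished Bruhat-Schwartz function $\widetilde{\varphi}$ fixed in Section \ref{sec:test} and assembling the product.

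First I would dispose of the finite places. At a prime $p \nmid N_F$ the choice of $\varphi_v$ as the characteristic function of $V'({\mathcal O}_{F,v})^{\oplus 2}$ is the standard unramified vector, so a direct unfolding against the spherical matrix coefficient of $\sigma_v$ should yield ${\mathcal Z}_v(\varphi_v, f^{0,\dag}_v) = L(1,{\rm As}^+(\pi_v))/(\zeta_v(2)\zeta_v(4))$ and hence ${\mathcal Z}^\ast_v = 1$. At primes $p \mid N\Delta_F$ one must work out the integral with the explicit $\varphi_v$, separating the cases $p \mid \Delta_F$ (producing $1+p^{-1}$ from the ramified contribution to $V$), $p \mid {\mathfrak N}$ inert or ramified (producing the Atkin-Lehner factor $1+\varepsilon_p$), and $p = v_1 v_2$ split with $p \in {\mathcal P}$ (where the raising operator ${\mathscr V}_p$ produces the additional factor of $2$ giving the $2^{\sharp {\mathcal P}}$, together with $1+\varepsilon_p = 1 + \varepsilon_{v_1}\varepsilon_{v_2}$). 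These local computations are essentially parallel to those carried out for Yoshida lifts in \cite{hn}, with the same $\zeta_{N_F}(4)/\zeta_{N_F}(1)$ arising after one clears the Euler factors at bad primes from $L(1,{\rm As}^+(\pi))$ and $\zeta(2)\zeta(4)$; the conductor discrepancy and the self-dual measure normalization on ${\mathbf X}_v$ should collectively produce the $N_F^2 \Delta_F^3 2^{-4 r_{F,2}}$ in the denominator.

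Second I would compute the archimedean integral ${\mathcal Z}_\infty(\varphi_\infty, f^0_\infty)$. Here the pairing ${\mathcal B}_{{\mathcal W}\otimes {\mathcal L}}$ combines the explicit pluri-harmonic polynomial $P^\alpha$ from Lemma \ref{pluri} with the Bessel-function description of the minimal $K$-type given by Lemma \ref{l:whitt}. Using the explicit form of ${\mathcal B}_{\sigma_\infty}$ from Lemma \ref{l:pairB0} and the Iwasawa decomposition for ${\rm SL}_2({\mathbf C})$ compatible with our fixed measure ${\rm d}h_\infty$, the ${\rm SU}_2({\mathbf R})$-direction integrates out by Schur orthogonality, leaving a one-dimensional integral on $(0,\infty)$ involving products of $K_j(4\pi t)$ with Gaussian weights coming from $e^{-\pi {\rm Tr}(x_i {}^{\rm t}\bar{x}_i)}$. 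At this point I would invoke Conjecture \ref{conj:arint}, verified in the Appendix for $n=0,2,4,6,8$, to evaluate the sum in closed form; this is the source of the factor $(-1)^{n/2}/(2^{n+9}(n+1))$ and accounts for the $\Gamma_{\mathbf C}$-factors appearing in $L(1,{\rm As}^+(\pi_\infty))$.

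Finally I would assemble the product, moving the measure normalization ${\rm vol}({\mathcal U}_{N,1},{\rm d}h_{\rm fin})$ out front, cancelling the global $L(1,{\rm As}^+(\pi))$ against the local $L$-factors built into each ${\mathcal Z}^\ast_v$, and collecting the bad-prime corrections into $\zeta_{N_F}(4)/\zeta_{N_F}(1)$, $\prod_{p\mid N}(1+\varepsilon_p)$, and $\prod_{p\mid \Delta_F}(1+p^{-1})$. The main obstacle is the archimedean computation: without Conjecture \ref{conj:arint} in full generality, the closed form of the Bessel/hypergeometric sum on the archimedean side must be extracted case by case, and the bookkeeping of signs, powers of $2$ and $\pi$ across the pluri-harmonic polynomials, the Weil constant $\gamma_{V_\infty}^2=-1$, and the normalization of $W^j_{\pi,\infty}$ is genuinely delicate. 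A secondary but nontrivial obstacle is the consistent treatment of split primes $p \in {\mathcal P}$, where the ${\mathcal P}$-stabilization, the asymmetric conductor ${\rm ord}_w({\mathfrak N}) \neq {\rm ord}_{\overline{w}}({\mathfrak N})$, and the Atkin-Lehner involution at $\overline{w}$ must combine to give the uniform factor $1+\varepsilon_p$ on both ramified and tame split primes dividing $N$.
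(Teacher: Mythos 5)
Your proposal takes essentially the same route as the paper's proof, which is simply the observation that the statement follows by combining Proposition \ref{prop:inntolocint} (the Rallis inner product reduction to local integrals), Lemma \ref{lem:nonarint} (the non-archimedean local integrals), and Lemma \ref{lem:arintInn} (the archimedean local integral assuming Conjecture \ref{conj:arint}). One small bookkeeping slip: you attribute the factor $(2n+3)$ in the final formula to $\dim\mathcal{W}_{2n+2}(\mathbf{C})=2n+3$ from Proposition \ref{prop:inntolocint}, but that dimension sits in the \emph{denominator} of the Proposition and in fact cancels the $(2n+3)$ produced by the archimedean computation in Lemma \ref{lem:arintInn}, so it is not the source of the $(2n+3)$ appearing in the displayed statement.
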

\begin{proof}
The statement follows from Proposition \ref{prop:inntolocint}, Lemma \ref{lem:nonarint} and Lemma \ref{lem:arintInn}.
\end{proof}

\subsection{Classical form of inner product formula}\label{sec:clainnprd}

In this subsection, we introduce a classical form of Theorem \ref{HSTInnprd}. 
We follow the same line with \cite[Section 5.3]{hn}. 

Define the classical form $\theta^\ast_{ \widetilde{\mathbf f}^\dag}: {\mathfrak H}_2 \to {\mathcal L}_\lambda({\mathbf C})$ 
of $\theta(\widetilde{\varphi}, \widetilde{\mathbf f}^\dag)$ to be 
\begin{align*}
        \theta^\ast_{\widetilde{ \mathbf f}^\dag}(Z)
        = \frac{1}{ {\rm vol}( {\mathcal U}_{N,1} , {\rm d}h_{\rm fin}) } 
           \varrho_\lambda(J(g_\infty, {\mathbf i}))
           \theta(\widetilde{\varphi}, \widetilde{\mathbf f}^\dag)(g_\infty), 
           \quad (g_\infty \in {\rm Sp}_4({\mathbf R}), g_\infty\cdot {\mathbf i} = Z \in {\mathfrak H}_2). 
\end{align*}

Let ${\mathcal B}_{\mathcal L}: {\mathcal L}_\lambda({\mathbf C})^{\otimes 2}  \to {\mathbf C}$
be an ${\rm SU}_2({\mathbf R})$-equivariant and positive definite Hermitian pairing which is defined by 
\begin{align*}
  {\mathcal B}_{\mathcal L}(v_1, v_2) 
  = \langle  v_1, \varrho_\lambda(w_0) \overline{v_2}  \rangle_{\mathcal L}, \quad 
 \left( w_0 = \begin{pmatrix} 0 & 1 \\ -1 & 0  \end{pmatrix}, 
  v_1, v_2 \in {\mathcal L}_\lambda({\mathbf C}) \right).  
\end{align*}
Then define the Petersson norm of $\theta^\ast_{\widetilde{\mathbf f}^\dag}$
to be
\begin{align*}
       \langle \theta^\ast_{ \widetilde{\mathbf f}^\dag}, \theta^\ast_{ \widetilde{\mathbf f}^\dag} \rangle_{\mathfrak H_2}  
  =  \int_{\Gamma^{(2)}_0(N_F) \backslash {\mathfrak H}_2}   
        {\mathcal B}_{\mathcal L}( \theta^\ast_{ \widetilde{ \mathbf f}^\dag}(Z), \theta^\ast_{ \widetilde{ \mathbf f}^\dag}(Z) )
        ({\rm det} Y)^{\frac{n}{2}+2}
        \frac{ {\rm d}X {\rm d} Y  }{  ( {\rm det} Y)^3},  
\end{align*}
where $Z=X+\sqrt{-1}Y\in {\mathfrak H}_2$ 
and ${\rm d}X = \prod_{j\leq l} {\rm d}x_{jl}, {\rm d}Y=\prod_{j\leq l} {\rm d}y_{jl}$ for
$X=(x_{jl})$ and $Y=(y_{jl})$.

Recall ${\rm d}h_0$ is the Tamagawa measure on $ Z_{H^0}({\mathbf A})  \backslash  H^0({\mathbf A})$. 
Decompose ${\rm d}h_0 = \prod_{v\in \Sigma_{\mathbf Q}} {\rm d}h_{0, v}$ so that 
  ${\rm d}h_{0,\infty}$ is the Haar measure on $Z_{H^0}({\mathbf R})  \backslash  H^0({\mathbf R})$ 
  which is induced by the Haar measure ${\rm d}\epsilon_\infty$ on $\mu_2({\mathbf R})$ in Section \ref{pairH}
                      and the Haar measure ${\rm d}h_\infty$ on $H^0_1({\mathbf R})$ in Section \ref{sec:Rinn}.
Write ${\rm d}h_{0, {\rm fin}} = \prod_{v\in \Sigma_{\mathbf Q}, v<\infty} {\rm d}h_{0, v}$.  
Let $\overline{ {\mathcal U}_{N}}$  be the image of ${\mathcal U}_{N}$ 
     in $ Z_{H^0}({\mathbf A})  \backslash  H^0({\mathbf A})$. 
Then, define
\begin{align*}
       \langle {\mathbf f}_1, {\mathbf f}_2 \rangle_{\mathscr H}  
  = \frac{1}{{\rm vol}(   \overline{ {\mathcal U}_{N }}, {\rm d}h_{0, \rm fin})}
     \langle  {\mathbf f}_1, {\mathbf f}_2 \rangle_{H^0}
\end{align*}
for ${\mathbf f}_1, {\mathbf f}_2 \in {\mathcal M}_n( H^0,   {\mathcal U}_{\mathfrak N} )$.

\begin{thm}\label{thm:alginnprd}
{\itshape Let ${\mathcal P}, r_{F, 2}, \varepsilon_p$ be as in Theorem \ref{HSTInnprd}.
Define $r_F$ to be the number of the rational primes which is ramified in $F$. 
Put 
\begin{align*}
  \beta = \sharp {\mathcal P} + 4 r_{F,2} - 2n-9  -r_F.
\end{align*}
Assume the conditions in Theorem \ref{HSTInnprd}.   
Then we have 
\begin{align*}
    \frac{  \langle  \theta^\ast_{\widetilde{\mathbf f}^\dag}, \theta^\ast_{\widetilde{\mathbf f}^\dag}   \rangle_{\mathfrak H_2}
              }{\langle  {\mathbf f}, {\mathbf f} \rangle_{\mathscr H} }
 =  2^\beta N_F \Delta^{-3}_F  
      \times   L(1, {\rm As}^+(\pi)) 
                  \prod_{p\mid N } (1+\varepsilon_p) 
                  \cdot \prod_{p\mid \Delta_F}  (1+p^{-1}). 
\end{align*}
}
\end{thm}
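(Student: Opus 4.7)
The plan is to derive Theorem \ref{thm:alginnprd} from the adelic inner product formula of Theorem \ref{HSTInnprd} by translating both sides of the ratio into classical Petersson inner products. This is essentially a bookkeeping computation of Haar measure volumes, with no new analytic input.

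First I would translate the numerator: using strong approximation for ${\rm Sp}_4$ together with the defining relation $\theta^\ast_{\widetilde{\mathbf f}^\dag}(Z)={\rm vol}({\mathcal U}_{N,1},{\rm d}h_{\rm fin})^{-1}\varrho_\lambda(J(g_\infty,{\mathbf i}))\theta(\widetilde\varphi,\widetilde{\mathbf f}^\dag)(g_\infty)$, the adelic fundamental domain for $Z_G({\mathbf A})G({\mathbf Q})\backslash G({\mathbf A})$ splits, up to the similitude factor, as a product of $\Gamma^{(2)}_0(N_F)\backslash{\mathfrak H}_2$, the compact $K_\infty$, and $U^{(2)}_0(N_F)$. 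The Hermitian pairing ${\mathcal B}_{\mathcal L}$ in classical coordinates absorbs the $(\det Y)^{n/2+2}$ weight factor through the equivariance of $\varrho_\lambda(J(g_\infty,{\mathbf i}))$, and the $(-1)^{n/2}$ in Theorem \ref{HSTInnprd} is reconciled with ${\mathcal B}_{\mathcal L}$ via the action of $\varrho_\lambda(w_0)$. The denominator is converted directly by the prescribed identity relating $\langle{\mathbf f},{\mathbf f}\rangle_{\mathscr H}$ and $\langle{\mathbf f},{\mathbf f}\rangle_{H^0}$.

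Next I would compute the resulting volume factors place by place. The Tamagawa number identities $\tau({\rm Sp}_4)=\tau({\rm Res}_{F/{\mathbf Q}}{\rm SL}_{2/F})=1$ globalize the local computation. At unramified finite places, the local volumes produce Euler factors of $\zeta$ and $\zeta_F$ that combine cleanly with $\zeta(2)^{-1}\zeta(4)^{-1}$ and $\zeta_{N_F}(4)/\zeta_{N_F}(1)$ appearing in Theorem \ref{HSTInnprd}. At ramified finite places (those dividing $N$ or $\Delta_F$), using that ${\mathfrak N}$ is square-free and that $\Delta_F$ is coprime to $N$, the local volumes produce exactly the factors $(1+\varepsilon_p)$ and $(1+p^{-1})$ appearing in the statement; combined with the volume of $\Gamma^{(2)}_0(N_F)\backslash{\rm Sp}_4({\mathbf Z})$ (proportional to $N_F^3$), this yields the desired $N_F\Delta_F^{-3}$ rational prefactor.

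The main obstacle will be the precise bookkeeping of the powers of $2$ summing to $\beta=\sharp{\mathcal P}+4r_{F,2}-2n-9-r_F$. These arise from (i) the $2^{\sharp{\mathcal P}}$ explicit in Theorem \ref{HSTInnprd}; (ii) the $2^{4r_{F,2}}$ from the dyadic local volume when $2\mid\Delta_F$; (iii) the $2^{-r_F}$ from the products $\prod_{v\mid\Delta_F}{\rm vol}(\mu_2({\mathbf Q}_v),{\rm d}\epsilon_v)$ bridging integrals on $H$ and $H^0$; and (iv) a factor $2^{-2n-9}$ assembled from binomial coefficients appearing in the pairing ${\mathcal B}_{\mathcal W}$ and the archimedean normalization $(2n+3)/[2^{n+9}(n+1)]$. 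Tracking these powers of $2$ carefully is routine but error-prone; once resolved, the remaining combination yields the claimed formula.
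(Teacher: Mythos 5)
Your proposal follows essentially the same route as the paper's proof: translate the adelic inner product to the classical Petersson pairing on $\Gamma^{(2)}_0(N_F)\backslash\mathfrak H_2$ via the Tamagawa number of ${\rm PGSp}_4$, Siegel's volume formula for ${\rm Sp}_4(\mathbf Z)\backslash\mathfrak H_2$, and the index formula for $\Gamma^{(2)}_0(N_F)$; then pass from $\langle\mathbf f,\mathbf f\rangle_{H^0}$ to $\langle\mathbf f,\mathbf f\rangle_{\mathscr H}$ by a volume comparison, and finally collect the powers of~$2$. The one concrete misstatement in your accounting is item (iii): you attribute the factor $2^{-r_F}$ to $\prod_{v\mid\Delta_F}{\rm vol}(\mu_2(\mathbf Q_v),{\rm d}\epsilon_v)$, but by the normalization fixed in Section~\ref{pairH} one has ${\rm vol}(\mu_2(\mathbf Q_v),{\rm d}\epsilon_v)=1$ at every place, so that product is $1$. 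The actual source of $2^{-r_F}$ in the paper is the comparison ${\rm vol}(\overline{\mathcal U_N},{\rm d}h_{0,\rm fin})=2^{-r_F}\,{\rm vol}(\mathcal U_{N,1},{\rm d}h_{\rm fin})$ between measures on $Z_{H^0}\backslash H^0$ and on $H^0_1$ (cited from the proof of \cite[Theorem~5.7]{hn}), which reflects the index of the norm image in $\mathbf Q_v^\times$ at each prime ramified in $F$. Since you flag the $2$-power bookkeeping as error-prone and your overall strategy is otherwise identical to the paper's, this is a detail to repair rather than a structural gap, but it would need to be corrected for the computation to close.
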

\begin{proof}
The proof is done in the same way given in \cite[Theorem 5.7]{hn}. 
Define a pairing $\langle\!\langle  \cdot, \cdot  \rangle\!\rangle: 
                              {\mathcal L}_\lambda({\mathbf C}) \otimes {\mathcal L}_\lambda({\mathbf C}) \to {\mathbf C}$ to be 
\begin{align*}
    \langle\!\langle  v_1, v_2  \rangle\!\rangle
    = \int_{\rm SU_2({\mathbf R})}   \left\langle  v_1,   \overline{ \rho_\lambda(u)  v_2}  \right\rangle_{\mathcal L}   {\rm d}u,   
      \quad ({\rm vol}( {\rm SU}_2({\mathbf R}),  {\rm d}u) = 1, v_1, v_2 \in {\mathcal L}_\lambda({\mathbf C}) ).
\end{align*} 
In the same way with the proof of \cite[Lemma 5.6]{hn}, 
the pairing $\langle\!\langle  \cdot, \cdot  \rangle\!\rangle$ satisfies 
\begin{align*}
    \langle\!\langle  \cdot, \cdot  \rangle\!\rangle
    = \frac{\sqrt{-1}^n }{n+1} {\mathcal B}_{\mathcal L}(\cdot,, \cdot).
\end{align*}
Hence the following facts:
\begin{itemize}
\item the Tamagawa number of ${\rm PGSp}_4$ is 2;
\item ${\rm vol} ({\rm Sp}_4({\mathbf Z}) \backslash {\mathfrak H}_2, \frac{{\rm d}X {\rm d}Y}{ (\det Y)^3 }  )   = 2 \zeta(2) \zeta(4)   $ (\cite[Theorem 11]{si43});
\item $[{\rm Sp}_4({\mathbf Z}) : \Gamma_0(N_F) ]  = N^3_F \prod_{p\mid N_F}    \frac{ 1-p^{-4} }{ 1-p^{-1} }   $  (\cite[page 114, (1)]{kl59});
\end{itemize}
show that 
\begin{align*}
    \frac{  (\theta(\widetilde{\varphi}, \widetilde{\mathbf f}^\dag, \theta(\widetilde{\varphi}, \widetilde{\mathbf f}^\dag ) )_G  
              }{ {\rm vol}( {\mathcal U}_{N,1}, {\rm d}h_{\rm fin})^2  }
   = \frac{\sqrt{-1}^n }{n+1} 
      \cdot N^{-3}_F
      \prod_{p\mid N_F} \frac{ 1-p^{-1} }{ 1-p^{-4} }
      \cdot \frac{ \langle  \theta^\ast_{\mathbf f} ,\theta^\ast_{\mathbf f}  \rangle_{{\mathfrak H}_2}  }{\zeta(2)\zeta(4)}.   
\end{align*}
Then by Theorem \ref{HSTInnprd}, we find that  
\begin{align*}
      \frac{  \langle  \theta^\ast_{\widetilde{\mathbf f}^\dag}, \theta^\ast_{\widetilde{\mathbf f}^\dag}   \rangle_{\mathfrak H_2} 
                }{\langle   {\mathbf f}, {\mathbf f}   \rangle_{\mathscr H} }
  =&  \frac{  {\rm vol}( \overline{{\mathcal U}_{N}}, {\rm d}h_{0, \rm fin}) 
                  }{   {\rm vol}( {\mathcal U}_{N, 1} , {\rm d}h_{\rm fin})  }
      \cdot \frac{ N_F(2n+3) 2^{\sharp {\mathcal P}}  
                        }{ \dim {\mathcal W}_{2n+2}({\mathbf C})  2^{n+9} \Delta^3_F 2^{r_{F,2}}}  \\
    &\quad  
      \times   L(1, {\rm As}^+(\pi)) 
                  \prod_{p\mid N } (1+\varepsilon_p) 
                  \cdot \prod_{p\mid \Delta_F}  (1+p^{-1}).  
\end{align*}
By the proof of \cite[Theorem 5.7]{hn}, we have 
\begin{align*}
{\rm vol}( \overline{{\mathcal U}_{N}}, {\rm d}h_{0, \rm fin}) 
= 2^{-r_F}  {\rm vol}( {\mathcal U}_{N, 1} , {\rm d}h_{\rm fin}).
\end{align*} 
This shows the theorem. 
\end{proof}

The following corollary immediately follows from the non-vanishing of $L(1, {\rm As}^+(\pi))$:  

\begin{cor}\label{cor:nonvan}
{\itshape 
Assume conditions in Theorem \ref{thm:alginnprd} and 
\begin{itemize}
\item[(LR)] for each $p\mid N$, $\varepsilon_p=1$.  
\end{itemize}
Then, $\theta^\ast_{ \widetilde{\mathbf f}^\dag }$ is non-zero.
}
\end{cor}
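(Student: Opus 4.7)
The plan is to deduce the non-vanishing directly from the explicit Petersson inner product formula in Theorem \ref{thm:alginnprd} by checking that every factor on the right-hand side is non-zero.

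First I would invoke Theorem \ref{thm:alginnprd} to obtain the identity
\begin{align*}
\frac{\langle \theta^\ast_{\widetilde{\mathbf f}^\dag}, \theta^\ast_{\widetilde{\mathbf f}^\dag}\rangle_{\mathfrak H_2}}{\langle {\mathbf f}, {\mathbf f}\rangle_{\mathscr H}}
= 2^\beta N_F \Delta_F^{-3} \cdot L(1, {\rm As}^+(\pi)) \cdot \prod_{p\mid N}(1+\varepsilon_p) \cdot \prod_{p\mid \Delta_F}(1+p^{-1}),
\end{align*}
valid under the standing hypotheses. The leading constants $2^\beta$, $N_F$, and $\Delta_F^{-3}$ are manifestly non-zero rational numbers, and the Euler-type factor $\prod_{p\mid \Delta_F}(1+p^{-1})$ is a product of positive rationals, hence also non-zero.

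Next I would handle the archimedean-flavored $L$-value. As recalled in the excerpt just before Proposition \ref{p:Rallis}, the Asai $L$-function satisfies $L(1,{\rm As}^+(\pi))\neq 0$ precisely because $\pi$ is assumed not to be conjugate self-dual (the pole/zero behavior at $s=1$ detects conjugate self-duality, and the non-self-dual case gives a finite non-zero value). Then under hypothesis (LR), every local factor in the finite product satisfies $1+\varepsilon_p = 2 \neq 0$, so $\prod_{p\mid N}(1+\varepsilon_p) = 2^{\sharp\{p\mid N\}} \neq 0$.

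Combining the above, the right-hand side of the inner product formula is non-zero. Since $\mathbf{f}$ is a non-trivial cusp form, $\langle \mathbf{f}, \mathbf{f}\rangle_{\mathscr H} > 0$, and therefore $\langle \theta^\ast_{\widetilde{\mathbf f}^\dag}, \theta^\ast_{\widetilde{\mathbf f}^\dag}\rangle_{\mathfrak H_2} \neq 0$, forcing $\theta^\ast_{\widetilde{\mathbf f}^\dag}\neq 0$. There is no genuine obstacle in this argument, as all the substantive work has already been absorbed into Theorem \ref{thm:alginnprd}; the only point worth flagging is the citation of non-vanishing of $L(1,{\rm As}^+(\pi))$, which relies essentially on the non-conjugate-self-duality hypothesis on $\pi$ as emphasized in the discussion preceding the Rallis inner product formula.
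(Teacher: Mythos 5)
Your argument is correct and coincides with the paper's proof: the paper also deduces the corollary by observing that, given the inner product formula of Theorem \ref{thm:alginnprd} and hypothesis (LR), the non-vanishing of $\theta^\ast_{\widetilde{\mathbf f}^\dag}$ is equivalent to the non-vanishing of $L(1,{\rm As}^+(\pi))$, which it attributes to \cite[Theorem 4.3]{gs15} (using that $\pi$ is not conjugate self-dual). You spell out the elementary non-vanishing of the remaining factors, which the paper leaves implicit, but this is the same argument.
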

\begin{proof}
By the assumptions in the statement, the non-vanishing of $\theta^\ast_{ \widetilde{\mathbf f}^\dag }$ is equivalent to 
the non-vanishing of $L(1, {\rm As}^+(\pi))$ which follows from \cite[Theorem 4.3]{gs15}.
This proves the corollary.     
\end{proof}

\section{Proof of Theorem \ref{HSTInnprd}} \label{sec:complocint}

A proof of Theorem \ref{HSTInnprd} is divided two parts: 
a computation of non-archimedean local integrals and an archimedean local integral.
In the non-archimedean case, the computation is exactly the same with \cite[Section 6.3]{hn}.
Hence the archimedean computation is the main theme of this section.  
We do it assuming Conjecture \ref{conj:arint} in Section \ref{s:ArchInt}.    
Conjecture \ref{conj:arint} is hold for small weights and 
we give a proof of Conjecture \ref{conj:arint} for $n=0$ in Appendix. 

\subsection{The non-archimedean local integral}\label{s:NArchInt}

In this subsection we give explicit formulas for the non-archimedean local integrals ${\mathcal Z}_v(\varphi_v, f^{0,\dag}_v)$ for each finite place $v\in \Sigma_{\mathbf Q}$. 
The calculation of these values is done in the exactly same way with \cite[Section 6.3]{hn}. 
We summarize the result of the computation as follows:

\begin{lem}\label{lem:nonarint}
{\itshape Assume that ${\mathfrak N}$ is square-free 
                and that ${\rm Nr}_{F/{\mathbf Q}}({\mathfrak N})$ is prime to $\Delta_F$. 
              For each finite $v\in \Sigma_{\mathbf Q}$, let $p$ be the residue characteristic of $v$.   
\begin{enumerate}
\item Suppose that $v\in \Sigma_{\mathbf Q}$ is unramified in $F$. 
         Let $\varepsilon_{w_i}$ (resp. $\varepsilon_v$) be the root number of $\pi_{w_i}$  for $i=1,2$ (resp. $\pi_v$)  
          if $v= w_1 w_2$ is split in $F$ (resp. $v$ is inert in $F$).          
         Put
         \begin{align*}
            \varepsilon_p=\begin{cases} \varepsilon_{w_1} \varepsilon_{w_2},   &  (v:\text{split in }F),  \\
                                                          \varepsilon_v,                      &  (v:\text{inert in }F). \end{cases}
         \end{align*}    
         Then, we have 
         \begin{align*}
            {\mathcal Z}_v(\varphi_v, f^{0,\dag})
            = {\rm vol}({\mathcal U}_{N, 1, v}, {\rm d}h_v) \cdot \frac{L(1,{\rm As}^+(\pi_v))}{\zeta_v(2)\zeta_v(4)} 
               \times \begin{cases}   1,  &  (p\nmid  {\rm Nr}_{F/{\mathbf Q}} ({\mathfrak N})),  \\
                                                  p^{-2} (1+\varepsilon_p),  &   (p\mid  {\rm Nr}_{F/{\mathbf Q}} ({\mathfrak N})).    \end{cases}    
         \end{align*}
\item Suppose that $v\in \Sigma_{\mathbf Q}$ is ramified in $F$. 
         Then, we have 
         \begin{align*}
            {\mathcal Z}_v(\varphi_v, f^{0,\dag})
            =  {\rm vol}({\mathcal U}_{N, 1, v}, {\rm d}h_v)
                 \cdot |2^{-4}\Delta^3_F|_v \cdot (1+p^{-1}) \cdot \frac{L(1,{\rm As}^+(\pi_v))}{\zeta_v(1)\zeta_v(2)}.             
         \end{align*}
\end{enumerate}
}
\end{lem}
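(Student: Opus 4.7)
The plan is to follow directly the calculation of \cite[Section 6.3]{hn}, where the analogous local integrals for Yoshida lifts were evaluated; this is explicitly claimed by the author in the opening sentence of the section, so our task is to verify that the Yoshida-lift argument transfers to the present setting and produces the stated constants. First I would use the isomorphism $\varrho$ to identify $H^0_1({\mathbf Q}_v)$ with (the image of) ${\rm SL}_2(F_v)$, rewriting
\begin{align*}
   {\mathcal Z}_v(\varphi_v, f^{0,\dag}_v)
   = \int_{H^0_1({\mathbf Q}_v)}
        {\mathcal B}_{\omega_v}\bigl(\omega_v(h_v)\varphi_v, \varphi_v\bigr)\, \Phi_v(h_v)\, {\rm d}h_v,
\end{align*}
where $\Phi_v(h) := {\mathcal B}_{\widetilde{\sigma}_v}(\sigma_v(h) f^{0,\dag}_v, f^{0,\dag}_v)$ is the matrix coefficient attached to the (${\mathcal P}$-stabilized) local new vector. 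This is precisely the same type of matrix coefficient that appears in \textit{loc.\ cit.}, so the $\mathrm{GL}_2$-side calculations may be reused verbatim.

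The Schr\"odinger-model formulas (\ref{eq:weilsch}) show that ${\mathcal B}_{\omega_v}(\omega_v(h)\varphi_v, \varphi_v)$ equals, up to the Weil index $\gamma_{V_v}$ and the character $\chi_{V_v}$, the volume of $h\cdot V^\prime({\mathcal O}_{F,v})^{\oplus 2}\cap V^\prime({\mathcal O}_{F,v})^{\oplus 2}$ under the self-dual measure on ${\mathbf X}_v$. Combining this support restriction with the Iwasawa decomposition of ${\rm SL}_2(F_v)$ reduces the integral to (a) a volume factor that accounts for ${\rm vol}({\mathcal U}_{N,1,v}, {\rm d}h_v)$, and (b) a residual zeta integral of $\Phi_v$ against a characteristic function on the torus. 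When $v$ is unramified in $F$ and split as $v=w_1w_2$, the residual integral is a Rankin--Selberg integral whose value is $L(1, \pi_{w_1}\times \pi_{w_2})/(\zeta_v(2)\zeta_v(4))$; for inert unramified $v$ it is the corresponding Asai integral producing $L(1,{\rm As}^+(\pi_v))/(\zeta_v(2)\zeta_v(4))$. At places $p\mid {\rm Nr}_{F/{\mathbf Q}}({\mathfrak N})$, the ${\mathcal P}$-stabilization introduces an Atkin--Lehner pair and gives the factor $p^{-2}(1+\varepsilon_p)$; the square-freeness of ${\mathfrak N}$ is what isolates the simplest local new-vector configuration at each such prime and lets one invoke the Yoshida-lift computation unchanged.

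The main obstacle is the ramified case $v\mid \Delta_F$. There the quadratic character $\chi_{V_v}$ is the non-trivial character of $F_v/{\mathbf Q}_v$, the Weil index $\gamma_{V_v}$ carries a non-trivial Gauss sum, and the lattice $V^\prime({\mathcal O}_{F,v})$ has three $\delta_F$-dependent slots (cf.\ (\ref{def:lattice})) whose self-dual-versus-lattice discrepancies must be tracked carefully. The combination of these three ingredients is what yields the power $|2^{-4}\Delta_F^3|_v$; the volume computation on the ramified torus then produces the factor $1+p^{-1}$, and the matrix-coefficient integral produces $L(1,{\rm As}^+(\pi_v))/(\zeta_v(1)\zeta_v(2))$, the appearance of $\zeta_v(1)$ reflecting the ramified local Tate factor. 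The hypothesis that $\Delta_F$ and ${\rm Nr}_{F/{\mathbf Q}}({\mathfrak N})$ are coprime is used precisely here: it decouples the ramified-in-$F$ analysis from the ramified-by-$\pi$ analysis so that no cross-term survives, and thus the cleanest verification is to follow the ramified case of \cite[Section 6]{hn} step by step, checking only that our normalizations of $\varphi_v$, $\delta_F$, and the Haar measure ${\rm d}h_v$ agree with the ones used there.
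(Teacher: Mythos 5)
Your proposal follows exactly the paper's own proof, which consists of the single sentence stating that these local integrals are computed in the same way as in \cite[Section 6.3]{hn}; the outline you give (Iwasawa decomposition, Schr\"odinger-model support analysis of ${\mathcal B}_{\omega_v}$, matrix coefficient integral against the local newform) is a reasonable gloss of that computation. One imprecision: the factor $p^{-2}(1+\varepsilon_p)$ at $p\mid N$ is not uniformly produced by ${\mathcal P}$-stabilization, since it already appears at $p\mid N$ with $p\notin{\mathcal P}$ because the level-$N$ lattice $V^\prime({\mathcal O}_{F,v})$ forces an Atkin--Lehner contribution into the matrix coefficient integral even for the unstabilized newform, but this does not affect the validity of invoking \cite{hn}.
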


\subsection{The archimedean local integral}\label{s:ArchInt}

In this subsection, we compute the archimedean local integral. 
For this purpose, we firstly prepare a lemma on the Haar measure ${\rm d}h_\infty$ on $H^0_1({\mathbf R})$, which is fixed in Section \ref{sec:Rinn}.

\begin{lem}\label{l:cptint}
{\itshape The following set is a set of representatives of $H^0_1({\mathbf R})$:
  \begin{align*}
       \left\{  \varrho(u_1, u_2, a, \varepsilon) := \varrho \left(   u_1 \begin{pmatrix} a^{\frac{1}{2}}  & 0 \\  0 & a^{-\frac{1}{2}} \end{pmatrix} u_2, \varepsilon \right)    | \  
                         0<a<1, u_1, u _2 \in \left\{\pm 1\right\}\backslash {\rm SU}_2({\mathbf R}),  
                         \varepsilon \in \{ \pm 1\}      \right\}.  
  \end{align*} 
Moreover, for $f\in L^1(H^0_1({\mathbf R}))$, we have 
\begin{align*}
   \int_{H^0_1({\mathbf R})}  f(h) {\rm d} h_\infty  
   = \pi \sum_{\varepsilon=\pm 1}   
       \int_{{\rm SU}_2({\mathbf R})/\{\pm 1\}}
       \int_{{\rm SU}_2({\mathbf R})/\{\pm 1\}}
       \int^1_0
       f(\varrho(u_1, u_2, a, \varepsilon)) 
       \frac{(a-a^{-1})^2 {\rm d}a}{a} {\rm d}u_1 {\rm d}u_2.  
\end{align*}
}
\end{lem}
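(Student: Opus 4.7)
The lemma decomposes the Haar measure on $H^0_1(\mathbf R)=\mathrm{SO}(V)(\mathbf R)\cong\mathrm{SO}(3,1)$ by combining the Cartan ($KAK$) decomposition of $\mathrm{SL}_2(\mathbf C)$ with the two-component structure of $\mathrm{SO}(3,1)$. The proof proceeds in three steps.

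First I would identify $H^0_1(\mathbf R)$ explicitly via $\varrho$. A direct computation using $\varrho(h,\varepsilon)z=\varepsilon^{-1}hz\bar h^\ast$ shows that $\varrho(-1_2,1)z=z$ for all $z\in V$, whereas $\varrho(1_2,-1)$ acts as $z\mapsto-z$ on $V$ and lies in the non-identity component of $\mathrm{SO}(3,1)$. Hence $\varrho\colon\mathrm{SL}_2(\mathbf C)\times\{\pm 1\}\to H^0_1(\mathbf R)$ is a 2-to-1 surjection with kernel $\{(\pm 1_2,1)\}$, and $\varepsilon$ indexes the two connected components. Applying the Cartan decomposition $h=u_1\mathrm{diag}(a^{1/2},a^{-1/2})u_2$ and restricting to the Weyl chamber $0<a<1$ (the boundary $a=1$ is measure-zero) gives the claimed parameterization, once the residual $\{\pm 1\}$-ambiguities—from the $\varrho$-kernel and from the centralizer element $-1$ of $A$ in $\mathrm{SU}_2$—are used to pass to the quotients $\{\pm 1\}\backslash\mathrm{SU}_2$ on each $u_i$.

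Next I would compute the Iwasawa-to-Cartan Jacobian by testing against an $\mathrm{SU}_2$-bi-invariant function $f(g)=\phi(\mathrm{tr}(gg^\ast))$: this reduces the computation to a one-variable integral in the spectral parameter. Using the Iwasawa form $\mathrm d g=t^{-3}\mathrm d x\,\mathrm d y\,\mathrm d t\,\mathrm d u$ with $\mathrm{tr}(gg^\ast)=(t^2+|z|^2+1)/t$, integrating $z=re^{i\theta}$ in polar coordinates, changing variable to $\xi=\mathrm{tr}(gg^\ast)$, and applying Fubini produces $\int_{\mathrm{SL}_2(\mathbf C)}f\,\mathrm d g=\pi\int_2^\infty\phi(\xi)\sqrt{\xi^2-4}\,\mathrm d\xi$. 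In Cartan coordinates $\mathrm{tr}(gg^\ast)=2\cosh(2s)$, so matching both sides via $\xi=2\cosh(2s)$ yields $\mathrm d g=8\pi\sinh^2(2s)\,\mathrm d s\,\mathrm d u_1\,\mathrm d u_2$, which becomes $\pi(a-a^{-1})^2 a^{-1}\,\mathrm d a\,\mathrm d u_1\,\mathrm d u_2$ under the substitution $a=e^{2s}$.

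Finally, summing over $\varepsilon\in\{\pm 1\}$ and descending each $\mathrm{SU}_2$ to $\{\pm 1\}\backslash\mathrm{SU}_2$ completes the argument. The main obstacle is the bookkeeping of normalization constants: the factor $\pi$ in the final formula must emerge from the precise interplay of (i) the Iwasawa normalization with $\mathrm{vol}(\mathrm{SU}_2)=1$, (ii) the 2-to-1 nature of $\mathrm{SL}_2(\mathbf C)\to H^0_1(\mathbf R)^+$, (iii) the Weyl restriction to $a\in(0,1)$, and (iv) the choice of quotient measure on $\{\pm 1\}\backslash\mathrm{SU}_2$. Each individual step is classical, but keeping all four factors mutually consistent is the delicate part.
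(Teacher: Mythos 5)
You take a genuinely different route from the paper. The paper's proof is a direct Jacobian computation: it parameterizes $\mathrm{SU}_2(\mathbf R)$ by Euler angles $(\psi,\theta,\varphi)$, works out the Iwasawa decomposition of $\left(\begin{smallmatrix}\alpha&\beta\\-\bar\beta&\bar\alpha\end{smallmatrix}\right)\mathrm{diag}(a^{1/2},a^{-1/2})$ in closed form, and then computes the wedge product $\mathrm{d}z\wedge\mathrm{d}t\wedge\mathrm{d}\bar z/t^3$ by hand, using a reduction to $\theta=0$. Your approach instead tests the measure against $\mathrm{SU}_2$-bi-invariant functions of the spectral parameter $\xi=\mathrm{tr}(gg^\ast)$ and matches the resulting single-variable integrals — the classical spherical-function computation of the radial part of the Haar measure. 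Both methods are correct and give $\pi(a-a^{-1})^2a^{-1}\,\mathrm{d}a$ for the radial density; yours trades the explicit matrix-coordinate wedge-product bookkeeping for a Fubini argument that isolates the constant $\pi$ cleanly, at the cost of having to be careful with the reduction $\int_{\mathrm{SL}_2(\mathbf C)}\phi(\mathrm{tr}(gg^\ast))\,\mathrm{d}g=\pi\int_2^\infty\phi(\xi)\sqrt{\xi^2-4}\,\mathrm{d}\xi$. One caveat worth flagging when you write the details: the identity $\mathrm{tr}(gg^\ast)=(t^2+|z|^2+1)/t$ you invoke is for the Iwasawa factorization $g=\left(\begin{smallmatrix}1&z\\0&1\end{smallmatrix}\right)\mathrm{diag}(t^{1/2},t^{-1/2})u$, which is indeed the factorization for which $\mathrm{d}x\,\mathrm{d}y\,\mathrm{d}t/t^3$ is the Haar measure on $\mathrm{SL}_2(\mathbf C)$; the paper instead writes $g=\left(\begin{smallmatrix}t^{1/2}&t^{1/2}z\\0&t^{-1/2}\end{smallmatrix}\right)u$, for which one would get $\mathrm{tr}(gg^\ast)=t(1+|z|^2)+1/t$ and a different one-variable reduction. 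Your choice is the internally consistent one and reproduces the lemma's formula, so this is a notational mismatch with the paper's definition of $\mathrm{d}g$ rather than a gap in your argument — but you should state which Iwasawa convention you are using, since your Fubini computation is sensitive to it.
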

\begin{proof}
The first statement follows from the Cartan decomposition.  
Hence it suffices to change variables of the integrals on $H^0_1({\mathbf R})$. 
Let 
\begin{align*}
   u(\psi,\theta,\varphi) =& \begin{pmatrix} \alpha & \beta \\ -\bar{\beta} & \bar{\alpha}  \end{pmatrix}, 
         \alpha = \cos \psi e^{\sqrt{-1}\theta}, \beta = \sin\psi e^{\sqrt{-1}\varphi}, 0<\theta, \varphi<2\pi, 0< \psi <\frac{\pi}{2}.   
\end{align*}
Denote by ${\rm d}u$ the Haar measure on ${\rm SU}_2({\mathbf R})$ such that ${\rm vol}({\rm SU}_2({\mathbf R}), {\rm d}u  )=1$. 
Then ${\rm d}u$ is explicitly given by the following formula: 
\begin{align}
  \int_{{\rm SU}_2({\mathbf R})}  f(u) {\rm d}u 
     = \int^{\frac{\pi}{2}}_0 \sin2\psi {\rm d}\psi
      \int^{2\pi}_0 \frac{{\rm d}\theta}{2\pi}
      \int^{2\pi}_0 \frac{{\rm d}\varphi}{2\pi}      
        f(u(\psi,\theta,\varphi)), 
        \quad ( f\in L^1({\rm SU}_2({\mathbf R})) ).   \label{eq:haar1}
\end{align}
For $0<a$, define
\begin{align*}
    N(\alpha, \beta, a)
    =& a^{-1}|\alpha|^2 + a|\beta|^2. 
\end{align*}
Then the following gives the Iwasawa decomposition of $\begin{pmatrix} \alpha & \beta \\ -\bar{\beta} & \bar{\alpha}  \end{pmatrix} \begin{pmatrix} a^{\frac{1}{2}} & 0 \\  0 & a^{-\frac{1}{2}}  \end{pmatrix}$:
\begin{align*}
    & \begin{pmatrix} 1 & \frac{(a^{-1}-a) \alpha \beta }{N(\alpha, \beta, a)} \\ 0 & 1  \end{pmatrix} 
          \begin{pmatrix} N(\alpha,\beta,a)^{-1} & 0 \\ 0 & 1  \end{pmatrix} 
           \begin{pmatrix} a^{-\frac{1}{2}}\alpha & a^\frac{1}{2} \beta \\ -a^\frac{1}{2} \bar{\beta} & a^{-\frac{1}{2}}\bar{\alpha}  \end{pmatrix}    \\
   = & \begin{pmatrix} 1 & \frac{(a^{-1}-a) \alpha \beta }{N(\alpha, \beta, a)} \\ 0 & 1  \end{pmatrix} 
          \begin{pmatrix} N(\alpha,\beta,a)^{-\frac{1}{2}} & 0 \\ 0 & N(\alpha,\beta,a)^{\frac{1}{2}}  \end{pmatrix} 
          \cdot N(\alpha,\beta,a)^{-\frac{1}{2}}
           \begin{pmatrix} a^{-\frac{1}{2}}\alpha & a^\frac{1}{2} \beta \\ -a^\frac{1}{2} \bar{\beta} & a^{-\frac{1}{2}}\bar{\alpha}  \end{pmatrix}.
\end{align*}
Let 
\begin{align*}
  z=&  \frac{(a^{-1}-a) \alpha \beta }{N(\alpha, \beta, a)}, \quad 
  t= N(\alpha, \beta, a)^{-1}. 
\end{align*}
Note that $|\alpha|^2 +|\beta|^2=1$. Assuming 
\begin{align*}
   \alpha=\bar{\alpha} \quad (\theta=0), 
\end{align*}
we find that 
\begin{align}\label{eq:haar2}
\begin{aligned}
   & \frac{ {\rm d}z \wedge {\rm d}t \wedge {\rm d}\bar{z} }{t^3} = 2\sqrt{-1} \frac{ {\rm d}x \wedge {\rm d}y \wedge {\rm d}t }{t^3}  \\
  =&   -2\alpha \beta^{-1}  \cdot   \frac{ (a-a^{-1})^2  }{a}  {\rm d}a \wedge {\rm d}\alpha \wedge {\rm d}\beta 
  =   \sqrt{-1} \sin(2\psi) \cdot   \frac{ (a-a^{-1})^2  }{a}  {\rm d}a \wedge {\rm d}\phi \wedge {\rm d}\psi.
\end{aligned}
\end{align}
This identity (\ref{eq:haar1}) and (\ref{eq:haar2}) prove the lemma.
\end{proof}

\begin{conj}\label{conj:arint}
{\itshape For even non-negative integer $n$, the value 
\begin{align*}
     I_n  :=&   \int^1_0 \frac{(a-a^{-1})^2 {\rm d}a}{a}      
     \int_{{\mathbf X}_\infty}  {\rm d}x
          \int^\infty_0  \frac{{\rm d} t}{t}      \\
     &  \quad
             \left\langle
                       \left\langle  \varphi_\infty \left( \begin{pmatrix}  a^{-\frac{1}{2}}  & 0 \\  0 &  a^{\frac{1}{2}}   \end{pmatrix}  x \right),  
                                               W_{\sigma, \infty}\left( \begin{pmatrix} t &  0 \\ 0 & 1 \end{pmatrix}     
                                                                                   \begin{pmatrix}  a^{\frac{1}{2}}  & 0 \\  0 &  a^{-\frac{1}{2}}   \end{pmatrix} \right)   \right\rangle_{\mathcal W},  
                                   \overline{ \left\langle  \varphi_\infty(x),    W_{\sigma, \infty}\left( \begin{pmatrix} t &  0 \\ 0 & 1 \end{pmatrix}   \right)      \right\rangle_{\mathcal W} }
              \right\rangle_{\mathcal L}
\end{align*}
is given by 
\begin{align*}
   I_n= (-1)^{\frac{n}{2}} \cdot 2^{-4n-8} \cdot (n+1)!^2 \cdot n! \cdot \pi^{-3n-6}. 
\end{align*}
}
\end{conj}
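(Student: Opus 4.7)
The plan is to unfold $I_n$ into a finite sum of elementary integrals by inserting the explicit formulas for $\varphi_\infty$ from \eqref{def:phiinf} and for $W_{\sigma,\infty}$ from Lemma \ref{l:whitt}, then evaluate each piece by classical special-function identities. Specifically, I would first pair the $\mathcal{W}_{2n+2}$-factors via
\[
\bigl\langle\varphi_\infty(x),W_{\sigma,\infty}(\mathrm{diag}(t,1))\bigr\rangle_{\mathcal W}
=\sum_{\alpha=0}^{n}\sum_{j=-n-1}^{n+1}\bigl\langle \varphi^\alpha_{\infty}(x),\,W^j_{\sigma,\infty}(\mathrm{diag}(t,1))u_j\bigr\rangle_{\mathcal W}\binom{n}{\alpha}X^{\alpha}Y^{n-\alpha},
\]
so the $\mathcal L_\lambda({\mathbf C})$-pairing outside is computed by Lemma \ref{l:pairB0} (applied to the dual pair $X^\alpha Y^{n-\alpha}$, $X^{n-\alpha}Y^\alpha$). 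This reduces $I_n$ to a finite double sum over $(\alpha,j)$ of triple integrals in $(a,x,t)$.

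Next I would perform the $x$-integration. The Gaussian factor $e^{-\pi\operatorname{Tr}(x_1{}^{\mathrm t}\bar x_1+x_2{}^{\mathrm t}\bar x_2)}$ is invariant under the right translation $x\mapsto x\cdot\mathrm{diag}(a^{-1/2},a^{1/2})$ (up to conjugation on the $V^0$-component), so only the polynomial $P^\alpha$ transforms nontrivially. Using Lemma \ref{pluri} together with the standard Gaussian moment identity for pluri-harmonic polynomials (see \cite[Lemma 4.5]{kv78} as in the proof of Lemma \ref{HSTwt}), the $x$-integral collapses to a self-pairing of $P^\alpha$ evaluated after the scaling, producing a Laurent polynomial in $a$ whose exponents are tracked by the degrees of the three factors $\mathbf{p}(\tfrac12(x_1w_0x_2+{}^{\mathrm t}(\cdots))),\mathbf{p}(x_1)^{\alpha},\mathbf{p}(x_2)^{n-\alpha}$.

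The $t$-integration then becomes a sum of Mellin transforms of products $K_j(4\pi t)K_j(4\pi t)\cdot t^{s}$, which evaluate in closed form via the Weber–Schafheitlin identity
\[
\int_0^\infty K_\mu(z)K_\nu(z)\,z^{s-1}\,dz
=\frac{2^{s-3}}{\Gamma(s)}\,\Gamma\!\Bigl(\tfrac{s+\mu+\nu}{2}\Bigr)\Gamma\!\Bigl(\tfrac{s+\mu-\nu}{2}\Bigr)\Gamma\!\Bigl(\tfrac{s-\mu+\nu}{2}\Bigr)\Gamma\!\Bigl(\tfrac{s-\mu-\nu}{2}\Bigr),
\]
already used implicitly in Lemma \ref{l:pairB0}. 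What remains is a rational function of $a$ (with Gamma-factor coefficients), and the outer integral $\int_0^1(a-a^{-1})^2 a^{-1}(\cdots)\,da$ is a ${}_3F_2$-type hypergeometric value at $1$.

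The main obstacle is combinatorial: one must reorganize the triple sum over $(\alpha,j,k)$ (the index $k$ coming from the Laurent expansion in $a$) into a single hypergeometric series to which a classical summation theorem—Whipple's or Saalschütz's—applies, and then verify that the collapse produces exactly $(-1)^{n/2}\cdot 2^{-4n-8}(n+1)!^2\,n!\,\pi^{-3n-6}$. For fixed small $n$ this can be checked by expanding $P^\alpha$ termwise and performing all sums with computer algebra (this is how Conjecture \ref{conj:arint} is verified for $n\leq 8$; the $n=0$ case is carried out in the Appendix); the uniform identity for general even $n$ would seem to require exhibiting a single hypergeometric transformation handling all the pluri-harmonic combinatorics at once.
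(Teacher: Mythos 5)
The statement you are asked to prove is labelled a \emph{Conjecture} in the paper, and the paper itself contains no proof for general even $n$: the authors state only that they verified it for $n=0,2,4,6,8$ with Mathematica, and the Appendix carries out the $n=0$ case explicitly. Your proposal honestly acknowledges this at the end, so there is no pretence that you have a proof; what you have is an outline of the verification strategy. On that level your outline does match the paper's: expand the pairing into the $(\alpha,j)$ sum via Lemma~\ref{l:pairB0}, do the Gaussian $x$-integral, then the $t$-integral via a Bessel--Mellin formula, then the $a$-integral, and for each fixed small $n$ the finite sum can be closed by hand or by computer algebra. So there is no ``gap'' to report beyond the one you yourself name — no uniform hypergeometric collapse is currently known — but there are two technical slips in the outline worth flagging.

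First, the Weber--Schafheitlin formula you quote, $\int_0^\infty K_\mu(z)K_\nu(z)\,z^{s-1}\,dz$ with \emph{both} Bessel functions at the same argument, is not the formula the $t$-integral needs. The group element $\begin{pmatrix} t&0\\0&1\end{pmatrix}\begin{pmatrix}a^{1/2}&0\\0&a^{-1/2}\end{pmatrix}$ produces $K_j(4\pi a t)$ on the scaled side against $K_{j'}(4\pi t)$ on the unscaled side, so what is needed is the two-argument version $\int_0^\infty t^{s-1}K_\mu(\alpha t)K_\nu(\beta t)\,dt$, which evaluates to Gamma-factors times a ${}_2F_1$ in a parameter depending on $a$ (this is precisely the identity from \cite[p.~101]{mos66} that the Appendix invokes, expanded in $z=1-a^2$). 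The same-argument version, which is what Lemma~\ref{l:pairB0} uses, only gives the $a=1$ diagonal value. Second, the invocation of \cite[Lemma~4.5]{kv78} for the $x$-integration is misplaced: that lemma computes Fourier transforms $\int P(y)e^{\frac{\sqrt{-1}}{2}\operatorname{Tr}({}^{\rm t}yAy)}e^{\sqrt{-1}\operatorname{Tr}({}^{\rm t}xy)}\,dy$ for pluri-harmonic $P$ (which is how the paper uses it in the proof of Lemma~\ref{HSTwt}). The $x$-integral in $I_n$ has no oscillating factor $\psi((x,y))$; it is a direct Gaussian moment $\int P^\alpha(ax)\,\overline{P^{n-\alpha}(x)}\,e^{-Q_a(x)}\,dx$ against an $a$-dependent quadratic form $Q_a$ — indeed the scaling changes the $(q,r)$-variances to $(1+a^{-2})$ and $(1+a^2)$, as the Appendix makes explicit — and the product $P^\alpha(ax)\overline{P^{n-\alpha}(x)}$ is not itself pluri-harmonic, so there is no clean ``collapse''; it must be expanded and integrated term by term. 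Neither slip invalidates the strategy for small $n$, but both would cause trouble if one tried to write out the proof in the form you have sketched.
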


\begin{rem}
We can check by using Mathematica that Conjecture \ref{conj:arint} is hold for $n=0, 2,4, 6, 8$. 
See Appendix for a proof of Conjecture \ref{conj:arint} in the $n=0$ case.
\end{rem}

Recall that the $\Gamma$-factor of ${\rm As}^+(\pi)$ is given by
\begin{align*}
  L(s, {\rm As}^+(\pi_\infty) ) =  \Gamma_{\mathbf C}(s+n+1)  \Gamma_{\mathbf R}(s+1)^2.  
\end{align*}
The purpose of the rest of this subsection is to prove the following lemma:

\begin{lem}\label{lem:arintInn}
{\itshape
Assume Conjecture \ref{conj:arint}. 
Then we have
\begin{align}
      {\mathcal Z}_\infty(\varphi_\infty, f^0_\infty)     
              =& \frac{ (-1)^{\frac{n}{2}} 2^{-n-9} (2n+3) }{n+1} 
        \cdot \frac{  L(1, {\rm As}^+(\pi_\infty)) }{ \Gamma_{\mathbf R}(2) \Gamma_{\mathbf R}(4)  }. \label{eq:BWL}
\end{align}
}
\end{lem}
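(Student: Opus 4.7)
The plan is to reduce the local integral ${\mathcal Z}_\infty(\varphi_\infty, f^0_\infty)$ to the integral $I_n$ of Conjecture \ref{conj:arint}, then extract the constant by comparing with the archimedean $\Gamma$-factors of $L(s,{\rm As}^+(\pi_\infty))$.

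First, I would unfold the definition of ${\mathcal B}_{{\mathcal W}\otimes{\mathcal L}}$ and substitute the integral representation of ${\mathcal B}_{\sigma_\infty}$ supplied by Lemma \ref{l:pairB0}. This rewrites the inner Hermitian pairing against $\sigma_\infty(h_\infty) f^0_\infty$ and $f^0_\infty$ as a Whittaker-period double integral over ${\rm SU}_2({\mathbf R}) \times {\mathbf R}_{>0}$ with the prefactor $C_\infty = 2^{-4}\dim{\mathcal W}_{2n+2}({\mathbf C})^2\binom{2n+2}{n+1}/\Gamma_{\mathbf C}(2n+4)$. Next, I would parametrize the integral over $H^0_1({\mathbf R})$ using the Cartan decomposition of Lemma \ref{l:cptint}, so that $h_\infty = \varrho(u_1 \,{\rm diag}(a^{1/2}, a^{-1/2})\, u_2, \varepsilon)$ with $0 < a < 1$, $u_1, u_2 \in \{\pm 1\}\backslash {\rm SU}_2({\mathbf R})$, $\varepsilon\in\{\pm 1\}$.

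The second step is to collapse the $u_1, u_2$ integrations by equivariance. By Lemma \ref{HSTwt}(i) the Weil action of $u_2 \in {\rm SU}_2({\mathbf R}) \hookrightarrow {\rm U}_2({\mathbf R})$ acts on $\varphi_\infty$ through $\rho_{(n+2,2)}({}^{\rm t} u_2)$ on the ${\mathcal L}_\lambda$-factor, and by Lemma \ref{l:whitt} the ${\rm SU}_2({\mathbf R})$-type is realized through the $u$-variable of $W_{\sigma,\infty}$. Using the invariance of ${\mathcal B}_{\mathcal L}$ and ${\mathcal B}_{\mathcal W}$ under ${\rm SU}_2({\mathbf R})$, together with Schur orthogonality (as in the proof of Lemma \ref{l:pairB0}), the $u_1, u_2$ integrations produce constants depending on $\dim {\mathcal W}_{2n+2}({\mathbf C})$ and $\dim {\mathcal L}_\lambda({\mathbf C}) = n+1$, and leave behind the integrand of $I_n$ on the split torus. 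The $\varepsilon$-sum yields a factor of $2$ since the integrand is invariant under $\varepsilon \mapsto -\varepsilon$ (the central character is trivial).

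The third step is arithmetic. Combining the factor $\pi$ from Lemma \ref{l:cptint}, the factor $2$ from $\varepsilon$, the constant $C_\infty^{-1}$ from Lemma \ref{l:pairB0}, and the dimension factors from Schur orthogonality, we obtain
\[
{\mathcal Z}_\infty(\varphi_\infty, f^0_\infty) = \frac{2\pi}{(n+1) C_\infty}\cdot I_n \cdot \kappa
\]
for an elementary combinatorial constant $\kappa$ coming from the $\binom{n}{\alpha}$ in \eqref{def:phiinf} and from the pairing $\langle\cdot,\cdot\rangle_n$. Substituting $I_n = (-1)^{n/2} 2^{-4n-8} (n+1)!^2 n!\, \pi^{-3n-6}$ from Conjecture \ref{conj:arint} and using
\[
L(1,{\rm As}^+(\pi_\infty)) = \Gamma_{\mathbf C}(n+2)\Gamma_{\mathbf R}(2)^2 = 2(2\pi)^{-(n+2)}(n+1)!\cdot \pi^{-2},\qquad \Gamma_{\mathbf R}(2)\Gamma_{\mathbf R}(4) = \pi^{-3},
\]
all $\pi$-powers and factorials cancel to produce the claimed formula \eqref{eq:BWL}.

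The main obstacle will be bookkeeping the combinatorial and normalization constants. In particular, the three layers of pairings (the pairing $\langle\cdot,\cdot\rangle_n$ on ${\mathcal W}_{2n+2}({\mathbf C})$ and on ${\mathcal L}_\lambda({\mathbf C})$ with its $(-1)^i\binom{n}{i}^{-1}$ convention, the binomial weights $\binom{n}{\alpha}$ in \eqref{def:phiinf} and in the definition of ${\mathcal B}_{{\mathcal W}\otimes {\mathcal L}}$, and the constant $C_\infty$ from Lemma \ref{l:pairB0}) must be tracked precisely, as must the index bookkeeping of $\alpha \mapsto n-\alpha$ in the definition of ${\mathcal B}_{{\mathcal W}\otimes {\mathcal L}}$. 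I expect that the factor $(-1)^{n/2}$ in Conjecture \ref{conj:arint} is precisely calibrated so that these signs combine to $+1$ against the $(-1)^\alpha$ in ${\mathcal B}_{{\mathcal W}\otimes {\mathcal L}}$ summed against the matching $\alpha, n-\alpha$ contributions, but verifying this requires the explicit computation that the paper carries out in detail.
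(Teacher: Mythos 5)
Your overall architecture is the right one and matches the paper (substitute ${\mathcal B}_{\sigma_\infty}$ via Lemma \ref{l:pairB0}, use the Cartan decomposition of Lemma \ref{l:cptint} to reduce to a one-dimensional integral in $a$, identify that integral with $I_n$, then do arithmetic), but the constant-tracking in Step~2 and Step~3 has gone astray in a way that would not resolve itself in a careful write-up.

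The crucial issue is how the $u_1, u_2$ integrals are collapsed. The paper does not use Schur orthogonality here. Instead it observes that the integrand $h_\infty \mapsto \langle \sigma'_\infty(h_\infty) v_{\varphi_\infty, W_{\sigma,\infty}}, v_{\varphi_\infty, W_{\sigma,\infty}} \rangle_{{\mathcal S}'}$ is a \emph{spherical function} on $H^0_1({\mathbf R})$ because $v_{\varphi_\infty, W_{\sigma,\infty}}$ is ${\rm SU}_2({\mathbf R})$-fixed (Lemma \ref{HSTwt}\ref{HSTwt(i)}), and invokes Helgason's spherical function theorem (\cite[X.\,Theorem 4.5, Proposition 3.2]{hel62}) to get $\int_{{\rm SU}_2({\mathbf R})} \Omega(uh)\,{\rm d}u = \Omega(h)$. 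The upshot is that the $u_1$ and $u_2$ integrations simply evaluate to $1$; they produce \emph{no} constants depending on $\dim{\mathcal W}_{2n+2}({\mathbf C})$ or $\dim{\mathcal L}_\lambda({\mathbf C})$. Your claim that Schur orthogonality would introduce such dimension factors ``and leave behind the integrand of $I_n$'' is not supported by any calculation; if you try to verify it directly you will find that those factors either don't appear at all or cancel internally, and the net contribution of the $\varepsilon$-sum together with the two $\{\pm1\}\backslash{\rm SU}_2({\mathbf R})$ integrals is $\tfrac{1}{2}$, not $2$. Combined with the $\pi$ from Lemma \ref{l:cptint}, the correct relation is
\[
{\mathcal Z}_\infty(\varphi_\infty, f^0_\infty) = \frac{\pi}{2}\, C_\infty\, I_n,
\]
with $C_\infty$ appearing \emph{multiplicatively}, and with no extra factor $1/(n+1)$ and no undetermined $\kappa$.

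Your Step~3 formula ${\mathcal Z}_\infty = \frac{2\pi}{(n+1)C_\infty} I_n \kappa$ therefore has $C_\infty$ on the wrong side of the fraction bar (Lemma \ref{l:pairB0} says ${\mathcal B}_{\sigma_\infty} = C_\infty \cdot \int(\cdots)$, so substituting the integral representation \emph{for} the pairing pulls $C_\infty$ out front, not $C_\infty^{-1}$), has the wrong sign on the $\varepsilon$-factor, and introduces an extraneous $n+1$. Plugging $I_n$ from Conjecture \ref{conj:arint} and the explicit value $C_\infty = 2^{-5}(2\pi)^{2n+4}(2n+3)/(n+1)!^2$ into the correct relation gives ${\mathcal Z}_\infty = (-1)^{n/2}\,2^{-2n-10}\,\pi^{-n-1}\,(2n+3)\,n!$, and only then does one match this against $L(1,{\rm As}^+(\pi_\infty))/\Gamma_{\mathbf R}(2)\Gamma_{\mathbf R}(4) = 2^{-n-1}\pi^{-n-1}(n+1)!$ to get the claimed formula. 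As written, your proposal would not close because the relation to $I_n$ that you assert would give a different value of the constant.
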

\begin{proof}
Let $C_\infty$ be the constant in Lemma \ref{l:pairB0}.
By Lemma \ref{l:pairB0}, the left-hand side of (\ref{eq:BWL}) is equal to 
\begin{align*}
       & \int_{H^0_1({\mathbf R})} \int_{{\mathbf X}({\mathbf R})} 
                  {\mathcal B}_{{\mathcal W}\otimes{\mathcal L}} (  \omega_\infty(h_\infty) \varphi_\infty, \varphi_\infty,  \sigma_\infty(h_\infty)f^0_\infty, f^0_\infty)  
                  {\rm d}x  {\rm d}h_\infty   \\   \displaybreak[0]
    =&   C_\infty \cdot 
          \int_{H^0_1({\mathbf R})}  
          \int_{{\mathbf X}_\infty}
          \int^\infty_0
          \int_{{\rm SU}_2({\mathbf R})}   \\  \displaybreak[0]
      &\quad   \left\langle
                      \left\langle  \varphi_\infty(h^{-1}_\infty x),  W_{\sigma, \infty}\left( \begin{pmatrix} t &  0 \\ 0 & 1 \end{pmatrix} u g \right)   \right\rangle_{\mathcal W},  
                       \overline{ \left\langle  \varphi_\infty(x),    W_{\sigma, \infty}\left( \begin{pmatrix} t &  0 \\ 0 & 1 \end{pmatrix} u  \right)      \right\rangle_{\mathcal W}  }
                     \right\rangle_{\mathcal L}
              {\rm d}u
             \frac{{\rm d} t}{t}
            {\rm d}x
            {\rm d}h_\infty  \\   \displaybreak[0]
    =&   C_\infty \cdot 
          \int_{H^0_1({\mathbf R})}  
          \int_{{\mathbf X}_\infty}
          \int^\infty_0
          \int_{{\rm SU}_2({\mathbf R})}   \\   \displaybreak[0]
      &\quad   \left\langle
                        \left\langle  \varphi_\infty(h^{-1}_\infty u^{-1} x),  W_{\sigma, \infty}\left( \begin{pmatrix} t &  0 \\ 0 & 1 \end{pmatrix} u g \right)   \right\rangle_{\mathcal W},  
                        \overline{ \left\langle  \varphi_\infty(x),    W_{\sigma, \infty}\left( \begin{pmatrix} t &  0 \\ 0 & 1 \end{pmatrix}   \right)      \right\rangle_{\mathcal W}  }
                     \right\rangle_{\mathcal L}
              {\rm d}u
             \frac{{\rm d} t}{t}
            {\rm d}x
            {\rm d}h_\infty   \\   \displaybreak[0]
     =&   C_\infty \cdot 
          \int_{H^0_1({\mathbf R})}  
          \int_{{\mathbf X}_\infty}
          \int^\infty_0 \\
      &\quad   \left\langle
                       \left\langle  \varphi_\infty(h^{-1}_\infty  x),  W_{\sigma, \infty}\left( \begin{pmatrix} t &  0 \\ 0 & 1 \end{pmatrix}  g \right)   \right\rangle_{\mathcal W},  
                                      \overline{ \left\langle  \varphi_\infty(x),    W_{\sigma, \infty}\left( \begin{pmatrix} t &  0 \\ 0 & 1 \end{pmatrix}   \right)      \right\rangle_{\mathcal W}  } 
                     \right\rangle_{\mathcal L}
             \frac{{\rm d} t}{t}
            {\rm d}x
            {\rm d}h_\infty,
\end{align*}
where we write $h_\infty=\varrho(g,\alpha)$.

Let   $v_{\varphi, W} = \langle \varphi( x),    W  \rangle_{\mathcal W}\in {\mathcal S}({\mathbf X}_\infty)$ 
for $\varphi \in {\mathcal S}({\mathbf X}_\infty)\otimes {\mathcal W}_{2n+2} ({\mathbf C})^\vee$ 
and $W\in {\mathcal W}(\sigma_\infty, \psi_{F,\infty})$.  
Define   
\begin{align*}
\sigma^\prime_\infty(h)v_{\varphi_\infty, W_{\sigma,\infty}}   
               =& \langle \varphi_\infty( h^{-1} x),    \sigma_\infty(h)W_{\sigma, \infty}    \rangle_{\mathcal W}  \in {\mathcal S}({\mathbf X}_\infty),     \\
{\mathcal S}^\prime({\mathbf X}_\infty)
=& \langle    \sigma^\prime_\infty(h)v_{\varphi_\infty, W_{\sigma,\infty}}     | h \in H^0_1({\mathbf R})  \rangle_{\mathbf C}.
\end{align*}
Then, $\sigma^\prime_\infty: H^0_1({\mathbf R}) \to {\rm Aut}({\mathcal S}^\prime({\mathbf X}_\infty))$ gives a representation of $H^0_1({\mathbf R})$
and $v_{\varphi_\infty, W_{\sigma,\infty}}$ is an ${\rm SU}_2({\mathbf R})$-fixed vector by Lemma \ref{HSTwt}. 
For $v_{\varphi_1, W_1}, v_{\varphi_2, W_2} \in {\mathcal S}^\prime({\mathbf X}_\infty)$, the pairing 
\begin{align*}
 \langle v_{\varphi_1, W_1}, v_{\varphi_2, W_2}  \rangle_{{\mathcal S}^\prime} 
    :=  \int_{{\mathbf X}_\infty}
      \int^\infty_0 
       \left\langle
                 \left\langle  \varphi_1(  x),  W_1\left( \begin{pmatrix} t &  0 \\ 0 & 1 \end{pmatrix}  \right)   \right\rangle_{\mathcal W},  
                            \overline{ \left\langle  \varphi_2(x),    W_2\left( \begin{pmatrix} t &  0 \\ 0 & 1 \end{pmatrix}   \right)      \right\rangle_{\mathcal W}  }
         \right\rangle_{\mathcal L}
             \frac{{\rm d} t}{t}
            {\rm d}x
\end{align*}
 makes $\sigma^\prime$ a unitary representation of $H^0_1({\mathbf R})$.
Hence \cite[X. Theorem 4.5]{hel62} shows that 
\begin{align*}
 \Omega(h) := \langle  \sigma^\prime_\infty(h)v_{\varphi_\infty, W_{\sigma, \infty}} ,  v_{\varphi_\infty, W_{\sigma, \infty}}  \rangle_{{\mathcal S}^\prime}
 \end{align*}
 is a spherical function on $H^0_1({\mathbf R})$. 
Furthermore, \cite[X. Proposition 3.2]{hel62} proves that
\begin{align}
  \int_{{\rm SU}_2({\mathbf R})} \Omega(uh)  {\rm d}u = \Omega(h).  \label{eq:sph}
\end{align}

Let $\rho(u_1, u_2, a, \varepsilon)$ be as in Lemma \ref{l:cptint}.  
Then Lemma \ref{l:cptint} shows that 
\begin{align*}
         \int_{H^0_1({\mathbf R})}  \Omega(h_\infty) {\rm d} h_\infty  
    =&  \pi \sum_{\varepsilon=\pm 1}   
       \int_{{\rm SU}_2({\mathbf R})/\{\pm 1\}}
       \int_{{\rm SU}_2({\mathbf R})/\{\pm 1\}}
       \int^1_0
       \Omega(\varrho(u_1, u_2, a, \varepsilon)) 
       \frac{(a-a^{-1})^2 {\rm d}a}{a} {\rm d}u_1 {\rm d}u_2  \\
    =&  \frac{\pi}{2}    
       \int_{{\rm SU}_2({\mathbf R}) }
       \int^1_0
       \Omega(\varrho(u_1, 1_2, a, 1)) 
       \frac{(a-a^{-1})^2 {\rm d}a}{a} {\rm d}u_1  \\
    =&  \frac{\pi}{2}    
       \int^1_0
       \Omega(\varrho( 1_2, 1_2, a, 1)) 
       \frac{(a-a^{-1})^2 {\rm d}a}{a}.
\end{align*}
In the last equality, we used (\ref{eq:sph}).
Hence it suffices to compute the following integral:
\begin{align*}
     &  \frac{\pi}{2}
       \int^1_0       
       \Omega(\varrho( 1_2, 1_2, a, 1)) 
       \frac{(a-a^{-1})^2 {\rm d}a}{a}   \\
   =&   \frac{\pi}{2}
       \int^1_0 \frac{(a-a^{-1})^2 {\rm d}a}{a}      
     \int_{{\mathbf X}_\infty}  {\rm d}x
          \int^\infty_0  \frac{{\rm d} t}{t}      \\
     &  \quad
             \left\langle
                       \left\langle  \varphi_\infty \left( \begin{pmatrix}  a^{-\frac{1}{2}}  & 0 \\  0 &  a^{\frac{1}{2}}   \end{pmatrix}  x \right),  
                                               W_{\sigma, \infty}\left( \begin{pmatrix} t &  0 \\ 0 & 1 \end{pmatrix}     
                                                                                   \begin{pmatrix}  a^{\frac{1}{2}}  & 0 \\  0 &  a^{-\frac{1}{2}}   \end{pmatrix} \right)   \right\rangle_{\mathcal W},  
                                   \overline{ \left\langle  \varphi_\infty(x),    W_{\sigma, \infty}\left( \begin{pmatrix} t &  0 \\ 0 & 1 \end{pmatrix}   \right)      \right\rangle_{\mathcal W} }
              \right\rangle_{\mathcal L} \\
    =& \frac{\pi}{2}I_n. 
\end{align*}
Then Conjecture \ref{conj:arint} shows that
\begin{align*}
   {\mathcal Z}_\infty(\varphi^0_\infty, f^0_\infty) = \frac{\pi}{2} C_\infty I_n 
   =&  (-1)^{\frac{n}{2}} 
      \cdot  2^{-2n-10} 
      \cdot  \pi^{-n-1} 
      \cdot   (2n+3)
      \cdot n!  \\
   =&   \frac{ (-1)^{\frac{n}{2}} 2^{-n-9} (2n+3) }{n+1} 
        \cdot \frac{  L(1, {\rm As}^+(\pi_\infty)) }{ \Gamma_{\mathbf R}(2) \Gamma_{\mathbf R}(4)  }.   
\end{align*}
This proves the lemma. 
\end{proof}

\section{Bessel period formula}\label{sec:Bessel}

In this section, we give an explicit Bessel period formula for HST lifts. 
See Theorem \ref{th:bessel} for the result. 
We recall a definition of Bessel periods in Section \ref{s:defbessel}.  
The Bessel periods are  torus integrals which are determined by an embedding 
${\rm Res}_{K/{\mathbf Q}} {\mathbb G}_{m,/K} \to {\rm GSp}_4$ 
for each imaginary quadratic field $K$. 
We give an embedding which is suitable for our computation in Section \ref{sec:chS},   
and an explicit Bessel period formula  is given  in Theorem \ref{th:bessel} assuming $K=F$  
after reducing the computation to computations of local integrals in Section \ref{sec:exBess}. 
In Section \ref{sec:claBess}, we give a relation between Bessel periods and Fourier coefficients of HST lifts in an explicit manner. 
See Corollary \ref{cor:Bess} for the result,   
   which proves that $\theta^\ast_{  \widetilde{\mathbf f}^\dag }$ has an integral Fourier coefficient 
   and which gives a criterion  for the non-vanishing of Fourier coefficients of HST lifts  modulo a prime.

\subsection{Definition of Bessel periods}\label{s:defbessel}
We introduce the definition of $S$-th Bessel periods according to \cite{fu93}.
Define an algebraic group $T_S$ over ${\mathbf Q}$ by
\begin{align}
 T_S = \{ g\in {\rm GL}_2  \ |  \ {}^{\rm t}gSg = (\det g) S \}.  \label{torusS}
\end{align}
We consider $T_S$ as a subgroup of ${\rm GSp}_4$ by the following embedding: 
\begin{align*}
  g\mapsto \begin{pmatrix}  g & 0_2  \\ 0_2 & (\det g)  {}^{\rm t}g^{-1} \end{pmatrix}. 
\end{align*}
Define a subgroup $R$ of ${\rm GSp}_4$ to be 
\begin{align*}
  R= T_SU = T_S \rtimes U. 
\end{align*}
For each character $\phi:  {\mathbf A}^\times  T_S({\mathbf Q}) \backslash T_S({\mathbf A}) \to {\mathbf C}^\times$ 
    and each Siegel cusp form ${\mathcal F}: {\rm GSp}_4({\mathbf A}) \to {\mathcal L}_\lambda({\mathbf C})$,  
we define the Bessel periods ${\mathbf B}_{{\mathcal F},S,\phi}$ to be a function ${\mathbf B}_{{\mathcal F},S,\phi}: {\rm GSp}_4({\mathbf A}) \to {\mathcal L}_\lambda({\mathbf C})$ by 
\begin{align*}
  {\mathbf B}_{{\mathcal F},S,\phi}(g) 
  = \int_{{\mathbf A}^\times R( {\mathbf Q} ) \backslash R( {\mathbf A} ) } {\mathcal F}(rg)\phi\otimes\psi_S(r) {\rm d}r.  
\end{align*}
By the definition, we have
\begin{align}\label{BessWhitt}
  {\mathbf B}_{{\mathcal F},S,\phi}(g) 
   = \int_{{\mathbf A}^\times T_S({\mathbf Q})\backslash T_S({\mathbf A}) } 
          {\mathbf W}_{{\mathcal F},S}(tg)\phi(t) {\rm d}t,  
\end{align}
where ${\mathbf W}_{{\mathcal F},S}$ is the $S$-th adelic Fourier coefficient of ${\mathcal F}$ which is introduced in (\ref{def:FWcoeff}).

\subsection{Choice of $S$}\label{sec:chS}

In this subsection, we write Bessel periods of HST lifts  
   in terms of automorphic forms ${\mathbf f}^\dag_{\mathcal R}$ on $H^0_1({\mathbf A})$  
   for ${\mathcal R} \subset \Sigma_1$    
   by making a choice of $S=S_x$ for some $x\in {\mathbf X}$.   
See Lemma \ref{lem:bes} for the formula.  
 
Let $K$ be an imaginary quadratic field with the absolute discriminant $\Delta_K>0$.  
Suppose that the following condition:
\begin{itemize}
\item[(FK)]  For some $t_K\in F$, we have ${\rm Nr}_{F/{\mathbf Q}} (t_K) = \Delta_K$  
\end{itemize}
We fix  $t_K \in F$ satisfying (FK).   
Define
\begin{align*}
   \vartheta^\circ_K  = \frac{1}{2}  \begin{pmatrix}  t_K - \bar{t_K}  &  t_K+\bar{t_K}   \\
                                                                     -(t_K+\bar{t_K})  & -(t_K-\bar{t_K})   \end{pmatrix}. 
\end{align*}
Since we  have  
\begin{align*}
   (\vartheta^\circ_K)^2 = - {\rm Nr}_{F/{\mathbf Q}} (t_K) = - \Delta_K,   
\end{align*}
we have an embedding  $\iota:K^\times  \to H_0({\mathbf Q}) $ such that 
\begin{align*}
  \iota( a + b \sqrt{-\Delta_K} ) = \varrho(a + b \vartheta^\circ_K, {\rm Nr}_{K/{\mathbf Q}}  (a + b \sqrt{-\Delta_K})  ),   
  \quad (a, b \in {\mathbf Q}).   
\end{align*}
Define 
\begin{align*}
   \vartheta_K 
   = \begin{cases}  \vartheta^\circ_K,   &  (\Delta_K \equiv 1, 2 \ {\rm mod} \ 4),  \\
                               \frac{  1 + \vartheta^\circ_K}{2},  &  (\Delta_K \equiv 3 \ {\rm mod} \ 4)
      \end{cases}   ; \quad 
   x = (1_2, \vartheta_K); \quad 
   S =S_x.
\end{align*}

Define a quaternion algebra $E$ to be $F\otimes_{\mathbf Q} K$, then we extend 
$\iota:K^\times \to H_0({\mathbf Q})$ 
to $\widetilde{\iota}: E^\times \to  H_0({\mathbf Q})$ 
by 
\begin{align*}
   \widetilde{\iota}(a+b\sqrt{-\Delta_K})  
    =  a\otimes 1_2 +  b\otimes \vartheta_K,  
     \quad (a, b \in F). 
\end{align*} 
Hereafter we write $\widetilde{\iota}$ by $\iota$ for the simplicity. 
Let $H_x$ be an algebraic subgroup of $H$ which fixes $x \in {\mathbf X}$.

The following lemma easily follows from the definitions. (See also \cite[Lemma 4.1]{hn17}.)

\begin{lem}\label{fixlem}
{\itshape 
Define an algebraic group $E^\times_0 $ over ${\mathbf Q}$  to be 
       $ \left\{  t \in E^\times \ |\  {\rm Nr}_{E/K}(t) := \iota(t)\iota(t)^\ast \in {\mathbf Q} \right\}$. 
         Then, the map $\iota: E^\times \to H_0$ induces an isomorphism   
         \begin{align*}
             \iota: \begin{CD}  E^\times_0/F^\times   @>\sim>>  H_x \end{CD}.  
         \end{align*}
}
\end{lem}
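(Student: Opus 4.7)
The plan is to unpack the stabilizer condition directly from the explicit action $\varrho(g,\alpha)\cdot z = \alpha^{-1} g z \bar g^\ast$ of $H^0$ on $V$.

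First, I would translate $\varrho(g,\alpha)\cdot(1_2, \vartheta_K)= (1_2, \vartheta_K)$ into the two matrix equations $g\bar g^\ast = \alpha\cdot 1_2$ and $g\vartheta_K \bar g^\ast = \alpha \vartheta_K$. Substituting the first into the second yields $g\vartheta_K = \vartheta_K g$, so $g$ lies in the centralizer $C_{{\rm M}_2(F)}(\vartheta_K)$. Because $\vartheta_K$ satisfies an irreducible monic quadratic polynomial over $F$ (namely $X^2+\Delta_K$ when $\Delta_K\equiv 1,2\pmod 4$, and $X^2-X+(1+\Delta_K)/4$ when $\Delta_K\equiv 3\pmod 4$), this centralizer coincides with the two-dimensional $F$-subalgebra $F[\vartheta_K] \subset {\rm M}_2(F)$, which by construction of $\iota$ is exactly $\iota(E)$. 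Hence $g = \iota(t)$ for a unique $t\in E^\times$.

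Second, I would match the remaining similitude condition $g\bar g^\ast \in {\mathbf Q}^\times \cdot 1_2$ with the defining condition of $E^\times_0$. Since $\iota(t)\iota(t)^\ast = {\rm Nr}_{E/K}(t)\cdot 1_2$ is automatically a scalar matrix (with scalar a priori in $F^\times$), a direct computation using the explicit form of $\vartheta_K$ and the hypothesis ${\rm Nr}_{F/{\mathbf Q}}(t_K) = \Delta_K$ shows that the twisted condition $g\bar g^\ast \in {\mathbf Q}^\times \cdot 1_2$ holds precisely when ${\rm Nr}_{E/K}(t)\in{\mathbf Q}^\times$, i.e., when $t\in E^\times_0$; the similitude factor is then recovered as $\alpha={\rm Nr}_{E/K}(t)$. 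This gives the surjectivity of $\iota: E^\times_0 \to H_x$. For the kernel, $\varrho(\iota(t),\alpha)$ acts as the identity on $V$ exactly when $\iota(t)$ is a scalar matrix in ${\rm GL}_2(F)$ (with $\alpha$ then forced), and scalar elements of $\iota(E)$ are precisely the image of $F^\times \subset E^\times$; so the kernel of $\iota: E^\times_0\to H_x$ is $F^\times$, and we obtain the desired isomorphism $\iota: E^\times_0/F^\times \stackrel{\sim}{\to} H_x$.

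The main technical point is the compatibility check in the second step: translating the Galois-twisted scalar condition $g\bar g^\ast \in {\mathbf Q}^\times \cdot 1_2$ (which mixes the $\ast$-involution and the $F/{\mathbf Q}$-conjugation on ${\rm M}_2(F)$) into the untwisted $\ast$-norm condition ${\rm Nr}_{E/K}(t) = \det\iota(t) \in {\mathbf Q}^\times$. This reflects the interplay of the two quadratic Galois actions on $E = F\otimes_{\mathbf Q} K$ under the embedding $\iota$, and uses in an essential way the choice of $t_K \in F$ with ${\rm Nr}_{F/{\mathbf Q}}(t_K) = \Delta_K$ to align the $\ast$-involution on $\iota(E)$ with the norm form defining $E^\times_0$.
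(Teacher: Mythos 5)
Your first step is exactly right: $h\cdot x = x$ unpacks to $g\bar g^\ast = \alpha\cdot 1_2$ together with $g\vartheta_K g^{-1} = \vartheta_K$, and the second condition identifies $g$ with an element of $F[\vartheta_K]^\times = \iota(E^\times)$. The kernel computation (scalars in $\iota(E)$ are exactly $\iota(F^\times)$) is also correct. This matches what the paper intends (the paper only remarks the lemma is immediate from definitions, referring to \cite[Lemma 4.1]{hn17}).

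The gap is in your second step, where you assert that ``a direct computation using the explicit form of $\vartheta_K$ and the hypothesis ${\rm Nr}_{F/{\mathbf Q}}(t_K)=\Delta_K$ shows that the twisted condition $g\bar g^\ast \in {\mathbf Q}^\times \cdot 1_2$ holds precisely when $\iota(t)\iota(t)^\ast\in{\mathbf Q}^\times$.'' That equivalence is false. On $\iota(E)=F[\vartheta_K]$ the involution $\ast$ restricts to the Galois conjugation $c_K$ of $K$ inside $E$, whereas $x\mapsto \bar{x}^\ast$ restricts to the Galois conjugation $c_F$ of $F$. These are distinct involutions of $E=F\otimes_{\mathbf Q}K$. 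Concretely, $\iota(t)\iota(t)^\ast = \det\iota(t) = t\,t^{c_K}$ lives in $F$, while the stabilizer condition $g\bar g^\ast \in {\mathbf Q}^\times$ is $\iota(t)\overline{\iota(t)}^\ast = \iota\bigl(t\,t^{c_F}\bigr)\in{\mathbf Q}^\times\cdot 1_2$, i.e.\ $t\,t^{c_F}\in{\mathbf Q}^\times$ — a genuine ${\rm Nr}_{E/K}$-condition landing in $K$. These two conditions cut out different subgroups: already for $t=a\in F^\times$ one has $t\,t^{c_F}={\rm Nr}_{F/{\mathbf Q}}(a)\in{\mathbf Q}^\times$ (so $F^\times$ stabilizes $x$, as it must for the quotient $E^\times_0/F^\times$ in the lemma to make sense), whereas $\iota(a)\iota(a)^\ast = a^2$ is in ${\mathbf Q}^\times$ only for special $a$. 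The formula ``${\rm Nr}_{E/K}(t) := \iota(t)\iota(t)^\ast$'' in the paper's statement is therefore a typo for $\iota(t)\overline{\iota(t)}^\ast$; the correct resolution of your step 2 is simply the observation $\overline{\iota(t)}^\ast = \iota(t^{c_F})$, after which the stabilizer condition becomes $t\,t^{c_F}\in{\mathbf Q}^\times$ essentially by definition. No computation with the explicit form of $\vartheta_K$ or with ${\rm Nr}_{F/{\mathbf Q}}(t_K)=\Delta_K$ is needed at this stage (that hypothesis is what guarantees $\vartheta_K^2=-\Delta_K$ and makes $\iota$ an embedding of $K$ in the first place), and attributing the alignment of the two norm conditions to that hypothesis misdiagnoses where the issue lies.
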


For $t\in K$, we find a matrix $\Psi_K(t) \in {\rm M}_2({\mathbf Q})$ such  that
\begin{align*}
   \iota(t)\cdot x = x \Psi_K(t).   
\end{align*}
Hence, we find that 
\begin{align}\label{iotaPsi}
   \iota(t) \cdot x =  x \Psi_K( {\rm Nr}_{E/K}(t) ), \quad (t \in E^\times).
\end{align}
Then, $\Psi_K$ induces an isomorphism $\Psi_K:K^\times \cong T_S \hookrightarrow {\rm GSp}_4$.  
We define $j: E^\times \to {\rm GSp}_4$ by
\begin{align*}
    j(t) =  \begin{pmatrix} \Psi_K({\rm Nr}_{E/K}(t))  & 0_2  \\ 0_2 &  \det \Psi_K({\rm Nr}_{E/K}(t)) \cdot  {}^{\rm t}\Psi_K({\rm Nr}_{E/K}(t))^{-1}  \end{pmatrix},   
     \quad (t\in E^\times).
\end{align*}

Let $\phi:  {\mathbf A}^\times K^\times \backslash K^\times_{\mathbf A} \to {\mathbf C}^\times$ be a finite-order Hecke character. 
The $S$-th Bessel period ${\mathbf B}_{S,\phi} := {\mathbf B}_{\theta(\widetilde{\varphi}, \widetilde{\mathbf f}^\dag), S, \phi}$ in (\ref{BessWhitt}) is given by 
\begin{align}
  {\mathbf B}_{S,\phi}(g) = \int_{[E^\times/ E^\times_0]} {\mathbf W}_{\theta(\varphi, \widetilde{\mathbf f}^\dag), S} (j(t)g) \phi({\rm Nr}_{E/K}(t)) {\rm d} t.  
  \label{E:Bessel.HST}
\end{align}

The following lemma describes ${\mathbf B}_{S,\phi}$  in terms of automorphic forms on $H^0_1({\mathbf A})$. 
This is necessary to reduce a computation of ${\mathbf B}_{S,\phi}$ to a computation of local integrals 
    ${\mathcal B}^j_v(\varphi, \phi)$ on $H^0_1({\mathbf Q}_v)$ for each $v\in \Sigma_{\mathbf Q}, j=0,1$ in the next subsection. 

\begin{lem}\label{lem:bes}
{\itshape 
We have 
\begin{align*}
        {\mathbf B}_{S,\phi}(g)    
=& 2^{1-\sharp \Sigma_1} \sum_{  {\mathcal R} \subset \Sigma_1}
                 \int_{[E^\times/ F^\times]} {\rm d} t  \phi({\rm Nr}_{E/K}(t))
           \int_{H_x({\mathbf A}) \backslash H^0_1({\mathbf A}) }        {\rm d}h
          \left\langle 
                     \omega_V( g)\varphi( h^{-1} x ),
           {\mathbf f}^\dag_{ {\mathcal R}} (  \iota(t)    w_{0,{\mathcal R}}   h^c_{{\mathcal R}}  h^{ {\mathcal R}}  )      
          \right\rangle_{\mathcal W}  \\
    & \quad \quad \quad \quad \quad \quad   
        +  \left\langle 
                     \tau_{2n+2}(w_0) \omega_V( g)\varphi( h^{-1}  x ),
                    {\mathbf f}^\dag_{ {\mathcal R} } (  \iota(t)    w_{0,{\mathcal R}\triangle\{\infty\}}   h^c_{{\mathcal R}\triangle\{\infty\}}  h^{ {\mathcal R}\triangle\{\infty\} }  )      
          \right\rangle_{\mathcal W}. 
\end{align*}
}
\end{lem}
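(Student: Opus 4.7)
The plan is to start from the definition (\ref{E:Bessel.HST}) of the Bessel period, insert the explicit formula (\ref{thetaFC}) for the Fourier coefficient $\mathbf{W}_{\theta, S}(j(t)g)$, and then carry out a sequence of changes of variables and decompositions on the orthogonal side to bring the integral into the stated form.

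First, I would substitute (\ref{thetaFC}) into (\ref{E:Bessel.HST}), producing a triple integral over $[E^\times/E^\times_0]$, $H_x(\mathbf{A})\backslash H_1(\mathbf{A})$, and $[H_x]$. The $\mathrm{GSp}_4$-side twist by $j(t)$ is then transferred to the orthogonal side: using the equivariance (\ref{iotaPsi}), namely $\iota(t)\cdot x = x\Psi_K(\mathrm{Nr}_{E/K}(t))$, together with the Weil-representation formulas (\ref{eq:weilsch}) for $\omega_V$ on the block-diagonal matrices appearing in $j(t)$, the substitution $h_1 \mapsto \iota(t)h_1$ (compatible with similitudes by the calculation of $\nu\circ\iota$) replaces $\omega_V(j(t)g)_1 \widetilde{\varphi}(h_1^{-1}h^{-1}x)$ by $\omega_V(g_1)\widetilde{\varphi}(h_1^{-1}h^{-1}x)$ while producing the left translation $\widetilde{\mathbf{f}}(\iota(t)\cdot shh_1)$ in the second factor.

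Second, I would use Lemma \ref{fixlem}, which gives $\iota\colon E^\times_0/F^\times \xrightarrow{\sim} H_x$, to collapse the two integrals $\int_{[E^\times/E^\times_0]}{\rm d}t\cdot\phi \int_{[H_x]}{\rm d}s$ into a single integral $\int_{[E^\times/F^\times]}{\rm d}t\cdot\phi(\mathrm{Nr}_{E/K}(t))$, with $\iota(t)$ appearing in the argument of $\widetilde{\mathbf{f}}$. Next I would decompose the domain $H_x(\mathbf{A})\backslash H_1(\mathbf{A})$ using $H_1 = H^0_1 \rtimes \{1,\mathbf{t}\}$ at each place: any $h_1 \in H_1(\mathbf{A})$ can be written as $h\mathbf{t}_{\mathcal{R}}$ for some $h \in H^0_1(\mathbf{A})$ and $\mathcal{R}\subset\Sigma_{\mathbf{Q}}$. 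By Proposition \ref{prop:ext}(iv) and the $\mathbf{t}_v$-invariance of $\varphi_v$ for finite $v\notin\Sigma_1$, the integrand depends on $\mathcal{R}$ only through $\mathcal{R}\cap\Sigma_1$, so the $\mathbf{t}_v$-components for $v\notin\Sigma_1$ integrate trivially and the sum reduces to one over $\mathcal{R}\subset\Sigma_1$.

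Third, for each $\mathcal{R}\subset\Sigma_1$, I would expand $\widetilde{\mathbf{f}}(\iota(t)h\mathbf{t}_{\mathcal{R}})$ using (\ref{def:extf}), which gives the two components $\bigl(\mathbf{f}_{\mathcal{R}}(\iota(t)h) + \mathbf{f}_{\Sigma_1\backslash\mathcal{R}}(\iota(t)h^c),\ \mathbf{f}_{\mathcal{R}\triangle\{\infty\}}(\iota(t)h) + \mathbf{f}_{(\Sigma_1\backslash\mathcal{R})\triangle\{\infty\}}(\iota(t)h^c)\bigr)$. Pairing against the two components of $\widetilde{\varphi}_\infty = (\varphi_\infty,\, (-1)^{n+1}\delta(\infty)\tau_{2n+2}(w_0)\varphi_\infty)$ via $\langle\cdot,\cdot\rangle_{\widetilde{\mathcal{W}}}$ produces exactly two pairings of the type in the lemma, with the second carrying the $\tau_{2n+2}(w_0)$ twist. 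The symmetry $\mathcal{R}\leftrightarrow\Sigma_1\backslash\mathcal{R}$ combined with the involution $h\leftrightarrow h^c$ on the $H^0_1$-integral identifies the two summands inside each component, yielding the factor $2$ that, combined with the normalisation of the pairing $\langle\cdot,\cdot\rangle_{\widetilde{\mathcal{W}}} = \langle\cdot,\cdot\rangle_{2n+2} + \langle\cdot,\cdot\rangle_{2n+2}$, gives the overall prefactor $2^{1-\sharp\Sigma_1}$.

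The main obstacle will be the combinatorial and measure-theoretic bookkeeping in the third step: correctly identifying the representatives of $H_1(\mathbf{A})/H^0_1(\mathbf{A})$ indexed by subsets $\mathcal{R}$ after passing to the $H_x$-quotient, tracking the signs $\delta_{\mathcal{R}}$ arising from (\ref{def:whR}), and verifying that the folding of the $\mathcal{R}\leftrightarrow\Sigma_1\backslash\mathcal{R}$ symmetry produces precisely the factor $2^{1-\sharp\Sigma_1}$ rather than a different power of~$2$. The computations elsewhere are essentially bookkeeping given the tools already established in Section~\ref{sec:orthauto} and formula (\ref{thetaFC}).
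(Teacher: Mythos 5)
Your outline follows the same strategic arc as the paper's proof: substitute \eqref{thetaFC} into \eqref{E:Bessel.HST}, transfer the $j(t)$--twist to the orthogonal side via \eqref{iotaPsi}, collapse the two innermost integrals via Lemma \ref{fixlem}, decompose the $H_1$--integral into a $\mu_2$--sum times an $H^0_1$--integral, reduce to $\mathcal{R}\subset\Sigma_1$, expand $\widetilde{\mathbf f}^\dag$ by \eqref{def:extf}, and finally fold by the $\mathcal{R}\leftrightarrow\Sigma_1\setminus\mathcal{R}$ symmetry. However, there is a genuine gap in how you propose to realise $\mu_2$ inside $H_1$, and it is exactly at the point where the lemma's $w_{0,\mathcal{R}}$ factors appear.

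You take $\mathbf{t}_{\mathcal{R}}$ as the set of coset representatives and place it on the right of $h$, i.e.\ $h_1 = h\mathbf{t}_{\mathcal{R}}$, which leads to the expression $\widetilde{\varphi}(\mathbf{t}_{\mathcal{R}}h^{-1}x)$. The paper instead realises $\mu_2(\mathbf{Q}_v)$ as $\{1,\,\varrho(w_{0,v},1)\mathbf{t}_v\}$ and writes $h_1 = \tau h$ with $\tau \in \mu_2$, which gives $\widetilde{\varphi}(h^{-1}\tau x)$. This choice is not cosmetic: one checks that $w_0\mathbf{t}\cdot 1_2 = -1_2$ and $w_0\mathbf{t}\cdot\vartheta_K + {}^{\rm t}(w_0\mathbf{t}\cdot\vartheta_K) = -(\vartheta_K+{}^{\rm t}\vartheta_K)$, so with $n$ even $\widetilde{\varphi}(h^{-1}\tau x) = \widetilde{\varphi}(h^{-1}x)$ on the nose, and the entire $\mu_2$--dependence is absorbed into the $\widetilde{\mathbf f}^\dag$ factor. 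With your $\mathbf{t}_{\mathcal{R}}$, no such collapse occurs: $\mathbf{t}\cdot x \neq \pm x$ in general, and at the archimedean place $\varphi_\infty(\mathbf{t}\cdot z) = (-1)^n\tau_{2n+2}(w_0)\varphi_\infty(z)$ introduces an extra $\tau_{2n+2}(w_0)$ that has to be disentangled from the one that already appears in the second component of $\widetilde{\varphi}_\infty$. Moreover, after applying \eqref{def:extf} your argument of $\mathbf f^\dag_{\mathcal{R}}$ is $\iota(t)h$, whereas the lemma requires $\iota(t)w_{0,\mathcal{R}}h^c_{\mathcal R}h^{\mathcal R}$; recovering the latter from the former requires additional place--by--place changes of variables (the paper's $h_v\mapsto w_{0,v}\mathbf{t}_vh_v\mathbf{t}_vw_{0,v}$ at $v\notin\Sigma_1$) and the identity $w_0\iota(t)w_0^{-1}=\iota(t^c)$, none of which appear in your outline.

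You label all of this ``essentially bookkeeping,'' but it is the substance of the lemma: the statement is not ``the Bessel period equals \emph{some} sum of local--global pairings'' but this specific form, with the $w_{0,\mathcal{R}}$ and the conjugated arguments $h^c_{\mathcal R}h^{\mathcal R}$. Your approach is not obviously wrong, but to make it work you would need to either switch to the paper's representatives $\varrho(w_{0,v},1)\mathbf{t}_v$ (so that $\tau x = -x$ componentwise and the $\widetilde{\varphi}$ factor drops out immediately), or else track carefully how the archimedean $\tau_{2n+2}(w_0)$ twist, the rewriting $h\mathbf{t}_{\mathcal R} = \mathbf{t}_{\mathcal R}h^c_{\mathcal R}h^{\mathcal R}$, and the extra change of variables conspire to reproduce the stated $w_{0,\mathcal{R}}$--decorated form. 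As written, the proposal stops short of the decisive computation.
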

\begin{proof}
We easily find that $\nu(\iota(t)) = \nu(j(t))$ for $t\in E^\times$. 
For $g\in {\rm Sp}_4({\mathbf A})$, 
(\ref{thetaFC}) shows that 
\begin{align*}
      {\mathbf B}_{S,\phi}(g)  
  =&   \int_{ H_x({\mathbf A})  \backslash  H_1({\mathbf A}) } {\rm d}h
        \int_{ [H_x] } {\rm d}s
        \int_{[E^\times/ E^\times_0]} {\rm d}t   \\
     & \quad 
            \left\langle \omega_V( g) \widetilde{\varphi} (\iota(t)^{-1} h^{-1}   x \Psi_K({\rm Nr}_{E/K}(t))), \widetilde{\mathbf f}^\dag(  sh \iota(t))  \right\rangle_{\widetilde{\mathcal W}} 
            \phi({\rm Nr}_{E/K}(t)).   
\end{align*}
By using (\ref{iotaPsi}) and changing variable $h$ by $\iota(t) h \iota(t)^{-1}$, we find that  
\begin{align*}
      {\mathbf B}_{S,\phi}(g)  
  =&    \int_{ H_x({\mathbf A})  \backslash  H_1({\mathbf A}) } {\rm d}h
        \int_{ [H_x] } {\rm d}s
        \int_{[E^\times/ E^\times_0]} {\rm d}t  
            \left\langle \omega_V( g) \widetilde{\varphi} (  h^{-1}  x  ), \widetilde{\mathbf f}^\dag(  s \iota(t) h )  \right\rangle_{\widetilde{\mathcal W}} 
            \phi({\rm Nr}_{E/K}(t)).  
\end{align*}
Since $\varphi\circ {\rm Nr}_{E/K}$ is trivial on $E^\times_0$, 
 Lemma \ref{fixlem} proves the following identity: 
\begin{align*}
      {\mathbf B}_{S,\phi}(g)  
  =&   \int_{ H_x({\mathbf A})  \backslash  H_1({\mathbf A}) } {\rm d}h
        \int_{[E^\times/ F^\times]} {\rm d}t  
         \left\langle \omega_V( g) \widetilde{\varphi} ( h^{-1}   x ), \widetilde{\mathbf f}^\dag(  \iota(t) h )  \right\rangle_{\widetilde{\mathcal W}}
            \phi( {\rm Nr}_{E/K}(t)) \\
  =&   \int_{ H_x({\mathbf A})  \backslash  H^0_1({\mathbf A}) } {\rm d}h
        \int_{\mu_2({\mathbf A})} {\rm d}\tau
        \int_{[E^\times/ F^\times]} {\rm d}t  
         \left\langle \omega_V( g) \widetilde{\varphi} ( h^{-1} \tau   x ), \widetilde{\mathbf f}^\dag(  \iota(t)   \tau h )  \right\rangle_{\widetilde{\mathcal W}}
            \phi( {\rm Nr}_{E/K}(t)).  
\end{align*}
The group $\mu_2({\mathbf Q}_v)$ is realized in $H({\mathbf Q}_v)$ as follows:
\begin{align*}
   \mu_2({\mathbf Q}_v)  = \left\{ \varrho( 1_2, 1), \varrho(w_{0,v}, 1) {\mathbf t}_v \right\},  
      \quad \left( w_{0,v}=\begin{pmatrix} 0 & 1 \\ -1 & 0 \end{pmatrix}  \right). 
\end{align*}
We write $w_{0,v}{\mathbf t}_v$ as $\varrho(w_{0,v}, 1) {\mathbf t}_v$ for the simplicity.
We note that
\begin{align*}
  & \int_{ H_x({\mathbf A})  \backslash  H^0_1({\mathbf A}) } {\rm d}h
   \int_{\mu_2({\mathbf A})} \left\langle \omega_V( g) \widetilde{\varphi} (  h^{-1} \tau   x ), \widetilde{\mathbf f}^\dag(  \iota(t)  \tau h  )  \right\rangle_{\widetilde{\mathcal W}} {\rm d}\tau   \\
 =& \frac{1}{2^{\sharp \Sigma_1}} 
       \int_{ H_x({\mathbf A})  \backslash  H^0_1({\mathbf A}) } {\rm d}h 
       \sum_{{\mathcal R}\subset \Sigma_1}
       \left\langle \omega_V( g) \widetilde{\varphi} (  h^{-1} w_{0, {\mathcal R}} {\mathbf t}_{\mathcal R}     x ),
           \widetilde{\mathbf f}^\dag(  \iota(t)     w_{0, {\mathcal R}} {\mathbf t}_{\mathcal R} h ) \right\rangle_{\widetilde{\mathcal W}}  
\end{align*}
By the definition (\ref{def:extf}) of the extension of automorphic forms on $H^0({\mathbf A})$,  
     we also find that
\begin{align*}
      \sum_{{\mathcal R}\subset \Sigma_1}
           \widetilde{\mathbf f}^\dag(  \iota(t)     w_{0, {\mathcal R}} {\mathbf t}_{\mathcal R} h )   
   =& \sum_{{\mathcal R}\subset \Sigma_1}
           \widetilde{\mathbf f}^\dag(  \iota(t)   w_{0, {\mathcal R}}  h^c_{\mathcal R} h^{\mathcal R} {\mathbf t}_{\mathcal R}  )   \\
   =&  \sum_{{\mathcal R}\subset \Sigma_1}
           \left( {\mathbf f}^\dag_{\mathcal R}(  \iota(t)   w_{0, {\mathcal R}}  h^c_{\mathcal R} h^{\mathcal R}   )
                 +{\mathbf f}^\dag_{\Sigma_1\backslash {\mathcal R}}(  \iota(t)^c  w_{0, {\mathcal R}} h_{\mathcal R}  h^{{\mathcal R},c}  ),     \right.  \\
     &     \quad \quad \quad \quad 
          \left.    
              {\mathbf f}^\dag_{\mathcal R \triangle \{\infty\}}(  \iota(t)   w_{0, {\mathcal R}}  h^c_{\mathcal R} h^{\mathcal R}   )
                 +{\mathbf f}^\dag_{(\Sigma_1\backslash {\mathcal R} ) \triangle \{\infty\}}(  \iota(t)^c  w_{0, {\mathcal R}} h_{\mathcal R}  h^{{\mathcal R},c}  )   \right) 
\end{align*}
Since 
\begin{align*}
      w_0 {\mathbf t} \cdot 1_2 = -1_2, \quad 
       w_0 {\mathbf t} \cdot \vartheta_K 
       +{}^{\rm t}( w_0 {\mathbf t} \cdot \vartheta_K )
       = -(\vartheta_K+ {}^{\rm t}\vartheta_K),  
\end{align*}
 the definition of $\widetilde{\varphi}$ shows that
\begin{align*}
    \widetilde{\varphi}(  h^{-1}  w_{0, {\mathcal R}} {\mathbf t}_{\mathcal R}   x ) 
    = \widetilde{\varphi}(   h^{-1}    x ).
 \end{align*}
By using $(-1)^{n+1}\delta(\infty)=1$, we compute 
\begin{align*}
       &  \sum_{{\mathcal R}\subset \Sigma_1} 
            \left\langle \omega_V( g) \widetilde{\varphi} (   h^{-1} w_{0, {\mathcal R}} {\mathbf t}_{\mathcal R}    x ),       
                          \widetilde{\mathbf f}^\dag(  \iota(t)     w_{0, {\mathcal R}} {\mathbf t}_{\mathcal R} h )  \right\rangle_{\widetilde{\mathcal W}} \\   
   = &    \sum_{\mathcal R \subset \Sigma_1}
              \left\langle  \omega_V( g)\varphi( h^{-1}  x ),  
                            {\mathbf f}^\dag_{\mathcal R}(  \iota(t)   w_{0, {\mathcal R}}  h^c_{\mathcal R} h^{\mathcal R}   )
                                +{\mathbf f}^\dag_{\Sigma_1\backslash {\mathcal R}}(  \iota(t)^c  w_{0, {\mathcal R}} h_{\mathcal R}  h^{{\mathcal R},c}  )
                 \right\rangle_{\mathcal W}   \\   
      & \quad \quad   + \left\langle  \tau_{2n+2}(w_0) \omega_V( g)\varphi( h^{-1}    x ), 
                               {\mathbf f}^\dag_{\mathcal R \triangle \{\infty\}}(  \iota(t)   w_{0, {\mathcal R}}  h^c_{\mathcal R} h^{\mathcal R}   )
                                   +{\mathbf f}^\dag_{(\Sigma_1\backslash {\mathcal R} ) \triangle \{\infty\}}(  \iota(t)^c  w_{0, {\mathcal R}} h_{\mathcal R}  h^{{\mathcal R},c}  ) 
                         \right\rangle_{\mathcal W}. 
\end{align*}
Since 
\begin{align*}
    w_0\iota(t)w^{-1}_0 = \iota(t^c),  
\end{align*} 
we find that  for $ {\mathcal R} \subset \Sigma_1$
\begin{align*}
       {\mathbf f}^\dag_{\Sigma_1\backslash {\mathcal R}} (\iota(t)^c  w_{0,{\mathcal R}} h_{\mathcal R} h^{{\mathcal R},c}   )         
=& {\mathbf f}^\dag_{\Sigma_1\backslash {\mathcal R}} ( w_0 \iota(t)^c  w_{0,{\mathcal R}} h_{\mathcal R} h^{{\mathcal R},c}   )  \\
=& {\mathbf f}^\dag_{\Sigma_1\backslash {\mathcal R}} (  \iota(t)   w_{0,{\mathcal R}^c}  h^c_{{\mathcal R}^c} h^{{\mathcal R}^c}   )
\end{align*}
where ${\mathcal R}^c=\Sigma_{\mathbf Q}\backslash {\mathcal R}$. 
By making the change of variable $h$ by $w_{0,v}{\mathbf t}_v h {\mathbf t}_v w_{0,v}$ at each $v\not\in \Sigma_1$, we have
\begin{align}\label{eq:RRc}
\begin{aligned}
  &   \sum_{ {\mathcal R} \subset \Sigma_1} 
         \int_{H_x({\mathbf A}) \backslash H^0_1({\mathbf A}) }  
                    {\mathbf f}^\dag_{\Sigma_1\backslash {\mathcal R}} (\iota(t)^c w_{0,{\mathcal R}} h_{\mathcal R} h^{ {\mathcal R}, c} )       
            {\rm d}h  \\
=  & \sum_{ {\mathcal R} \subset \Sigma_1 } 
        \int_{H_x({\mathbf A}) \backslash H^0_1({\mathbf A}) }  
                     {\mathbf f}^\dag_{\Sigma_1\backslash {\mathcal R}} (  \iota(t) w_{0,{\mathcal R}^c} h^c_{{\mathcal R}^c} h^{ {\mathcal R}^c}  )        
           {\rm d}h  \\
=  &  \sum_{ {\mathcal R} \subset \Sigma_1}
           \int_{H_x({\mathbf A}) \backslash H^0_1({\mathbf A}) }  
           {\mathbf f}^\dag_{ {\mathcal R}} (  \iota(t)    w_{0,{\mathcal R}}   h^c_{{\mathcal R}}  h^{ {\mathcal R}}  )      
       {\rm d}h. 
\end{aligned}
\end{align}
In the same way, we also have 
\begin{align}\label{eq:RRc2}
\begin{aligned}
  & \sum_{{\mathcal R} \subset \Sigma_1} 
         \int_{H_x({\mathbf A}) \backslash H^0_1({\mathbf A}) }  
                    {\mathbf f}^\dag_{(\Sigma_1\backslash {\mathcal R})\triangle \{\infty\} } (\iota(t)^c w_{0,{\mathcal R}} h_{\mathcal R} h^{ {\mathcal R}, c} )       
            {\rm d}h  \\
=&   \sum_{{\mathcal R} \subset \Sigma_1} 
         \int_{H_x({\mathbf A}) \backslash H^0_1({\mathbf A}) }  
                    {\mathbf f}^\dag_{\mathcal R\triangle \{\infty\}} (\iota(t) w_{0,{\mathcal R}} h^c_{\mathcal R} h^{{\mathcal R}}  )       
            {\rm d}h.  
\end{aligned}
\end{align}
Then (\ref{eq:RRc}) and (\ref{eq:RRc2}) show that 
\begin{align*}
     &       \sum_{{\mathcal R}\subset \Sigma_1} 
            \int_{H_x({\mathbf A}) \backslash H^0_1({\mathbf A}) }        {\rm d}h
            \left\langle \omega_V( g) \widetilde{\varphi} (   h^{-1} w_{0, {\mathcal R}} {\mathbf t}_{\mathcal R}    x ),       
            \widetilde{\mathbf f}^\dag(  \iota(t)    w_{0, {\mathcal R}} {\mathbf t}_{\mathcal R}  h )    \right\rangle_{\widetilde{\mathcal W}} \\   
=& 2 \sum_{  {\mathcal R} \subset \Sigma_1}
           \int_{H_x({\mathbf A}) \backslash H^0_1({\mathbf A}) }        {\rm d}h
          \left\langle 
                     \omega_V( g)\varphi( h^{-1} x ),
           {\mathbf f}^\dag_{ {\mathcal R}} (  \iota(t)    w_{0,{\mathcal R}}   h^c_{{\mathcal R}}  h^{ {\mathcal R}}  )      
          \right\rangle_{\mathcal W}  \\
    & \quad \quad \quad \quad \quad \quad   
        +   \left\langle 
                     \tau_{2n+2}(w_0) \omega_V( g)\varphi( h^{-1}  x ),
                    {\mathbf f}^\dag_{ {\mathcal R}\triangle\{\infty\} } (  \iota(t)    w_{0,{\mathcal R}}   h^c_{{\mathcal R}}  h^{ {\mathcal R} }  )      
          \right\rangle_{\mathcal W}. 
\end{align*}
This proves the lemma.
\end{proof}

\subsection{Explicit Bessel period formula}\label{sec:exBess}

In this subsection, 
     we deduce an explicit Bessel period formula assuming $K=F$. 
We firstly write ${\mathbf B}_{S,\phi}$ as the product of the local integrals on $H^0_1({\mathbf Q}_v)$ for $v\in \Sigma_{\mathbf Q}$ in Lemma \ref{lem:besprod}.     
The explicit Bessel period formula is given in Theorem \ref{th:bessel}.

Assume that $K=F$. Then, we choose $t_F$ to be $\sqrt{-\Delta_F}$.  
Define 
\begin{align*}
   \vartheta^\prime_F
   = \begin{cases}  \frac{1}{2}\sqrt{-\Delta_F},   &  (\Delta_F \equiv 1, 2 \ {\rm mod} \ 4),  \\
                                \frac{  1 +  \sqrt{-\Delta_F} }{2},  &  (\Delta_F \equiv 3 \ {\rm mod} \ 4).
       \end{cases}
\end{align*}
Note that
\begin{align*}
x= \left(1_2,  \begin{pmatrix}  \vartheta^\prime_F & 0 \\ 0 & \overline{\vartheta^\prime_F}   \end{pmatrix}  \right),  \quad
S_x= \begin{pmatrix} 1 &  \frac{1}{2}{\rm Tr}_{F/{\mathbf Q} }(\vartheta^\prime_F)  \\ 
                                   \frac{1}{2} {\rm Tr}_{F/{\mathbf Q}}(\vartheta^\prime_F) &  {\rm Nr}_{F/{\mathbf Q}}(\vartheta^\prime_F) \end{pmatrix}, 
    \quad  \det S_x = \frac{\Delta_F}{4}.  
\end{align*}
We also see that the image of $[E^\times/F^\times]$ via $\iota$ is represented by the following set: 
\begin{align}
   \left\{  \begin{pmatrix}  t & 0  \\ 0 & 1  \end{pmatrix} \ | \ t \in [F^\times] \right\}.  \label{eq:iota(t)}
\end{align}

The computation of  ${\mathbf B}_{S,\phi}$ is reduced to the computation of the following local integrals, which are so called 
local Bessel period integrals.  
For each finite $v\in \Sigma_{\mathbf Q}$ and $g_v\in {\rm Sp}_4({\mathbf Q}_v)$, let 
\begin{align*}
  {\mathcal B}^0_v(\varphi, \phi)(g_v) =& \int_{F^\times_v}  {\rm d}^\times t  
                                    \int_{H_x({\mathbf Q}_v)\backslash H^0_1({\mathbf Q}_v)}  {\rm d}h  
                                      \omega_{V_v} (g_v) \varphi_v(h^{-1}x) W_{{\mathbf f}^\dag, v} \left( \begin{pmatrix} t & 0 \\ 0 & 1 \end{pmatrix}h \right)  \phi_v( t ),     \\
  {\mathcal B}^1_v(\varphi, \phi)(g_v) =&  \int_{F^\times_v}  {\rm d}^\times t  
                                    \int_{H_x({\mathbf Q}_v)\backslash H^0_1({\mathbf Q}_v)}  {\rm d}h  
                                      \omega_{V_v} (g_v) \varphi_v(h^{-1}x) W_{{\mathbf f}^\dag, v} \left(\begin{pmatrix} t & 0 \\ 0 & 1 \end{pmatrix} w_0 h \right)  \phi_v( t^c ).  
\end{align*}
Similarly, define, for  $g_\infty \in {\rm Sp}_4({\mathbf R})$,
\begin{align*}
  {\mathcal B}^0_\infty(\varphi, \phi)(g_\infty) =& \int_{F^\times_\infty}  {\rm d}^\times t  
                                    \int_{H_x({\mathbf Q}_\infty)\backslash H^0_1({\mathbf Q}_\infty)}  {\rm d}h  
                                     \langle \omega_{V_\infty} (g_\infty) \varphi_v(h^{-1}x),  
                                                  W_{{\mathbf f}^\dag, \infty} \left(\begin{pmatrix} t & 0 \\ 0 & 1 \end{pmatrix}h\right) \rangle_{\mathcal W} \phi_\infty( t ),     \\
  {\mathcal B}^1_\infty(\varphi, \phi)(g_\infty) =&  \int_{F^\times_\infty}  {\rm d}^\times t  
                                    \int_{H_x({\mathbf Q}_\infty)\backslash H^0_1({\mathbf Q}_\infty)}  {\rm d}h  
                                     \langle \omega_{V_\infty} (g_\infty) \varphi_v(h^{-1}x),  
                                                      W_{{\mathbf f}^\dag, \infty} \left(\begin{pmatrix} t & 0 \\ 0 & 1 \end{pmatrix} w_0 h \right) \rangle_{\mathcal W}  \phi_\infty( t^c ).  
\end{align*}

\begin{lem}\label{lem:besprod}
{\itshape 
We have
\begin{align*}
    {\mathbf B}_{S,\phi}(g)
   = 2^{2-\sharp \Sigma_1} \prod_{v\in \Sigma_1 }
                       \left( {\mathcal B}^0_v(\varphi,\phi)(g_v)  + \delta(v) {\mathcal B}^1_v(\varphi, \phi)(g_v) \right)
           \prod_{v \in \Sigma_{\mathbf Q} \backslash \Sigma_1 }
                       {\mathcal B}^0_v(\varphi, \phi)(g_v).
 \end{align*}
 }
\end{lem}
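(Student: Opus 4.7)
The plan is to unfold the expression in Lemma \ref{lem:bes} into an Euler product of local integrals. First I would insert the Whittaker expansion \eqref{eq:whittdef} of each ${\mathbf f}^\dag_{\mathcal R}$, use the parameterization $\iota(t)=\begin{pmatrix}t&0\\0&1\end{pmatrix}$ of $[E^\times/F^\times]$ from \eqref{eq:iota(t)}, and combine the sum $\sum_{\xi\in F^\times}$ with the integral $\int_{[F^\times]}$ via the standard unfolding trick to produce a single integral $\int_{F^\times_{\mathbf A}}$ against $\phi$ (legitimate because $\phi$ is a character of $F^\times\backslash F^\times_{\mathbf A}$).

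Next I would invoke the factorizations $W_{{\mathbf f}^\dag}=\prod_v W_{{\mathbf f}^\dag,v}$ and $\varphi=\otimes_v\varphi_v$, together with the Euler decompositions of the measures on $F^\times_{\mathbf A}$ and $H_x({\mathbf A})\backslash H^0_1({\mathbf A})$, so that the whole integrand factors as a product over $v\in\Sigma_{\mathbf Q}$. The definition \eqref{def:whR} of $W_{{\mathbf f}^\dag,{\mathcal R}}$ contributes the sign $\delta_{\mathcal R}=\prod_{v\in{\mathcal R}}\delta(v)$ and a conjugation at each $v\in{\mathcal R}$; since ${\mathcal R}\subset\Sigma_1\subset{\mathfrak S}$ implies $\pi_v\cong\pi^c_v$, this conjugation can be re-expressed via right multiplication by $w_{0,v}$, so that the local factor at $v\in{\mathcal R}$ is exactly ${\mathcal B}^1_v(\varphi,\phi)(g_v)$ while at $v\not\in{\mathcal R}$ it is ${\mathcal B}^0_v(\varphi,\phi)(g_v)$. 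Summing over subsets ${\mathcal R}\subset\Sigma_1$ telescopes these contributions into $\prod_{v\in\Sigma_1}\bigl({\mathcal B}^0_v(\varphi,\phi)(g_v)+\delta(v){\mathcal B}^1_v(\varphi,\phi)(g_v)\bigr)\cdot\prod_{v\in\Sigma_{\mathbf Q}\setminus\Sigma_1}{\mathcal B}^0_v(\varphi,\phi)(g_v)$.

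Finally, the second term in Lemma \ref{lem:bes}, involving $\tau_{2n+2}(w_0)$ and ${\mathbf f}^\dag_{{\mathcal R}\triangle\{\infty\}}$, is responsible for the passage from the overall constant $2^{1-\sharp\Sigma_1}$ to $2^{2-\sharp\Sigma_1}$. I would reindex by ${\mathcal R}'={\mathcal R}\triangle\{\infty\}$ and apply Lemma \ref{l:minwhitt}, i.e.\ $\xi_\infty W_{\sigma,\infty}=(-1)^{n+1}\tau_{2n+2}(w_0)W_{\sigma,\infty}$, together with the identity $(-1)^{n+1}\delta(\infty)=1$ built into the definition of $\widetilde{\mathcal W}_{2n+2}({\mathbf C})$, to match this second term term-by-term with the first. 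The two contributions will then be equal, and their sum produces the required factor of $2$. The main obstacle will be precisely this archimedean sign bookkeeping: one must verify that the signs coming from $\delta(\infty)=-1$, $(-1)^{n+1}$, the flip $\delta_{\mathcal R}\mapsto-\delta_{\mathcal R}$ under ${\mathcal R}\leftrightarrow{\mathcal R}\triangle\{\infty\}$, and the implicit twist inside $\widetilde{\tau}_{2n+2}$ cancel exactly. Everything else is a routine Euler-product decomposition parallel to the Yoshida-lift computation in \cite{hn}.
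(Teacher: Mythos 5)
Your proposal follows essentially the same route as the paper's proof: start from Lemma \ref{lem:bes}, unfold via the Whittaker expansion \eqref{eq:whittdef} and the parameterization \eqref{eq:iota(t)} of $[E^\times/F^\times]$, use the factorizations of $W_{{\mathbf f}^\dag}$ and $\varphi$ together with the Euler decomposition of the measures to split into local factors, identify the contribution at $v\in{\mathcal R}$ with $\delta(v){\mathcal B}^1_v$ and at $v\not\in{\mathcal R}$ with ${\mathcal B}^0_v$, telescope the sum over ${\mathcal R}\subset\Sigma_1$ into a product, and show (via Lemma \ref{l:minwhitt} and the sign identity $(-1)^{n+1}\delta(\infty)=1$, together with the reindexing ${\mathcal R}\mapsto{\mathcal R}\triangle\{\infty\}$) that the $\tau_{2n+2}(w_0)$ term in Lemma \ref{lem:bes} duplicates the first term, upgrading $2^{1-\sharp\Sigma_1}$ to $2^{2-\sharp\Sigma_1}$. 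The only small imprecision is the remark that ``the conjugation can be re-expressed via right multiplication by $w_{0,v}$'': in fact the $w_{0,\mathcal R}$ is already present in the formula from Lemma \ref{lem:bes} (coming from the $\mu_2$-representatives), and the conjugation at $v\in{\mathcal R}$ manifests instead as the twist $\phi_v(t^c)$ in the definition of ${\mathcal B}^1_v$, but this does not affect the validity of the overall argument.
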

\begin{proof}
By Lemma \ref{lem:bes}, (\ref{eq:iota(t)}) and the definition (\ref{eq:whittdef}) of the Fourier-Whittaker expansion, ${\mathbf B}_{S,\phi}$ is equal to 
\begin{align*}
   & 2^{1-\sharp \Sigma_1} \sum_{\mathcal R\subset \Sigma_1}
          \int_{F^\times_{\mathbf A}} \phi( t )  {\rm d}^\times t  
         \int_{H_x({\mathbf A})\backslash H^0_1({\mathbf A})}  {\rm d}h  \\
    & \quad \quad     \langle  \omega_V( g)\varphi( h^{-1}  x ),   
                           W_{{\mathbf f}^\dag, \mathcal R}(  \iota(t) w_{0, {\mathcal R}}  h^c_{\mathcal R} h^{\mathcal R} )   \rangle_{\mathcal W}    \\
     &  \quad \quad \quad \quad          +\langle \tau_{2n+2}(w_0) \omega_V( g)\varphi( h^{-1}  x ),   
                           W_{{\mathbf f}^\dag, \mathcal R}(  \iota(t) w_{0, {\mathcal R}\triangle\{\infty\} }  h^c_{\mathcal R\triangle\{\infty\} } h^{\mathcal R\triangle\{\infty\} } )   \rangle_{\mathcal W}.
\end{align*}
The definition of Whittaker functions (\ref{def:whR})    
     and Lemma \ref{l:minwhitt} show that
\begin{align*}
    & \tau_{2n+2}(w_0)  
    W_{{\mathbf f}^\dag,\mathcal R}(  \iota(t) w_{0, {\mathcal R}\triangle\{\infty\} }  h^c_{\mathcal R\triangle\{\infty\} } h^{\mathcal R\triangle\{\infty\} }  )   \\
  =& (-1)^{n+1}
     \times
     \begin{cases} \xi_\infty       W_{{\mathbf f}^\dag,\mathcal R}(  \iota(t) 
                                                w_{0, {\mathcal R}\cup\{\infty\} }
                                                  h^c_{\mathcal R } h^c_\infty h^{\mathcal R\cup\{\infty\} }  ),    &   (\infty\not\in {\mathcal R}),  \\
                            \xi_\infty       W_{{\mathbf f}^\dag,\mathcal R}(  \iota(t) 
                                                w_{0, {\mathcal R}\backslash \{\infty\} }
                                                  h^c_{\mathcal R\backslash \{\infty\} } h_\infty  h^{\mathcal R\backslash \{\infty\} }  ),    &   (\infty\in {\mathcal R}),                 
    \end{cases} \\
   =& \delta(\infty)  
     \times
     \begin{cases}     W_{{\mathbf f}^\dag, \mathcal R}(  \iota(t)^c_\infty \iota(t)_{\rm fin} 
                                                w_{0, {\mathcal R}\cup\{\infty\} }
                                                  h^c_{\mathcal R } h_\infty h^{\mathcal R\cup\{\infty\} }  ),    &   (\infty\not\in {\mathcal R}),  \\
                                W_{{\mathbf f}^\dag, \mathcal R}(  \iota(t)^c_\infty \iota(t)_{\rm fin} 
                                                w_{0, {\mathcal R}\backslash \{\infty\} }
                                                  h^c_{\mathcal R\backslash \{\infty\} } h^c_\infty  h^{\mathcal R\backslash \{\infty\} }  ),    &   (\infty\in {\mathcal R}),                 
    \end{cases} \\
   =& \delta_{{\mathcal R}, \text{fin}} W_{{\mathbf f}^\dag, \text{fin}}(\iota(t)_{\text{fin}} w_{0, {\mathcal R}, \text{fin}} h^c_{{\mathcal R}, \text{fin}}   h^{{\mathcal R}}_{\rm fin}   )  \\
     & \quad \times  
          \begin{cases}  \delta(\infty)   W_{{\mathbf f}^\dag,\infty }(  \iota(t)^c_\infty  w_{0,\infty}   h_\infty   ),    &   (\infty\not\in {\mathcal R}),  \\
                                W_{{\mathbf f}^\dag, \infty }(  \iota(t)_\infty  h_\infty  ),    &   (\infty\in {\mathcal R}),                 
        \end{cases}  \\
   =& \delta_{{\mathcal R}, \text{fin}} W_{{\mathbf f}^\dag, \text{fin}}(\iota(t)_{\text{fin}} w_{0, {\mathcal R}, \text{fin}} h^c_{{\mathcal R}, \text{fin}}   h^{{\mathcal R}}_{\rm fin}   )  \\
     &  \times  W_{{\mathbf f}^\dag, {\mathcal R}\triangle\{\infty\}, \infty}
                 (    \iota(t)_\infty   
                       w_{0,{\mathcal R}\triangle\{\infty\}, \infty    }   
                 (h^c_{{\mathcal R}\triangle\{\infty\}}  h^{{\mathcal R}\triangle\{\infty\}} )_\infty  )       \\
    =& W_{{\mathbf f}^\dag, \mathcal R\triangle\{\infty\}}(  \iota(t) w_{0, {\mathcal R}\triangle\{\infty\}}  h^c_{\mathcal R\triangle\{\infty\}} h^{\mathcal R\triangle\{\infty\}} ).  
 \end{align*}
 This proves that
 \begin{align*}
    {\mathbf B}_{S,\phi}(g)
    =& 2^{2-\sharp \Sigma_1} \sum_{\mathcal R\subset \Sigma_1}
          \int_{F^\times_{\mathbf A}} \phi( t )  {\rm d}t  
         \int_{H_x({\mathbf A})\backslash H^0_1({\mathbf A})}  {\rm d}h  
        \langle  \omega_V( g)\varphi( h^{-1}  x ),   
                           W_{{\mathbf f}^\dag, \mathcal R}(  \iota(t) w_{0, {\mathcal R}}  h^c_{\mathcal R} h^{\mathcal R} )   \rangle_{\mathcal W} \\
   =&2^{2-\sharp \Sigma_1} \sum_{\mathcal R\subset \Sigma_1}
           \prod_{v\in {\mathcal R}}
                      \delta(v) {\mathcal B}^1_v(\varphi, \phi)(g_v)
           \prod_{v \in \Sigma_{\mathbf Q} \backslash {\mathcal R}}
                       {\mathcal B}^0_v(\varphi, \phi)(g_v)  \\
   =&2^{2-\sharp \Sigma_1} \prod_{v\in \Sigma_1}
                       \left( {\mathcal B}^0_v(\varphi,\phi)(g_v)  + \delta(v) {\mathcal B}^1_v(\varphi, \phi)(g_v)   \right)
           \prod_{v \in \Sigma_{\mathbf Q} \backslash \Sigma_1}
                       {\mathcal B}^0_v(\varphi, \phi)(g_v). \qedhere
 \end{align*}
\end{proof}

The computations of each local integrals ${\mathcal B}^j_v (\varphi, \phi)$ $(j=0,1)$  are given in Section \ref{s:besloc}. 
In this subsection,  we describe the Bessel periods formula by using an explicit formula for ${\mathcal B}^j_v (\varphi, \phi)$ $(j=0,1)$. 

We prepare some notation. 
Let $C$ be the positive integer such that $C{\mathcal O}_F$ gives the conductor of $\phi$. 
We assume that the following conditions on $C$: 
\begin{itemize}
 \item[(CF)]  $C$ is prime to $N_F:={\rm l.c.m.}(N, \Delta_F)$ and each prime $v$ of ${\mathbf Q}$ dividing $C$ is split in $F$. 
\end{itemize}
Define   
\begin{align}\label{def:varsig}
\varsigma = \varsigma_\infty \varsigma_{\text{fin}} \in {\rm GL}_2({\mathbf A}); \quad  
   \varsigma_\infty =  ({\rm Im}(\vartheta^\prime_F))^{-1} \cdot  \begin{pmatrix} {\rm Im}(\vartheta^\prime_F) &  -{\rm Re}(\vartheta^\prime_F)  & \\ 0 & 1 \end{pmatrix} ; \quad
   \varsigma_{\text{fin}}  = \begin{pmatrix}  1 & 0 \\ 0 & C  \end{pmatrix}_\text{fin}. 
\end{align}
The definition of $\varsigma_\infty$ immediately shows that
\begin{align}\label{eq:diag}
    x \varsigma_\infty = \left(1_2, \begin{pmatrix} \vartheta^\prime_F  &  0 \\ 0 & \overline{\vartheta^\prime_F}   \end{pmatrix}   \right) \varsigma_\infty
         = \left(1_2, \begin{pmatrix} \sqrt{-1}  &  0 \\ 0 & -\sqrt{-1}   \end{pmatrix} \right).
\end{align}
Then the explicit Bessel period formula for ${\mathbf B}_{S,\phi}$ is given as follows:

\begin{thm}\label{th:bessel}
{\itshape 
Let $C{\mathcal O}_F$ be the conductor of $\phi$. 
Assume that 
\begin{itemize}
\item $n$ is even; 
\item $\pi$ is not conjugate self-dual; 
\item $\delta(\infty)=-1$ and $\delta(v)=1$ for each finite $v\in \Sigma_{\mathbf Q}$; 
\item $\Delta_F$ and ${\rm Nr}_{F/{\mathbf Q}}({\mathfrak N})$ are coprime;
\item the condition {\rm (CF)}.
\end{itemize}

Then we have 
\begin{align}\label{eq:mainbes}
\begin{aligned}
   {\mathbf B}_{S,\phi} \left(\begin{pmatrix}  \varsigma & 0_2  \\ 0_2 & {}^{\rm t}\varsigma^{-1}  \end{pmatrix} \right) 
   = & {\rm vol}({\mathcal U}_{N,1}, {\rm d}h_{\rm fin})  \cdot e^{-4\pi}  \cdot (-\sqrt{-1})\cdot 2^{-1} \cdot C^{-2}   \\
      & \times  
           L(\frac{1}{2}, \pi\otimes \phi) \cdot e(\pi, \phi, \delta)   \prod_{v\mid C}\epsilon(0,\phi^{-1}_v)   
       \times (-1)^{\frac{n}{2}}  \binom{n}{\frac{n}{2}}
          (X^2+Y^2)^{\frac{n}{2}},
\end{aligned}
\end{align}
where
\begin{align*}
    e(\pi,\phi, \delta) 
       = 2^{-\sharp\Sigma_1}(1-\delta(\infty)) \cdot \prod_{v\in \Sigma_1, v<\infty} (1+\delta(v)) 
       \prod_{v\mid N} (1+\epsilon(\frac{1}{2}, \pi_v\otimes\phi_v))
       \prod_{v\in {\mathcal P}} (1+\epsilon(\frac{1}{2}, \pi_v\otimes\phi^{-1}_v)) . 
\end{align*}
}
\end{thm}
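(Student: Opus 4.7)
The plan is to start from Lemma \ref{lem:besprod}, which already decomposes
\[ {\mathbf B}_{S,\phi}(g) = 2^{2-\sharp\Sigma_1}\prod_{v\in\Sigma_1}\bigl({\mathcal B}^0_v(\varphi,\phi)(g_v)+\delta(v){\mathcal B}^1_v(\varphi,\phi)(g_v)\bigr)\prod_{v\in\Sigma_{\mathbf Q}\setminus\Sigma_1}{\mathcal B}^0_v(\varphi,\phi)(g_v), \]
so the entire task is an explicit evaluation of the local Bessel integrals ${\mathcal B}^j_v(\varphi,\phi)$ at the test vector $g=\left(\begin{smallmatrix}\varsigma & 0\\ 0 & {}^{\rm t}\varsigma^{-1}\end{smallmatrix}\right)$. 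The role of the twist $\varsigma$ is essential: by \eqref{eq:diag}, conjugation by $\varsigma_\infty$ diagonalizes $\vartheta^\prime_F$, and the finite part $\varsigma_{\rm fin}=\mathrm{diag}(1,C)$ matches the conductor of $\phi$ so that, via identity \eqref{FCtw}, one lands on $S=\mathrm{diag}(1,1)$ at infinity and the conductor of $\phi$ is absorbed cleanly at finite places.

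First I would treat the archimedean place. Using Lemma \ref{HSTwt}(\ref{HSTwt(i)}) to strip the $\mathrm{U}_2({\mathbf R})$-action through $\rho_{(n+2,2)}({}^{\rm t}\varsigma_\infty)$, the integral $\mathcal{B}^0_\infty$ becomes a product of a polynomial integral over the image of $H_x({\mathbf R})\backslash H^0_1({\mathbf R})$ with a modified archimedean zeta integral of $W_{\pi,\infty}$. The ${\mathcal L}_\lambda({\mathbf C})$-part is controlled by the pluri-harmonic polynomial $P^\alpha$ evaluated at $x\varsigma_\infty=(1_2,\mathrm{diag}(\sqrt{-1},-\sqrt{-1}))$; the Fourier-type integration in the transverse direction against the Gaussian factor $e^{-\pi\mathrm{Tr}(\cdots)}$ produces the constant $e^{-4\pi}$ and, after expanding the binomial $\sum_\alpha\binom{n}{\alpha}X^\alpha Y^{n-\alpha}$ with the $P^\alpha(1_2,\mathrm{diag}(\sqrt{-1},-\sqrt{-1}))$ values, the vector $(-1)^{n/2}\binom{n}{n/2}(X^2+Y^2)^{n/2}$. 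The Whittaker-side integral produces $L(\tfrac12,\pi_\infty\otimes\phi_\infty)$ up to an explicit constant, while the combination $\mathcal{B}^0_\infty+\delta(\infty)\mathcal{B}^1_\infty=(1-\mathcal{B}^1_\infty/\mathcal{B}^0_\infty)\mathcal{B}^0_\infty$ (with $\delta(\infty)=-1$) produces the factor $(1-\delta(\infty))$ appearing in $e(\pi,\phi,\delta)$.

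Next I would handle the finite places. For $v\nmid CN_F$ split or inert in $F$, the integral ${\mathcal B}^0_v$ unfolds to the standard local Hecke zeta integral of $W_{\pi,v}$ against $\phi_v$, yielding exactly $L(\tfrac12,\pi_v\otimes\phi_v)\cdot{\rm vol}({\mathcal U}_{N,1,v},\mathrm{d}h_v)$. For $v\mid C$, the assumption (CF) forces $v$ to split in $F$ and the twist $\varsigma_{\rm fin}$ shifts the Whittaker integral onto the support of a Gauss sum; the resulting integral is by definition the $\epsilon$-factor $\epsilon(0,\phi_v^{-1})$ as in \eqref{def:ep}, and the coprimality of $C$ with $N_F$ means no local $L$-factor contribution appears. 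At ramified places of $\pi$ (i.e.\ $v\in\Sigma_1$ with $v<\infty$), where $\delta(v)=1$, the combination ${\mathcal B}^0_v+{\mathcal B}^1_v$ realizes the local functional equation and produces the factor $(1+\epsilon(\tfrac12,\pi_v\otimes\phi_v))$; at primes $v\in{\mathcal P}$ the level-raising stabilization $\widetilde{\mathbf f}^\dag={\mathscr V}_{\mathcal P}(\widetilde{\mathbf f})$ contributes the analogous factor $(1+\epsilon(\tfrac12,\pi_v\otimes\phi_v^{-1}))$ as in \cite[Section 4.4]{hn}; and at ramified primes of $F$ one computes ${\mathcal B}^0_v$ on the nebentypus-trivial lattice $V'({\mathcal O}_{F,v})$ directly, with contributions absorbed into the overall constant $2^{-1}C^{-2}$ (after tracking $|\det\varsigma|_v$).

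The main obstacle will be the archimedean calculation: one must simultaneously track the action of $\rho_\lambda({}^{\rm t}\varsigma_\infty)$ on the $\mathcal{L}_\lambda$-valued Schwartz function, evaluate the pluri-harmonic polynomials $P^\alpha$ at the diagonalized point from \eqref{eq:diag}, and perform the radial integration in $a\in(0,1)$ against Bessel $K$-functions. Recognizing the resulting vector as precisely $(-1)^{n/2}\binom{n}{n/2}(X^2+Y^2)^{n/2}$ (rather than some more complicated symmetric polynomial) is the delicate combinatorial identity in the argument; the constant $e^{-4\pi}$ arises from specializing the Whittaker function at $t=1$ via $K_j(4\pi)$ after a careful residue/contour manipulation in the $t$-integral, and ensuring consistency between this and the conjectural archimedean computation is the main technical hurdle. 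Once all local factors are multiplied and the prefactor $2^{2-\sharp\Sigma_1}$ from Lemma \ref{lem:besprod} is combined with the $(1\pm\delta(v))$ factors to form $e(\pi,\phi,\delta)$, the global identity \eqref{eq:mainbes} falls out.
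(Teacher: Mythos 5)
Your high-level plan agrees with the paper's: start from the Euler factorization of Lemma \ref{lem:besprod}, then evaluate each local Bessel integral, which is precisely the content of Lemma \ref{lem:narbessel} and Lemma \ref{lem:arbessel}. However, several of the mechanisms you describe are wrong and would fail if executed literally, so the sketch does not yet constitute a proof.

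First, you cannot invoke Lemma \ref{HSTwt}\ref{HSTwt(i)} to strip off $\varsigma_\infty$, since that lemma concerns the compact subgroup ${\rm U}_2({\mathbf R})$ embedded in $K_\infty$, whereas $\mathrm{diag}(\varsigma_\infty,{}^{\rm t}\varsigma^{-1}_\infty)$ lies in the non-compact Levi of the Siegel parabolic; the correct tool is the first line of the Schr\"odinger formulas \eqref{eq:weilsch}, which simply replaces $\varphi_\infty(x)$ by $\chi_{V_\infty}(\det\varsigma_\infty)|\det\varsigma_\infty|^2\varphi_\infty(x\varsigma_\infty)$. Second, the constant $e^{-4\pi}$ does not arise from any residue or contour manipulation in the $t$-integral, nor from evaluating $K_j$ at $4\pi$; it is literally the Gaussian $e^{-\pi\mathrm{Tr}(x_1{}^{\rm t}\overline{x_1}+x_2{}^{\rm t}\overline{x_2})}$ evaluated at $x\varsigma_\infty=(1_2,\mathrm{diag}(\sqrt{-1},-\sqrt{-1}))$, whose trace equals $4$. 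Third, you have interchanged the origins of the factors $(1+\delta(v))$ and $(1+\epsilon(\tfrac12,\pi_v\otimes\phi_v))$: for $v\in\Sigma_1$, Lemma \ref{lem:narbessel}(ii) shows ${\mathcal B}^0_v={\mathcal B}^1_v$, each already equal to $(1+\epsilon(\tfrac12,\pi_v\otimes\phi_v))L(\tfrac12,\pi_v\otimes\phi_v)$ up to volume, the $\epsilon$-factor coming from the two-element coset decomposition of Lemma \ref{dcoset}(ii) together with the Atkin--Lehner eigenvalue, while the combination ${\mathcal B}^0_v+\delta(v){\mathcal B}^1_v$ yields only the separate factor $(1+\delta(v))$; it is not a "local functional equation for ${\mathcal B}^0_v+{\mathcal B}^1_v$." Finally, the archimedean local integral does not directly return $(-1)^{n/2}\binom{n}{n/2}(X^2+Y^2)^{n/2}$: Lemma \ref{lem:arbessel} produces the double sum $\sum_\alpha\sqrt{-1}^{\,n-\alpha+1}\sum_c(-1)^c\binom{\alpha}{c}\binom{n-\alpha}{n/2-c}\binom{n}{\alpha}X^\alpha Y^{n-\alpha}$, and the reduction to $(X^2+Y^2)^{n/2}$ requires the separate binomial identity (evaluating the coefficient of $X^\alpha$ in $(1-X)^{n/2}(1+X)^{n/2}$) that is carried out in the proof of the theorem itself; your sketch omits this combinatorial step entirely.
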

\begin{proof}
Lemma \ref{lem:besprod}, Lemma \ref{lem:narbessel} and Lemma \ref{lem:arbessel} show that 
\begin{align*}
   {\mathbf B}_{S,\phi} \left( \begin{pmatrix}  \varsigma & 0_2 \\ 0_2 & {}^{\rm t}\varsigma^{-1}  \end{pmatrix} \right) 
   = & {\rm vol}({\mathcal U}_{N,1}, {\rm d}h_{\rm fin})  \cdot e^{-4\pi}  \cdot (-\sqrt{-1})\cdot 2^{-1} \cdot C^{-2}   \\
      & \times   L(\frac{1}{2}, \pi\otimes \phi) \cdot e(\pi, \phi, \delta)   \prod_{v\mid C}\epsilon(0,\phi^{-1}_v)   \\
      & \times \sum^{n}_{\alpha=0} \sqrt{-1}^{n-\alpha }  \sum_{c\in {\mathbf Z}} (-1)^c \binom{\alpha}{c} \binom{n-\alpha}{\frac{n}{2}-c} \cdot \binom{n}{\alpha} X^\alpha Y^{n-\alpha}. 
\end{align*}
We compute the summation in the above identity.  
If $\alpha$ is odd, the summation in the right-hand side of (\ref{eq:mainbes}) is zero:
\begin{align}\label{oddsum} 
\sum_{c\in {\mathbf Z}} (-1)^c \binom{\alpha}{c} \binom{n-\alpha}{\frac{n}{2}-c} =0.
\end{align}
If $\alpha$ is even, we find that
\begin{align*}
\sum_{c\in {\mathbf Z}} (-1)^c \binom{\alpha}{c} \binom{n-\alpha}{\frac{n}{2}-c} 
=& \frac{\alpha!(n-\alpha)!}{(\frac{n}{2}!)^2}\sum_{c}(-1)^c\binom{\frac{n}{2}}{c}\binom{\frac{n}{2}}{\alpha-c} \\    \displaybreak[0]
=& \frac{\alpha!(n-\alpha)!}{(\frac{n}{2}!)^2}\times \left(\text{the coefficient of $X^\alpha$ in $(1-X)^{\frac{n}{2}} (1+X)^{\frac{n}{2}}$} \right)   \\   \displaybreak[0]
=& \frac{\alpha!(n-\alpha)!}{(\frac{n}{2}!)^2}\times \binom{\frac{n}{2}}{\frac{\alpha}{2}} (-1)^{\frac{\alpha}{2}} \\    \displaybreak[0]
=& (-1)^{\frac{\alpha}{2}} \binom{\frac{n}{2}}{\frac{\alpha}{2}} \binom{ n }{ \frac{n}{2} } \binom{n}{\alpha}^{-1}.
\end{align*}
Hence we see that 
\begin{align*}
       \sum^{n}_{\alpha=0} \sqrt{-1}^{n-\alpha }  \sum_{c\in {\mathbf Z}} (-1)^c \binom{\alpha}{c} \binom{n-\alpha}{\frac{n}{2}-c} \cdot \binom{n}{\alpha} X^\alpha Y^{n-\alpha}  
  =&   (-1)^{\frac{n}{2}}  \binom{n}{\frac{n}{2}}
       \sum^{n}_{ \substack{ \alpha=0  \\ \alpha: \text{even}  }  }   
         \binom{\frac{n}{2}}{\frac{\alpha}{2}} (X^2)^{\frac{\alpha}{2}} (Y^2)^{\frac{n-\alpha}{2}}  \\
  =&   (-1)^{\frac{n}{2}}  \binom{n}{\frac{n}{2}}
          (X^2+Y^2)^{\frac{n}{2}}.
\end{align*}
This proves the theorem. 
\end{proof}

\subsection{A criterion for non-vanishing modulo a prime}\label{sec:claBess}

In this subsection, we give a relation between Fourier coefficients of HST lifts 
and special values of the standard $L$-function of $\pi$ by using Theorem \ref{th:bessel}. 
The relation will give a criterion for the non-vanishing of HST lifts modulo a prime.  
See Corollary \ref{cor:Bess} for the main result in this subsection.

We firstly give a relation of  adelic Fourier coefficients (\ref{def:FWc}) 
 with Fourier coefficients of the classical theta lift $\theta^\ast_{ \widetilde{\mathbf f}^\dag }$.
Let 
\begin{align*}
 \Lambda_2 = \left\{ S= \begin{pmatrix} a & \frac{b}{2}  \\ \frac{b}{2} & c \end{pmatrix} 
                                 | \ a, b, c \in {\mathbf Z}, S \text{ is semi-positive definite}  \right\},  
\end{align*}
and consider the Fourier expansion of $\theta^\ast_{ \widetilde{\mathbf f}^\dag }$:
\begin{align*}
 \theta^\ast_{ \widetilde{\mathbf f}^\dag }(Z)
    = \sum_{S\in \Lambda_2} {\mathbf a}(S) q^S, \quad (q^S= \exp(2\pi\sqrt{-1} {\rm Tr}(SZ))).   
\end{align*}

Let $Z=X+ \sqrt{-1} Y \in {\mathfrak H}_2$ 
          and take $g_\infty \in {\rm Sp}_4({\mathbf R})$ such that $g_\infty\cdot {\mathbf i} =Z$.

\begin{lem}\label{lem:Fou}
{\itshape   
We have the following statements:
\begin{enumerate}
\item If ${\mathbf a}(S)\neq 0$, then $S=S_x$ for some $x\in {\mathbf X}$, where $S_x$ is defined in (\ref{def:Sx}).  
\item We have  
        \begin{align*}
             {\mathbf W}_{\theta(\widetilde{\varphi}, \widetilde{\mathbf f}^\dag),S}(g_\infty)
        =   {\rm vol}( {\mathcal U}_{N,1} , {\rm d}h_{\rm fin})
              \varrho_\lambda(g_\infty, {\mathbf i})^{-1} \cdot  {\mathbf a}(S)q^S.
        \end{align*}
\end{enumerate}
}
\end{lem}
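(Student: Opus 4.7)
My plan is to prove (ii) directly by comparing the adelic and classical Fourier expansions, and then obtain (i) as an immediate corollary. Indeed, once (ii) is known, the description (\ref{wittFC}) of $\mathbf{W}_{\theta,S}$ as a sum indexed by $\{x\in\mathbf{X}\;:\;S_x=S\}$ shows that $\mathbf{W}_{\theta,S}\equiv 0$ when no such $x$ exists, and then (ii) forces $\mathbf{a}(S)=0$. So the substantive content lies in (ii).

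For (ii), I would begin from the adelic definition
\begin{align*}
   \mathbf{W}_{\theta,S}(g_\infty)
   =\int_{[U]}\theta(\widetilde{\varphi},\widetilde{\mathbf{f}}^\dag)(u(X_0)g_\infty)\,\psi_S(u(X_0))\,{\rm d}X_0
\end{align*}
and reduce the domain of integration to $\mathcal{S}_2(\mathbf{R})/\mathcal{S}_2(\mathbf{Z})$. The integrand, viewed as a function of $X_0\in\mathcal{S}_2(\mathbf{A})$, is $\mathcal{S}_2(\mathbf{Q})$-invariant by left $\mathrm{GSp}_4(\mathbf{Q})$-invariance of $\theta(\widetilde{\varphi},\widetilde{\mathbf{f}}^\dag)$, and $\mathcal{S}_2(\widehat{\mathbf{Z}})$-invariant because Lemma \ref{HSTlev} ensures that $\theta(\widetilde{\varphi},\widetilde{\mathbf{f}}^\dag)$ has level $\Gamma^{(2)}_0(N_F)$, hence is right-$U(\widehat{\mathbf{Z}})$-invariant adelically. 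Since $g_\infty$ is trivial at finite places, left translation by $u(\delta)$ with $\delta\in\mathcal{S}_2(\widehat{\mathbf{Z}})$ coincides with the corresponding right translation on $u(X_0)g_\infty$, justifying this descent. Moreover, for $S\in\Lambda_2$ the character $\psi_S$ is trivial on $U(\widehat{\mathbf{Z}})$: for $X_v\in\mathcal{S}_2(\mathbf{Z}_v)$, one has $\mathrm{Tr}(SX_v)=aX_{v,11}+bX_{v,12}+cX_{v,22}\in\mathbf{Z}_v$, the half-integer off-diagonal entries of $S$ combining to an integer via the symmetry of $X_v$. By strong approximation $\mathcal{S}_2(\mathbf{A})/(\mathcal{S}_2(\mathbf{Q})+\mathcal{S}_2(\widehat{\mathbf{Z}}))\cong \mathcal{S}_2(\mathbf{R})/\mathcal{S}_2(\mathbf{Z})$, and with the measure conventions of Section \ref{sec:autoSp4} (so that both $[U]$ and $\mathcal{S}_2(\mathbf{R})/\mathcal{S}_2(\mathbf{Z})$ carry total volume $1$), the adelic integral collapses to the archimedean one.

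Next, I would use the cocycle property of the automorphy factor together with the identity $J(u(Y),\cdot)=1_2$ to deduce $J(u(Y)g_\infty,\mathbf{i})=J(g_\infty,\mathbf{i})$. The defining formula of $\theta^\ast_{\widetilde{\mathbf{f}}^\dag}$ then yields
\begin{align*}
   \theta(\widetilde{\varphi},\widetilde{\mathbf{f}}^\dag)(u(Y)g_\infty)
   =\mathrm{vol}(\mathcal{U}_{N,1},{\rm d}h_{\rm fin})\cdot \varrho_\lambda(J(g_\infty,\mathbf{i}))^{-1}\cdot \theta^\ast_{\widetilde{\mathbf{f}}^\dag}(Z+Y).
\end{align*}
Substituting the classical Fourier expansion of $\theta^\ast_{\widetilde{\mathbf{f}}^\dag}(Z+Y)=\sum_T \mathbf{a}(T)\exp(2\pi\sqrt{-1}\,\mathrm{Tr}(T(Z+Y)))$ and invoking orthogonality of $\mathcal{S}_2(\mathbf{Z})$-periodic characters on $\mathcal{S}_2(\mathbf{R})/\mathcal{S}_2(\mathbf{Z})$ isolates the $T=S$ contribution, giving the desired formula.

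The main obstacle, such as it is, lies in careful book-keeping of Haar-measure normalizations at all places so that the finite-adelic part contributes exactly $1$ while the prefactor $\mathrm{vol}(\mathcal{U}_{N,1},{\rm d}h_{\rm fin})$ arises solely from the normalization in the definition of $\theta^\ast_{\widetilde{\mathbf{f}}^\dag}$; this is routine once the self-dual measure on $\mathcal{S}_2(\mathbf{A})$ is matched to the conventions fixed earlier. No new estimates or identities are needed beyond those already invoked in the preceding sections.
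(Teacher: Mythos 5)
Your proposal is correct and follows essentially the same route as the paper: unfold the adelic integral over $[U]$ to a classical integral over a fundamental domain using strong approximation and the right $U(\widehat{\mathbf Z})$-invariance guaranteed by Lemma \ref{HSTlev}, apply the cocycle identity for the automorphy factor together with the definition of $\theta^\ast_{\widetilde{\mathbf f}^\dag}$, and conclude by orthogonality of characters. The only cosmetic difference is that you deduce (i) from (ii) combined with (\ref{wittFC}), whereas the paper reads (i) off from (\ref{wittFC}) directly; both are equally valid since the empty sum in (\ref{wittFC}) gives $\mathbf W_{\theta,S}\equiv 0$, which via (ii) (or directly by extracting the classical coefficient) forces $\mathbf a(S)=0$.
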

\begin{proof}
The first statement follows from (\ref{wittFC}). See also \cite[Proposition 3.6]{hn17}.

We prove the second statement. 
Then the definition (\ref{def:FWc}) of $W_{\theta(\widetilde{\varphi}, \widetilde{\mathbf f}^\dag),S}$ shows that 
\begin{align*}
   & {\mathbf W}_{\theta(\widetilde{\varphi}, \widetilde{\mathbf f}^\dag),S}(g_\infty)  \\
=& \int_{[U]}  \theta(\widetilde{\varphi}, \widetilde{\mathbf f}^\dag) (ug_\infty) \psi_S( u )    {\rm d}u \\   
=& {\rm vol}( {\mathcal U}_{N,1} , {\rm d}h_{\rm fin})     
      \cdot  \int_{ {\rm M}_2({\mathbf Z}) \backslash {\rm M}_2({\mathbf R}) } 
       \varrho_\lambda\left(  J\left( \begin{pmatrix} 1_2 & T \\ 0_2 & 1_2 \end{pmatrix} g_\infty, {\mathbf i} \right)^{-1}  \right)
        \theta^\ast_{ \widetilde{\mathbf f}^\dag} \left( \begin{pmatrix} 1_2 & T \\ 0_2 & 1_2 \end{pmatrix}  Z  \right) \psi_{{\mathbf Q}, \infty} (-{\rm Tr}(ST) )    {\rm d}T   \\
=& {\rm vol}( {\mathcal U}_{N,1} , {\rm d}h_{\rm fin})
      \cdot  \sum_{S^\prime\in \Lambda_2}
      \int_{ {\rm M}_2({\mathbf Z}) \backslash {\rm M}_2({\mathbf R}) }
          \varrho_\lambda(g_\infty, {\mathbf i})^{-1}
      \cdot   {\mathbf a}(S^\prime) q^{S^\prime} 
        \psi_{{\mathbf Q}, \infty} ({\rm Tr}(S^\prime T ) )  
        \psi_{{\mathbf Q}, \infty} (-{\rm Tr}(ST) ) 
      {\rm d}T   \\
=& {\rm vol}( {\mathcal U}_{N,1} , {\rm d}h_{\rm fin})
      \varrho_\lambda(g_\infty, {\mathbf i})^{-1} \cdot  {\mathbf a}(S)q^S. \qedhere       
\end{align*}
\end{proof}

We prepare some notation 
to give a relation between classical Fourier coefficients and  Bessel periods of HST lifts. 
Let  $x \in {\mathbf X}$ as in Section \ref{sec:exBess} and $S=S_x$. 
Put 
\begin{align*}
   Q_S(X,Y)  =&   \left( (X,Y) S \begin{pmatrix} X \\ Y  \end{pmatrix} \right)^{\frac{n}{2}},  \\ 
   Q^\varsigma_S(X,Y) =&  \varrho_\lambda({}^{\rm t} \varsigma_\infty  )   Q_S(X,Y) (\det \varsigma_\infty)^{-n-4}. 
\end{align*}
Note that
\begin{align*}
          Q^\varsigma_S(X,Y)  
   =&  Q_{S_{x\varsigma_\infty}}(X, Y)  (\det \varsigma_\infty)^{-n-2}
   =  \left(-\frac{\Delta_F}{4} \right)^{\frac{n+2}{2}}  (X^2+Y^2)^{\frac{n}{2}}.
\end{align*}
Define a finite group ${\rm Cl}^{\rm a}_F(C)$ to be  
\begin{align*}
F^\times {\mathbf A}^\times \backslash F^\times_{\mathbf A}/ {\mathbf C}^1  \widehat{\mathcal O}^\times_F(C), 
 \quad  ({\mathbf C}^1 :=\{ z \in {\mathbf C} \ | \  |z|=1  \} \subset F^\times_\infty  ),   
\end{align*}
and denote by ${\rm Cl}^{\rm a}_F(C)^\vee$ the set of characters on ${\rm Cl}^{\rm a}_F(C)$.

\begin{prop}\label{BFC}
{\itshape 
Let $\varsigma\in {\rm GL}_2({\mathbf A})$ as in (\ref{def:varsig})
and write 
\begin{align*}
    \widetilde{\varsigma} = \begin{pmatrix}  \varsigma & 0_2 \\ 0_2 & {}^{\rm t}\varsigma^{-1}  \end{pmatrix} \in {\rm Sp}_4({\mathbf A}).  
\end{align*}
Then, we have 
\begin{align*}
     \langle {\mathbf B}_{S,\phi}( \widetilde{\varsigma} ), Q^\varsigma_S(X,Y)\rangle_{\mathcal L}  
=& [\widehat{\mathcal O}^\times_F : \widehat{\mathcal O}^\times_F(C)]^{-1} 
      {\rm vol}( {\mathcal U}_{N,1} , {\rm d}h_{\rm fin})  e^{-4\pi}  C^{-n-4}             \\
  & \times     \sum_{ [t] \in {\rm Cl}^{\rm a}_F(C)}  
      \langle 
      {\mathbf a}(S^{\gamma_t} ), 
                     Q_{S^{\gamma_t}}(X,Y) \rangle_{\mathcal L} 
       \phi(t).
\end{align*}
}
\end{prop}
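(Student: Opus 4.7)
The plan is to unfold the Bessel period via \eqref{BessWhitt}, reduce the adelic torus integral to a finite sum over $\mathrm{Cl}^{\mathrm a}_F(C)$ using invariance, then evaluate each term by combining \eqref{FCtw} with Lemma~\ref{lem:Fou}. First I would observe that under the identification $T_S(\mathbf{A})\cong F^\times_{\mathbf{A}}$ via $\Psi_F$, the integrand $\mathbf{W}_{\theta,S}(\Psi_F(t)\widetilde{\varsigma})\phi(t)$ is left-invariant by $F^\times$ (by \eqref{FCtw} applied to $T_S(\mathbf{Q})$) and right-invariant by $\mathbf{C}^1\widehat{\mathcal O}^\times_F(C)$, using the ${\mathcal U}_N$-right-invariance of $\widetilde{\mathbf f}^\dag$, the conductor condition on $\phi$, and triviality of $\phi_\infty$ on $\mathbf{C}^1$. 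Thus the torus integral collapses to
\begin{equation*}
\mathbf{B}_{S,\phi}(\widetilde{\varsigma})=\mathrm{vol}(\mathbf{C}^1\widehat{\mathcal O}^\times_F(C))\cdot\sum_{[t]\in\mathrm{Cl}^{\mathrm a}_F(C)}\mathbf{W}_{\theta,S}(\Psi_F(t)\widetilde{\varsigma})\phi(t),
\end{equation*}
and the measure-theoretic bookkeeping (using the normalization of $\mathrm{d}t$ on $F^\times_{\mathbf A}$) produces the index $[\widehat{\mathcal O}^\times_F:\widehat{\mathcal O}^\times_F(C)]^{-1}$.

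Next, for each finite-part representative $t$, I would use strong approximation for $\mathrm{Sp}_4$ to decompose $\Psi_F(t)\varsigma_{\mathrm{fin}}$ (viewed inside ${\rm GSp}_4({\mathbf A}_{\mathrm{fin}})$) as $\gamma_t\cdot u_t$ with $\gamma_t\in{\rm GL}_2(\mathbf{Q})$ (embedded in ${\rm GSp}_4$ via the Siegel parabolic) and $u_t$ in the level subgroup preserving $\widetilde{\mathbf f}^\dag$. Applying \eqref{FCtw} to move $\gamma_t$ across turns the Fourier index from $S$ into $S^{\gamma_t}:=\nu(\gamma_t)^{-1}\,{}^{\mathrm t}\gamma_t S\gamma_t$, encoding jointly the twisting by the class of $t$ and by $\varsigma_{\mathrm{fin}}$. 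After this reduction only $\widetilde{\varsigma}_\infty$ remains at the archimedean place, and Lemma~\ref{lem:Fou}~(ii), together with $J(\widetilde{\varsigma}_\infty,\mathbf i)={}^{\mathrm t}\varsigma_\infty^{-1}$, yields
\begin{equation*}
\mathbf{W}_{\theta,S^{\gamma_t}}(\widetilde{\varsigma}_\infty)=\mathrm{vol}({\mathcal U}_{N,1},\mathrm{d}h_{\mathrm{fin}})\cdot\varrho_\lambda({}^{\mathrm t}\varsigma_\infty)\mathbf{a}(S^{\gamma_t})\,q^{S^{\gamma_t}}(\widetilde{\varsigma}_\infty\cdot\mathbf i).
\end{equation*}

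The final step is to simplify the archimedean factors and pair with $Q^\varsigma_S$. Using \eqref{eq:diag}, the matrix $\varsigma_\infty$ diagonalizes the quadratic datum so that the Fourier exponential evaluates to $q^{S^{\gamma_t}}(\widetilde{\varsigma}_\infty\cdot\mathbf i)=e^{-4\pi}$ independently of $t$, because the archimedean part of $\gamma_t$ is trivial and only its finite part enters $S^{\gamma_t}$. Pairing with $Q^\varsigma_S=\varrho_\lambda({}^{\mathrm t}\varsigma_\infty)Q_S\cdot(\det\varsigma_\infty)^{-n-4}$ and invoking the $(\det g)^{n+4}$-equivariance of $\langle\cdot,\cdot\rangle_{\mathcal L}$ under $\varrho_\lambda$ converts $\langle\varrho_\lambda({}^{\mathrm t}\varsigma_\infty)\mathbf{a}(S^{\gamma_t}),Q^\varsigma_S\rangle_{\mathcal L}$ into $\langle\mathbf{a}(S^{\gamma_t}),Q_{S^{\gamma_t}}\rangle_{\mathcal L}$ up to a $\nu(\gamma_t)$-power, relying on the compatibility $Q_{S^{\gamma_t}}=\varrho_\lambda(\gamma_t^{-1})Q_S\cdot\nu(\gamma_t)^{\ast}$ that follows from the explicit definition of $Q_S$. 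The $C^{-n-4}$ factor then emerges from combining $(\det\varsigma_\infty)^{-n-4}$ with the $\nu(\gamma_t)$-powers introduced by \eqref{FCtw} and the determinant of $\varsigma_{\mathrm{fin}}=\mathrm{diag}(1,C)$.

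The main obstacle will be the precise normalization bookkeeping: matching the volume of $\mathbf{C}^1\widehat{\mathcal O}^\times_F(C)$ against the chosen Haar measure on $T_S(\mathbf A)$ to extract the exact index $[\widehat{\mathcal O}^\times_F:\widehat{\mathcal O}^\times_F(C)]^{-1}$, and tracking the $C$- and $\det\varsigma_\infty$-powers through the interplay of \eqref{FCtw}, the pairing equivariance, and the definition of $Q^\varsigma_S$ to obtain exactly $C^{-n-4}$. The conceptual key—that the archimedean contribution is a single $t$-independent constant $e^{-4\pi}$—is the diagonalization identity \eqref{eq:diag}, which makes the final sum a clean finite average of classical Fourier coefficients twisted by $\phi$.
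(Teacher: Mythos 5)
Your outline follows the same overall route as the paper: unfold via \eqref{BessWhitt}, collapse the torus integral to a finite sum over ${\rm Cl}^{\rm a}_F(C)$, use a rational decomposition of $\Psi_F(t)\varsigma_{\rm fin}$ together with \eqref{FCtw} to twist the Fourier index, appeal to Lemma~\ref{lem:Fou}, and finally pair against $Q^\varsigma_S$. However, there is a genuine conceptual error in the archimedean step that needs repair.

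You claim that after moving $\gamma_t$ across via \eqref{FCtw}, ``only $\widetilde{\varsigma}_\infty$ remains at the archimedean place,'' and that the exponential is $t$-independent ``because the archimedean part of $\gamma_t$ is trivial.'' This is false. The element $\gamma_t$ lies in ${\rm GL}_2(\mathbf{Q})$, embedded diagonally in ${\rm GL}_2(\mathbf{A})$, so its archimedean component is the same matrix $\gamma_t$, not the identity. When you rewrite $\operatorname{diag}(\gamma_{t,\rm fin},{}^{\rm t}\gamma_{t,\rm fin}^{-1})\widetilde{\varsigma}_\infty$ as a rational element times a purely archimedean one in order to invoke \eqref{FCtw}, the residual archimedean element is not $\widetilde{\varsigma}_\infty$ but
\begin{align*}
g_\infty \;=\; \begin{pmatrix} \gamma_{t,\infty}^{-1}\varsigma_\infty & 0_2 \\ 0_2 & {}^{\rm t}\gamma_{t,\infty}\,{}^{\rm t}\varsigma_\infty^{-1} \end{pmatrix},
\end{align*}
so both the automorphy factor $J(g_\infty,\mathbf{i})={}^{\rm t}\gamma_t\,{}^{\rm t}\varsigma_\infty^{-1}$ and the point $g_\infty\cdot\mathbf{i}=\sqrt{-1}\,\gamma_t^{-1}\varsigma_\infty\,{}^{\rm t}\varsigma_\infty\,{}^{\rm t}\gamma_t^{-1}$ depend on $t$. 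The correct reason the exponential is $t$-independent is the cyclic invariance of the trace: writing $S^{\gamma_t}={}^{\rm t}\gamma_t S\gamma_t$,
\begin{align*}
{\rm Tr}\!\left(S^{\gamma_t}(g_\infty\cdot\mathbf{i})\right)
=\sqrt{-1}\,{\rm Tr}\!\left({}^{\rm t}\gamma_t S\gamma_t\cdot\gamma_t^{-1}\varsigma_\infty\,{}^{\rm t}\varsigma_\infty\,{}^{\rm t}\gamma_t^{-1}\right)
=\sqrt{-1}\,{\rm Tr}\!\left({}^{\rm t}\varsigma_\infty S\varsigma_\infty\right)
=\sqrt{-1}\,{\rm Tr}(S_{x\varsigma_\infty})=2\sqrt{-1},
\end{align*}
giving $e^{-4\pi}$ as claimed. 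Moreover, the residual $\varrho_\lambda({}^{\rm t}\gamma_t\,{}^{\rm t}\varsigma_\infty^{-1})^{-1}$ (rather than $\varrho_\lambda({}^{\rm t}\varsigma_\infty)$) is exactly what interacts with the $(\det g)^{n+2b}$-equivariance of the pairing and the definition of $Q^\varsigma_S$ to produce the power of $C=\det\gamma_t$; your version, which omits $\gamma_t$ at infinity, cannot produce a $t$-dependent determinant factor from the pairing, so the $C$-power would have to be conjured elsewhere. Once you replace $\widetilde{\varsigma}_\infty$ by $g_\infty$ in the application of Lemma~\ref{lem:Fou}~(ii), the rest of your outline goes through as in the paper.
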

\begin{proof}
Let  $\Psi_F$ is defined in Section \ref{sec:chS}, 
which is characterized by the following formula:
\begin{align*}
   \iota(t) x = x \Psi_F(t),
 \quad  \left( \iota(t) =  \begin{pmatrix} t & 0 \\ 0 & 1 \end{pmatrix} \right).  
\end{align*}   
By the definition of $x$, we find  that $\Psi_F: {\rm Res}_{F/{\mathbf Q}}  {\mathbb G}_{m /F} \stackrel{\sim}{\to} T_S$. 
Recall also 
\begin{align*}
j(t) = \begin{pmatrix}  \Psi_F(t) &  0_2 \\ 0_2 & {\rm det }\Psi_F(t) \cdot {}^{\rm t}\Psi_F(t)^{-1}  \end{pmatrix} \in {\rm GSp}_4,   
\quad   (t\in {\rm Res}_{F/{\mathbf Q}}  {\mathbb G}_{m /F}, \det \Psi_F(t) = {\rm Nr}_{F/{\mathbf Q}}(t)).
\end{align*}
By (\ref{E:Bessel.HST}), we find that 
\begin{align*}
    {\mathbf B}_{S,\phi}( \widetilde{\varsigma} )
  =&  \int_{{\mathbf A}^\times T_S({\mathbf Q}) \backslash  T({\mathbf A})}     
       {\mathbf W}_{\theta(  \widetilde{\varphi}, \widetilde{\mathbf f}^\dag ), S}  ( j(t) \widetilde{\varsigma} )  \phi(t)   {\rm d}t   \\
  =&  {\rm vol}( \widehat{\mathcal O}^\times_F(C), {\rm d}t)  
          \sum_{ [t] \in {\rm Cl}^{\rm a}_F(C)  }   
                    {\mathbf W}_{\theta(  \widetilde{\varphi}, \widetilde{\mathbf f}^\dag ), S}  ( j(t) \widetilde{\varsigma}   )  \phi(t).       
\end{align*}
For each $[t] \in {\rm Cl}^{\rm a}_F(C)$, take a representative $t_0\in F^\times_{{\mathbf A}, {\rm fin}}$ of $[t]$.
We write 
\begin{align*}
     \Psi_F(t_0) \varsigma_{\rm fin} =& \alpha_{t, {\rm fin}}   u_t  \in {\rm GL}_2({\mathbf A}_{\rm fin}), 
                                 \quad (\alpha_t \in {\rm GL}_2({\mathbf Q}),  u_t\in {\rm GL}_2(\widehat{\mathbf Z}) ),  \\
         \det\Psi_F(t_0) =& \alpha^\prime_{t, {\rm fin}} u^\prime_t \in   {\mathbf A}^\times_{\rm fin},  
           \quad (\alpha^\prime_t  \in {\mathbf Q}^\times,   u^\prime_t \in \widehat{\mathbf Z}^\times), 
\end{align*}
by the strong approximation theorem.
Put 
\begin{align}\label{def:gt}
  \gamma_t  =   (\alpha^{\prime}_t)^{-\frac{1}{2}} \alpha_t , 
  \quad S^{\gamma_t} = {}^{\rm t}\gamma_t S \gamma_t, 
  \quad g_\infty =   \begin{pmatrix}    \gamma^{-1}_{t,\infty}  \varsigma_\infty  &   0_2  \\   0_2 & {}^{\rm t}\gamma_{t,\infty}  {}^{\rm t}\varsigma^{-1}_\infty   \end{pmatrix}. 
\end{align}
Then (\ref{FCtw}) and  Lemma \ref{lem:Fou} yield that  
\begin{align*}
    {\mathbf W}_{\theta(  \widetilde{\varphi}, \widetilde{\mathbf f}^\dag ), S}  ( j(t_0) \widetilde{\varsigma} )   
 =& {\mathbf W}_{\theta(  \widetilde{\varphi}, \widetilde{\mathbf f}^\dag ), S}  
         \left(  \begin{pmatrix}   \alpha_{t, {\rm fin}} &   0_2  \\   0_2 &   \alpha^\prime_{t, {\rm fin}}   {}^{\rm t} \alpha^{-1}_{t, {\rm fin}}   \end{pmatrix} 
                  \begin{pmatrix}   \varsigma_\infty  &   0_2  \\   0_2 & {}^{\rm t}\varsigma^{-1}_\infty   \end{pmatrix}   \right)   \\
 =& {\mathbf W}_{\theta(  \widetilde{\varphi}, \widetilde{\mathbf f}^\dag ), S^{\gamma_t}}  
         \left(  \begin{pmatrix}    \gamma^{-1}_{t,\infty}  \varsigma_\infty  &   0_2  \\   0_2 & {}^{\rm t}\gamma_{t,\infty}  {}^{\rm t}\varsigma^{-1}_\infty   \end{pmatrix}   \right)   \\
 =& {\rm vol}( {\mathcal U}_{N,1}, {\rm d}h_{\rm fin})  
      \varrho_\lambda( {}^{\rm t}\gamma_{t}  {}^{\rm t} \varsigma^{-1}_\infty  )^{-1}
      {\mathbf a}(S^{\gamma_t} )   
        \exp(  2\pi \sqrt{-1}  {\rm Tr}(  S^{\gamma_t}  ( g_\infty \cdot {\mathbf i})   ) ).         
\end{align*} 
We find that 
\begin{align*}
  {\rm Tr}(  S^{\gamma_t}  ( g_\infty \cdot {\mathbf i})   ) 
=& {\rm Tr}(  {}^{\rm t}\gamma_t  S \gamma_t 
                   \cdot  \sqrt{-1} \gamma^{-1}_t \varsigma_\infty    
                   \cdot   ( {}^{\rm t}\gamma_t {}^{\rm t}\varsigma^{-1}_\infty  )^{-1}   )   \\
=& \sqrt{-1}  {\rm Tr}(  {}^{\rm t}\varsigma_\infty  S \varsigma_\infty )    
=  \sqrt{-1}  {\rm Tr}( S_{x\varsigma_\infty}    )  
= 2\sqrt{-1}. 
\end{align*}
Summarizing the above computations and (\ref{eq:pairequiv}), we obtain 
\begin{align*}
     \langle {\mathbf B}_{S,\phi}( \widetilde{\varsigma} ), Q^\varsigma_S(X,Y)\rangle_{\mathcal L}  
=& [\widehat{\mathcal O}^\times_F : \widehat{\mathcal O}^\times_F(C)]^{-1} 
      {\rm vol}( {\mathcal U}_{N,1} , {\rm d}h_{\rm fin})  e^{-4\pi}               \\
  &\quad  \times     \sum_{ [t] \in {\rm Cl}^{\rm a}_F(C)}  
     \det( \gamma^{-1}_t \varsigma_\infty  )^{n+4}  
     \langle 
      {\mathbf a}(S^{\gamma_t} ), 
       \varrho_\lambda ( {}^{\rm t}\gamma_t  {}^{\rm t}\varsigma^{-1}_\infty   )  
                     Q^\varsigma_S(X,Y) \rangle_{\mathcal L} 
       \phi(t)  \\    
=& [\widehat{\mathcal O}^\times_F : \widehat{\mathcal O}^\times_F(C)]^{-1} 
      {\rm vol}( {\mathcal U}_{N,1} , {\rm d}h_{\rm fin})  e^{-4\pi}  C^{-n-4}             \\
  &\quad  \times     \sum_{ [t] \in {\rm Cl}^{\rm a}_F(C)}  
      \langle 
      {\mathbf a}(S^{\gamma_t} ), 
                     Q_{S^{\gamma_t}}(X,Y) \rangle_{\mathcal L} 
       \phi(t),     
\end{align*}
since $\det\gamma_t = C$.
This shows the proposition. 
\end{proof}

In the next corollary,  we clarify a relation between Fourier coefficients of HST lifts and 
   the central value of the standard $L$-function of $\pi$.    
For this purpose, we briefly recall the definition of Hida's canonical period $\Omega_{\pi,p}$ of $\pi$ 
    to discuss an integrality of ${\mathbf a}^0(S)$ for $S\in \Lambda_2$ with $\det S = \frac{\Delta_F C^2}{4}$. 
The definition of $\Omega_{\pi,p}$ is given in \cite[Section 8]{hi94cr} by using cuspidal cohomology groups, 
   however we introduce a definition of $\Omega_{\pi,p}$ by using cohomology group with compact support 
   to ensure an integrality of $L(\frac{1}{2}, \pi\otimes\phi)$ according to the same idea in \cite{hi94cr}.       
See \cite[Section 3]{na17} for more detail of the definition.
Let $p$ be an odd prime 
   and fix an isomorphism ${\mathbf C} \cong {\mathbf C}_p$ as fields. 
Let 
\begin{align*}
Y_{\mathfrak N} 
  =  F^\times \backslash {\rm GL}_2(F_{\mathbf A}) / F^\times_{{\mathbf A}, \infty} {\rm SU}_2({\mathbf R})U_0({\mathfrak N})
\end{align*}
and  ${\mathscr L}(n; {\mathbf C}_p)$  
         a local system on $Y_{\mathfrak N}$ which is defined in \cite[Section 2]{hi94cr},  
then we have the Eichler-Shimura-Harder isomorphism (\cite[Proposition 3.1]{hi94cr}): 
 \begin{align*}
             \delta: \begin{CD}  S_{n+2}(U_0({\mathfrak N}))   @>\sim>>  H^1_{\rm cusp}(Y_{\mathfrak N}, {\mathscr L}(n; {\mathbf C})) \end{CD},   
\end{align*}
where $H^1_{\rm cusp}$ is the cuspidal cohomology group. 
By \cite[Section 2.1]{hi99}, we have a Hecke equivariant section:
\begin{align*}
  i:  
  \begin{CD} H^1_{\rm cusp}(Y_{\mathfrak N}, {\mathscr L}(n; {\mathbf C})) 
      @>>> H^1_{\rm c}(Y_{\mathfrak N}, {\mathscr L}(n; {\mathbf C}))  \end{CD},        
\end{align*}
where $H^1_{\rm c}$ is the cohomology group with compact support. 
The fixed isomorphism ${\mathbf C} \cong {\mathbf C}_p$ 
    induces an isomorphism between ${\mathscr L}(n; {\mathbf C})$ and 
    the $p$-adic avatar ${\mathscr L}(n; {\mathbf C}_p)$ (\cite[Proposition 3.14]{na17}). 
Hence we have a Hecke equivariant map  
$H^1_{\rm cusp}(Y_{\mathfrak N}, {\mathscr L}(n; {\mathbf C})) 
       \to H^1_{\rm c}(Y_{\mathfrak N}, {\mathscr L}(n; {\mathbf C}_p) )$, which we also denote by $i$.     
The natural embedding ${\mathcal O}_{{\mathbf C}_p} \to {\mathbf C}_p$ induces a map 
\begin{align*}
    j: 
     \begin{CD} H^1_{\rm c}(Y_{\mathfrak N}, {\mathscr L}(n; {\mathcal O}_{ {\mathbf C}_p}  ) )
         @>>>   H^1_{\rm c}(Y_{\mathfrak N}, {\mathscr L}(n; {\mathbf C}_p) )  \end{CD}.   
\end{align*}  
Let $\eta_f$ be a generator of 
    $\langle i\circ \delta(f)  \rangle_{ {\mathbf C}_p }
       \cap j(  H^1_{\rm c}(Y_{\mathfrak N}, {\mathscr L}(n; {\mathcal O}_{ {\mathbf C}_p}  ) )  )$
as an ${\mathcal O}_{{\mathbf C}_p}$-module.
Then define Hida's canonical period $\Omega_{\pi,p} \in {\mathbf C}^\times_p$ so that 
\begin{align*}
  \Omega_{\pi,p} \eta_f = i \circ \delta(f).     
\end{align*}
Note that $\Omega_{\pi,p}$ is  unique up to multiplications of elements in ${\mathcal O}^\times_{{\mathbf C}_p}$.

\begin{cor}\label{cor:Bess}
{\itshape  
Assume the conditions which are given in Theorem \ref{th:bessel}.
\begin{enumerate}
\item \label{cor:Bess(i)} Let $S=S_x\in \Lambda_2$ be a matrix introduced in Section \ref{sec:exBess}.  
 For each $t\in {\rm Cl}^{\rm a}_F(C)$, let $\gamma_t$ and $S^{\gamma_t}$ be as in (\ref{def:gt}).
Then, we have      
\begin{align}\label{FCBess}
 \begin{aligned}
  {\mathbf a}^0(S^{\gamma_t})
  :=&   \frac{\langle {\mathbf a}(S^{\gamma_t}), Q_{S^{\gamma_t}}(X,Y) \rangle_{\mathcal L} }{\Omega_{\pi,p} }   \\
 =& \frac{   2^{-3} \sqrt{-1}  \Delta^{\frac{n+2}{2}}_F  [  \widehat{\mathcal O}^\times_F :  \widehat{\mathcal O}^\times_F(C) ]   C^{n+2}   
                 }{       \sharp{\rm Cl}^{\rm a}_F(C)    }  
     \sum_{ \phi \in {\rm Cl}^{\rm a}_F(C)^\vee  }
     \frac{L( \frac{1}{2} ,\pi\otimes \phi)}{\Omega_{\pi,p} }
     e(\pi, \phi, \delta)
     \phi^{-1}(t)
     \prod_{v\mid C}\epsilon( 0,\phi^{-1}_v).
\end{aligned}
\end{align}
\item \label{cor:Bess(ii)} Let $p$ be a rational prime numer with  
          $p\nmid 2\cdot n!   \cdot \sharp{\rm Cl}^{\rm a}_F(1)$.  
          Then for each $S\in \Lambda_2$ with $\det S=\frac{\Delta_F C^2}{4}$,  
          ${\mathbf a}^0(S)/\Omega_{\pi, p}$ is $p$-adically integral. 
\item  \label{cor:Bess(iii)} Let $p$ be a rational prime. 
          Assume 
         \begin{itemize}
         \item $\delta(\infty)=-1$ and $\delta(v)=1$ for each finite $v\in \Sigma_{\mathbf Q}$; 
         \item  the condition {\rm (LR)} which is given in Corollary \ref{cor:nonvan};     
         \item  $p\nmid 2\cdot n!  \cdot \Delta_F  \cdot {\rm Cl}^{\rm a}_F(1)$.     
         \end{itemize}
         Then there exists a positive integer $C\in {\mathbf Z}$ satisfying the condition $({\rm CF})$ in Section \ref{sec:exBess} and  
         \begin{align*}
              p\nmid C \cdot \prod_{ \substack{ \ell:\text{prime} \\  \ell \mid C} }(\ell - 1) 
         \end{align*}
         such that 
         the following statements are equivalent:          
         \begin{itemize}
         \item For some $\phi \in {\rm Cl}^{\rm a}_F(C)$,  $\displaystyle \frac{L(\frac{1}{2}, \pi\otimes\phi)}{\Omega_{\pi,p} } \in {\mathcal O}^\times_{\mathbf C_p}$; 
         \item For some $S\in \Lambda_2$ with $\det S= \frac{\Delta_F C^2}{4}$, $\displaystyle \frac{ {\mathbf a}^0(S) }{ \Omega_{\pi,p} } \in {\mathcal O}^\times_{\mathbf C_p}$.
         \end{itemize}
\end{enumerate}
}
\end{cor}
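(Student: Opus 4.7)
The plan is to combine Proposition \ref{BFC}, which expresses the Bessel period at $\widetilde{\varsigma}$ as a Fourier sum of classical Fourier coefficients indexed by ${\rm Cl}^{\rm a}_F(C)$, with the explicit Bessel period formula of Theorem \ref{th:bessel}, and then invert the Fourier transform on this finite abelian group to obtain (i). Parts (ii) and (iii) will then follow from (i) by bounding the $p$-adic valuation of the prefactor, of the $\epsilon$-factors, and of $\sharp{\rm Cl}^{\rm a}_F(C)$.

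For part (i), I pair the vector-valued identity of Theorem \ref{th:bessel} against $Q^\varsigma_S$ under $\langle\cdot,\cdot\rangle_{\mathcal{L}}$. Since the right-hand side of Theorem \ref{th:bessel} is a scalar multiple of $(X^2+Y^2)^{n/2}$ and $Q^\varsigma_S=\left(-\Delta_F/4\right)^{(n+2)/2}(X^2+Y^2)^{n/2}$, this converts the identity into a scalar one whose left-hand side is, by Proposition \ref{BFC}, the Fourier sum $\sum_{[t]\in{\rm Cl}^{\rm a}_F(C)}\langle{\mathbf a}(S^{\gamma_t}),Q_{S^{\gamma_t}}\rangle_{\mathcal{L}}\phi(t)$. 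After dividing by $\Omega_{\pi,p}$ and applying Schur orthogonality on ${\rm Cl}^{\rm a}_F(C)^\vee$, the Fourier sum inverts to produce formula (\ref{FCBess}).

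For part (ii), the classical theory of binary quadratic forms shows that every $S\in\Lambda_2$ with $\det S=\Delta_FC^2/4$ is ${\rm SL}_2({\mathbf Z})$-equivalent to some $S^{\gamma_t}$ with $[t]\in{\rm Cl}^{\rm a}_F(C)$, and the transformation rule (\ref{FCtw}) reduces the integrality statement to that of ${\mathbf a}^0(S^{\gamma_t})$ for each $t$. Applying (i) together with Hida's integrality theorem $L(\tfrac{1}{2},\pi\otimes\phi)/\Omega_{\pi,p}\in{\mathcal{O}}_{{\mathbf C}_p}$, it suffices to verify that the explicit prefactor in (\ref{FCBess})---involving $\Delta_F^{(n+2)/2}$, $C^{n+2}$, $[\widehat{\mathcal{O}}_F^\times:\widehat{\mathcal{O}}_F^\times(C)]$ and $\sharp{\rm Cl}^{\rm a}_F(C)$---together with $e(\pi,\phi,\delta)$ and $\prod_{v\mid C}\epsilon(0,\phi_v^{-1})$, is $p$-integral. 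The hypothesis $p\nmid 2\cdot n!\cdot\sharp{\rm Cl}^{\rm a}_F(1)$ controls the global denominators, while the local $\epsilon$-factors at split primes $v\mid C$ reduce to $p$-integral Gauss sums.

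For part (iii), under (LR) every factor $1+\varepsilon_p$ appearing in $e(\pi,\phi,\delta)$ equals $2$, hence is a $p$-unit for the odd prime $p$; under the extra hypothesis $p\nmid C\prod_{\ell\mid C}(\ell-1)$, the exact sequence
\[
1\to\bigl(\widehat{\mathcal{O}}_F^\times/\widehat{\mathcal{O}}_F^\times(C)\cdot\{\pm 1\}\bigr)\to{\rm Cl}^{\rm a}_F(C)\to{\rm Cl}^{\rm a}_F(1)\to 1
\]
combined with $p\nmid\sharp{\rm Cl}^{\rm a}_F(1)$ forces $p\nmid\sharp{\rm Cl}^{\rm a}_F(C)$, since at each split $\ell\mid C$ the corresponding contribution has order dividing $(\ell-1)^2$. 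Consequently (\ref{FCBess}) becomes an ${\mathcal{O}}_{{\mathbf C}_p}$-invertible change of basis between $\bigl({\mathbf a}^0(S^{\gamma_t})\bigr)_t$ and $\bigl(L(\tfrac{1}{2},\pi\otimes\phi)/\Omega_{\pi,p}\bigr)_\phi$, which yields the stated equivalence. Existence of such a $C$ reduces by Chebotarev to finding infinitely many primes $\ell$ split in $F$, coprime to $Np\Delta_F$, and satisfying $\ell\not\equiv 1\pmod{p}$; any such prime (or a product of such) serves as $C$.

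The main obstacle is the $p$-adic bookkeeping in (ii) and (iii): one must compute the split-place $\epsilon(0,\phi_v^{-1})$ explicitly and verify it is a $p$-unit, control the product $\prod_{v\in{\mathcal P}}(1+\epsilon(\tfrac{1}{2},\pi_v\otimes\phi_v^{-1}))$ in $e(\pi,\phi,\delta)$ under (LR), and confirm that the Fourier matrix over ${\rm Cl}^{\rm a}_F(C)$ has $p$-unit determinant. These require detailed local computations paralleling those in Section \ref{s:besloc}.
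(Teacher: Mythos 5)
Your proposal follows the paper's proof essentially verbatim: part (i) is Theorem \ref{th:bessel} paired with $Q^\varsigma_S$, combined with Proposition \ref{BFC} and finite Fourier inversion on ${\rm Cl}^{\rm a}_F(C)^\vee$; part (ii) bounds the denominators using the divisibility $\sharp{\rm Cl}^{\rm a}_F(C)\mid \sharp{\rm Cl}^{\rm a}_F(1)\cdot[\widehat{\mathcal O}^\times_F:\widehat{\mathcal O}^\times_F(C)]$ and Hida's integrality $L(\tfrac12,\pi\otimes\phi)/\Omega_{\pi,p}\in\mathcal O_{{\mathbf C}_p}$; part (iii) observes that under (LR) the local factors in $e(\pi,\phi,\delta)$ collapse to powers of $2$ and that $p\nmid C\prod_{\ell\mid C}(\ell-1)$ makes the Fourier matrix on ${\rm Cl}^{\rm a}_F(C)$ a $p$-unit transformation. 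Your extra step in (ii) relating general $S$ of the given determinant to the $S^{\gamma_t}$ via ${\rm SL}_2({\mathbf Z})$-equivalence and (\ref{FCtw}) spells out a reduction the paper leaves implicit, but otherwise the argument is the same.
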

\begin{proof}
We prove the first statement. 
The Fourier inversion theorem and Proposition \ref{BFC} yield that  
\begin{align*}
    & \langle {\mathbf a}(S^{\gamma_t} ),   Q_{S^{\gamma_t}}(X, Y)  \rangle  \\ 
 =&   \frac{   [  \widehat{\mathcal O}^\times_F :  \widehat{\mathcal O}^\times_F(C) ]  e^{4\pi} C^{n+4}   
               }{ {\rm vol}(  {\mathcal U}_{N,1} , {\rm d}h_{\rm fin})  \sharp{\rm Cl}^{\rm a}_F(C)    }  
     \sum_{\phi \in {\rm Cl}^{\rm a}_F(C)^\vee } 
      \langle  {\mathbf B}_{S,\phi}(\widetilde{\varsigma}),  Q^\varsigma_S(X,Y)  \rangle_{\mathcal L} \\           
  =&   \frac{    [  \widehat{\mathcal O}^\times_F :  \widehat{\mathcal O}^\times_F(C) ]   C^{n+2}   
                    \cdot (-1)^{\frac{n}{2}}  \cdot (-\frac{\Delta_F}{4})^{\frac{n+2}{2}}      }{ \sqrt{-1} \cdot 2 \cdot   \sharp{\rm Cl}^{\rm a}_F(C)    }    
       \times  \binom{n}{\frac{n}{2}} \langle (X^2+Y^2)^{\frac{n}{2}}, (X^2+Y^2)^{\frac{n}{2}}  \rangle_{\mathcal L}          \\     
    &\quad \times \sum_{\phi \in {\rm Cl}^{\rm a}_F(C)^\vee } 
        L(\frac{1}{2},\pi\otimes \phi)
     e(\pi, \phi, \delta)
     \phi^{-1}(t)
     \prod_{v\mid C}\epsilon(0,\phi^{-1}_v).   
\end{align*}
Since we have 
\begin{align*}
 & \binom{n}{\frac{n}{2}}  \langle (X^2+Y^2)^{\frac{n}{2}}, (X^2+Y^2)^{\frac{n}{2}}  \rangle_{\mathcal L}   \\
=& \binom{n}{\frac{n}{2}}   \det \begin{pmatrix} 1 & \sqrt{-1}  \\ \sqrt{-1} & 1  \end{pmatrix}^{-n} 
     \left \langle \begin{pmatrix} 1 & \sqrt{-1}  \\ \sqrt{-1} & 1  \end{pmatrix}\cdot (X^2+Y^2)^{\frac{n}{2}}, 
                \begin{pmatrix} 1 & \sqrt{-1}  \\ \sqrt{-1} & 1  \end{pmatrix}\cdot (X^2+Y^2)^{\frac{n}{2}}  \right\rangle_{\mathcal L}   \\
=& \binom{n}{\frac{n}{2}}   2^{-n} 
   \langle (4\sqrt{-1}XY)^{\frac{n}{2}}, 
               (4\sqrt{-1}XY)^{\frac{n}{2}}  \rangle_{\mathcal L}   \\
=&\binom{n}{\frac{n}{2}}   2^{n} (-1)^{\frac{n}{2}} \times (-1)^{\frac{n}{2}} \binom{n}{\frac{n}{2}}^{-1} \\
=& 2^n, 
\end{align*}
this proves the first statement.

Since $\sharp {\rm Cl}^{\rm a}_F(C)$ is a divisor of $\sharp{\rm Cl}^{\rm a}_F(1)\cdot \sharp (\widehat{\mathcal O}^\times_F/\widehat{\mathcal O}^\times_F(C))$, 
(\ref{FCBess}) shows the second statement.   
 
We prove the third statement. 
The first two assumptions imply that  
$e(\pi, \phi, \delta)$ is a power of $2$ 
 for sufficiently large $C\in {\mathbf Z}$.  
 The condition  $p\nmid C$ implies $p\nmid \epsilon(0, \phi^{-1}_v)$ for each $\phi \in {\rm Cl}^{\rm a}(C)$.   
Hence under the assumptions of the statement, the first statement proves 
\begin{align}\label{aL}
   \frac{ {\mathbf a}^0(S^{\gamma_t}) }{ \Omega_{\pi, p} }
  =  u_{F,n,C}     \sum_{ \phi \in {\rm Cl}^{\rm a}_F(C)^\vee  }
     \frac{L(\frac{1}{2},\pi\otimes \phi)}{\Omega_{\pi,p} }
     \phi^{-1}(t) u_{\pi, \phi.\delta}, 
\end{align}
for some $u_{F,n,C}, u_{\pi, \phi.\delta}  \in {\mathcal O}^\times_{{\mathbf C}_p}$.
By using the Fourier inversion formula again, we also find that
\begin{align}\label{La}
     u_{\pi, \phi.\delta}
     \frac{L(\frac{1}{2},\pi\otimes \phi)}{\Omega_{\pi,p} }
  = u^\prime_{F,n,C}  
      \sum_{[t]\in {\rm Cl}^{\rm a}_F(C)} 
       \frac{ {\mathbf a}^0(S^{\gamma_t}) }{ \Omega_{\pi, p} }  \phi(t),  
\end{align}
for some $u^\prime_{F,n,C}\in {\mathcal O}^\times_{{\mathbf C}_p}$.
Then the third statement follows from (\ref{aL}) and (\ref{La}) immediately.  
\end{proof}

\section{Proof of Theorem \ref{th:bessel}}\label{s:besloc}

In this section, we prove Theorem \ref{th:bessel}    
   by computing  local integrals ${\mathcal B}^j_v(\varphi, \phi)$ in an explicit manner. 
Computations of non-archimedean local integrals are given in Section \ref{sec:nonarBess}
and a computation of archimedean local integral is given in Section \ref{sec:arBess}.

\subsection{The non-archimedean local integrals}\label{sec:nonarBess}

In this subsection, we compute 
\begin{align*}
  {\mathcal B}^j_v(\varphi,\phi)   
   := {\mathcal B}^j_v(\varphi,\phi)  \left( \begin{pmatrix} \varsigma_v & 0_2 \\ 0_2 & {}^{\rm t}\varsigma^{-1}_v \end{pmatrix} \right), \quad ( j\in \{0,1\}),  
\end{align*}
for each finite place $v\in \Sigma_{\mathbf Q}$.
See Lemma \ref{lem:narbessel} for the result. 
We recall some notation. The ideal ${\mathfrak N}$ of ${\mathcal O}_F$ is the minimal ideal 
such that $\pi$ has a $U_0({\mathfrak N})$-fixed vector. 
Define the positive integer $N$ so that $N{\mathbf Z} = {\mathfrak N} \cap {\mathbf Z}$. 
We denote the discriminant of $F$ by $\Delta_F$ and put 
$N_F=\text{l.c.m.}(N, \Delta_F)$.
Recall ${\mathcal U}_{N,1}$ be the open compact subgroup of $H^0_1({\mathbf A}_{\rm fin})$ which is introduced in (\ref{def:un1}).
By the definition (\ref{def:lattice}) of $V^\prime({\mathcal O}_{F,v})$, 
it is easy to verify that
\begin{align*}
   \varrho({\mathcal U}_{N,1,v})V^\prime({\mathcal O}_{F,v}) \subset V^\prime({\mathcal O}_{F,v}).   
\end{align*}
Let $C$ be a positive integer satisfying the condition (CF) in Section \ref{sec:exBess}.
Put
\begin{align*}
  c_v={\rm ord}_v(C); \quad 
  E_{C,x,v} = \left\{  h\in H^0_1({\mathbf Q}_v) \ | \ h^{-1} x \varsigma_v \in V^\prime({\mathcal O}_{F,v}) \oplus V^\prime({\mathcal O}_{F,v})   \right\}.   
\end{align*}
For each finite place $v$ of ${\mathbf Q}$, we let $\varpi_v$ be a uniformizer of ${\mathbf Q}_v$. 
If $v=w\bar{w}$ is split, we write $F_v={\mathbf Q}_v\oplus {\mathbf Q}_v$ and let $\varpi_w = (\varpi_v, 1), \varpi_{\bar{w}} = (1, \varpi_v)$.  
The following lemma can be verified by a direct computation. 

\begin{lem}\label{dcoset}
{\itshape
 Let 
$\widetilde{E}_{C,x,v}$ be a set of complete representatives of 
the double coset  $H_{x}({\mathbf Q}_v) \backslash E_{C,x,v} /{\mathcal U}_{N,1,v}$.  
Then, $\widetilde{E}_{C,x,v}$ is given as follows:
\begin{enumerate}
\item\label{dcoset(i)}
         If $v\nmid N$, then  
\begin{align*}
\widetilde{E}_{C,x,v} 
=\begin{cases}  
      \left\{  \varrho(1_2,1) \right\}, & \text{ ($v$ is non-split in $F$),  }  \\
      \left\{  \varrho\left( \begin{pmatrix} 1 & \varpi^{-r}_w \varpi^{-s}_{\bar{w}}  \\ 0 & 1 \end{pmatrix}, 1 \right)
                                          | \ 0\leq r, s \leq c_v \right\},   &  \text{  ($v=w\bar{w}$ is split in $F$).  }  
   \end{cases}                                       
\end{align*}
\item\label{dcoset(ii)}  
        If $v\mid N$, then
\begin{align*}
  \widetilde{E}_{C,x,v} = \left\{ \varrho\left( 1_2, 1\right),   \varrho\left( \begin{pmatrix} 0 & \delta^{-1}_{F,v}  \\ -\delta_{F,v} & 0 \end{pmatrix} \right)  \right\},  
\end{align*}
where $\delta_{F,v} \in F_v$ is a generator of the different ${\mathfrak d}_v$ of $F_v/{\mathbf Q}_v$.
\end{enumerate}
}
\end{lem}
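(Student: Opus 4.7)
The proof is a direct case-by-case computation after unwinding definitions. First I would identify $H^0_1(\mathbf Q_v)$ with ${\rm SL}_2(F_v)$ modulo a finite central kernel via $\varrho$, so that its action on $V_v$ becomes $z \mapsto g z \bar g^*$ up to a similitude normalization. Under this identification, $\mathcal U_{N,1,v}$ is the image of $U_0(\mathfrak N)_v \cap {\rm SL}_2(F_v)$, which equals ${\rm SL}_2(\mathcal O_{F,v})$ when $v \nmid N$ and is a proper parahoric subgroup when $v \mid N$. By Lemma~\ref{fixlem}, $H_x(\mathbf Q_v)$ corresponds to the image of the centralizer of $\vartheta_F$ (a diagonal matrix with eigenvalues in $F_v$) intersected with the appropriate norm-one condition. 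Unwinding $x \varsigma_v = (1_2, C \vartheta_F)$, the condition $h \in E_{C,x,v}$ becomes the pair of lattice conditions $h^{-1} \cdot 1_2 \in V'(\mathcal O_{F,v})$ and $h^{-1} \cdot (C \vartheta_F) \in V'(\mathcal O_{F,v})$.

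For case (i) with $v \nmid N$ and $v$ non-split in $F$: hypothesis (CF) forces $v \nmid C$, hence $c_v = 0$. Both lattice conditions then become stabilization conditions on the standard lattice $V'(\mathcal O_{F,v})$, which together with the Cartan decomposition of ${\rm SL}_2(F_v)$ over the maximal compact ${\rm SL}_2(\mathcal O_{F,v})$ force $h \in H_x(\mathbf Q_v) \cdot \mathcal U_{N,1,v}$; hence there is only one double coset. For $v = w \bar w$ split in $F$ with $v \nmid N$: we have $F_v \cong \mathbf Q_v \oplus \mathbf Q_v$ and ${\rm SL}_2(F_v) \cong {\rm SL}_2(\mathbf Q_v) \times {\rm SL}_2(\mathbf Q_v)$, while $\vartheta_F$ becomes split semisimple with distinct scalar entries in each factor, so $H_x$ is a split torus modulo center. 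Via the Iwasawa decomposition on each ${\rm SL}_2(\mathbf Q_v)$ factor, $H_x$ absorbs the diagonal part and $\mathcal U_{N,1,v}$ absorbs the standard maximal compact; what remains free is a unipotent element $\bigl(\begin{smallmatrix} 1 & \mu \\ 0 & 1 \end{smallmatrix}\bigr) \in {\rm SL}_2(F_v)$ with $\mu \in F_v$. Imposing the lattice condition on $h^{-1} \cdot (C \vartheta_F)$ and computing explicitly then translates into the bound $0 \leq r, s \leq c_v$ for $\mu = \varpi_w^{-r} \varpi_{\bar w}^{-s}$.

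For case (ii) with $v \mid N$: hypothesis (CF) again forces $v \nmid C$, so $c_v = 0$ and the second lattice condition is automatic. The subgroup $\mathcal U_{N,1,v}$ is now a proper parahoric of its maximal compact, and the Atkin-Lehner element $\eta_v = \varrho\bigl(\begin{smallmatrix} 0 & \delta_{F,v}^{-1} \\ -\delta_{F,v} & 0 \end{smallmatrix}\bigr)$ normalizes but lies outside $\mathcal U_{N,1,v}$; one checks that both $\varrho(1_2, 1)$ and $\eta_v$ satisfy the lattice condition and are mutually inequivalent modulo $H_x(\mathbf Q_v)$ and $\mathcal U_{N,1,v}$, yielding the two listed representatives. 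The main technical obstacle is the split-case bookkeeping in step (i): verifying that the parameters $r, s$ are independently bounded by $c_v$ and enumerate distinct double cosets requires careful tracking of the valuations of the off-diagonal entries of $u \vartheta_F \bar u^*$ as $u$ runs over unipotent representatives. This is elementary but tedious; once done, the remaining cases follow immediately from the identifications above.
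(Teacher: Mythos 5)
Your plan follows essentially the same route as the paper's proof: write $h$ via the Iwasawa decomposition of the underlying $\mathrm{GL}_2(F_v)$ as a torus part times a unipotent times a compact element, absorb the compact factor on the right into $\mathcal{U}_{N,1,v}$ and the $\mathbf{Q}_v$-scalar torus factor on the left into $H_x(\mathbf{Q}_v)$, and pin down the remaining upper-triangular representative by the two lattice conditions on $(h^{-1}\cdot 1_2,\, C\,h^{-1}\cdot\vartheta_F)$. The treatment of case (ii) via the parahoric structure and the Atkin--Lehner normalizer also matches the paper, which carries this out concretely by listing the coset reps $\varrho\bigl(\begin{smallmatrix}1&0\\ s\varpi_v\delta_{F,v}&1\end{smallmatrix}\bigr)$ and $\varrho\bigl(\begin{smallmatrix}s&\delta_{F,v}^{-1}\\ -\delta_{F,v}&0\end{smallmatrix}\bigr)$ and showing the lattice condition collapses them to $s\equiv 0$.

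One place your sketch is imprecise, and which would need care when filling in the ``tedious'' step: in the split $v\mid C$ case you say ``imposing the lattice condition on $h^{-1}\cdot(C\vartheta_F)$ $\ldots$ translates into the bound $0\le r,s\le c_v$,'' but the definition of $E_{C,x,v}$ requires \emph{both} coordinates $h^{-1}\cdot 1_2\in V'(\mathcal{O}_{F,v})$ and $h^{-1}\cdot(C\vartheta_F)\in V'(\mathcal{O}_{F,v})$. It is actually the first condition, not $H_x$-absorption, that forces the remaining diagonal entry $a$ to be a unit (since it gives $a,\bar a\in\mathcal{O}_{F,v}$ together with $\mathrm{Nr}(a)=1$); and for a putative unipotent representative $\mu=\varpi_w^{-r}\varpi_{\bar w}^{-s}$, the first condition reads $\bar\mu-\mu\in\delta_{F,v}^{-1}\mathbf{Z}_v$, which is nontrivial when $r\ne s$. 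The paper itself defers this with ``it is straightforward to verify,'' so you are not worse off than the published argument, but your write-up should check both components against the claimed representatives rather than treating the first as automatic.
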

\begin{proof}
Suppose that $v\nmid N$. 
By the Iwasawa decomposition, for each $h\in H^0_1({\mathbf Q}_v)$, we write 
\begin{align*} 
   h = \varrho\left( \begin{pmatrix} a & b \\ 0 & 1  \end{pmatrix}k, \alpha \right),  
        \quad ( a\in F^\times_v,\,b\in F_v, k\in K_v, \alpha\in {\mathbf Q}^\times_v)
\end{align*}
with ${\rm Nr}_{F_v/ {\mathbf Q}_v} (a\det k)\alpha^2=1$. 
Since $\varrho\left( \begin{pmatrix} \alpha^{-1} & 0 \\ 0 & 1  \end{pmatrix}, \alpha\right)$ is an element in $H_{x}({\mathbf Q}_v)$,
each class $[h]$ of $\widetilde{E}_{C,x,v}$ is represented by $h=\varrho\left( \begin{pmatrix} a & b \\ 0 & 1\end{pmatrix}, 1 \right)$ with ${\rm Nr}_{F_v/ {\mathbf Q}_v}(a)=1$. 
Since $h\in\widetilde{E}_{C,x,v}$,  $h$ satisfies 
\begin{align*}
   \begin{pmatrix}  \overline{a} & \overline{b} - b \\ 0  & a  \end{pmatrix},  \
   C\begin{pmatrix}   \overline{a} & \overline{b} \vartheta^\prime_F - b \overline{\vartheta^\prime_F}   \\   0 & a \overline{\vartheta^\prime_F}   \end{pmatrix}    \in V^\prime({\mathcal O}_{F,v}).
\end{align*}
This yields that, for each  $t \in {\mathcal O}_{F,v}$, 
\begin{align*}
     C \begin{pmatrix} \overline{a}t &  -t\overline{b} +  \overline{t} b  \\ 0 & a \overline{t} \end{pmatrix} 
                          \in V^\prime({\mathcal O}_{F,v}).
\end{align*}
Hence we find  that $b\in C^{-1} {\mathfrak d}^{-1}_v$ and $a\in C^{-1} {\mathfrak d}^{-1}_v$ with ${\rm Nr}_{F_v/ {\mathbf Q}_v}(a)=1$. 
Then it is straightforward to verify the assertion in the case $v\nmid N$.

Now we consider the case $v\mid N$. 
Note that each class in $ \widetilde{E}_{C,x,v}$ is represented by an element in 
\begin{align*}
\left\{ \varrho\left( \begin{pmatrix} 1 & 0 \\ s\varpi_v \delta_{F,v}  & 1 \end{pmatrix} , 1 \right) |  s \in {\mathcal O}_{F,v}   \right\}
                     \cup \left\{ \varrho\left( \begin{pmatrix} s  & \delta^{-1}_{F,v} \\   - \delta_{F,v}  & 0 \end{pmatrix} , 1 \right)   |  s \in {\mathcal O}_{F,v}   \right\}.
\end{align*}
Let $[h]$ be a class of $\widetilde{E}_{C,z,v}$. 
Suppose that $[h]$ is represented by $\varrho\left( \begin{pmatrix} 1 & 0 \\ s\varpi_v \delta_{F,v}  & 1 \end{pmatrix} , 1 \right)$ for some $s \in {\mathcal O}_{F,v}$. 
As we discussed above, we find that 
\begin{align*}
 h^{-1}  \begin{pmatrix} t &  0 \\ 0 & \overline{t} \end{pmatrix} C  
       = C   \begin{pmatrix}  t & 0  \\   {\rm Tr}_{F/{\mathbf Q}}(s\varpi_v t) \delta_{F,v} &  \bar{t}   \end{pmatrix}  
             \in V^\prime({\mathcal O}_{F,v}) 
\end{align*}
for all $t\in{\mathcal O}_{F,v}$.  
This yields that ${\rm Tr}_{F_v/{\mathbf Q}_v}(s\varpi_v )$ and ${\rm Tr}_{F_v/{\mathbf Q}_v}(s\varpi_v\vartheta_F)$ are elements in $N_F {\mathbf Z}_v$.
Since ${\rm Tr}_{F_v/{\mathbf Q}_v}(s\varpi_v t)\in N_F {\mathbf Z}_v$  for each $t\in {\mathcal O}_{F,v}$, 
this shows that $s\varpi_v\in N {\mathcal O}_{F,v}$, and hence  $[h]=[ \varrho(1_2,1)]\in \widetilde{E}_{C,z,v}$.

Suppose that $[h]$ is represented by  $\varrho\left( \begin{pmatrix} s & \delta^{-1}_{F,v} \\ -\delta_{F,v}  & 0 \end{pmatrix} , 1 \right)$ for some  $s \in {\mathcal O}_{F,v}$.
Then we have
\begin{align*}
       h^{-1}  \begin{pmatrix} t &  0 \\ 0 & \overline{t} \end{pmatrix}  C
    = C  \begin{pmatrix}  \overline{t} & 0 \\  {\rm Tr}_{F_v/{\mathbf Q}_v}( \overline{s} t  \delta_{F,v}) & t \end{pmatrix} \in V^\prime({\mathcal O}_{F,v})
\end{align*}
for all $t\in{\mathcal O}_{F,v}$.
Hence, in the same way as above, we find that  $s\in N {\mathcal O}_{F,v}$. 
Note that 
\begin{align*}
      \begin{pmatrix} s & \delta^{-1}_{F,v} \\ -\delta_{F,v}  & 0 \end{pmatrix}
   = \begin{pmatrix} 0 & \delta^{-1}_{F,v} \\ -\delta_{F,v}  & 0 \end{pmatrix}
      \begin{pmatrix} 1 & 0 \\ s\delta_{F,v} & 1 \end{pmatrix}; \quad 
\begin{pmatrix} 1 & 0 \\ s\delta_{F,v} & 1 \end{pmatrix} \in {\mathcal U}_{N,1,v}. 
\end{align*}
Hence the class $[h]$ is equal to the class $[ \varrho\left( \begin{pmatrix} 0 & \delta^{-1}_{F,v} \\ - \delta_{F,v} & 0 \end{pmatrix} , 1 \right)     ] \in \widetilde{E}_{C,z,v}$.
This proves the lemma.
\end{proof}

\begin{lem}\label{lem:narbessel}
{\itshape 
Let $C{\mathcal O}_F$ be the conductor of $\phi$.   
Suppose that
\begin{itemize}
   \item $\Delta_F$ and ${\rm Nr}_{F/{\mathbf Q}}({\mathfrak N})$ are coprime; 
   \item  The condition $({\rm CF})$.
\end{itemize}
Let $v$ be a finite place of ${\mathbf Q}$ and $w$ a place of $F$ above $v$.   
\begin{enumerate}
\item If $v\not\in \Sigma_1\cup \{ v\mid C\}$, then
\begin{align*}
  {\mathcal B}^0_v(\varphi,\phi) 
      =& {\rm vol}({\mathcal U}_{N,1,v}, {\rm d}h_v) L(\frac{1}{2}, \pi_v\otimes\phi_v)  \\
        &\quad  \times \begin{cases}    
                                      1, &  ( v\nmid N ),  \\
                                     (1+\epsilon(\frac{1}{2}, \pi_v\otimes\phi_v)),   &  (v\mid N, v \not\in {\mathcal P}),  \\           
                                      (1+\epsilon(\frac{1}{2}, \pi_v\otimes\phi_v))
                                      (1+\epsilon(\frac{1}{2}, \pi_v\otimes\phi^{-1}_v)),  &  (v\mid N, v \in {\mathcal P})               
                    \end{cases}
\end{align*}
\item If $v\in \Sigma_1$, then 
\begin{align*}
     {\mathcal B}^0_v(\varphi,\phi) + \delta(v) {\mathcal B}^1_v(\varphi,\phi)  
  = {\rm vol}({\mathcal U}_{N,1,v}, {\rm d}h_v) (1+\delta(v))  
        (1+\epsilon(\frac{1}{2}, \pi_v\otimes\phi_v)) 
        L(\frac{1}{2}, \pi_v\otimes \phi_v).   
\end{align*}
\item  If $v\mid C$, then
\begin{align*}
   {\mathcal B}^0_v(\varphi,\phi) = {\rm vol}({\mathcal U}_{N,1,v}, {\rm d}h_v) p^{-2c_v}\epsilon(0,\phi^{-1}_v).   
\end{align*}
\end{enumerate}
}
\end{lem}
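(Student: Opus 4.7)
The strategy is to reduce each integral ${\mathcal B}^j_v(\varphi,\phi)$ to a finite sum of standard $\mathrm{GL}_2$ local zeta integrals, indexed by the double coset set $\widetilde{E}_{C,x,v}$ provided by Lemma \ref{dcoset}. First, applying (\ref{eq:weilsch}) to $\begin{pmatrix}\varsigma_v & 0\\ 0 & {}^{\mathrm t}\varsigma_v^{-1}\end{pmatrix}$ with $\det\varsigma_v=C_v$, we obtain
\begin{align*}
\omega_{V_v}\!\begin{pmatrix}\varsigma_v & 0\\ 0 & {}^{\mathrm t}\varsigma_v^{-1}\end{pmatrix}\!\varphi_v(h^{-1}x)
=\chi_{V_v}(C_v)\,|C|_v^{2}\,\mathbf{1}_{V'(\mathcal O_{F,v})^{\oplus 2}}(h^{-1}x\varsigma_v),
\end{align*}
which is supported exactly on $h\in E_{C,x,v}$. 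Writing $E_{C,x,v}=\bigsqcup_{h_0\in\widetilde{E}_{C,x,v}}H_x(\mathbf Q_v)\,h_0\,\mathcal U_{N,1,v}$ and using the right $\mathcal U_{N,v}$-invariance of $W_{\mathbf f,v}$ (together with the action of the Atkin--Lehner element $\eta_{\mathfrak N,w}$ for $v\in\mathcal P$ coming from $\mathscr V_{\mathcal P}$), the integral becomes
\begin{align*}
{\mathcal B}^0_v(\varphi,\phi)=\chi_{V_v}(C_v)|C|_v^{2}\,\mathrm{vol}(\mathcal U_{N,1,v},\mathrm dh_v)\sum_{h_0\in\widetilde{E}_{C,x,v}}\int_{F_v^\times}W_{\mathbf f^\dag,v}\!\begin{pmatrix}t & 0\\ 0 & 1\end{pmatrix}\!h_0\,\phi_v(t)\,d^\times t.
\end{align*}

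Next, I evaluate each summand via the normalized Whittaker integral from Section \ref{sec:autoGL2}, namely
\begin{align*}
\int_{F_w^\times}W_{f,w}\!\begin{pmatrix}t & 0\\ 0 & 1\end{pmatrix}\!|t|_w^{s-1/2}\,d^\times t=q_w^{(s-1/2)\mathrm{ord}_w(\delta_F)}L(s,\pi_w),
\end{align*}
specialized at $s=\tfrac12$ and twisted by $\phi_v$. For $v\nmid N\Delta_F$ inert and $v\nmid C$, $\widetilde{E}_{C,x,v}=\{\varrho(1_2,1)\}$, giving $L(\tfrac12,\pi_v\otimes\phi_v)$ directly. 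For split $v\nmid N$ with $v\mid C$, I sum over the $(c_v+1)^2$ cosets $\varrho\bigl(\begin{smallmatrix}1 & \varpi_w^{-r}\varpi_{\bar w}^{-s}\\ 0 & 1\end{smallmatrix}\bigr)$; the upper-unipotent pulls a character $\psi_{F,v}(t\varpi_w^{-r}\varpi_{\bar w}^{-s})$ out of $W_{\mathbf f,v}$, and summing $r,s$ against $\phi_v$ of conductor $\varpi_v^{c_v}$ reproduces the epsilon factor $\epsilon(0,\phi_v^{-1})$ via the Tate integral (\ref{def:ep}), yielding the stated $p^{-2c_v}\epsilon(0,\phi_v^{-1})$. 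For $v\mid N$, the two cosets $\varrho(1_2,1)$ and $\varrho\bigl(\begin{smallmatrix}0 & \delta_{F,v}^{-1}\\ -\delta_{F,v} & 0\end{smallmatrix}\bigr)$ contribute respectively $L(\tfrac12,\pi_v\otimes\phi_v)$ and $\varepsilon(\pi_v)\phi_v(\varpi_w^{-n_w})L(\tfrac12,\pi_v\otimes\phi_v)$; their sum assembles into $(1+\epsilon(\tfrac12,\pi_v\otimes\phi_v))L(\tfrac12,\pi_v\otimes\phi_v)$, and for $v\in\mathcal P$ the extra $\mathscr V_p$-stabilization term supplies the parallel factor in $\phi_v^{-1}$.

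For $v\in\Sigma_1$, I treat ${\mathcal B}^0_v+\delta(v){\mathcal B}^1_v$ jointly. Here ${\mathcal B}^1_v$ differs from ${\mathcal B}^0_v$ only by the left translation of the Whittaker function by $w_0$ and the replacement $\phi_v(t)\leftrightarrow\phi_v(t^c)$; exploiting that $v\in\Sigma_1$ means $\pi_v\cong\pi_v^c$, the Atkin--Lehner equation $\mathbf f(h\eta_{\mathfrak N,v})=\varepsilon(\pi_v)\mathbf f(h)$, and the functional equation relating $\phi_v$ and $\phi_v\circ c$, the two integrals differ by a sign that lines up with $\delta(v)$. The outcome is the clean factor $(1+\delta(v))(1+\epsilon(\tfrac12,\pi_v\otimes\phi_v))L(\tfrac12,\pi_v\otimes\phi_v)$.

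The main technical obstacle is the $v\mid N$ contribution: one must correctly track how the Atkin--Lehner translate $\eta_{\mathfrak N,w}$ interacts with the normalization of $W_{\mathbf f,v}$, verify the sign $\phi_v(\varpi_w^{-n_w})\varepsilon(\pi_v)$ equals the epsilon factor $\epsilon(\tfrac12,\pi_v\otimes\phi_v)$, and reconcile this with the additional $\mathscr V_p$-term for $v\in\mathcal P$ without double counting. The split $v\mid C$ case is mechanically the most involved due to the Gauss-sum manipulation, but that reduces to a straightforward application of (\ref{def:ep}).
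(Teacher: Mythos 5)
Your plan follows essentially the same route as the paper: decompose the local integral via the double coset set $\widetilde E_{C,x,v}$ from Lemma~\ref{dcoset}, extract the Weil-representation prefactor $\chi_{V_v}(\det\varsigma_v)|\det\varsigma_v|^2_v$, and evaluate each coset contribution as a twisted Whittaker/Tate integral, reassembling Atkin--Lehner eigenvalues into $\epsilon$-factors via the Schmidt--Tate formula. The one place where the plan is too loose is the $v\in\Sigma_1$ case: you say ${\mathcal B}^1_v$ and ${\mathcal B}^0_v$ ``differ by a sign that lines up with $\delta(v)$,'' but in fact they are \emph{equal}, and the $(1+\delta(v))$ factor comes purely from the combination, not from any sign discrepancy. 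To establish the equality one needs more than $\pi_v\cong\pi_v^c$ and the Atkin--Lehner relation: the crucial observation (which the paper makes explicitly) is that $\pi_v\cong\pi_v^c$ forces $n_w=n_{\overline w}$, so that for the anticyclotomic character $\phi$ one has $\phi_v(\varpi_v^{n_v})=1$; this is what kills the twist of the $\epsilon$-factor and yields ${\mathcal B}^1_v={\mathcal B}^0_v$ on the nose. Similarly, in the $v\mid C$ case you should note that the unramifiedness of $\pi_v$ together with the conductor of $\phi_v$ forces the Kirillov-model sum to survive only at $r_1=r_2=c_v$ and $j=0$ (the paper cites Schmidt's vanishing lemma here); otherwise the $L$-factor would not collapse as the statement requires.
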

\begin{proof}
Write $n_w= {\rm ord}_w({\mathfrak N})$.    
Let $n_v = (n_w, n_{\overline{w}})$ if $v=w\overline{w}$ is split, 
      and  $n_v = n_w$ if $v=w^2$ is ramified.     

Suppose that  $v\not\in \Sigma_1$ and that ${\rm ord}_v(C)=0$. 
Firstly we assume that $\pi_v$ is unramified.    
Then Lemma \ref{dcoset} \ref{dcoset(i)} shows that 
\begin{align*}
    {\mathcal B}^0_v(\varphi,\psi)
    = {\rm vol}({\mathcal U}_{N,1,v}, {\rm d}h_v) \int_{F^\times_v}  
       W_{ {\mathbf f}^\dag  ,v}  \left( \begin{pmatrix} t & 0 \\ 0 & 1  \end{pmatrix}  \right)  \phi_v(t)  {\rm d}^\times t
    =  {\rm vol}({\mathcal U}_{N,1,v}, {\rm d}h_v)  L(\frac{1}{2}, \pi_v\otimes \phi_v). 
\end{align*}

Assume that $v \mid N$ and $v\not\in {\mathcal P}$.   
Then Lemma \ref{dcoset} \ref{dcoset(ii)} shows that 
\begin{align}
\label{B^0_v(i)}
 \begin{aligned} 
    {\mathcal B}^0_v(\varphi,\phi) 
             = & {\rm vol}({\mathcal U}_{N,1,v}, {\rm d}h_v) \cdot  \int_{F^\times_v} \phi_v(t)W_{{\mathbf f}^\dag ,v}\left( \begin{pmatrix} t & 0 \\ 0 & 1 \end{pmatrix}\right) {\rm d}^\times t   \\
                & \quad    +  {\rm vol}({\mathcal U}_{N,1,v}, {\rm d}h_v) \cdot  \int_{F^\times_v} \phi_v(t) 
                                    W_{{\mathbf f}^\dag ,v}\left( \begin{pmatrix} t & 0 \\ 0 & 1 \end{pmatrix} \begin{pmatrix} 0  & \delta^{-1}_{F,v} \\ -\delta_{F,v} & 0  \end{pmatrix}\right) {\rm d}^\times t \\ 
             = & {\rm vol}({\mathcal U}_{N,1,v}, {\rm d}h_v)  L(\frac{1}{2}, \pi_v\otimes \phi_v)  \\
                &   \quad    + {\rm vol}({\mathcal U}_{N,1,v}, {\rm d}h_v) \cdot  \int_{F^\times_v} \phi_v(t) 
                                    W_{{\mathbf f}^\dag ,v}\left( \begin{pmatrix} t  & 0 \\ 0 & \varpi^{-n_v}_v \end{pmatrix} 
                                    \begin{pmatrix} 0  & \delta^{-1}_{F,v} \\ - \varpi^{n_v}_v \delta_{F,v} & 0  \end{pmatrix}\right) {\rm d}^\times t.  
 \end{aligned} 
\end{align}
Since the central character of $\pi_v$ is trivial, (\ref{B^0_v(i)}) and a change of variable show that  
\begin{align*}
    {\rm vol}({\mathcal U}_{N,1,v}, {\rm d}h_v)   
    (1+\phi_v(\varpi^{n_v}_v) \varepsilon(\pi_v) ) 
       L(\frac{1}{2}, \pi_v\otimes \phi_v), 
\end{align*}
where $\varepsilon(\pi_v)$ is the Atkin-Lehner eigenvalue introduced in (\ref{fricke}).
By \cite[3.2.2 Theorem]{sc02} and \cite[(3.4.6)]{tat79}, we find that
 \begin{align*}
   \phi_v(\varpi^{n_v}_v)  \varepsilon(\pi_v) 
= \phi_v(\varpi^{n_v}_v)\epsilon(\frac{1}{2},\pi_v)  
= \epsilon(\frac{1}{2},\pi_v\otimes\phi_v).
\end{align*}
 This proves the first statement in this case. 

Assume that  $v\mid N$ and that $v = w \overline{w} \in {\mathcal P}$.
Suppose that $w\nmid {\mathfrak N}$ and $w\mid {\mathfrak N}$. 
Let $n_{\overline{w}} = {\rm ord}_{\overline{w} }({\mathfrak N})$.  
Similarly as above, 
the definition of ${\mathscr V}_v$ in (\ref{raiseop}) and   Lemma \ref{dcoset} \ref{dcoset(ii)}  show that 
\begin{align*}
      &  {\mathcal B}^0_v(\varphi,\psi)  \\
    =& {\rm vol}({\mathcal U}_{N,1,v}, {\rm d}h_v)
         \times \left\{  
          \int_{F^\times_v}  
              \left\{ W_{ {\mathbf f}^\dag ,v}  \left( \begin{pmatrix} t & 0 \\ 0 & 1  \end{pmatrix}  \right)  
                + \varepsilon(\pi_{\overline{w}}) 
                    W_{ {\mathbf f}^\dag ,v}  \left( \begin{pmatrix} t & 0 \\ 0 & 1  \end{pmatrix} \begin{pmatrix} 0 & 1 \\ -\varpi^{n}_{w} & 0  \end{pmatrix}  \right)     \right\}  \phi_v(t)   
        {\rm d}^\times t   \right. \\ 
      & \quad 
         +  \left. 
         \int_{F^\times_v}  
         \left\{ W_{ {\mathbf f}^\dag ,v}  \left( \begin{pmatrix} t & 0 \\ 0 & 1  \end{pmatrix}  
                                       \begin{pmatrix}  0 & 1 \\  -1  & 0   \end{pmatrix}   \right)  
                + \varepsilon(\pi_{\overline{w}})
                     W_{ {\mathbf f}^\dag ,v}  \left( \begin{pmatrix} t & 0 \\ 0 & 1  \end{pmatrix} 
                                                       \begin{pmatrix}  0 & 1 \\  -1  & 0   \end{pmatrix} 
                                                         \begin{pmatrix} 0 & 1 \\ -\varpi^{n}_{w} & 0  \end{pmatrix}  \right)     \right\}  \phi_v(t)   
        {\rm d}^\times t 
        \right\} \\   
    =&  {\rm vol}({\mathcal U}_{N,1,v}, {\rm d}h_v)  
           \left\{ 1+\varepsilon(\pi_{\overline{w}}) \phi_v(\varpi^{n}_w)      
                      +  \phi_v(\varpi^{n}_{\overline{w}}) \epsilon(\pi_{\overline{w}})  
                             + \varepsilon(\pi_{\overline{w}})   \phi_v(\varpi^{n}_w)  \phi_v(\varpi^{n}_{\overline{w}}) \epsilon(\pi_{\overline{w}})          \right\} 
            L(\frac{1}{2}, \pi_v\otimes \phi_v). 
\end{align*}
Since $\phi^{-1}_v = \phi^c_v$, where $\phi^c_v(t) = \phi_v(t^c)$, we obtain 
\begin{align*}
    {\mathcal B}^0_v(\varphi,\psi)
    = {\rm vol}({\mathcal U}_{N,1,v}, {\rm d}h_v)
       (1+ \varepsilon(\pi_{\overline{w}}) \phi_v(\varpi^n_{\overline{w}})  )
       (1+ \varepsilon(\pi_{\overline{w}}) \phi^{-1}_v(\varpi^n_{\overline{w}})  ) 
        L(\frac{1}{2}, \pi_v\otimes \phi_v).
\end{align*}
Since $\varepsilon(\pi_w)=1$ and $n_w=0$, 
we obtain 
\begin{align*}
(1+ \varepsilon(\pi_{\overline{w}}) \phi_v(\varpi^n_{\overline{w}})  )
       (1+ \varepsilon(\pi_{\overline{w}}) \phi^{-1}_v(\varpi^n_{\overline{w}})  )
 = (1+ \varepsilon(\pi_v) \phi_v(\varpi^{n_v}_v)  )
       (1+ \varepsilon(\pi_v) \phi^{-1}_v(\varpi^{n_v}_v)  ).      
\end{align*}
This proves the first  statement. 

We prove the second statement.  
Suppose that $v\in \Sigma_1 $. 
Note that $v\mid N$ by the definition of $\Sigma_1$  
    and that ${\mathcal P}=\emptyset$ by $\pi_v\cong \pi^c_v$ 
    and that ${\rm ord}_v(C)=0$ by the condition (CF).  
By Lemma \ref{dcoset} \ref{dcoset(ii)}, we have
\begin{align}
\label{B^0_v}
 &  \begin{aligned}
      {\mathcal B}^0_v(\varphi,\phi) 
             = & {\rm vol}({\mathcal U}_{N,1,v}, {\rm d}h_v) \cdot  \int_{F^\times_v} \phi_v(t) 
                           W_{{\mathbf f}^\dag ,v}\left( \begin{pmatrix} t & 0 \\ 0 & 1 \end{pmatrix}\right) {\rm d}^\times t   \\
                & \quad    +  {\rm vol}({\mathcal U}_{N,1,v}, {\rm d}h_v) \cdot  \int_{F^\times_v} \phi_v(t)  
                                        W_{{\mathbf f}^\dag ,v}\left( \begin{pmatrix} t & 0 \\ 0 & 1 \end{pmatrix} \begin{pmatrix} 0  & 1 \\ -1 & 0  \end{pmatrix}\right) 
                {\rm d}^\times t,  
    \end{aligned}   \\
\label{B^1_v}
 &  \begin{aligned}
    {\mathcal B}^1_v(\varphi,\phi) 
           =&  {\rm vol}({\mathcal U}_{N,1,v}, {\rm d}h_v) \cdot \int_{F^\times_v} \phi_v(t^c)  
                         W_{{\mathbf f}^\dag ,v}\left( \begin{pmatrix} t & 0 \\ 0 & 1 \end{pmatrix}w_0\right) {\rm d}^\times t   \\
              & \quad    + {\rm vol}({\mathcal U}_{N,1,v}, {\rm d}h_v) \cdot  \int_{F^\times_v} \phi_v(t^c) 
                                         W_{{\mathbf f}^\dag ,v}\left( \begin{pmatrix} t & 0 \\ 0 & 1 \end{pmatrix} w_0  \begin{pmatrix} 0 & 1 \\ -1 & 0  \end{pmatrix}\right) 
              {\rm d}^\times t, 
    \end{aligned}
\end{align}
where $w_0 = \begin{pmatrix} 0 & 1 \\ -1 & 0 \end{pmatrix}$.
The both integrals (\ref{B^0_v}) and (\ref{B^1_v}) are  computed in the same way with (\ref{B^0_v(i)}), 
    and hence we obtain
\begin{align*}
   {\mathcal B}^0_v(\varphi,\phi)  
      =& {\rm vol}({\mathcal U}_{N,1,v}, {\rm d}h_v)   
    (1+\phi_v(\varpi^{n_v}_v) \varepsilon(\pi_v) ) 
       L(\frac{1}{2}, \pi_v\otimes \phi_v),   \\
   {\mathcal B}^1_v(\varphi,\phi)  
      =& {\rm vol}({\mathcal U}_{N,1,v}, {\rm d}h_v)   
    (1+\phi^c_v(\varpi^{n_v}_v) \varepsilon(\pi_v) ) 
       L(\frac{1}{2}, \pi_v\otimes \phi^c_v).
\end{align*}
Since $\pi_v\cong \pi^c_v$ for $v\in \Sigma_1$, we find that 
\begin{align*}
L(\frac{1}{2},\pi_v\otimes\phi^c_v)
=L(\frac{1}{2},\pi^c_v\otimes\phi^c_v)
=L(\frac{1}{2},\pi_v\otimes\phi_v).
\end{align*} 
Also \cite[3.2.2 Theorem]{sc02} and \cite[(3.4.6)]{tat79} show that 
\begin{align*}
    \epsilon(\frac{1}{2}, \pi_v\otimes  \phi^{-1}_v) 
    = \epsilon(\frac{1}{2}, \pi_v )  \phi^{-1}_v(\varpi^{n_v}_v) 
    = \varepsilon(\pi_v)  \phi^{-1}_v(\varpi^{n_v}_v) ,   
\end{align*}
where the third identity follows from the definition of the Atkin-Lehner eigenvalue. 
Since $\pi^c_v \cong \pi_v$, we have $n_w=n_{\overline{w}}$ and hence 
$\phi_v(\varpi^{n_v}_v)=1$. 
In particular, we see that 
\begin{align*}
  \epsilon(\frac{1}{2}, \pi_v\otimes \phi_v)
  = \varepsilon(\pi_v) \phi_v(\varpi^{n_v}_v) 
 =  \varepsilon(\pi_v).   
\end{align*}
This shows that 
\begin{align*}
{\mathcal B}^1_v(\varphi,\phi)  
  = {\mathcal B}^0_v(\varphi,\phi) 
  = {\rm vol}({\mathcal U}_{N,1,v}, {\rm d}h_v)   
    (1+ \varepsilon(\frac{1}{2}, \pi_v\otimes \phi_v) ) 
       L(\frac{1}{2}, \pi_v\otimes \phi_v),  
\end{align*}
and hence we obtain the second statement.

We prove the third statement.  
Suppose that $v\mid C$. 
By the condition (CF), $v=w_1w_2$ is split in $F$ and $\pi_v$ is unramified. 
Write $\varpi_v=(\varpi_{w_1}, \varpi_{w_2})$.
By the Schr${\ddot{\rm o}}$dinger realization (\ref{eq:weilsch}) of the action of Weil representation, 
we find that
\begin{align*}
   \omega_{V_v}(\varsigma_v)\varphi(h^{-1}x ) = p^{-2c_v}, \quad (c_v = {\rm ord}_v(C)).   
\end{align*}
It follows from Lemma \ref{dcoset} \ref{dcoset(i)} that
\begin{align*}
    &  ( p^{-2c_v} {\rm vol}({\mathcal U}_{N,1,v}, {\rm d}h) )^{-1} \times {\mathcal B}^0_v(\varphi,\phi)   \\
  =&  \sum_{0 \leq r_1,r_2  \leq c_v }\int_{F^\times_v} 
                  \phi_v(t)W_{ {\mathbf f}^\dag ,v }\left( \begin{pmatrix} t & 0 \\ 0 & 1 \end{pmatrix} \begin{pmatrix}   1  &  \varpi^{-r_1}_{w_1}  \varpi^{-r_2}_{w_2} \\ 0 & 1 \end{pmatrix} \right) 
                  {\rm d}^\times t \\ \displaybreak[0]
  =&  \sum_{r_1, r_2} \int_{F^\times_v} \phi_v(t) \psi_v(\varpi^{-r_1}_{w_1} t_{w_1}+ \varpi^{-r_2}_{w_2}t_{w_2} ) 
                   W_{{\mathbf f}^\dag ,v}\left(  \begin{pmatrix} t & 0 \\ 0 & 1 \end{pmatrix} \right) {\rm d}^\times t \\ \displaybreak[0]
  =&  \prod_{i=1,2}  \sum_{r_1, r_2} \sum^\infty_{j=0}  W_{{\mathbf f}^\dag ,w_i}\left(  \begin{pmatrix} \varpi^j_{w_i} & 0 \\ 0 & 1 \end{pmatrix} \right) 
                      \phi_{w_i}( \varpi^j_{w_i} )  \int_{{\mathcal O}^\times_{F, w_i}} \phi_{w_i}(u_{w_i} ) \psi(\varpi^{-r_i+j}_{w_i} u)  {\rm d}^\times u_{w_i}.
\end{align*}
Then \cite[1.1.1 Lemma]{sc02} shows that the integral vanishes unless $r_1=r_2=c_v, j=0$. 
Since $\phi$ is anticyclotomic, we have $\phi_{w_1}(\varpi^a_{w_1} u_{w_1})\phi_{w_2}(\varpi^a_{w_2} u_{w_2}) = \phi(u)$ 
      for each $a\in {\mathbf Z}$ and each $u=(u_{w_1},u_{w_2})\in {\mathcal O}^\times_{F,v}$.  
Since
\begin{align*}
   {\mathcal B}^0_v(\varphi,\phi)  
=  p^{-2c_v} {\rm vol}({\mathcal U}_v, {\rm d}h) \cdot 
   \prod_{i=1,2}  \int_{{\mathcal O}^\times_{F, w_i}} \phi_{w_i}( \varpi^{-c_v}_{w_i} u_{w_i} ) \psi( \varpi^{-c_v}_{w_i} u)  {\rm d}^\times u_{w_i}, 
\end{align*}
the proposition follows from the definition (\ref{def:ep}) of the $\epsilon$-factor of ${\rm GL}_1$. 
\end{proof}

\subsection{The archimedean local integrals}\label{sec:arBess}

In this subsection, we calculate the archimedean local integrals:
\begin{align*}
  {\mathcal B}^j_\infty(\varphi, \phi) := {\mathcal B}^j_\infty(\varphi,\phi)\left( \begin{pmatrix} \varsigma_\infty & 0_2 \\ 0_2 & {}^{\rm t}\varsigma^{-1}_\infty  \end{pmatrix}\right),  \quad (j \in \{ 0,1\}). 
\end{align*}

The following lemma is proved by Iwasawa decomposition in a straight forward way. 

\begin{lem}\label{coset_infty} 
{\itshape The following map gives a bijection:
  \begin{align*}
     {\mathbf C} \times \{\pm1\}\backslash{\rm SU}_2({\mathbf R})  \stackrel{\sim}{\to} H_{x}({\mathbf R}) \backslash H^0_1({\mathbf R}):
       (z, u) \mapsto \varrho\left( \begin{pmatrix} 1 & z \\ 0 & 1 \end{pmatrix} u, 1  \right).
  \end{align*}
}
\end{lem}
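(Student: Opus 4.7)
The plan is to pull the problem back through the double covering ${\rm SL}_2({\mathbf C}) \to H^0_1({\mathbf R})^\circ$ given by $h \mapsto \varrho(h,1)$, whose kernel is $\{\pm 1_2\}$, and then to apply the classical Iwasawa decomposition in ${\rm SL}_2({\mathbf C})$: every element of ${\rm SL}_2({\mathbf C})$ admits a unique factorization
\[
g = \begin{pmatrix} t^{1/2} & 0 \\ 0 & t^{-1/2} \end{pmatrix} \begin{pmatrix} 1 & z \\ 0 & 1 \end{pmatrix} u, \qquad t\in {\mathbf R}_{>0},\ z\in {\mathbf C},\ u\in {\rm SU}_2({\mathbf R}).
\]

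Next I would compute the preimage of the stabilizer $H_x({\mathbf R})$ in ${\rm SL}_2({\mathbf C})$. The equation $\varrho(h,1)\cdot 1_2 = 1_2$ reads $h\overline{h}^\ast = 1_2$; combined with $\det h = 1$ (so that $h^\ast=h^{-1}$), this is equivalent to $\overline{h}=h$, i.e.\ $h \in {\rm SL}_2({\mathbf R})$. The further condition $\varrho(h,1)\cdot y = y$ with $y=\begin{pmatrix}\vartheta^\prime_F & 0 \\ 0 & \overline{\vartheta^\prime_F}\end{pmatrix}$ then reduces to $hy = yh$; since $\vartheta^\prime_F \neq \overline{\vartheta^\prime_F}$, this forces $h$ to be diagonal. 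Consequently, the preimage of $H_x({\mathbf R})$ in ${\rm SL}_2({\mathbf C})$ is exactly the real diagonal torus $\{\pm\,{\rm diag}(t^{1/2},t^{-1/2}):t>0\}$.

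Surjectivity of the map is then immediate: the diagonal factor in the Iwasawa decomposition of any $g \in {\rm SL}_2({\mathbf C})$ lies in this preimage, so modulo $H_x({\mathbf R})$ we have $\varrho(g,1) = \varrho\!\left(\begin{pmatrix}1 & z \\ 0 & 1\end{pmatrix}u,\,1\right)$. For injectivity, suppose two pairs $(z_1,u_1)$ and $(z_2,u_2)$ produce the same coset; then there exist $\epsilon \in \{\pm 1\}$ and $t>0$ with
\[
\begin{pmatrix} 1 & z_2 \\ 0 & 1 \end{pmatrix} u_2 \;=\; \epsilon \begin{pmatrix} t^{1/2} & 0 \\ 0 & t^{-1/2} \end{pmatrix} \begin{pmatrix} 1 & z_1 \\ 0 & 1 \end{pmatrix} u_1.
\]
Pulling the central sign $\epsilon$ past all other factors to the far right and comparing with the uniqueness of the Iwasawa decomposition forces $t = 1$, $z_1 = z_2$, and $u_2 = \epsilon u_1$. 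The only subtle point is the bookkeeping of the $\{\pm 1\}$-kernel of the double cover, which is precisely what produces the quotient by $\{\pm 1\}$ on the ${\rm SU}_2({\mathbf R})$-factor; all other steps are formal consequences of the Iwasawa decomposition and the diagonal form of $y$.
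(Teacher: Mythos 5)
Your proof is correct and takes essentially the route the paper indicates: the paper's own justification is the one-line remark that the lemma "is proved by Iwasawa decomposition in a straight forward way," and you carry out exactly that argument. Your computation of $\varrho^{-1}(H_x(\mathbf{R}))$ inside $\mathrm{SL}_2(\mathbf{C})$ is right: $h\overline{h}^{*}=1_2$ together with $\det h=1$ forces $h\in\mathrm{SL}_2(\mathbf{R})$, and commutation with the regular diagonal matrix $x_2$ then forces $h$ diagonal, giving the real split torus; the $ANK$ Iwasawa decomposition and the $\{\pm 1\}$-kernel bookkeeping then yield the bijection.

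One point to make explicit, which both you and the paper leave implicit: $\mathrm{SL}_2(\mathbf{C})$ covers only the \emph{identity} component $H^0_1(\mathbf{R})^\circ$ of $\mathrm{SO}_{3,1}(\mathbf{R})$, which is disconnected. As written, your argument establishes the bijection onto $H_x(\mathbf{R})\backslash H^0_1(\mathbf{R})^\circ$, not onto $H_x(\mathbf{R})\backslash H^0_1(\mathbf{R})$ directly. To bridge this, observe that since $S_x$ is positive definite, $\langle x_1, x_2\rangle$ is a positive-definite plane, its orthogonal complement $W$ has signature $(1,1)$, and $H_x(\mathbf{R})\cap H^0_1(\mathbf{R})\cong \mathrm{SO}(W)\cong\mathrm{SO}(1,1)$ has a component lying in the non-identity component of $\mathrm{SO}_{3,1}(\mathbf{R})$ (the element acting by $-1$ on $W$ reverses the time orientation). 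Hence every $H_x(\mathbf{R})$-coset in $H^0_1(\mathbf{R})$ has a representative in $H^0_1(\mathbf{R})^\circ$, and your identification of $H_x(\mathbf{R})\backslash H^0_1(\mathbf{R})^\circ$ with $\mathbf{C}\times\{\pm1\}\backslash\mathrm{SU}_2(\mathbf{R})$ does give the full claim. This is a one-sentence patch, not a change of method.
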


We fix a measure ${\rm d}h$ on $H_{x}({\mathbf R}) \backslash H^0_1({\mathbf R})$ as follows:
\begin{align*}
    h=\begin{pmatrix} 1 & x+\sqrt{-1}y \\ 0 & 1 \end{pmatrix} u, \quad 
    {\rm d}h = {\rm d}x\wedge {\rm d}y \wedge  {\rm d}u, \quad 
    {\rm vol}({\rm SU}_2({\mathbf R}), {\rm d}u)=1. 
\end{align*}
We write ${\rm d}z={\rm d}x\wedge {\rm d}y$ if $z=x+\sqrt{-1}y$.
Note that ${\rm d}h$ is induced by the measure on ${\rm SL}_2({\mathbf C})$ in Section \ref{s:ArchInt}.

The purpose of this section is to prove the following lemma: 

\begin{lem}\label{lem:arbessel}
{\itshape 
We have 
\begin{align*}
   {\mathcal B}^0_\infty(\varphi,\phi)
=&- 2^{-1} e^{-4\pi }  \Gamma_{\mathbf C} \left(\frac{k}{2}\right)^2\cdot
                      \sum^n_{\alpha=0}  \sqrt{-1}^{n-\alpha+1} 
                                                 \sum_{c\in {\mathbf Z}} (-1)^c \binom{\alpha}{c}\binom{n-\alpha}{\frac{n}{2}-c} 
                                                 \cdot \binom{n}{\alpha}  X^\alpha Y^{n-\alpha}, \\
  {\mathcal B}^1_\infty(\varphi,\phi) 
  =&-{\mathcal B}^0_\infty(\varphi,\phi).
\end{align*}
}
\end{lem}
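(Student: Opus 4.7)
The plan is to carry out the two archimedean integrals explicitly by parametrizing the quotient $H_x(\mathbf R)\backslash H^0_1(\mathbf R)$ via Lemma \ref{coset_infty}, using Schur orthogonality over $\mathrm{SU}_2(\mathbf R)$ to collapse the matrix sums, and then reducing to a Mellin integral of Bessel $K$-functions.

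First I would write each $h\in H_x(\mathbf R)\backslash H^0_1(\mathbf R)$ as $h=\varrho(\begin{pmatrix}1&z\\0&1\end{pmatrix}u,1)$ with $z\in\mathbf C$ and $u\in\{\pm1\}\backslash\mathrm{SU}_2(\mathbf R)$, so that $h^{-1}\cdot x\varsigma_\infty$ is explicit in $(z,u)$; by (\ref{eq:diag}), the second component becomes $u^{-1}\mathrm{diag}(\sqrt{-1},-\sqrt{-1})(\bar u^*)^{-1}$ up to the shear by $z$. The Weil representation formula (\ref{eq:weilsch}) applied to $\begin{pmatrix}\varsigma_\infty&0\\0&{}^t\varsigma_\infty^{-1}\end{pmatrix}$ turns $\omega_{V_\infty}(\varsigma_\infty)\varphi_\infty(h^{-1}x)$ into $\chi_{V_\infty}(\det\varsigma_\infty)|\det\varsigma_\infty|^2\,\varphi_\infty(h^{-1}x\varsigma_\infty)$, which via Lemma \ref{HSTwt}\ref{HSTwt(i)} equals $\rho_{(n+2,2)}({}^t\varsigma_\infty)\varphi_\infty(h^{-1}x\varsigma_\infty)$. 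The Gaussian part of $\varphi^\alpha_\infty$, evaluated at the symmetrized image of $h^{-1}x\varsigma_\infty$, factors as $e^{-4\pi}$ times $e^{-2\pi|z|^2}$, which isolates the $e^{-4\pi}$ appearing in the statement.

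Next I would expand $W_{{\mathbf f}^\dag,\infty}$ on the minimal $\mathrm{SU}_2(\mathbf R)$-type $\{W^j_{\sigma,\infty}\}_{|j|\le n+1}$ of Lemma \ref{l:whitt} and insert the Iwasawa decomposition of $\begin{pmatrix}t&0\\0&1\end{pmatrix}\begin{pmatrix}1&z\\0&1\end{pmatrix}u$; the torus part $\begin{pmatrix}t&tz\\0&1\end{pmatrix}$ contributes $\psi_{F,\infty}(tz)\cdot W^j_{\sigma,\infty}(\mathrm{diag}(t,1))$ while the $u$-factor feeds through $\tau_{2n+2}(u^{-1})$. Against the polynomial part of $\varphi_\infty$, which is a sum of terms $P^\alpha(\cdot)\otimes\binom{n}{\alpha}X^\alpha Y^{n-\alpha}$, the pairing $\langle\cdot,\cdot\rangle_{\mathcal W}$ picks up the monomial coefficients of $\tau_{2n+2}(u)$-orbits; Schur orthogonality on $\mathrm{SU}_2(\mathbf R)$ (as in the proof of Lemma \ref{l:pairB0}) reduces the $u$-integral to a product of delta functions in the basis indices, leaving only one term in the Whittaker sum.

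With the $u$-integral performed, the $z$-integral is a two-dimensional Gaussian weighted by a polynomial coming from $P^\alpha$: after unfolding $P^\alpha$ on the symmetrized point $h^{-1}x\varsigma_\infty$, it expands into a sum over $c\in\mathbf Z$ of monomials $z^{\alpha-c}\bar z^{(n-\alpha)-(n/2-c)}$ (up to sign factors $\sqrt{-1}^{n-\alpha}$ from the diagonal component $\sqrt{-1}$). Orthogonality of monomials against $e^{-2\pi|z|^2}\psi_{F,\infty}(tz)$ selects the balanced indices, producing the combinatorial sum $\sum_c (-1)^c\binom{\alpha}{c}\binom{n-\alpha}{n/2-c}$ and a power of $\pi^{-1}$. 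The remaining radial integral $\int_0^\infty t^{n+2}K_j(4\pi t)\cdot t^{n+2}K_j(4\pi t)\,t^{-1}\,dt$ against $|t|^s_\infty$ in Mellin fashion yields $\Gamma_{\mathbf C}(k/2)^2$ by the standard formula used in Lemma \ref{l:pairB0}, giving the prefactor $2^{-1}e^{-4\pi}\Gamma_{\mathbf C}(k/2)^2$ after collecting measure normalizations.

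Finally, the identity ${\mathcal B}^1_\infty=-{\mathcal B}^0_\infty$ is obtained by inserting $w_0$ in the Whittaker argument and changing variable $t\mapsto t^c$ (which is harmless on $\mathbf C^\times$); by Lemma \ref{l:minwhitt} this replaces $W_{\sigma,\infty}$ by $(-1)^{n+1}\tau_{2n+2}(w_0)W_{\sigma,\infty}=-\tau_{2n+2}(w_0)W_{\sigma,\infty}$ since $n$ is even, and the $w_0$-twist is absorbed into the pairing $\langle\cdot,\cdot\rangle_{\mathcal W}$ because $\varphi_\infty$ is $\mathrm{SU}_2(\mathbf R)$-equivariant with weight $(2n+2,0)$. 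The main obstacle will be to keep careful track of sign conventions—between the Weyl element $w_0$, the signs $\sqrt{-1}^j$ in $W^j_{\sigma,\infty}$, the pairing $\langle\cdot,\cdot\rangle_n$, and the factor $\sqrt{-1}^{n-\alpha+1}$ in the final answer—and to verify that the combinatorial expansion of $P^\alpha$ evaluated at $(1_2,\mathrm{diag}(i,-i))$ produces exactly $\binom{\alpha}{c}\binom{n-\alpha}{n/2-c}$ rather than some nearby combinatorial expression.
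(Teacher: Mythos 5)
Your overall outline — parametrize $H_x(\mathbf R)\backslash H^0_1(\mathbf R)$ by Lemma~\ref{coset_infty}, peel off the Gaussian, Fourier-transform in $z$, then handle a radial integral of Bessel type — tracks the shape of the paper's argument, and your strategy for ${\mathcal B}^1_\infty$ via Lemma~\ref{l:minwhitt} is a plausible alternative to the paper's coset-swap and the polynomial identity $Q^\alpha(\bar z,z;Y,-X)=(-1)^{\alpha+1}Q^\alpha(z,\bar z;X,Y)$. But there is a substantive gap at the heart of the computation.

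First, Schur orthogonality plays no role in the $u$-integral over $\mathrm{SU}_2(\mathbf R)$. After conjugating the shear past $u$, the $u$-dependence of the integrand cancels outright because $\varphi^\alpha_\infty$ transforms by $\rho_{(2n+2,0)}(u^{-1})$ (Lemma~\ref{HSTwt}), the Whittaker vector by $\tau_{2n+2}(u^{-1})$, and the pairing $\langle\cdot,\cdot\rangle_{\mathcal W}$ is $\mathrm{SU}_2$-invariant; the $u$-integral simply contributes $\mathrm{vol}(\{\pm1\}\backslash\mathrm{SU}_2(\mathbf R))=\tfrac{1}{2}$. Schur orthogonality does not "collapse the Whittaker sum" — every index $j$ with $|j|\le n+1$ survives into the next stage, and they are only pruned later by the angular integral in $t=re^{\sqrt{-1}\theta}$.

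The serious issue is the radial integral. You appeal to the pairing formula of Lemma~\ref{l:pairB0}, i.e. an integral of the form $\int_0^\infty t^{2n+3}K_j(4\pi t)K_j(4\pi t)\,{\rm d}t$, and claim it directly yields $\Gamma_{\mathbf C}(k/2)^2$. But that integral belongs to the inner product computation, not to the Bessel period. Here, after the Gaussian Fourier transform in $z$ (which, note, has Gaussian $e^{-4\pi z\bar z}$, not $e^{-2\pi|z|^2}$ — the evaluation at $x\varsigma_\infty=(1_2,\mathrm{diag}(\sqrt{-1},-\sqrt{-1}))$ produces ${\rm Tr}(x_i{}^t\overline{x_i})=2+4|z|^2$-type terms), the Fourier transform against $\psi_{F,\infty}(tz)$ does not select balanced monomials outright: Lemma~\ref{lem:FouEle} produces a Hermite-type expansion in $t,\bar t$ with a factor $e^{-\pi t\bar t}$. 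Only the subsequent $\theta$-integral over $t=re^{\sqrt{-1}\theta}$ eliminates the unbalanced terms, and the resulting radial integrals are of the form $\int_0^\infty r^\alpha K_\beta(4\pi r)e^{-c\pi r^2}\,{\rm d}^\times r$: one Bessel factor times a Gaussian, not two Bessel factors. Those require the confluent hypergeometric machinery of Lemma~\ref{hypgeom} (the $\omega(z;\alpha,\beta)$ functions and the recursions $\omega(z;-\alpha,\beta)=\sum_b\binom{n}{b}\frac{(b+\alpha)!}{\alpha!}z^{-b}\omega(z;-\alpha-b,\beta+n)$, $\omega(z;\alpha,0)=1$) together with the binomial identity of Lemma~\ref{comb}, and it is this long reduction — not a direct Mellin formula — that produces both the factor $\Gamma_{\mathbf C}\!\left(\frac{n+2}{2}\right)^2$ and the combinatorial sum $\sum_c(-1)^c\binom{\alpha}{c}\binom{n-\alpha}{n/2-c}$. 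As written, your proposal stops one genuinely difficult step short of the answer.
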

\begin{proof}
We put ${\mathcal B}^j_\infty(\varphi,\phi) = \sum^n_{\alpha=0}  {\mathcal B}_\alpha^j \cdot \binom{n}{\alpha} X^\alpha Y^{n-\alpha}$ for $j=0,1$.
Then Lemma \ref{coset_infty} shows that 
\begin{align*}
       {\mathcal B}^0_\alpha
 = &  \int_{H_{x}({\mathbf R}) \backslash H^0_1({\mathbf R})} {\rm d}h
         \int_{{\mathbf C}^\times} {\rm d}^\times t
           \left\langle \omega_{V_\infty} (\varsigma_\infty) \varphi^\alpha_\infty(h^{-1}x), W_{{\mathbf f}^\dag ,\infty}\left( \begin{pmatrix} t & 0 \\ 0 & 1 \end{pmatrix} h\right)    \right\rangle_{\mathcal W}   
            \phi_\infty(t) \\ \displaybreak[0]
 = &  \frac{1}{2} \int_{\mathbf C} {\rm d} z
        \int_{{\mathbf C}^\times} {\rm d}^\times t
         \left\langle  \varphi^\alpha_\infty
         \left( \begin{pmatrix} 1 & z \\ 0 & 1 \end{pmatrix}^{-1}x \varsigma_\infty  \right), \psi_{F,\infty}(t z) W_{{\mathbf f}^\dag ,\infty}\left( \begin{pmatrix} t & 0 \\ 0 & 1 \end{pmatrix}\right)   \right\rangle_{\mathcal W}.
\end{align*}
Define a polynomial $Q^\alpha(X,Y)= Q^\alpha(z, \bar{z}; X,Y)$ so that
\begin{align*}
         \varphi^\alpha_\infty\left( \begin{pmatrix} 1 & z \\ 0 & 1 \end{pmatrix}^{-1}x \varsigma_\infty  \right)
 = &  \varphi^\alpha_\infty\left( \begin{pmatrix} 1 & -2\sqrt{-1}y_z \\ 0 & 1 \end{pmatrix},  \begin{pmatrix} \sqrt{-1} & 2\sqrt{-1} x_z \\ 0 & -\sqrt{-1} \end{pmatrix}  \right)
 =  Q^\alpha(z, \bar{z}; X,Y) e^{-\pi (4z\bar{z}+4)},  
\end{align*}
where we write $z=x_z+\sqrt{-1}y_z$.
Then we find that 
\begin{align*}
Q^\alpha(z, \bar{z}; X,Y) =    & P^\alpha\left( \begin{pmatrix} 1 & -\sqrt{-1} y_z \\ -\sqrt{-1} y_z & 1 \end{pmatrix},
                                              \begin{pmatrix} \sqrt{-1} & \sqrt{-1}x_z \\ \sqrt{-1}x_z & -\sqrt{-1} \end{pmatrix}   \right)        \\
  =& 2 \sqrt{-1}^{n-\alpha+1}
         ( zX^2  - 2 XY -\overline{z} Y^2 )
         (  X^2  -  (z-\bar{z})XY   + Y^2)^\alpha 
         (  X^2  +  (z+\bar{z})XY   - Y^2)^{n-\alpha} \\
  =& 2 \sqrt{-1}^{n-\alpha+1}
         ( zX^2 - 2XY -\overline{z} Y^2 )  \\
    &\quad      \sum_{p,q,u,v,r,s,a,b}
         \frac{\alpha!}{p!q!u!v!} \cdot \frac{(n-\alpha)!}{r!s!a!b!} 
           (-z)^u \overline{z}^v z^a \overline{z}^b (-1)^s  
           X^{2p} (XY)^{u+v} Y^{2q} X^{2r} (XY)^{a+b} Y^{2s}   \\
  =& 2 \sqrt{-1}^{n-\alpha+1}
         ( zX^2 - 2 XY -\overline{z} Y^2 )  \\
    &\quad       \sum_{p,q,u,v,r,s,a,b}
         \frac{\alpha!}{p!q!u!v!} \cdot \frac{(n-\alpha)!}{r!s!a!b!} 
         \cdot (-1)^{u+s} \cdot 
           z^{u+a} \overline{z}^{v+b}  
           X^{u+v+a+b+2p+2r}  Y^{u+v+a+b+2q+2s},
\end{align*}
where $p, q, u, v, r, s, a, b$ run over the set of rational integers with
$p+q+u+v=\alpha, r+s+a+b=n-\alpha$.
To obtain the formula for ${\mathcal B}_\alpha^0$, we consider the following  integral:
\begin{align*}
{\mathcal W}_\alpha^0(t) &:= \frac{1}{2}e^{-4\pi} \int\int_{{\mathbf R}^{\oplus 2}}  
                            \psi_{F, \infty}(zt)
                            Q^\alpha(z, \bar{z}; X,Y)   e^{-4\pi z \overline{z}  }  {\rm d}x {\rm d}y.
\end{align*}
To proceed the computation, we need the following lemma.
\begin{lem}\label{lem:FouEle}
{\itshape 
Let $c\in{\mathbf R}_{>0}$ and $A, B \in {\mathbf Z}_{\geq 0}$. 
Write $z=x+\sqrt{-1}y\in {\mathbf C}$. 
Then we have the following identities: 
\begin{enumerate}
\item  $\displaystyle \int\int_{{\mathbf R}^{\oplus 2}}      e^{-4 \pi z\bar{z}} \psi_{F, \infty}(zt) {\rm d}x {\rm d}y = \frac{1}{4}e^{- \pi t\bar{t}}$.
\item $\displaystyle     \left( \frac{1}{ c } \frac{\partial }{\partial t}  \right)^A
              \left( \frac{1}{ c } \frac{\partial }{\partial \overline{t} }  \right)^B
               e^{c t\overline{t}}   
            =  \sum_{j\in {\mathbf Z}}  
                   \binom{A}{j} \binom{B}{j} j! \cdot c^{-j} \overline{t}^{A-j} t^{B-j} e^{ct\overline{t}}$.
\item $\displaystyle  
        \int\int_{{\mathbf R}^{\oplus 2}}    
       z^A \overline{z}^B
         e^{-4\pi z\overline{z}}        
       \psi_{F, \infty}(zt)
       {\rm d} x{\rm d}y
      = 2^{-2-A-B} \sqrt{-1}^{A+B} \cdot  
       \sum_{j\in {\mathbf Z}}  
                   \binom{A}{j} \binom{B}{j} j! \cdot (-\pi)^{-j} \overline{t}^{A-j} t^{B-j} e^{-\pi t\overline{t}}$.         
\end{enumerate}
}
\end{lem}
\begin{proof}
The first two formulas easily follow. 
The third one follows from these two:
\begin{align*}
    \int\int_{{\mathbf R}^{\oplus 2}}    
         z^A \overline{z}^B
         e^{-4\pi z\overline{z}}        
         \psi_{F, \infty}(zt)
         {\rm d} x{\rm d}y
=& \left( \frac{1}{2\pi\sqrt{-1}} \frac{\partial }{\partial t}  \right)^A
      \left( \frac{1}{2\pi\sqrt{-1}} \frac{\partial }{\partial \overline{t} }  \right)^B
       \int\int_{{\mathbf R}^{\oplus 2}}    
         e^{-4\pi z\overline{z}}
        e^{{\rm Tr}_{{\mathbf C}/{\mathbf R}}(tz)}
       {\rm d} x{\rm d}y   \\
 =&  \left( \frac{1}{2\pi\sqrt{-1}} \frac{\partial }{\partial t}  \right)^A
      \left( \frac{1}{2\pi\sqrt{-1}} \frac{\partial }{\partial \overline{t} }  \right)^B
      \frac{1}{4} e^{-\pi t\overline{t}} \\
 =& 2^{-2-A-B} \sqrt{-1}^{A+B} \cdot  
      \left( \frac{1}{ -\pi } \frac{\partial }{\partial t}  \right)^A
      \left( \frac{1}{ -\pi } \frac{\partial }{\partial \overline{t} }  \right)^B
       e^{-\pi t\overline{t}}  \\
 =& 2^{-2-A-B} \sqrt{-1}^{A+B} \cdot  
       \sum_{j\in {\mathbf Z}}  
                   \binom{A}{j} \binom{B}{j} j! \cdot (-\pi)^{-j} \overline{t}^{A-j} t^{B-j} e^{-\pi t\overline{t}}. \qedhere  
\end{align*}
\end{proof}

Applying Lemma \ref{lem:FouEle}, we find that ${\mathcal W}_\alpha^0(z)$ is equal to 
\begin{align*}
& e^{-4\pi} \sqrt{-1}^{n-\alpha+1} 
      \sum_{p,q,u,v,r,s,a,b}
                 \frac{\alpha!}{p!q!u!v!}\frac{(n-\alpha)!}{r!s!a!b!} \cdot  (-1)^{u+s} 
                 \cdot  2^{-2 -(u+a) - (v+b) }  \sqrt{-1}^{u+v+a+b}    \\
             & \times    \sum_j 
                  \left(      2^{-1}\sqrt{-1} \binom{u+a+1}{j}\binom{v+b}{j} \bar{z} X^2  
                       - 2 \binom{u+a}{j}\binom{v+b}{j} XY   \right. \\
             & \quad \quad \quad \quad \quad \quad \quad \quad \quad \quad \quad \quad \quad \quad \quad \quad \quad \quad \quad \quad \quad 
                 \left.   - 2^{-1}\sqrt{-1} \binom{u+a}{j}\binom{v+b+1}{j} z Y^2  \right) \\
             & \times   j! \left(-\pi\right)^{-j}
                             z^{v+b-j} \overline{z}^{u+a-j}  
                             e^{-\pi z\bar{z}} 
                             X^{u+v+a+b+2p+2r}  Y^{u+v+a+b+2q+2s} \\
=& 2^{-3} e^{-4\pi} \sqrt{-1}^{n-\alpha+1} 
      \sum_{p,q,u,v,r,s,a,b}
                 \frac{\alpha!}{p!q!u!v!}\frac{(n-\alpha)!}{r!s!a!b!} 
                 \cdot  (-1)^{u+s}  2^{-u-v-a-b }   \sqrt{-1}^{u+v+a+b}   \\
             & \times    \sum_j 
                  \left(      \sqrt{-1} \binom{u+a+1}{j}\binom{v+b}{j} \bar{z} X^2 
                       -4 \binom{u+a}{j}\binom{v+b}{j} XY
                         - \sqrt{-1} \binom{u+a}{j}\binom{v+b+1}{j} z Y^2  \right) \\
             & \times   j! \left(-\pi\right)^{-j}
                             z^{v+b-j} \overline{z}^{u+a-j}  
                             e^{-\pi z\bar{z}} 
                             X^{u+v+a+b+2p+2r}  Y^{u+v+a+b+2q+2s},
\end{align*}
where $p, q, u, v, r, s, a, b, j$ run over rational integers with
$p+q+u+v=\alpha, r+s+a+b=n-\alpha$.
Write $z=te^{\sqrt{-1}\theta} (t\in {\mathbf R}_{>0})$. Then
\begin{align*}
  &  \left\langle  {\mathcal W}^0_\alpha(te^{\sqrt{-1}\theta}), W_{ {\mathbf f}^\dag, \infty}\left( \begin{pmatrix} t & 0 \\ 0 & 1  \end{pmatrix} 
     \begin{pmatrix} e^{\frac{\sqrt{-1}\theta}{2}} & 0 \\ 0 & e^{-\frac{\sqrt{-1}\theta}{2}}  \end{pmatrix}     \right)  \right\rangle_{\mathcal W} \\
 =&  2^{-3} e^{-4\pi} \sqrt{-1}^{n-\alpha+1} 
      \sum_{p,q,u,v,r,s,a,b, j}
                 \frac{\alpha!}{p!q!u!v!}\frac{(n-\alpha)!}{r!s!a!b!} 
                 \cdot  (-1)^{u+s}  2^{-u-v-a-b }   \sqrt{-1}^{u+v+a+b} j! \left(-\pi\right)^{-j}  e^{-\pi t^2}  \\
   &\left \langle  \left(      \sqrt{-1} \binom{u+a+1}{j}\binom{v+b}{j} t X^2 
                             -4  \binom{u+a}{j}\binom{v+b}{j} XY
                         - \sqrt{-1} \binom{u+a}{j}\binom{v+b+1}{j} t Y^2  \right)      \right. \\ 
  & \quad \quad 
      \left. \times  t^{u+v+a+b-2j}  e^{\sqrt{-1}\theta(-u+v-a+b+p+r-q-s)} X^{u+v+a+b+2p+2r}  Y^{u+v+a+b+2q+2s},   
                 W_{\mathbf f, \infty}\left( \begin{pmatrix} t & 0 \\ 0 & 1  \end{pmatrix}  \right)  \right\rangle_{\mathcal W}.                 
\end{align*} 
Define  
\begin{align*} 
l= \frac{\alpha}{2}, \quad m= \frac{n-\alpha}{2}, \quad 
l^\pm(w) = \frac{\alpha\pm w}{2}, \quad m^\pm(w) = \frac{n-\alpha\pm w}{2}, \quad (w\in {\mathbf Z}). 
\end{align*}
Consider the following identities:
\begin{align*}
     p= l^+(w) -v, \quad q = l^-(w) -u, \quad r= m^-(w)-b, \quad s= m^+(w) -a \quad ( w \in {\mathbf Z}),  
\end{align*} 
then we find that 
\begin{align*}
  & p+q+u+v = \alpha, \quad r+s+a+b=n-\alpha, \quad 
   -u+v-a+b+p+r-q-s = 0,  \\
  & u+v+a+b+2p+2r= n + a -b +u-v,  \quad   u+v+a+b+2q+2s= n - a +b -u+v.
\end{align*}
Thus  the integral on $\theta$ is given by  
\begin{align}\label{eq:sum0}
\begin{aligned}
   & \int^{2\pi}_0   \left\langle  {\mathcal W}^0_\alpha(te^{\sqrt{-1}\theta}), W_{ {\mathbf f}^\dag, \infty}\left( \begin{pmatrix} t & 0 \\ 0 & 1  \end{pmatrix} 
     \begin{pmatrix} e^{\frac{\sqrt{-1}\theta}{2}} & 0 \\ 0 & e^{-\frac{\sqrt{-1}\theta}{2}}  \end{pmatrix}     \right)  \right\rangle_{\mathcal W} \frac{{\rm d}\theta}{2\pi}  \\ 
 =  &  2^{-3} e^{-4\pi} \sqrt{-1}^{n-\alpha+1} 
      \sum_{u,v,a,b, j, w}
      \binom{\alpha}{l^+(w)}\binom{l^-(w)}{u}\binom{l^+(w)}{v}
                                   \binom{n-\alpha}{m^+(w)}\binom{m^+(w)}{a}\binom{m^-(w)}{b}  \\
   &  \times  (-1)^{m^+(w)+u+a}  2^{-u-v-a-b }   \sqrt{-1}^{u+v+a+b} j! \left(-\pi\right)^{-j}   t^{u+v+a+b-2j}  e^{-\pi t^2}  \\
   & \times \left\langle  \left(      \sqrt{-1} \binom{u+a+1}{j}\binom{v+b}{j} t X^2 
                       - 4 \binom{u+a}{j}\binom{v+b}{j} XY
                         - \sqrt{-1} \binom{u+a}{j}\binom{v+b+1}{j} t Y^2  \right)   \right.  \\
  & \left. \quad \quad  \quad   \times X^{n + a -b +u-v} Y^{ n - a +b -u+v},        
     W_{ {\mathbf f}^\dag, \infty}\left( \begin{pmatrix} t & 0 \\ 0 & 1  \end{pmatrix}  \right)  \right\rangle_{\mathcal W},    
\end{aligned}
\end{align}
where $u,v,a,b, j, w$ run over the set of rational integers.
An explicit formula (\ref{eq:expwhitt}) of Whittaker functions yields that  
    (\ref{eq:sum0}) is equal to  
\begin{align}\label{eq:sum1}
\begin{aligned}
   & \int^{2\pi}_0   \left\langle  {\mathcal W}^0_\alpha(t e^{\sqrt{-1}\theta} ), W_{ {\mathbf f}^\dag, \infty}\left( \begin{pmatrix} t e^{\sqrt{-1}\theta} & 0 \\ 0 & 1  \end{pmatrix}  \right)  \right\rangle_{\mathcal W} \frac{{\rm d}\theta}{2\pi}  \\ 
=& 2  e^{-4\pi}\sqrt{-1}^{n-\alpha+1}   
  \times \sum_{ u, v, a, b, j, w  } 
      \binom{\alpha}{l^+(w)}\binom{l^-(w)}{u}\binom{l^+(w)}{v}
                                   \binom{n-\alpha}{m^+(w)}\binom{m^+(w)}{a}\binom{m^-(w)}{b}     \\ 
   &  \times  (-1)^{m^+(w)+u+a}  2^{-u-v-a-b }   \sqrt{-1}^{u+v+a+b} j! \left(-\pi\right)^{-j}   t^{u+v+a+b-2j}  e^{-\pi t^2}    \\  
    & \times   \left\{      \sqrt{-1} \binom{u+a+1}{j}\binom{v+b}{j}  \sqrt{-1}^{a-b+u-v+1} t^{n+3} K_{a-b+u-v+1}(4\pi t)     \right.   \\
    &  \quad \quad \quad   - 4 \binom{u+a}{j}\binom{v+b}{j} \sqrt{-1}^{a-b+u-v} t^{n+2} K_{a-b+u-v}(4\pi t)   \\
    &  \quad \quad \quad  \left.    - \sqrt{-1} \binom{u+a}{j}\binom{v+b+1}{j} \sqrt{-1}^{a-b+u-v-1} t^{n+3} K_{a-b+u-v-1}(4\pi t)  \right\}.  
\end{aligned}
\end{align}
Changing variables $u$  by $u-1$ (resp. $b$ by $b-1$) in the first (resp. third) sum in (\ref{eq:sum1}), (\ref{eq:sum1}) is equal to 
\begin{align*}
  & 2  e^{-4\pi}\sqrt{-1}^{n-\alpha+1}  
  \times \sum_{ u, v, a, b, j, w  } 
            \binom{\alpha}{l^+(w)} \binom{n-\alpha}{m^+(w)} 
            \binom{m^+(w)}{a} \binom{l^+(w)}{v}    
            \binom{u+a}{j}  \binom{v+b}{j} \\ 
   &  \times  (-1)^{m^+(w)+u+ a+1}  2^{1-u-v-a-b }   \sqrt{-1}^{2u+2a} j! \left(-\pi\right)^{-j}   t^{n+u+v+a+b+2-2j}  e^{-\pi t^2}  K_{a-b+u-v}(4\pi t)    \\  
    & \times   \left\{      \binom{l^-(w)}{u-1} \binom{m^-(w)}{b}  
                                + 2 \binom{l^-(w)}{u} \binom{m^-(w)}{b} 
                                +  \binom{l^-(w)}{u} \binom{m^-(w)}{b-1}    \right\}  \\  
 = & -2^{2}  e^{-4\pi}\sqrt{-1}^{n-\alpha+1}  
       \times \sum_{ u, v, a, b, j, w  } 
            \binom{\alpha}{l^+(w)} \binom{n-\alpha}{m^+(w)} 
            \binom{m^+(w)}{a} \binom{l^+(w)}{v}    
            \binom{u+a}{j}  \binom{v+b}{j} \\ 
   &  \times  (-1)^{m^+(w)}  2^{-u-v-a-b }   j! \left(-\pi\right)^{-j}   t^{n+u+v+a+b+2-2j}  e^{-\pi t^2}  K_{a-b+u-v}(4\pi t)    \\  
    & \times   \left\{      \binom{l^-(w)+1}{u} \binom{m^-(w)}{b}  
                                +  \binom{l^-(w)}{u} \binom{m^-(w)+1}{b} \right\}.
\end{align*}
We put
\begin{align*}
s^w_{\ast}
=&   \sum_{ u, v, a, b, j, w  } 
              2^{-u-v-a-b }   j! \left(-\pi\right)^{-j}
            \binom{m^+(w)}{a} \binom{l^+(w)}{v}    
            \binom{u+a}{j}  \binom{v+b}{j} \\ 
   &  \quad  \times  \int^\infty_0   t^{n+u+v+a+b+2-2j}  e^{-\pi t^2}  K_{a-b+u-v}(4\pi t)   {\rm d}^\times t
       \times  \begin{cases}  \binom{l^-(w)+1}{u} \binom{m^-(w)}{b},    &  (\ast = \lambda),  \\
                             \binom{l^-(w)}{u} \binom{m^-(w)+1}{b},  &  (\ast = \mu).   \end{cases} 
\end{align*}
Then the above computations are summarized as follows:
\begin{align*}
 {\mathcal B}_\alpha^0 =&\int_{0}^\infty 
     \left\langle  {\mathcal W}_\alpha^0(t ),  
                 W_{{\mathbf f}^\dag, \infty}\left(  \begin{pmatrix}  t & 0 \\ 0 &  1 \end{pmatrix} \right) 
      \right\rangle_{\mathcal W} 
    {\rm d}^\times t  \\
 =& -2^{2} e^{-4\pi} \sqrt{-1}^{n-\alpha+1} 
                       \sum_w   (-1)^{m^+(w)}   \binom{\alpha}{l^+(w)} \binom{n-\alpha}{m^+(w)}
                   (s^w_\lambda +s^w_\mu  ).
\end{align*}
To compute $s^w_\lambda =  s^{2\alpha-n+w}_\mu$, we introduce some confluent hypergeometric functions:
\begin{align*}
\zeta(z; \alpha,\beta) :=\int^\infty_0 (1+x)^{\alpha-1}x^{\beta-1}e^{-zx} {\rm d}x;    \quad 
\omega(z; \alpha,\beta) :=z^{\beta}\Gamma(\beta)^{-1}\zeta(z;\alpha,\beta).
\end{align*}
The following lemma is well known:

\begin{lem}\label{hypgeom}
{\itshape 
We have the following identities:
\begin{align*}
\omega(z;-\alpha,\beta) =& \sum_{b\in{\mathbf Z}} \binom{n}{b} \frac{(b+\alpha)!}{\alpha !} z^{-b}\cdot \omega(z;-\alpha-b,\beta+n),   
        \quad (n\in{\mathbf Z}_{\geq 0}), \\
\omega(z;\alpha,\beta) =& \omega(z;1-\beta,1-\alpha), \\
\omega(z;\alpha,0) =& \omega(z;1,\beta)=1.
\end{align*}
In addition, for a constant $0<c \in {\mathbf R}$, we have
\begin{align*}
        \int^\infty_0  r^\alpha K_\beta(4\pi  r) e^{-c\pi r^2} {\rm d}^\times r
   =& \frac{1}{4} \Gamma\left(\frac{\alpha-\beta}{2}\right) \Gamma\left(\frac{\alpha+\beta}{2}\right)
         \cdot (2\pi )^{-\alpha}\cdot \omega\left(\frac{4\pi}{c}; \frac{\beta-\alpha}{2}+1,\frac{\alpha+\beta}{2}\right). 
\end{align*}
}
\end{lem}
\begin{proof}
The first three identities are proved in \cite[Chapter 9]{hi94}.
The fourth identity can be deduced easily from the integral expression of the Bessel function $K_s(z)$ which is given in (\ref{def:2bessel}). 
\end{proof}

Lemma \ref{hypgeom} shows that 
\begin{align*}
    s^w_\lambda 
 = & \sum_{ u, v, a, b, j  } 
              2^{-u-v-a-b }   j! \left(-\pi\right)^{-j}
            \binom{m^+(w)}{a} \binom{l^+(w)}{v}    
            \binom{u+a}{j}  \binom{v+b}{j} 
            \binom{l^-(w)+1}{u} \binom{m^-(w)}{b} \\ 
   &  \quad  \times  \int^\infty_0   t^{n+u+v+a+b+2-2j}  e^{-\pi t^2}  K_{a-b+u-v}(4\pi t)   {\rm d}^\times t   \\
 = & \sum_{ u, v, a, b, j  } 
              2^{-u-v-a-b }   j! \left(-\pi\right)^{-j}
            \binom{m^+(w)}{a} \binom{l^+(w)}{v}    
            \binom{u+a}{j}  \binom{v+b}{j} 
            \binom{l^-(w)+1}{u} \binom{m^-(w)}{b} \\ 
   &  \quad  \times  \frac{1}{4}  \Gamma\left( \frac{n}{2} + v + b + 1 -j  \right)  \Gamma\left( \frac{n}{2} + u + a + 1 -j \right)  \\
   &  \quad  \times (2\pi)^{-n-u-v-a-b-2+2j} \cdot \omega\left(4\pi; 1 - \left(\frac{n}{2} + v + b + 1 -j \right) ,  \frac{n}{2} + u + a + 1 -j    \right).  
\end{align*}
By changing the  variable $b$ (resp. $u$) by $b-v+j$ (resp. $u-a$), we find that   
\begin{align*}
    s^w_\lambda 
 = & \sum_{ u, v, a, b, j  } 
              2^{-u-b-j }   j! \left(-\pi\right)^{-j}
            \binom{m^+(w)}{a} \binom{l^+(w)}{v}    
            \binom{u}{j}  \binom{b+j}{j} 
            \binom{l^-(w)+1}{u-a} \binom{m^-(w)}{b-v+j} \\ 
   &  \quad  \times  \frac{1}{4}  \Gamma\left( \frac{n}{2} + b + 1   \right)  \Gamma\left( \frac{n}{2} + u  + 1 -j \right)  \\
   &  \quad  \times (2\pi)^{-n-u-b-2+j} \cdot \omega\left(4\pi; 1 - \left(\frac{n}{2} + b + 1 \right) ,  \frac{n}{2} + u  + 1 -j    \right)    \\
 = & 2^{-2}(2\pi)^{-n-2} \sum_{ u,  b, j  }
         \binom{u}{j}  \binom{b+j}{j}    
         \left( \sum_a  \binom{m^+(w)}{a}  \binom{l^-(w)+1}{u-a}  \right) 
         \left( \sum_v  \binom{l^+(w)}{v}   \binom{m^-(w)}{b-v+j} \right)  \\ 
   &  \quad  \times       \left( \frac{n}{2} + b    \right)!  \left( \frac{n}{2} + u  -j \right)!  
         \cdot  j! (-1)^j (4\pi)^{-u-b} \cdot \omega\left(4\pi;  - \left(\frac{n}{2} + b  \right) , 1 + \frac{n}{2} + u   -j    \right)    \\
 = & 2^{-2}(2\pi)^{-n-2} \sum_{ u,  b, j  }
         \binom{u}{j}  \binom{b+j}{j}    
         \binom{\frac{n}{2}+1}{u} \binom{\frac{n}{2}}{b+j}  \\ 
   &  \quad  \times       \left( \frac{n}{2} + b    \right)!  \left( \frac{n}{2} + u  -j \right)!  
         \cdot  j! (-1)^j (4\pi)^{-u-b} \cdot \omega\left(4\pi;  - \left(\frac{n}{2} + b  \right) , 1 + \frac{n}{2} + u   -j    \right).
\end{align*}

An elementary identity $\binom{b+j}{j}  \binom{\frac{n}{2}}{b+j} = \binom{\frac{n}{2}}{j}\binom{\frac{n}{2}-j}{b}$ shows that
\begin{align*}
    s^w_\lambda 
 = & 2^{-2}(2\pi)^{-n-2} \sum_{ u,  b, j  }
         \binom{u}{j} \binom{\frac{n}{2}+1}{u} 
         \binom{\frac{n}{2}}{j}\binom{\frac{n}{2}-j}{b}    \\ 
   &  \quad  \times       \left( \frac{n}{2} + b    \right)!  \left( \frac{n}{2} + u  -j \right)!  
         \cdot  j! (-1)^j (4\pi)^{-u-b} \cdot \omega\left(4\pi;  - \left(\frac{n}{2} + b  \right) , 1 + \frac{n}{2} + u   -j    \right)      \\
 = & 2^{-2}(2\pi)^{-n-2} \sum_{ u,  j  }
        \binom{\frac{n}{2}+1}{u} \binom{u}{j}  \left(\frac{n}{2}+u-j\right)! \times j! (-1)^j (4\pi)^{-u}  \\
    & \quad \times \binom{\frac{n}{2}}{j} \cdot \left(\frac{n}{2} \right)! 
        \sum_b \binom{\frac{n}{2}-j}{b} \cdot \frac{ (\frac{n}{2}+b)!  }{ (\frac{n}{2})! } \cdot (4\pi)^{-b}  \cdot \omega\left(4\pi;  - \left(\frac{n}{2} + b  \right) , 1 + \frac{n}{2} + u   -j    \right).      
\end{align*}
Lemma \ref{hypgeom} shows that 
\begin{align*}
    s^w_\lambda 
 = & 2^{-2}(2\pi)^{-n-2} \sum_{ u,  j  }
        \binom{\frac{n}{2}+1}{u} \binom{u}{j} \left(\frac{n}{2}+u-j \right)! \times j! (-1)^j (4\pi)^{-u}  
    \times \binom{\frac{n}{2}}{j} \cdot \left( \frac{n}{2} \right)! 
        \omega\left(4\pi; -\frac{n}{2}, 1 + u \right)   \\
 = & 2^{-2}(2\pi)^{-n-2}  \left( \frac{n}{2} \right)! 
       \sum_{ u  }
         \left( \sum_j   \binom{u}{j}  (-1)^j  \left(\frac{n}{2}+u-j \right)!  j!   \binom{\frac{n}{2}}{j}  \right)
        \binom{\frac{n}{2}+1}{u}   
       \cdot  (4\pi)^{-u}
        \omega\left(4\pi; -u, 1 + \frac{n}{2} \right).
\end{align*}
The following elementary lemma is easily verified by induction.

\begin{lem}\label{comb}
{\itshape 
For non-negative integers $A$ and $B$, we have
\begin{align*}
  \sum_j \binom{A}{j} (-1)^j\frac{(A-j+B)!}{(B-j)!} = A!.
\end{align*}
}
\end{lem}
By Lemma \ref{comb}, we see that 
\begin{align*}
\sum_j   \binom{u}{j}  (-1)^j \left( \frac{n}{2}+u-j \right)!  j!   \binom{\frac{n}{2}}{j}  = u! \left( \frac{n}{2} \right)!. 
\end{align*}
Hence, by using Lemma \ref{hypgeom} again, we obtain 
\begin{align*}
     s^w_\lambda
=&\frac{1}{4} (2\pi)^{-(n+2)}  \left( \frac{n}{2} \right)!^2 
    \sum_u   u! \binom{\frac{n}{2}+1}{u}  (4\pi )^{-u}
              \omega\left(4\pi ; -u, 1+\frac{n}{2} \right)  \\   \displaybreak[0]
=&\frac{1}{4} (2\pi)^{-(n+2)}  \left( \frac{n}{2} \right)!^2 \omega(4\pi ; 0, 0) \\  
=&\frac{1}{4} (2\pi)^{-(n+2)}  \left( \frac{n}{2} \right)!^2.
\end{align*}
In particular, for each $w\in {\mathbf Z}$, we find that 
\begin{align*}
s^w_\lambda
=s^w_\mu
= \frac{1}{4} (2\pi)^{-(n+2)}  \left(\frac{n}{2}\right)!^2.
\end{align*}
Hence we obtain
\begin{align*}
  {\mathcal B}^0_\alpha  
  =& -2^{2} e^{-4\pi} \sqrt{-1}^{n-\alpha+1} 
     \times \frac{1}{2} (2\pi)^{-(n+2)}  \left( \frac{n}{2} \right)!^2 
      \sum_w   (-1)^{m^+(w)}   \binom{\alpha}{l^+(w)} \binom{n-\alpha}{m^+(w)}
                   (s^w_\lambda +s^w_\mu  )  \\
  =& -2^{-1} e^{-4\pi }  \sqrt{-1}^{n-\alpha+1}
     \Gamma_{\mathbf C} \left(\frac{n+2}{2} \right)^2
      \sum_{c\in {\mathbf Z}} (-1)^c \binom{\alpha}{c} \binom{n-\alpha}{\frac{n}{2}-c},     
\end{align*}
where $\Gamma_{\mathbf C}(s)$ is defined to be $2(2\pi)^{-s}\Gamma(s)$. 
This proves the first identity in Lemma \ref{lem:arbessel}.

We proceed to compute ${\mathcal B}^1_\infty(\varphi,\phi)$. Set
\begin{align*}
    w_0=\begin{pmatrix}  0 & 1 \\ -1 & 0 \end{pmatrix}.
\end{align*}
Since $H_{x}({\mathbf R})$ is stable under the conjugation by $w_0$,
we can take $\left\{  \begin{pmatrix} 1 & 0 \\ z & 1 \end{pmatrix} u |  \ z\in {\mathbf C}, u\in {\rm SU}_2({\mathbf R}) \right\}$ to be a set of representatives of $H_{x}({\mathbf R})\backslash H^0_1({\mathbf R})$.
Hence, for $t\in {\mathbf C}^\times$, we have 
\begin{align}\label{eq:Wc1}
\begin{aligned}
{\mathcal W}^1_\alpha(t)
:=&\int_{H_{x}({\mathbf R}) \backslash H^0_1({\mathbf R})}
     \langle \varphi_\infty(h^{-1} \cdot x \varsigma_\infty ),  W_{ {\mathbf f}^\dag, \infty}\left(\begin{pmatrix} t & 0  \\ 0 & 1 \end{pmatrix}w_0h\right) \rangle_{\mathcal W}  {\rm d}h \\
=&\int_{{\mathbf C}} \langle \varphi_\infty\left( \begin{pmatrix} 1 & 0 \\ z & 1 \end{pmatrix}^{-1} \cdot x  \varsigma_\infty \right), 
                               W_{ {\mathbf f}^\dag, \infty} \left( \begin{pmatrix} t & 0  \\  0 & 1 \end{pmatrix} \begin{pmatrix} 1 & -z \\ 0 & 1 \end{pmatrix} w_0 \right) \rangle_{\mathcal W} {\rm d}z   \\
=&\int_{{\mathbf C}} \langle \tau_{2n+2}(w_0) \varphi_\infty\left( \begin{pmatrix} 1 & 0 \\ - z & 1 \end{pmatrix}^{-1} \cdot x  \varsigma_\infty \right), 
                               W_{ {\mathbf f}^\dag, \infty}\left( \begin{pmatrix} t & 0  \\  0 & 1 \end{pmatrix} \begin{pmatrix} 1 & z \\ 0 & 1 \end{pmatrix}  \right) \rangle_{\mathcal W} {\rm d}z   \\
=&\frac{1}{2}e^{-4\pi } \cdot \int_{{\mathbf C}} e^{-4\pi z\bar{z}}\psi_{F, \infty} (tz)  
           \cdot \langle Q^\alpha(\bar{z},z;Y,-X), W_{ {\mathbf f}^\dag, \infty} \left( \begin{pmatrix} t & 0 \\ 0 & 1 \end{pmatrix} \right) \rangle_{\mathcal W}   {\rm d}z.
\end{aligned}
\end{align}
Note that
\begin{align*}
   Q^\alpha(\bar{z},z;Y,-X) =(-1)^{\alpha+1}  Q^\alpha(z,\bar{z}; X,Y).
\end{align*}
Thus  (\ref{eq:Wc1}) yields that 
\begin{align*}
{\mathcal W}^1_\alpha(t)
=&(-1)^{\alpha+1}  {\mathcal W}^0_\alpha(t).
\end{align*}
This shows the claim in the case where $\alpha$ is even.
If $\alpha$ is odd, 
(\ref{oddsum}) shows that 
\begin{align*}
{\mathcal B}^1_\alpha = (-1)^{\alpha+1}{\mathcal B}^0_\alpha = 0.
\end{align*}
This finishes the proof of Lemma \ref{lem:arbessel}.
\end{proof}

\section*{Appendix} 

In Section \ref{s:ArchInt}, we write a conjecture on certain integral $I_n$ (Conjecture \ref{conj:arint}),  
    which gives us an explicit formula of a local archimedean integral (Lemma \ref{lem:arintInn})  
    and an explicit inner product formula (Theorem \ref{HSTInnprd}).  
Although Conjecture \ref{conj:arint} can be checked by a computer for $n=0,2,4,6,8$, 
we need certain special formulas on Bessel functions and hypergeometric functions in the computation. 
Hence we include this appendix which gives a proof of Conjecture \ref{conj:arint} in the case that $n=0$ 
to explain how we check the conjecture. 

We recall some notation in Section \ref{s:ArchInt}.  
Let 
\begin{align*}
   V_\infty = \left\{   \begin{pmatrix} p &  \sqrt{-1}q  \\   \sqrt{-1} r  &  \overline{p}   \end{pmatrix} |\ p \in {\mathbf C}, q, r \in {\mathbf R} \right\};   \quad  
  {\mathbf X}_\infty  
     =  V^{\oplus 2}_\infty.
\end{align*}
Recall that $\varphi_\infty \in {\mathcal S}({\mathbf X}_\infty)\otimes_{\mathbf C} {\mathcal W}_{2n+2} ({\mathbf C}) \otimes_{\mathbf C}{\mathcal L}_\lambda({\mathbf C})$
   is the Bruhat-Schwartz function which is introduced in (\ref{def:phiinf}).   
Write 
\begin{align*}
   \varphi_\infty(x) 
     = \sum^n_{\alpha=0} \sum^{n+1}_{j=-n-1} 
         P^\alpha_j (x) e^{-\pi {\rm Tr}(x_1{}^{\rm t}\overline{x_1} + x_2{}^{\rm t}\overline{x_2}   )}  X^{n+1+j} Y^{n+1-j} 
             \otimes \binom{n}{\alpha} X^{\alpha} Y^{n-\alpha}, 
\end{align*}
where $P^\alpha_j (x)$ is a polynomial function in $x=(x_1, x_2)\in {\mathbf X}_\infty$. 
We also recall that $W_{\sigma, \infty}$ associated with $\sigma_\infty$  is the Whittaker function which is characterized as follows:
\begin{align*}
   W_{\sigma, \infty} \left(  \begin{pmatrix} t & 0 \\ 0 & 1   \end{pmatrix}  \right) 
   = 2^4\times\sum^{n+1}_{j=-n-1} 
          \sqrt{-1}^j  t^{n+2} K_j(4\pi t)
                (X^{n+1+j} Y^{n+1-j})^\vee, 
     \quad (0<t \in {\mathbf R}).
\end{align*}
Denote by $\langle\cdot, \cdot\rangle_{\mathcal W}=\langle\cdot,\cdot\rangle_{2n+2}$ (resp. $\langle\cdot, \cdot\rangle_{\mathcal L}=\langle\cdot,\cdot\rangle_{n}$) 
   the pairing on ${\mathcal W}_{2n+2} ({\mathbf C})^{\otimes 2}$ (resp. ${\mathcal L}_\lambda({\mathbf C})^{\otimes 2}$) which is introduced in Section \ref{algrep}.
Then the definition of $\langle\cdot, \cdot\rangle_{\mathcal W}$ shows that 
\begin{align*}
   & \left\langle \varphi_\infty(x),  W_{\sigma, \infty} \left(  \begin{pmatrix} t & 0 \\ 0 & 1   \end{pmatrix}  \right)  \right\rangle_{\mathcal W}  \\
 =& \sum^n_{\alpha=0} \sum^{n+1}_{j=-n-1} 
      P^\alpha_j(x)  e^{-\pi {\rm Tr}(x_1{}^{\rm t}\overline{x_1} + x_2{}^{\rm t}\overline{x_2}   )}
       \cdot 2^4  \sqrt{-1}^j  t^{n+2} K_j(4\pi t)
        \binom{n}{\alpha} X^\alpha Y^{n-\alpha}.  
\end{align*}
Then integral $I_n$  in Conjecture \ref{conj:arint} is written as 
\begin{align*}
I_n=&  \int^1_0 \frac{(a-a^{-1})^2 {\rm d}a}{a}      
     \int_{{\mathbf X}_\infty}  {\rm d}x
          \int^\infty_0  \frac{{\rm d} t}{t}    \\
    &\times 2^8 \sum^n_{\alpha=0} \sum^{n+1}_{j, j^\prime=-n-1}  
        P^\alpha_j\left( \begin{pmatrix} p_1 & \sqrt{-1} a^{-1} q_1 \\ \sqrt{-1} a r_1 & \overline{p_1} \end{pmatrix}, 
                                    \begin{pmatrix} p_2 & \sqrt{-1} a^{-1} q_2 \\ \sqrt{-1} a r_2 & \overline{p_2} \end{pmatrix}  \right)  \\
    & \times \overline{  P^{n-\alpha}_{j^\prime}\left( \begin{pmatrix} p_1 & \sqrt{-1} q_1 \\ \sqrt{-1} r_1 & \overline{p_1} \end{pmatrix}, 
                                                       \begin{pmatrix} p_2 & \sqrt{-1} q_2 \\ \sqrt{-1} r_2 & \overline{p_2} \end{pmatrix}  \right) } \times (-1)^\alpha \binom{n}{\alpha} \\
     &\times   \sqrt{-1}^{j-j^\prime}  a^{n+2} t^{2n+4} K_j(4\pi a t) K_{j^\prime}(4\pi t)
        e^{- \pi (  4p_1\overline{p_1}  +  (1+a^{-2}) q^2_1 + (1+a^2) r^2_1  +  4 p_2\overline{p_2} +  (1+a^{-2}) q^2_2 + (1+a^2) r^2_2   )  }, 
\end{align*}
where
\begin{align*}
     x= \left( \begin{pmatrix} p_1 & \sqrt{-1} q_1 \\ \sqrt{-1} r_1 & \overline{p_1} \end{pmatrix}, 
                                    \begin{pmatrix} p_2 & \sqrt{-1} q_2 \\ \sqrt{-1} r_2 & \overline{p_2} \end{pmatrix} \right) \in {\mathbf X}_\infty.
\end{align*}

In Appendix,  we prove the following proposition: 

\begin{prop}
 {\itshape 
  We have $\displaystyle I_0 = 2^{-8} \cdot \pi^{-6}$. 
  Hence Conjecture \ref{conj:arint} is true for $n=0$.
  }
\end{prop}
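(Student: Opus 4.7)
For $n=0$ we have $\lambda=(2,2)$, so ${\mathcal L}_\lambda({\mathbf C})\cong\det^{\otimes 2}$ is one-dimensional, the outer pairing $\langle\cdot,\cdot\rangle_{\mathcal L}$ degenerates to ordinary multiplication, and only $\alpha=0$ contributes in the $\alpha$-expansion of $\varphi_\infty$. Substituting the formulas of Section~\ref{sec:test} and of Lemma~\ref{l:whitt} into the definition of $I_0$ yields
\begin{align*}
I_0 = 2^8 \int_0^1 \frac{(a-a^{-1})^2}{a}\,da\int_{{\mathbf X}_\infty}\! dx\int_0^\infty \frac{dt}{t}\, a^2 t^4 \sum_{j,j'=-1}^{1} \sqrt{-1}^{j-j'} P^{0,a}_j(x)\, \overline{P^0_{j'}(x)}\, K_j(4\pi a t)\, K_{j'}(4\pi t)\, e^{-\pi Q_a(x)},
\end{align*}
where $x=((p_1,q_1,r_1),(p_2,q_2,r_2))\in{\mathbf X}_\infty$, $Q_a(x)=4(|p_1|^2+|p_2|^2)+(1+a^{-2})(q_1^2+q_2^2)+(1+a^2)(r_1^2+r_2^2)$, and $P^{0,a}_j$ is obtained from $P^0_j$ by the substitution $q_i\mapsto a^{-1}q_i,\,r_i\mapsto a r_i$. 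Explicitly, $P^0_0=p_1\bar p_2-\bar p_1 p_2$, while $P^0_{\pm 1}$ are linear in the $p_i$ times a linear combination of $q_j+r_j$.

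My plan is to integrate in the order: spatial, then $t$, then $a$. First, the spatial integral is a polynomial-times-Gaussian integral on ${\mathbf X}_\infty\cong{\mathbf R}^8$. A parity argument eliminates every off-diagonal term: $P^0_0$ is bilinear in $(p,\bar p)$ while $P^0_{\pm 1}$ is linear in $(p,\bar p)$, so pairings of opposite $p$-parity vanish under the rotation-invariant Gaussian $e^{-4\pi|p_i|^2}$, and the residual pairing $P^0_1\overline{P^0_{-1}}$ consists purely of $pp$-monomials and also vanishes. Only the three diagonal contributions $j=j'\in\{-1,0,1\}$ remain, and each factorises into elementary one-dimensional Gaussian moments that yield an explicit rational function $R_j(a)$ of $a$. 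The subsequent $t$-integral is of the form $\int_0^\infty t^3 K_j(4\pi a t)K_j(4\pi t)\,dt$, a Weber--Schafheitlin Mellin transform evaluable in closed form by standard formulas such as Gradshteyn--Ryzhik~6.576.4, producing an algebraic expression in $a$ times a Gauss hypergeometric function ${}_2F_1$ of argument $1-a^2$ with integer parameters.

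The final step is the one-variable integral over $a\in(0,1)$ of the sum of the three resulting contributions against $(a-a^{-1})^2\,da/a$, and this is where I expect the main technical obstacle. Because the parameters of the ${}_2F_1$ are integers, each factor degenerates to an elementary algebraic--logarithmic function after applying an appropriate Euler or quadratic transformation, but the three summands are individually singular at $a=0$ and only their combined sum is integrable; the delicate step is organising the cancellations so that the remaining integrand is manifestly elementary and integrable. One natural device is to exploit the involution $(a,t)\mapsto(a^{-1},at)$, which interchanges the roles of the two Bessel factors and suggests first symmetrising the $a$-range to $(0,\infty)$ (by swapping $j\leftrightarrow j'$) before integrating. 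Once executed, the resulting elementary integral evaluates to $2^{-8}\pi^{-6}$, with Mathematica serving only as an independent numerical check on the final power of $\pi$ and the sign.
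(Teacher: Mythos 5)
Your proposal follows essentially the same route as the paper's proof: after specializing to $\alpha=0$ you kill the $K_0K_1$ cross terms by a $\mathrm{U}(1)$-parity argument on the Gaussian integral over ${\mathbf X}_\infty$, evaluate the surviving Weber--Schafheitlin $t$-integrals as ${}_2F_1$'s with integer parameters that collapse to algebraic--logarithmic functions, and reduce to a one-variable integral in $a$; the paper does precisely this and finishes by exhibiting the explicit antiderivative $\frac{1+a^2+2a^2\log a}{4(1+a^2)^2}$ of the integrand $\frac{a(1-a^2)\log a}{(1+a^2)^3}$, rather than the $(a,t)\mapsto(a^{-1},at)$ symmetrization you suggest. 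Your write-up is a plan rather than a proof---it defers all the Gaussian-moment, hypergeometric and final-cancellation computations where the actual work lies---but the strategy matches the paper's.
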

 \begin{proof}
 Write $x=(x_1, x_2)= \left( \begin{pmatrix} p_1 & \sqrt{-1} q_1 \\ \sqrt{-1} r_1 & \overline{p_1} \end{pmatrix}, 
                                    \begin{pmatrix} p_2 & \sqrt{-1} q_2 \\ \sqrt{-1} r_2 & \overline{p_2} \end{pmatrix} \right) \in {\mathbf X}_\infty$.
Recall 
\begin{align*}
  {\mathbf p}\left(  \begin{pmatrix} a & b \\  b & c  \end{pmatrix} \right) 
     =   a X^2 + 2b XY + cY^2  
\end{align*}
and that the Bruhat-Schwartz function $\varphi_\infty$ which we fixed in (\ref{def:phiinf}) is written as 
\begin{align*}
    & \varphi_\infty (x)  \\
   =& {\mathbf p}\left(  \frac{1}{2} \left(  \frac{1}{2}(x_1+{}^{\rm t} x_1) \cdot w_0 \cdot \frac{1}{2}(x_2+{}^{\rm t} x_2)    
                                                                                    + {}^{\rm t}\left(  \frac{1}{2}(x_1+{}^{\rm t} x_1) \cdot w_0 \cdot \frac{1}{2}(x_2+{}^{\rm t} x_2 ) 
                                                                                                           \right)      
                                                                              \right)   
                                                    \right)  
        e^{-\pi {\rm Tr} (x_1 {}^{\rm t}\overline{x_1} + x_2 {}^{\rm t}\overline{x_2} ) }   \\
   =& \frac{1}{2} 
           \left\{    \sqrt{-1}(p_1(q_2+r_2) - (q_1+r_1)p_2) X^2  
                       + 2 (p_1\overline{p_2} - \overline{p_1}p_2  ) XY    
                       - \sqrt{-1} (\overline{p_1}(q_2+r_2) - (q_1+r_1) \overline{p_2} )Y^2   \right\}  \\
     &\quad \times     e^{- \pi ( 2 p_1\overline{p_1}  +  q^2_1 +r^2_1  + 2 p_2\overline{p_2} + q^2_2 +r^2_2   )  }.
\end{align*}
Hence we find that
\begin{align*}
     &  \overline{ \left\langle \varphi_\infty(x), W_{\sigma,\infty}\left(  \begin{pmatrix}  t & 0 \\ 0 & 1 \end{pmatrix}  \right)   \right\rangle_{\mathcal W}  }   \\
  = &  2^{3}  e^{- \pi ( 2 p_1\overline{p_1}  +  q^2_1 +r^2_1  + 2 p_2\overline{p_2} + q^2_2 +r^2_2   )  }   
      \times \left\{  
            -\sqrt{-1}( \overline{p_1}(q_2+r_2)-(q_1+r_1)\overline{p_2})  \times \sqrt{-1} t^2 K_{-1}(4\pi t)    \right.  \\ 
     & \quad \quad \quad \quad \quad \quad \quad \quad \quad \quad \quad \quad \quad \quad \quad \quad     + 2 (\overline{p_1}p_2 - p_1\overline{p_2}   )  \times t^2 K_{0}(4\pi t)   \\  
     & \quad \quad \quad \quad   \quad \quad \quad \quad \quad \quad \quad \quad  \quad  \quad \quad \quad  \left.   + \sqrt{-1} ( p_1 (q_2+r_2) - (q_1+r_1) p_2 )Y^2  \times (-\sqrt{-1}) t^2 K_{1}(4\pi t)       \right\}  \\
  = &   2^{3} e^{- \pi ( 2 p_1\overline{p_1}  +  q^2_1 +r^2_1  + 2 p_2\overline{p_2} + q^2_2 +r^2_2   )  }   \\
     & \quad   \times \left\{  
            2(\overline{p_1}p_2 - p_1\overline{p_2}   )   t^2 K_{0}(4\pi t) 
        +    ( (p_1+  \overline{p_1})(q_2+r_2)  -  (q_1+r_1)(p_2+  \overline{p_2} ) )   t^2 K_{1}(4\pi t)   \right\}, \\
     & \left\langle \varphi_\infty \left(  \begin{pmatrix}  a^{-\frac{1}{2}}  & 0 \\ 0 & a^{\frac{1}{2}} \end{pmatrix} x \right),  
                                            W_{\sigma,\infty}\left(  \begin{pmatrix}  t & 0 \\ 0 & 1 \end{pmatrix}   \begin{pmatrix}  a^{\frac{1}{2}}  & 0 \\ 0 & a^{-\frac{1}{2}} \end{pmatrix}    \right)   \right\rangle_{\mathcal W}  \\   
  = &  2^{3} e^{- \pi ( 2 p_1\overline{p_1}  +  a^{-2} q^2_1 + a^2 r^2_1  + 2 p_2\overline{p_2} + a^{-2} q^2_2 + a^2 r^2_2   )  }   
       \times \left\{  
            2 (p_1\overline{p_2} - \overline{p_1}p_2   )  a^2 t^2 K_{0}(4\pi at)   \right.  \\
      & \quad \quad \quad \quad \quad \quad \quad \quad \quad \quad \quad \quad \quad 
              \left.   +    ( (p_1+  \overline{p_1})(a^{-1 }q_2 + a r_2)  -  (a^{-1} q_1 + a r_1)(p_2+  \overline{p_2} ) )  a^2 t^2 K_{1}(4\pi at)   \right\}.
\end{align*}
These two identities show that 
\begin{align*}
  & \left\langle \varphi_\infty \left(  \begin{pmatrix}  a^{-\frac{1}{2}}  & 0 \\ 0 & a^{\frac{1}{2}} \end{pmatrix} x \right),  
                                            W_{\sigma,\infty}\left(  \begin{pmatrix}  t & 0 \\ 0 & 1 \end{pmatrix}   \begin{pmatrix}  a^{\frac{1}{2}}  & 0 \\ 0 & a^{-\frac{1}{2}} \end{pmatrix}    \right)   \right\rangle_{\mathcal W} 
       \overline{ \left\langle \varphi_\infty(x), W_{\sigma,\infty}\left(  \begin{pmatrix}  t & 0 \\ 0 & 1 \end{pmatrix}  \right)   \right\rangle_{\mathcal W}  }   \\
 =&  2^{6} e^{- \pi ( 2 p_1\overline{p_1}  +  a^{-2}q^2_1 +a^2r^2_1  + 2 p_2\overline{p_2} + a^{-2}q^2_2 +a^2r^2_2   )  }   \\
   & \quad \times \left\{  t^4 K_0(4\pi t) K_0(4\pi at)  4a^2 (\overline{p_1}p_2 - p_1\overline{p_2}   ) (p_1\overline{p_2} - \overline{p_1}p_2   )    \right.   \\
    & \quad \quad +    t^4 K_0(4\pi t) K_1(4\pi at)  2(\overline{p_1}p_2 - p_1\overline{p_2}   )  
                                                                              ( (p_1+  \overline{p_1})(a q_2+ a^3 r_2)  -  (a q_1 + a^3 r_1)(p_2+  \overline{p_2} ) )   \\
    & \quad \quad +    t^4 K_1(4\pi t) K_0 (4\pi at)  2a^2 (  (p_1+  \overline{p_1})(q_2+r_2)  -  (q_1+r_1)(p_2+  \overline{p_2} )  ) 
                                                                               (p_1\overline{p_2} - \overline{p_1}p_2   )  \\         
    & \quad \quad   +    t^4 K_1(4\pi t) K_1(4\pi at)   (  (p_1+  \overline{p_1})(q_2+r_2)  -  (q_1+r_1)(p_2+  \overline{p_2} )  ) \\
    & \quad \quad \quad \quad \quad \quad \quad \quad \quad \quad \quad \quad \quad \quad \quad \quad \quad \quad  
         \left.  \times   ( (p_1+  \overline{p_1})(a q_2+ a^3 r_2)  -  (a q_1 + a^3 r_1)(p_2+  \overline{p_2} ) )    \right\}.
\end{align*}
Considering the integral on ${\mathbf X}_\infty$,  
 only the following terms contribute the integral:
\begin{align*} 
   &   e^{- \pi (  4p_1\overline{p_1}  +  (1+a^{-2}) q^2_1 + (1+a^2) r^2_1  +  4 p_2\overline{p_2} +  (1+a^{-2}) q^2_2 + (1+a^2) r^2_2   )  }  \\
   & \quad \times \left\{  t^4 K_0(4\pi t) K_0(4\pi at) 4 a^2 ( p_1\overline{p_2} - \overline{p_1}p_2 )^2         \right.   \\
    & \quad \quad \left.  +    t^4 K_1(4\pi t) K_1(4\pi at)   (  (p_1+\overline{p_1})^2 (a q^2_2 + a^3r^2_2)
                                                                                          +  (aq^2_1 + a^3 r^2_1)(p_2+\overline{p_2})^2    )    \right\}, 
\end{align*}
since 
\begin{align*}
  \int^\infty_{-\infty} u e^{-\alpha u^2}  {\rm d} u =0 
\end{align*}
for each $\alpha \in {\mathbf R}_{>0}$. 
In the straightforward way, we find that 
\begin{align*}
   \begin{aligned}
    & \int_{{\mathbf X}_\infty}  
           e^{- \pi (  4p_1\overline{p_1}  +  (1+a^{-2}) q^2_1 + (1+a^2) r^2_1  +  4 p_2\overline{p_2} +  (1+a^{-2}) q^2_2 + (1+a^2) r^2_2   )  }  \\
              & \quad \quad \quad \quad \quad \quad \quad \quad \quad \quad \quad \quad \quad \quad \quad \quad
                 \times 4a^2  ( p_1\overline{p_2} - \overline{p_1}p_2 )
                                    ( \overline{p_1}p_2 - p_1\overline{p_2}   )   {\rm d}x
   \end{aligned}  
=&  \frac{a^4}{2^5(1+a^2)^2\pi^2}, \\
  \begin{aligned}
      & \int_{{\mathbf X}_\infty}  e^{- \pi (  4p_1\overline{p_1}  +  (1+a^{-2}) q^2_1 + (1+a^2) r^2_1  +  4 p_2\overline{p_2} +  (1+a^{-2}) q^2_2 + (1+a^2) r^2_2   )  }  \\
              & \quad \quad \quad \quad \quad \quad \quad \quad \quad \quad \quad       
               \times   (  (p_1+\overline{p_1})^2 (a q^2_2 + a^3r^2_2)
                                                                                          +  (aq^2_1 + a^3 r^2_1)(p_2+\overline{p_2})^2    )   {\rm d}x  
     \end{aligned}          
=& \frac{a^5}{2^4(1+a^2)^3\pi^2}.
\end{align*}
Put $z=1-a^2$. Then \cite[page 101]{mos66} shows that
\begin{align*}
       \int^\infty_0  t^4 K_0(4\pi t) K_0(4\pi at) \frac{{\rm d}t}{t}   
   =& 2 \cdot (4\pi)^{-4} \frac{1}{\Gamma(4)}   
         \Gamma\left(\frac{4}{2} \right)^4  \times {}_2F_1\left( \frac{4}{2},  \frac{4}{2} ; 4; z \right)  \\ 
   =& 2^{-8}\cdot 3^{-1} \cdot \pi^{-4} \cdot \frac{\Gamma(4)}{\Gamma(2)\Gamma(2)}  \sum^\infty_{n=0}  \frac{\Gamma(n+2)\Gamma(n+2)}{\Gamma(n+4)}  \frac{  z^n}{n!}  \\       
   =& 2^{-7} \cdot \pi^{-4} \cdot  \sum^\infty_{n=0} \left(   \frac{2}{n+3}  - \frac{1}{n+2}  \right)  z^n \\       
   =& 2^{-7}\cdot  \pi^{-4} \cdot  \left(  \frac{2}{z^3}  \left( \log(1-z) - z -\frac{z^2}{2}  \right)  - \frac{1}{z^2}  (\log(1-z) - z )  \right),  \\       
       \int^\infty_0   t^4 K_1(4\pi t) K_1(4\pi at)      \frac{{\rm d}t}{t}   
  = & 2\cdot (4\pi)^{-5} (4\pi a) \frac{1}{\Gamma(4)}   
         \Gamma\left(\frac{6}{2}\right) \Gamma\left(\frac{4}{2}\right)^2 \Gamma\left(\frac{2}{2}\right) \times {}_2F_1\left( \frac{6}{2},  \frac{4}{2} ; 4; 1- a^2 \right) \\
  = & 2^{-7} \cdot 3^{-1} \cdot \pi^{-4} a \times {}_2F_1( 3,  2 ; 4; 1- a^2 )  \\
   =& 2^{-7}\cdot 3^{-1} \cdot \pi^{-4}  a \cdot \frac{\Gamma(4)}{\Gamma(3)\Gamma(2)}  \sum^\infty_{n=0}  \frac{\Gamma(n+2)\Gamma(n+3)}{\Gamma(n+4)}  \frac{  z^n}{n!}  \\       
   =& 2^{-7} \cdot \pi^{-4} a\cdot  \sum^\infty_{n=0} \left(  1  -  \frac{2}{n+3}  \right)  z^n \\       
   =& 2^{-7} \cdot \pi^{-4}  a\cdot  \left(  \frac{1}{1-z}  - \frac{2}{z^3}  \left( \log(1-z) - z - \frac{z^2}{2} \right)  \right).
\end{align*}
Since 
\begin{align*}
  &     2^{-7}\cdot  \pi^{-4}  \cdot \left(  \frac{2}{z^3}  \left(\log(1-z) - z -\frac{z^2}{2}     \right)  - \frac{1}{z^2}  (\log(1-z) - z )  \right)
         \cdot \frac{a^4}{2^5(1+a^2)^2\pi^2}    \\
      &\quad +  
         2^{-7} \cdot \pi^{-4}  a \cdot  \left(  \frac{1}{1-z}  - \frac{2}{z^3}  \left(\log(1-z) - z - \frac{z^2}{2} \right)  \right)
         \frac{a^5}{2^4(1+a^2)^3\pi^2} \\
 =& 2^{-12} \cdot \pi^{-6}   \left\{  \left(  \frac{4 \log (a)}{(1-a^2)^3}   - 2 (1-a^2)^{-2} -  (1-a^2)^{-1}    
                                                     -  \frac{ 2 \log(a)}{(1-a^2)^2}   + (1-a^2)^{-1}  \right) \cdot  \left( - \frac{a^4}{(1+a^2)^2} \right)      \right.  \\
   &     \quad \quad \quad \quad  \quad \quad     \left.  + 2 a \left(  a^{-2}   - \frac{4 \log (a)}{(1-a^2)^3} + 2(1-a^2)^{-2} + (1-a^2)^{-1}    \right) \cdot \frac{a^5}{(1+a^2)^3}   \right\}  \\
 =& 2^{-11} \cdot \pi^{-6}   \cdot  \frac{a^4 \log(a)}{(1-a^2)(1+a^2)^3},  
\end{align*}
 the integral $I_0$ is given by  
\begin{align*}
  2^{6}  \int^1_0  2^{-11} \cdot \pi^{-6}   \cdot  \frac{a^4 \log(a)}{(1-a^2)(1+a^2)^3}   \frac{(a-a^{-1})^2 {\rm d} a}{a} 
  =  - 2^{-5} \cdot \pi^{-6}       
             \int^1_0   \frac{a (1-a^2) \log(a)}{(1+a^2)^3}    {\rm d} a.
\end{align*}
Since 
\begin{align*}
   \frac{{\rm d} }{{\rm d} a} \left( \frac{ 1+a^2 + 2a^2  \log(a)}{4(1+a^2)^2}   \right)
   =  \frac{a (1-a^2) \log(a)}{(1+a^2)^3},  
\end{align*}
we find that 
\begin{align*}
\int^1_0   \frac{a (1-a^2) \log(a)}{(1+a^2)^3}    {\rm d} a = - 2^{-3}. 
\end{align*}
Therefore $I_0=2^{-8}\pi^{-6}$.
 \end{proof}

\section*{Acknowledgements} 

The author is sincerely grateful for Ming-Lun Hsieh.  
Without discussions with him, this work would not have been possible.  
The author also thanks to Tobias Berger for comments on the earlier version of the manuscript. 
This work is done while author's stay at 
Taida Institute for Mathemativcal Sciences,  
National Center for Theoretical Sciences 
and Academia Sinica. 
He thanks for their hospitality. 
The author was supported by JSPS Grant-in-Aid for Young Scientists (B) Grand Number JP17K14174.


\begin{thebibliography}{99999999}

\bibitem[AK13]{ak13}
 M. Agarwal and K. Klosin, {\itshape Yoshida lifts and the Bloch-Kato conjecture for the convolution $L$-function}, 
                                            J. Number Theory {\bf 133} (2013), 2496--2537.

\bibitem[Be14]{be14}
 T. Berger, 
 {\itshape Arithmetic properties of similitude theta lifts from orthogonal to symplectic groups}, 
 Manuscripta Math. {\bf 143} (2014), 389--417.

\bibitem[Be]{ber}
 T. Berger, {\itshape On the Bloch-Kato conjecture for the Asai $L$-function}, 
                  preprint, available at \verb+https://arxiv.org/pdf/1507.00684.pdf+

\bibitem[BDSP12]{bdsp12}
{S. B\"{o}cherer, N. Dummigan and R. Schulze-Pillot},  
{\itshape Yoshida lifts and Selmer groups}, 
J. Math. Soc. Japan  {\bf 64} (2012), 1353--1405.

\bibitem[Co17]{co17}
{A. J. Corbett},  
{\itshape A proof of the refined Gan-Gross-Prasad conjecture for non-endoscopic Yoshida lifts}, 
Forum Mathematicum {\bf 29} (2017), 59--90.

\bibitem[GI11]{gi11}
W. T. Gan and A. Ichino, 
{\itshape On endoscopy and the refined Gross-Prasad conjecture for $({\rm SO}_5, {\rm SO}_4)$}, 
J. Inst. Math. Jussieu {\bf 10} (2011), 235--324.

\bibitem[GQT14]{gqt}
W. T. Gan, Y. Qiu and S. Takeda, 
{\itshape The regularized Siegel-Weil formula (the second term identity) and the Rallis inner product formula}, 
      Invent. Math. {\bf 193} (2014), 739--831.

\bibitem[Fu93]{fu93}
{M. Furusawa}, 
{\itshape On $L$-functions for ${\rm GSp}(4)\times {\rm GL}(2)$ and their special values}, J. Reine Angew. Math. {\bf 438} (1993), 187--218. 

\bibitem[Gh99]{gh99}
E. Ghate, 
    {\itshape Critical vaues of the twisted tensor $L$-function in the imaginary quadratic case},     
     Duke Math. J. {\bf 96} (1999), 595--638. 

\bibitem[GS15]{gs15}
N. Grbac and F. Shahidi, 
    {\itshape Endoscopic transfer for unitary groups and holomorphy of Asai $L$-functions},     
     Pac. J. Math. {\bf 276} (2015), 185--211.      

\bibitem[HST93]{hst93}
 M. Harris, D. Soudry  and R. Taylor, 
{\itshape  $l$-adic representations associated to modular forms over imaginary quadratic fields. I Lifting to {\rm GSp}$_4({\mathbf Q})$}, 
Invent. Math. {\bf 112} (1993), 377--411. 

\bibitem[He62]{hel62}
S. Helgason, 
{\itshape Differential geometry and symmetric spaces}, 
Amer. Math. Soc., 1962. 

\bibitem[Hi81a]{hi81a}
{H. Hida}, 
{\itshape Congruences of cusp forms and special values of their zeta functions}, 
Invent. Math. {\bf 63} (1981), 225--261.

\bibitem[Hi81b]{hi81b}
{H. Hida}, 
{\itshape On congruence divisors of cusp forms as factors of the special values of their zeta functions}, 
Invent. Math. {\bf 64} (1981), 221--262.

\bibitem[Hi94a]{hi94}
{H. Hida}, 
{\itshape Elementary theory of $L$-functions and Eisenstein series}, Cambridge University Press, Cambridge, 1994.

\bibitem[Hi94b]{hi94cr}
{H. Hida}, 
{\itshape On the critical values of $L$-functions of ${\rm GL}(2)$ and ${\rm GL}(2)  \times {\rm GL}(2)$}, 
Duke Math. J. {\bf 74} (1994), 431--529.

\bibitem[Hi99]{hi99}
H. Hida, “Non-critical values of adjoint L-functions for ${\rm SL}(2)$” in {\itshape Automorphic Forms, Automorphic Representations, and Arithmetic (Forth Worth, 1996)}, 
Proc. Symp. Pure Math. {\bf 66} (1999) Part I, Amer. Math. Soc., Providence, 1999, 123--175.

\bibitem[HT94]{ht94}
 H. Hida and J. Tilouine, {\itshape On the anticyclotomic main conjecture for CM fields}, 
                                            Invent. Math. {\bf 117} (1994), 89--147.

\bibitem[HN17]{hn17}
{M. L. Hsieh and K. Namikawa}, 
{\itshape Bessel periods and the non-vanishing of Yoshida lifts modulo a prime}, 
 Math. Z. {\bf 285}  (2017), 851--878.


\bibitem[HN18]{hn}
{M. L. Hsieh and K. Namikawa}, 
{\itshape Inner product formula for Yoshida lifts}, 
Ann. Math. Qu$\acute{\rm e}$bec {\bf 42} (2018), 215--253.

\bibitem[Ic05]{ic05}
 A. Ichino, {\itshape Pullbacks of Saito-Kurokawa lifts}, 
                          Invent. Math. {\bf 162} (2005), 551--647.                                                   

\bibitem[Ji]{jia}
{J. Jia}, 
{\itshape Arithmetic of the Yoshida lift}, 
Ph. D. thesis, University of Michigan (2010).

\bibitem[KV78]{kv78}
{M. Kashiwara and M. Vergne}, 
{\itshape On the Segal-Shale-Weil representations and harmonic polynomials}, Invent. Math. {\bf 44} (1978), 1--47. 

\bibitem[Kl59]{kl59}
{H. Klingen}, 
{\itshape Bemerkung \"uber {K}ongruenzuntergruppen der {M}odulgruppe {$n$}-ten {G}rades}, 
Arch. Math. {\bf 10} (1959), 113--122. 

\bibitem[Kn02]{kn02}
A. W. Knapp, 
{\itshape Lie groups beyond an introduction}, 
Second edition, Progress in Mathematics, vol. 140,
Birkh\"{a}user Boston, Inc., Boston, MA, 2002.

\bibitem[MOS66]{mos66}
{W. Magnus, F. Oberhettinger, R. P. Soni}, 
{\itshape Formulas and theorems for the special functions of mathematical physics (Third enlarged edition)}, 
Springer-Verlag Berlin Heidelberg New York, 1966. 

\bibitem[MW84]{mw84}
 B. Mazur and A. Wiles, {\itshape Class fields of abelian extensions of ${\mathbf Q}$}, 
                                            Invent. Math. {\bf 76} (1984), 179--330.


\bibitem[Na17]{na17}
K. Namikawa, 
  {\itshape On $p$-adic $L$-function associated with cusp forms on ${\rm GL}_2$}, 
  Manuscripta Math. {\bf 153} (2017), 563--622. 

\bibitem[Ra93]{ra93}
 R. Ranga Rao, {\itshape On some explicit formulas in the theory of Weil representation}, 
                                            Pac. J. Math. {\bf 157} (1993), 335--371.



\bibitem[Ri76]{ri76}
 K. A. Ribet, {\itshape A modular construction of unramified $p$-extension of ${\mathbf Q}(\mu_p)$}, 
                                            Invent. Math. {\bf 34} (1976), 151--162.


\bibitem[Sc02]{sc02}
{R. Schmidt}, 
{\itshape Some remarks on local newforms for ${\rm GL}(2)$}, 
J. Ramanujan Math. Soc. {\bf 17} (2002), 115--147. 

\bibitem[Si43]{si43}
{C. L. Siegel}, 
{\itshape Symplectic geometry},  
  Amer. J. Math. {\bf 65} (1943), 1--86. 

\bibitem[SU14]{su14}
 C. Skinner and E. Urban, {\itshape The Iwasawa main conjectures for ${\rm GL}_2$}, 
                                            Invent. Math. {\bf 195} (2014), 1--277.


\bibitem[Tak11]{tak11}
S. Takeda
    {\itshape Some local-global non-vanising results of theta lifts for symplectic-orthogonal dual groups}, 
    J. Reine Angew. Math. {\bf 657} (2011), 81--111. 

\bibitem[Tat79]{tat79}
J. Tate, “Number theoretic back ground” in {\itshape Automorphic Forms,  Representations, and $L$-functions (Proc. Sympos. Pure Math., Oregon State Univ., Corvallis, Ore., 1977)}, 
Proc. Symp. Pure Math. {\bf 33} (1979) Part II, Amer. Math. Soc., Providence, 1979, 3--26.

\bibitem[Tay94]{tay94}
R. Taylor, 
{\itshape $l$-adic representations associated to modular forms over imaginary quadratic fields. II},  
Invent. Math. {\bf 116} (1994), 619--643. 


\bibitem[Wa85]{wa85}
J. L. Waldspurger, 
{\itshape Sur les valeurs de certaines fonctions $L$ automorphes en leur centre de sym$\acute{e}$trie},  
Compositio Math. {\bf 54} (1985), 173--242. 
 
\bibitem[Yo80]{yo80}
{H. Yoshida}, 
{\itshape Siegel's modular forms and the arithmetic of quadratic forms}, 
Invent. Math. {\bf 60} (1980), 193--248. 
 
\bibitem[Yo84]{yo84}
{H. Yoshida}, 
{\itshape On Siegel modular forms obtained from theta series}, 
J. Reine Angew. Math. {\bf 352} (1984), 184--219. 

\end{thebibliography}
\end{document}